\theoremstyle{definition}
\newtheorem* {theorem*}{Theorem}
\newtheorem{theorem}{Theorem}[section]
\newtheorem{thmdef}[theorem]{Theorem-Definition}
\newtheorem{question}[theorem]{Question}
\theoremstyle{definition}
\newtheorem{observation}[theorem]{Observation}
\newtheorem* {example*}{Example}
\newtheorem{lemma}[theorem]{Lemma}
\theoremstyle{definition}
\newtheorem{definition}[theorem]{Definition}
\theoremstyle{definition}
\newtheorem{conjecture}[theorem]{Conjecture}
\newtheorem{proposition}[theorem]{Proposition}
\newtheorem{corollary}[theorem]{Corollary}
\newtheorem*{remark}{Remark}
\theoremstyle{definition}
\newtheorem {example}[theorem]{Example}
\theoremstyle{definition}
\theoremstyle{definition}
\theoremstyle{definition}
\numberwithin{equation}{section}
\def\({\left(}
\def\){\right)}
     \newcommand{\CC}{\mathbb{C}}  \newcommand{\QQ}{\mathbb{Q}}   \newcommand{\cP}{\mathcal{P}} \newcommand{\cA}{\mathcal{A}}
\newcommand{\cR}{\mathcal{R}}   \newcommand{\cI}{\mathcal{I}}
\def\NN{\mathbb{N}}
    \def\ZZ{\mathbb{Z}}   
  \def\GL{\mathrm{GL}}       \def\spanning{\textnormal{-span}}
   \newcommand{\supp}{\mathrm{supp}}
\def\fk{\mathfrak}
\def\barr{\begin{array}}
\def\earr{\end{array}}
\def\ba{\begin{aligned}}
\def\ea{\end{aligned}}
\def\be{\begin{equation}}
\def\ee{\end{equation}}
\def\qquand{\qquad\text{and}\qquad}
\def\quand{\quad\text{and}\quad}
\def\I{\mathcal{I}}
\def\DesR{\mathrm{Des}_R}
\def\DesL{\mathrm{Des}_L}
\def\omdef{\overset{\mathrm{def}}}
\def\proj{\mathrm{proj}}
\def\id{\mathrm{id}}
\def\PP{\mathbb{P}}
\def\cG{g}
\def\fkS{\fk S}
\newcommand{\rww}{\operatorname{rrw}}
\def\ben{\begin{enumerate}}
\def\een{\end{enumerate}}
\def\fpf{{\tt {FPF}}}
\def\D{\hat D}
\def\Dfpf{\D_\fpf}
\newcommand{\shift}[2]{1_{#1}\times #2}
\newcommand{\shiftfpf}[3]{\wfpf_{#1} \times #3 \times \wfpf_{#2}}
\def\ellhat{\hat\ell}
\def\Ffpf{\hat F^\fpf}
\def\Sfpf{\hat {\fk S}^\fpf}
\def\Ifpf{\I_\fpf}
\def\cAfpf{\cA_\fpf}
\def\cRfpf{\hat\cR_\fpf}
\def\wfpf{v}
\newcommand{\xRightarrow}[2][]{\ext@arrow 0359\Rightarrowfill@{#1}{#2}}
\newcommand{\Fl}{\operatorname{Fl}}
\newcommand{\Proj}{\operatorname{proj}}
\renewcommand{\O}{\operatorname{O}}
\newcommand{\Sp}{\operatorname{Sp}}
\newcommand{\stb}{\operatorname{stb}}
\newcommand{\Ess}{\operatorname{Ess}}
\newcommand{\rank}{\operatorname{rank}}
\renewcommand{\dim}{\operatorname{dim}}
\renewcommand{\skew}{\operatorname{skew}}
\renewcommand{\r}{\operatorname{rk}}
\def\LambdaOdd{\Gamma}
\renewcommand{\@makefnmark}{\mbox{\textsuperscript{}}}
\begin{document}
\title{Involution words: counting problems and connections to Schubert calculus for symmetric orbit closures}
\author{Zachary Hamaker \footnote{This author was supported by the IMA with funds provided by the National Science Foundation.}
\\
Department of Mathematics \\ University of Michigan \\ { \tt zachary.hamaker@gmail.com}
 \and
   Eric Marberg\footnote{This author was supported through a fellowship from the National Science Foundation.}
\\
 Department of Mathematics \\  HKUST \\ {\tt eric.marberg@gmail.com}
\vspace{3mm}
\and
Brendan Pawlowski\footnote{This author was partially supported by NSF grant 1148634.}
\\ Department of Mathematics  \\ University of Michigan \\ {\tt br.pawlowski@gmail.com}
}

\date{}

\maketitle

\begin{abstract}
Involution words are  variations of reduced words for involutions in Coxeter groups, first studied under the name of ``admissible sequences'' by Richardson and Springer. 
They are maximal chains in Richardson and Springer's weak order on involutions.
This article is the first in a series of papers on involution words, and focuses on their enumerative properties.
We define involution analogues of several objects associated to permutations,
including Rothe diagrams, the essential set,  Schubert polynomials, and Stanley symmetric functions.
These definitions have geometric interpretations for certain intervals in the weak order on involutions.
In particular, our definition of ``involution Schubert polynomials'' can
be viewed as a Billey-Jockusch-Stanley type formula
for cohomology class representatives of $\O_n$- and $\Sp_{2n}$-orbit closures in the flag variety,
defined inductively in recent work of Wyser and Yong.
As a special case of a more general theorem, we show that the involution Stanley symmetric function
for the longest element of a finite symmetric group is a product of staircase-shaped Schur functions.
This implies that the number of involution words for the longest element of a finite
symmetric group is equal to the dimension of a certain irreducible representation of a Weyl group of type $B$.
\end{abstract}

\tableofcontents
\setcounter{tocdepth}{2}

\section{Introduction}

\subsection{Involution words}\label{intro1-sect}

Let $(W,S)$ be a Coxeter system and define $\I = \I(W) = \{ x\in W : x=x^{-1}\}$ to be the set of involutions in $W$. 
A \emph{reduced word} for an element $w \in W$ is a sequence $(s_1,s_2,\dots,s_k)$ with $s_i \in S$ of shortest possible length such that $w=s_1s_2\cdots s_k$. 
An \emph{involution word} for an element $z \in \I$ is a sequence $(s_1,s_2,\dots,s_k)$ with $s_i \in S$ of shortest possible length such that
\be\label{first-eq} 
z=(\cdots ((1 \rtimes s_1) \rtimes s_2) \rtimes \cdots )\rtimes s_k
\ee
where for $g \in W$ and $s \in S$ we let $g \rtimes s $ be either $gs$ (if $s$ and $g$ commute) or $sgs$ (if $sg \neq gs$). %
When $g \in \I$, the element $g\rtimes s$ is also an involution.
Less obviously, every $z \in \I$ has at least one involution word with the convention that the empty sequence $\emptyset$ is the unique involution word of 
the identity element $1 \in \I$.
We write $\cR(w)$ for the set of reduced words of  $w \in W$ and  $\hat \cR(z)$ for the set of involution words of  $z \in \I$.
Given any involutions $y,z \in \I$, we define $\hat\cR(y,z)$ to be the set of sequences in $S$ which, when appended to   involution words for $y$,  produce involution words for $z$. The set $\hat\cR(y,z)$ may be empty, and we refer to its elements as \emph{involution words} from $y$ to $z$.

Involution words have many properties analogous to those of  ordinary reduced words, which accounts for our terminology. 
Reduced words correspond to maximal chains in $W$ under the right weak order.
Involution words analogously correspond to maximal chains in $\cI$ under the \emph{involution weak order}  defined by Richardson and Springer in \cite[\S3.17]{R}.
For initial intervals (that is, intervals starting at $y=1$), involution words are the same as what Hultman calls ``reduced $\underline S$-expressions'' in \cite{H2,H3} and are the right-handed versions of   ``admissible sequences'' in \cite{R,RS} and ``reduced $\textbf{I}_*$-expressions'' (with $*=\id$) in \cite{EM1,EM2}.

For permutations, the involution weak order can be identified with the weak order on the set of $B$-orbit closures in certain spherical varieties, and involution words are studied in this form  by Can, Joyce, and Wyser in \cite{CJ,CJW,CJW2}.
Specifically, the orbits induced by the actions of the orthogonal and symplectic groups on the flag variety have weak orders whose saturated chains starting at the largest orbit correspond to involution words in the intervals starting at $1$ and $v_n = [2,1,4,3,\dots,2n,2n{-}1] \in S_{2n}$, respectively.
These  \emph{geometric cases} are of particular interest, and lead us to define, alongside $\hat\cR(y)$, the  set 
\be\label{Rfpf-def}
\cRfpf(z) \omdef= \hat \cR(v_n,z) \qquad \text{for $z \in \I(S_{2n})$}.
\ee
Elements of this set will be called \emph{fixed-point-free involution words}, since $v_n$ is the involution of smallest possible length with no fixed points in $S_{2n}$.
The set $\cRfpf(z)$ is non-empty if and only if $z \in \I(S_{2n})$ has no fixed points, 
in which case every involution in the interval between $v_n$ and $z$ in weak order will also be fixed-point-free.
Fixed-point-free involution words are a special case of Rains and Vazirani's
notion of ``reduced expressions'' for elements of \emph{quasiparabolic sets} \cite{RV}.

\subsection{Geometry of $K$-orbits in flag varieties}\label{intro2-sect}

Before describing our results on involution words, we provide a brief overview of the geometry underlying the   geometric cases.
Let $B$ be the Borel subgroup of lower triangular matrices in $\GL_n(\CC)$ and denote by $\Fl(n) = B\backslash \GL_n(\CC)$ the flag variety. 
The right  orbits of the opposite Borel subgroup $B^+$ in $\Fl(n)$ decompose into Schubert cells, whose Zariski closures are the \emph{Schubert varieties} $X_w$, which are indexed by permutations $w \in S_n$.
Schubert varieties can also be defined explicitly using a fixed reference flag and rank conditions determined by $w$. 
By instead considering the right actions on $\Fl(n)$ of another group $K$, such as the  {orthogonal group} $\O_n(\CC)$ or (when $n$ is even) the  {symplectic group} $\Sp_n(\CC)$, one obtains different orbit decompositions.
The $K$-orbits in $\Fl(n)$ are certain sets $Y^K_y$ indexed by arbitrary involutions $y \in \I(S_n)$ when $K=\O_n(\CC)$ and by fixed-point-free involutions in $S_n$ when $K = \Sp_n(\CC)$.
Again, $Y^K_y$ can be defined using a fixed reference flag and explicit rank conditions determined by $y$.

Each Schubert variety determines a class $[X_w]$ in the cohomology ring $H^*(\Fl(n),\ZZ)$, which can be identified with the quotient of $\ZZ[x_1,\dots,x_n]$ by the ideal generated by the symmetric polynomials of positive degree via the Borel isomorphism (see \eqref{borel-eq}).
The \emph{Schubert polynomial} $\fkS_w$, defined by Lascoux and Sch\"utzenberger~\cite{LS1982a}, is a particularly nice choice of representative under this map for the cohomology class of the Schubert variety $X_w$.
When $K = \O_n(\CC)$ or $\Sp_n(\CC)$, we may consider the cohomology class of $Y^K_y$ in $H^*(\Fl(n),\ZZ)$  as an element of the same quotient of $\ZZ[x_1,\dots,x_n]$.
For these  classes $[Y^K_y]$, Wyser and Yong have defined similarly nice polynomial representatives $\Upsilon^K_{y}$, which they call  $\Upsilon$-polynomials~\cite{WY}.
The construction of $\Upsilon^K_y$ in \cite{WY} relies on first choosing a representative for the class of the unique closed orbit, and then showing compatibility with certain compositions of divided difference operators.

\subsection{Atoms for involutions}

This paper initiates the study of involution words from an enumerative perspective.
We introduce ``involution'' analogues of Rothe diagrams and Fulton's essential set for the geometric cases, and of Schubert polynomials and Stanley symmetric functions.
Some of these definitions are simple extensions of the ordinary versions in light of the following fundamental result.

\begin{thmdef}
[{\cite[Lemma 3.16]{R}}]
\label{atoms-thmdef}
For each  $y,z \in \I(W)$,  there exists a finite subset 
$
\cA(y,z)\subset W
$
such that $\hat \cR(y,z) = \bigcup_{w \in \cA(y,z)} \cR(w).$
Equivalently, every involution word from $y$ to $z$ is a reduced word for some element of $W$ and the set $\hat \cR(y,z)$ is closed under the braid relations for $(W,S)$.
For $y,z \in \I$ and $w \in \cA(y,z)$, we say that $w$ is a \emph{relative atom} from $y$ to $z$.
\end{thmdef}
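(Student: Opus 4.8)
The plan is to deduce both displayed equivalences from the standard word property (Matsumoto--Tits theorem) for ordinary reduced words, after establishing two facts about the $\rtimes$-action: (a) the product $s_1 s_2 \cdots s_k$ of the letters of any sequence in $\hat\cR(y,z)$ is a reduced expression in $W$, and (b) the set $\hat\cR(y,z)$ is closed under the braid relations of $(W,S)$. Granting (a) and (b), set $\cA(y,z) = \{ w \in W : \cR(w) \cap \hat\cR(y,z) \neq \emptyset\}$. By (a) every element of $\hat\cR(y,z)$ is a reduced word for a unique such $w$, so $\hat\cR(y,z) \subseteq \bigcup_{w \in \cA(y,z)} \cR(w)$; conversely, since by Matsumoto--Tits each $\cR(w)$ is a single equivalence class under the braid relations, (b) forces $\cR(w) \subseteq \hat\cR(y,z)$ whenever $\cR(w)$ meets $\hat\cR(y,z)$, giving the reverse inclusion. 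Finiteness of $\cA(y,z)$ is automatic: every sequence in $\hat\cR(y,z)$ has the common length $\hat\ell(z) - \hat\ell(y)$ by minimality, so $\hat\cR(y,z)$ is finite and hence so is $\cA(y,z)$.

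It therefore suffices to prove (a) and (b), and for both I would first reduce to the initial case $y = 1$. If $\vec t$ is an involution word for $y$ and $\vec a \in \hat\cR(y,z)$, then by definition $\vec t \vec a \in \hat\cR(1,z)$, so (a) for $\vec a$ follows from (a) for $\vec t \vec a$ together with the elementary fact that every contiguous factor of a reduced word is reduced; similarly, a braid move applied inside $\vec a$ extends to a braid move inside $\vec t \vec a$, so (b) for the interval $(y,z)$ follows from (b) for $(1,z)$. Thus I may assume $y = 1$ and work with genuine involution words.

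Both (a) and (b) for $y=1$ are controlled by the behaviour of a single length-increasing step $z' \mapsto z' \rtimes s$, which is governed by \cite[Lemma~3.16]{R}. That lemma provides the trichotomy: when $\hat\ell(z' \rtimes s) > \hat\ell(z')$, either $s$ and $z'$ commute and $z' \rtimes s = z's$ with $\ell(z's) = \ell(z') + 1$, or $s z' \ne z' s$ and $z' \rtimes s = s z' s$ with $\ell(s z' s) = \ell(z') + 2$; it also supplies the corresponding exchange (lifting) condition. For (a) I would induct on $k$: writing $w_{k-1} = s_1 \cdots s_{k-1}$ and using the inductive hypothesis that $w_{k-1}$ is reduced and represents $z_{k-1} = 1 \rtimes s_1 \rtimes \cdots \rtimes s_{k-1}$, the content to verify is that a length-increasing step $z_{k-1} \mapsto z_{k-1} \rtimes s_k$ forces $s_k \notin \DesR(w_{k-1})$, so that $w_k = w_{k-1} s_k$ is again reduced; the exchange half of Lemma~3.16 is exactly what rules out a right descent, since a right descent would instead produce a length-decreasing step. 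For (b) I would localize: it is enough to show that replacing a factor $s t s \cdots$ by $t s t \cdots$ (the braid move for a rank-two pair $s,t$) inside an involution word yields another involution word, and this is a computation inside the dihedral parabolic generated by $s$ and $t$, where the possible $\rtimes$-chains can be enumerated directly and checked to be braid-stable.

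The main obstacle is the conjugation (length-two) steps, where $z' \rtimes s = s z' s$: here the passage between the involution $z'$ and its representing reduced word $w$ no longer matches an ordinary right multiplication on the nose, and the bookkeeping of left versus right descents must be handled carefully. Concretely, the crux is the exchange property of Lemma~3.16 for these steps, which I expect to be the delicate point in both the reducedness induction and the rank-two verification underlying braid-closure; once it is in hand, (a) and (b)---and hence the stated decomposition---follow formally as above.
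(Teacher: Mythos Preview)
The paper does not actually prove Theorem-Definition~\ref{atoms-thmdef}: the remark immediately following it simply cites \cite[\S3]{R} and the companion paper \cite{HMP2}. Your outline---reduce to showing (a) that every involution word is an ordinary reduced word and (b) that $\hat\cR(y,z)$ is braid-closed, then invoke Matsumoto--Tits---is indeed the strategy of those references, and your reduction to $y=1$ and the finiteness argument are fine.

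There is, however, a genuine gap in your inductive step for (a). You want to conclude $s_k \notin \DesR(w_{k-1})$ from $s_k \notin \DesR(z_{k-1})$, and you invoke ``the exchange half of Lemma~3.16'' of \cite{R}. But that lemma controls descents of the \emph{involution} $z_{k-1}$, not of the product $w_{k-1}=s_1\cdots s_{k-1}$; these are distinct elements of $W$ once conjugation steps occur, and there is no a priori relation between their descent sets. Your final paragraph correctly flags this as the delicate point but does not resolve it. One clean fix is to strengthen the inductive hypothesis to $w_{k-1}\leq_L z_{k-1}$ in left weak order, i.e.\ $\ell(z_{k-1})=\ell(z_{k-1}w_{k-1}^{-1})+\ell(w_{k-1})$. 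In the conjugation case $z_k=s_k z_{k-1}s_k$ one then computes $z_kw_k^{-1}=s_k\cdot(z_{k-1}w_{k-1}^{-1})$; since $s_k\notin\DesR(z_{k-1})=\DesL(z_{k-1})$ (using that $z_{k-1}$ is an involution) and $z_{k-1}w_{k-1}^{-1}\leq_R z_{k-1}$, one gets $s_k\notin\DesL(z_{k-1}w_{k-1}^{-1})$, whence a length count forces $\ell(w_k)=k$ and $w_k\leq_L z_k$. The commuting case is easier. This invariant is exactly the content of the inclusion $\DesR(u)\subset\DesR(z)$ recorded later in the paper as Proposition~\ref{atomdes-prop} and proved in \cite{HMP2}; without it, your induction for (a) does not close.
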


In the geometric cases, we define $\cA(y) \omdef= \cA(1,y)$ and $\cAfpf(z) \omdef= \cA(v_n,z)$.

\begin{remark}
The theorem-definition follows  from  results of Richardson and Springer \cite{R}. A direct proof using our present notation appears in \cite{HMP2}.
\end{remark}

The \emph{involution Rothe diagrams} $\D(y)$ and $\Dfpf(y)$ of $y \in \I(S_n)$ are defined in Section~\ref{invdiagram-sect} as certain restrictions of the usual Rothe diagram $D(y)$.
The \emph{essential sets} $\Ess(\D(y))$ and $\Ess(\Dfpf(y))$ consist of southeast corners in the corresponding involution diagram.
This closely mirrors the definition of Fulton's essential set $\Ess(D(w))$ for $w \in S_n$.
In Proposition~\ref{prop:essential-set}, we show that the involution essential sets determine a subset of the rank conditions sufficient to define $Y^K_y$ when $K = \O_n(\CC)$ or $K=\Sp_n(\CC)$.
The proof is largely a consequence of the analogous result for the $B^+$-action, with some  subtleties in the fixed-point-free case.
These objects prove to be a key tool in our study of involution Schubert polynomials and involution Stanley symmetric functions.

\subsection{Schubert polynomials and Stanley symmetric functions}

Schubert polynomials were originally defined using divided difference operators.
However, they can also be viewed as a sort of generating function over reduced words.
More specifically, Billey, Jockusch and Stanley~\cite{BJS} and Fomin and Stanley~\cite{FS} found the following explicit combinatorial formula.
Let $s_i$ denote the simple transposition $(i,i+1)$, so that $S_n$ is a Coxeter group relative to the generating set  $\{s_1,s_2,\dots,s_{n-1}\}$.
Fix $w \in S_n$, and for each $a = (s_{a_1},s_{a_2},\dots,s_{a_k}) \in \cR(w)$, let $C(a)$ be the set of sequences  of positive integers
 $I = (i_1,i_2\dots,i_k) $ satisfying 
 \[
 i_1 \leq i_2 \leq \dots \leq i_k \qquand i_j < i_{j+1}\text{ whenever }a_j < a_{j+1}.\] 
 We write $I \leq a$ to indicate that  $i_j \leq a_j$ for all $j $ and define $x_I = x_{i_1} x_{i_2}\cdots x_{i_k}$.
The Schubert polynomial corresponding to $w \in S_n$ is then given by
\be\label{schub1-eq}
 \fkS_w \omdef= \sum_{a \in \cR(w)} \sum_{ \substack{ I \in C(a) \\ I \leq a} } x_I \in \ZZ[x_1,\dots,x_n].
 \ee
This formula makes clear that $\fkS_w$ is homogeneous with degree equal to the length of $w$.
Similarly, the \emph{Stanley symmetric function} of $w$ is
 \be
 \label{F1-eq}
 F_w \omdef= \sum_{a \in \cR(w)} \sum_{I \in C(a)} x_I  \in \ZZ[[x_1,x_2,\dots]]
 \ee
(this definition is $F_{w^{-1}}$ in~\cite{Stan}).
The coefficient  of $x_1 x_2 \dots x_{\ell(w)}$ in $F_w$ is $|\cR(w)|$,
and it holds that $F_w =\lim_{N\to\infty} \fkS_{1_N \times w}$
where $1_N \times w$ is the image of $w$ under the natural embedding $S_n \hookrightarrow S_N\times S_n \subset S_{N+n}$ and the limit is taken in the sense of formal power series.
This limit is called \emph{stabilization}, and Stanley symmetric functions are sometimes referred to as \emph{stable Schubert polynomials}.

\subsection{Main results}\label{intro5-sect}

For $y,z \in \I(S_n)$,  we define the \emph{involution Schubert polynomial} $\hat \fkS_{y,z} $ and \emph{involution Stanley symmetric function} $\hat F_{y,z}$ 
by the formulas
\be
\label{intro-inv-schub-eq}
\hat \fkS_{y,z} \omdef= \sum_{a \in \hat \cR(y,z)} \sum_{ \substack{ I \in C(a) \\ I \leq a} } x_I
\qquand 
\hat F_{y,z} \omdef= \lim_{N\to\infty} \hat \fkS_{1_N \times y,1_N \times z} = \sum_{a \in \hat \cR(y,z)} \sum_{I \in C(a)} x_I.
\ee
Theorem-Definition~\ref{atoms-thmdef} implies that \[\hat \fkS_{y,z} = \sum_{w \in \cA(y,z)} \fkS_w \qquand \hat F_{y,z} = \sum_{w \in \cA(y,z)} F_w.\]
For the geometric cases, we define 
\[\hat \fkS_y \omdef= \hat \fkS_{1,y},\qquad \hat F_y \omdef= \hat F_{1,y},\qquad \Sfpf_y \omdef= \hat \fkS_{v_n,y},\qquand \Ffpf_y\omdef= \hat F_{v_n,y}.\]
As one would hope, these involution Schubert polynomials are the same (up to scaling factor) as Wyser and Yong's representatives for $[Y^K_y]$.
Let $\kappa(y)$ be the number of two-cycles in $y \in \I(S_n)$.

\begin{theorem}
\label{t:WyserYong}
For each $y \in \I(S_n)$ and each fixed-point-free $z \in \I(S_{2n})$,
it holds that
\[
2^{\kappa(y)}\hat \fkS_y = \Upsilon^{\O_n}_y \qquand  \Sfpf_z = \Upsilon^{\Sp_{2n}}_z.
\]
\end{theorem}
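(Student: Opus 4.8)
The plan is to show that both sides of each identity satisfy the same inductive characterization, namely the divided-difference recursion that Wyser and Yong use in \cite{WY} to build $\Upsilon^{\O_n}_y$ and $\Upsilon^{\Sp_{2n}}_z$. Recall that $\Upsilon^K_y$ is obtained by fixing a polynomial representative for the class of the closed $K$-orbit and then repeatedly applying divided difference operators $\partial_i$ (where $\partial_i f = (f - s_i f)/(x_i - x_{i+1})$, with $s_i$ swapping $x_i$ and $x_{i+1}$) to descend through the weak order on orbits, each covering step contributing a coefficient of $1$ or $2$ according to the type of the minimal-parabolic $P_i$-action relating the two orbits. Since the $\partial_i$ satisfy the braid relations, it suffices to match (i) the value at the seed orbit and (ii) the one-step relation along each cover of the involution weak order.

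For the combinatorial side I would first convert everything into ordinary Schubert polynomials using Theorem-Definition~\ref{atoms-thmdef}, which gives $\hat\fkS_y = \sum_{w \in \cA(y)} \fkS_w$ and $\Sfpf_z = \sum_{w \in \cAfpf(z)} \fkS_w$. The action of $\partial_i$ is then transparent on each summand, since $\partial_i \fkS_w = \fkS_{ws_i}$ when $\ell(ws_i) < \ell(w)$ and $\partial_i\fkS_w = 0$ otherwise. The essential combinatorial input is a transition lemma for relative atoms: for a cover $y \lessdot y' = y \rtimes s_i$ of the involution weak order, the assignment $w \mapsto w s_i$ should restrict to a bijection from $\{w \in \cA(y') : \ell(w s_i) < \ell(w)\}$ onto $\cA(y)$, and likewise with $\cAfpf$ in the symplectic case. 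Granting this, $\partial_i \hat\fkS_{y'} = \hat\fkS_y$ and $\partial_i\Sfpf_{z'} = \Sfpf_z$ with coefficient exactly $1$ along every cover; the lemma itself I would prove directly from the definition of involution words, by appending or deleting the final letter $s_i$ of an involution word and checking that this is compatible with reducedness.

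The crux is reconciling the coefficients. Covers of the involution weak order come in two flavors: the commuting type, where $y' = y s_i = s_i y$ and $\kappa(y') = \kappa(y) + 1$ (a pair of fixed points of $y$ is joined into a two-cycle), and the conjugating type, where $y' = s_i y s_i \ne y s_i$ and $\kappa(y') = \kappa(y)$. Multiplying the clean relation through by $2^{\kappa}$ gives $\partial_i\bigl(2^{\kappa(y')}\hat\fkS_{y'}\bigr) = 2^{\kappa(y')-\kappa(y)}\cdot 2^{\kappa(y)}\hat\fkS_y$, so the rescaled family picks up a factor $2^{\kappa(y')-\kappa(y)}$ that equals $2$ on commuting covers and $1$ on conjugating covers. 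Comparing with \cite{WY}, the factor of $2$ appearing in the orthogonal geometric recursion lands precisely on the commuting (that is, $\kappa$-increasing) covers, so after the $2^{\kappa}$ normalization the two recursions coincide. In the symplectic case every involution between $v_n$ and $z$ is fixed-point-free, only conjugating covers occur, no factor of $2$ ever arises, and no rescaling is needed — which is why $\Sfpf_z = \Upsilon^{\Sp_{2n}}_z$ holds on the nose. It remains to match the base of the induction at the seed orbit (the unique closed $K$-orbit, i.e.\ the longest element of the relevant interval), where Wyser and Yong's representative is an explicit product of linear forms and the matching value of $2^{\kappa}\hat\fkS$ (respectively $\Sfpf$) can be evaluated directly, its stable limit being the product of staircase Schur functions established elsewhere in the paper. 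Downward induction through the weak order then finishes the argument, since every involution lies below the longest element.

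I expect the coefficient bookkeeping to be the main obstacle: proving the atom transition lemma cleanly, with coefficient exactly $1$, in \emph{both} cover types, and then confirming that the factors of $2$ in Wyser and Yong's orthogonal recursion fall exactly on the commuting covers so that they are reproduced by the ratio $2^{\kappa(y')}/2^{\kappa(y)}$. A secondary subtlety is the base-case comparison in the orthogonal case, where one must reconcile the explicit product-of-linear-forms representative of the closed-orbit class with the value of $2^{\kappa}\hat\fkS$ at the longest involution.
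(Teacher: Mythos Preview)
Your strategy---match the divided-difference recursions on both sides---is exactly the paper's, and your coefficient bookkeeping is correct: the atom transition lemma you describe is Proposition~\ref{atomdes-prop}(a), and your observation that commuting covers are precisely those with $\kappa(y') = \kappa(y)+1$ correctly aligns the factor of $2$ in Wyser--Yong's orthogonal recursion (Theorem~\ref{WY1-thm}) with the ratio $2^{\kappa(y')}/2^{\kappa(y)}$.

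The substantive difference is the direction of the induction, and this creates a genuine gap at your base case. You anchor at the longest element $w_n$ and must therefore verify $2^{\kappa(w_n)}\hat\fkS_{w_n} = \Upsilon^{\O_n}_{w_n}$ independently; but the justification you offer (``its stable limit being the product of staircase Schur functions established elsewhere in the paper'') is circular. In the paper's logical structure, the product formula $\hat\fkS_{w_n} = 2^{-\lfloor n/2\rfloor}\prod_{i\le j,\ i+j\le n}(x_i+x_j)$ is Theorem~\ref{invSchuprod-thm}, whose proof begins ``The desired formula holds when $y=w_n$ by Theorem~\ref{WYatom-thm}''---i.e., it \emph{uses} the very result you are proving. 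The staircase Schur identity (Theorem~\ref{F-thm}) is even further downstream, and in any case a stable limit cannot recover the polynomial itself.

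The paper avoids this by running the induction the other way: it shows that the normalized Wyser--Yong polynomials $2^{-\kappa(y)}\Upsilon^{\O_n}_y$ satisfy the characterization of $\hat\fkS_y$ in Theorem~\ref{invschubdef-thm}, whose base case is at the \emph{identity}. There the check is almost free: $Y^{\O(n)}_1 = \Fl(n)$, so its class is $1 \in H^*(\Fl(n),\ZZ)$, and since $\Upsilon^{\O_n}_1$ is a homogeneous polynomial of degree $0$ representing $1$, it equals $1$. Your approach can be repaired either by switching to this base case, or by supplying an independent proof of the $w_n$ identity (for instance via Brion's formula together with the observation that both sides lie in $\ZZ\text{-span}\{\fkS_w : w\in S_n\}$, which injects into cohomology)---but as written, the base case is not established.
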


Our proof of  this result, which is restated as Theorem~\ref{WYatom-thm},
proceeds by generalizing a characterization of Schubert polynomials to the involution setting.
This allows us to show that our formula coincides with   Wyser and Yong's formula for the class of the unique closed orbit, and that it behaves in the same way with respect to divided difference operators.

A general formula of Brion~\cite[Theorem 1.5]{Brion98} shows that (up to a power of $2$) $[Y^K_y]$ is a multiplicity-free sum of Schubert classes $\sum_{w \in \cA^K(y)} [X_w]$. 
Wyser and Yong \cite[Section 5]{WY} note that their representative for $[Y^K_y]$ is a linear combination of Schubert polynomials, and therefore is equal to $\sum_{w \in \cA^K(y)} \fkS_w$ (again up to a power of $2$). 
As a consequence, one gets an analogue of \eqref{schub1-eq} for $\Upsilon^K_y$ by replacing $\cR(w)$ with 
$\bigcup_{w\in \cA^K(y)} \cR(w)$.
From this point of view, the main contribution of Theorem~\ref{t:WyserYong} is combinatorial: we identify $\bigcup_{w \in \cA^K(y)} \cR(w)$
with either $\hat \cR(y)$ or $\cRfpf(y)$ and $\cA^K(y)$ with either $\cA(y)$ or $\cAfpf(y)$.

In both geometric cases, the longest permutation $w_n = [n,n{-}1,\dots,1] \in S_n$ indexes the orbit of the fixed reference flag.
For this closed orbit, Wyser and Yong's polynomial representatives are
\be \label{eq:wyser.yong.prod}
\Upsilon^{\O_n}_{w_n} = 2^{\lfloor \frac{n}{2} \rfloor} \hat \fkS_{w_n} = \prod_{1 \leq i \leq j \leq n{-}i} (x_i + x_j) \ \ \ \mbox{and} \ \ \ \Upsilon^{\Sp_{2n}}_{w_{2n}} = \Sfpf_{w_{2n}} = \prod_{1 \leq i < j \leq 2n{-}i} (x_i + x_j).
\ee
A permutation is \emph{dominant} if it is 132-avoiding.
Another of our main results is to extend the product formulas \eqref{eq:wyser.yong.prod} to  {dominant}  involutions as follows.
\begin{theorem}
\label{t:dominvSchub}
Let $y \in \I(S_n)$  and let $z \in \I(S_{2n})$ be fixed-point-free. If $y$ and $z$ are dominant, 
then
\[
\hat \fkS_y = 2^{-\kappa(y)}\prod_{(i,j) \in \D(y)} (x_i + x_j) \qquand \Sfpf_z = \prod_{(i,j) \in \Dfpf(z)} (x_i + x_j)
\]
where $\D(y)$ and $\Dfpf(z)$ are defined as in Section \ref{invdiagram-sect}.
\end{theorem}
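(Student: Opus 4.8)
The plan is to prove both identities by induction, taking the longest element as the base case and descending through dominant involutions by divided differences. I will treat the orthogonal statement in detail and indicate the symplectic parallel. For dominant $y \in \I(S_n)$ the ordinary Rothe diagram $D(y)$ is the Young diagram of a partition $\lambda$, and since $y = y^{-1}$ this partition is self-conjugate, $\lambda = \lambda'$; by the definitions in Section~\ref{invdiagram-sect} the diagrams $\D(y)$ and $\Dfpf(z)$ are the parts of $D(y)$, resp.\ $D(z)$, lying weakly on or above the main diagonal (rows and columns with $i \le j$). The diagonal cells $(i,i)$ are exactly the boxes with $y(i) > i$, of which there are $\kappa(y)$, and each contributes a factor $x_i + x_i = 2x_i$ to the product. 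In the orthogonal case the prefactor $2^{-\kappa(y)}$ cancels these, so that the right-hand side is an honest polynomial, consistent with $\hat\fkS_y$ being a nonnegative integer combination of Schubert polynomials by Theorem-Definition~\ref{atoms-thmdef}; the base case $y = w_n$ is precisely the first identity in \eqref{eq:wyser.yong.prod}.

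The combinatorial engine is a dictionary between dominant involutions and shifted shapes contained in the staircase $\D(w_n)$. First I would show that $y \mapsto \D(y)$ is a bijection from dominant involutions onto such shifted shapes, and that it carries covers of the involution weak order to the addition of a single outer corner: a diagonal corner $(d,d)$ corresponds to a Type~1 cover $y \lessdot y s_d$ adjoining the adjacent two-cycle $(d,d{+}1)$ (so $\kappa$ increases by one), while an off-diagonal corner of $\D(y)$ together with its mirror across the diagonal corresponds to a Type~2 cover $y \lessdot s_i y s_i$ (so $\kappa$ is unchanged). One checks the diagonal case directly: if $(d,d)$ is an outer corner then $y(d) \le d{+}1$ and $y(d) > d$ force $y(d) = d{+}1$. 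Granting this, every dominant $y \neq w_n$ has an addable corner of $\D(y)$ inside the staircase, hence a dominant cover $y' = y \rtimes s_i \gtrdot y$, and the induction descends from $w_n$.

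The two sides are then reconciled through the divided-difference recurrence satisfied by the Wyser--Yong representatives, transported to $\hat\fkS$ and $\Sfpf$ via Theorem~\ref{t:WyserYong}: across a cover $y \lessdot y' = y \rtimes s_i$ one has $\partial_i \hat\fkS_{y'} = \hat\fkS_y$. It remains to verify the matching identity on the product side, namely $\partial_i \prod_{(a,b)\in\D(y')}(x_a+x_b) = 2\,\prod_{(a,b)\in\D(y)}(x_a+x_b)$ in the Type~1 step and $\partial_i \prod_{(a,b)\in\D(y')}(x_a+x_b) = \prod_{(a,b)\in\D(y)}(x_a+x_b)$ in the Type~2 step; the extra factor of $2$ in the first is exactly absorbed by the change $2^{-\kappa(y')} = 2^{-\kappa(y)-1}$, which is why the normalization makes both halves compatible. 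One isolates the factors involving the indices $i$ and $i{+}1$, notes that the remaining factors are $s_i$-symmetric and pass through $\partial_i$ via $\partial_i(fg) = \partial_i(f)\,g + s_i(f)\,\partial_i(g)$, and computes $\partial_i$ on the few relevant factors. For the symplectic statement the argument runs the same way with $v_n \le z \le w_{2n}$ and $\Dfpf(z)$, but is simpler: a fixed-point-free involution has no fixed points to join into a new adjacent two-cycle, so every cover is of Type~2, the set of diagonal cells is constant (every dominant fixed-point-free $z$ pairs $\{1,\dots,n\}$ with $\{n{+}1,\dots,2n\}$, giving all $n$ of them), and the inert diagonal factors are why no $2^{-\kappa}$ prefactor is needed.

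I expect the main obstacle to be the Type~2 step of the product computation together with the cover half of the dictionary. A single divided difference lowers polynomial degree by one, whereas a Type~2 cover changes the Coxeter length by two and removes a symmetric pair of boxes from $D(y')$; reconciling these requires tracking how $\partial_i$ acts on the factor $(x_i + x_{i+1})$ and on the two columns of factors indexed by $i$ and $i{+}1$, and confirming that the result is exactly the single factor recorded by the box removed from the shifted diagram $\D(y)$. Verifying that every off-diagonal addable corner genuinely arises from a conjugation $s_i y s_i$ with the predicted index $i$ is the other delicate point, though it is a finite, diagram-local check.
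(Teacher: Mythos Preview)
Your approach is essentially the paper's: induct downward from $w_n$ using the divided-difference recursion $\partial_i \hat\fkS_{y\rtimes s_i} = \hat\fkS_y$, matching covers in the involution weak order with addable cells of the shifted diagram. A few corrections and one simplification are in order.

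First, the orientation is reversed: by the definitions in Section~\ref{invdiagram-sect}, $\D(y)$ is the weakly \emph{sub}diagonal part ($j\le i$) of $D(y)$, and $\Dfpf(z)$ is the \emph{strictly} subdiagonal part ($j<i$). In particular $\Dfpf(z)$ contains no diagonal cells at all, so your explanation of why the symplectic formula carries no $2^{-\kappa}$ prefactor (``inert diagonal factors'') is not the right one; the prefactor is absent simply because there are no $2x_i$ factors in the product to cancel.

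Second, your worry about the Type~2 step dissolves once you work with the half-diagram. A Type~2 cover removes a symmetric pair of boxes from $D(y')$, but exactly \emph{one} box $(i,j)$ with $j<i$ from $\D(y')$; this matches the degree drop under $\partial_i$. The paper's clean way to finish is to observe that the remaining product $\prod_{(k,l)\in\D(y)}(x_k+x_l)$ is $s_i$-invariant: the involution $(k,l)\mapsto(s_i(k),s_i(l))$ preserves $\D(y)$, since rows $i$ and $i{+}1$ of $\D(y)$ have equal length and columns $i$ and $i{+}1$ are empty (the chosen corner $(i,j)$ has $j\le i$ minimal, so nothing sits in those columns below the diagonal). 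Hence $\partial_i$ passes through this product, and one only computes $\partial_i(x_i+x_j)$, which is $1$ if $j<i$ and $2$ if $j=i$; the latter is exactly absorbed by $\kappa(y\rtimes s_i)=\kappa(y)+1$. This avoids the Leibniz bookkeeping you anticipated. In the diagonal case your claim that $y(d)=d$ and $y(d{+}1)=d{+}1$ (so the cover is $y\mapsto ys_d$) is indeed forced: dominance makes $D(y)$ a Young diagram, and $(d,d)\notin D(y)$ then excludes $(d{+}1,d{+}1)$ as well, so $y(d{+}1)\le d{+}1$, while $s_d\notin\DesR(y)$ gives $y(d{+}1)>y(d)=d$.
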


Theorem~\ref{t:dominvSchub} is restated as Theorem~\ref{invSchuprod-thm} and is a special case of Theorem~\ref{factor-thm},
which describes a product formula for the involution Schubert polynomials of a more general class of permutations that we call \emph{weakly dominant}.

In~\cite{Stan}, Stanley showed that power series $F_w$ are symmetric
and computed several of these functions explicitly.
He was able to show, for example, that $F_{w_n}$ is the  Schur function $s_{\delta_n}$ indexed by the \emph{staircase shape partition}
$\delta_n=(n-1,n-2, \dots, 1)$.
This implies  that $|\cR(w_n)|$ is equal to  $f^{\delta_n},$ the number of \emph{standard Young tableaux} of shape $\delta_n$.
More generally, as a consequence of work by Lascoux and Sch\"utzenberger~\cite{LS1982a},
and as proven bijectively by Edelman and Greene~\cite{EG}, Stanley symmetric functions are \emph{Schur positive},
i.e., they can be expressed as positive integer sums of Schur functions.
Since involution Stanley symmetric functions are sums of Stanley symmetric functions, they inherit this property.

In the geometric cases, we characterize the involutions whose involution Stanley symmetric function is a single Schur function.
Moreover, by carefully studying the stabilization of certain weakly dominant involution Schubert polynomials, we obtain expressions for the corresponding involution Stanley symmetric functions.
Most notably we derive the following result,  which was conjectured in 2006 in unpublished work of Cooley and Williams \cite{Wcorr}.
\begin{theorem} \label{F-thm}
Let $ p = \lceil \frac{n+1}{2}\rceil$ and $q= \lfloor \frac{n+1}{2}\rfloor$, and set $P = \binom{p}{2}$, $Q = \binom{q}{2}$, and $N = \binom{n}{2}$.
Then
\[
\hat F_{w_n} = s_{\delta_p} s_{\delta_q}
\qquand
\Ffpf_{w_{2n}} = (s_{\delta_{n}})^2.
\]
Consequently,
$
|\hat \cR(w_n)| = \tbinom{P+Q}{P} f^{\delta_p} f^{\delta_q}
$
and
$
|\cRfpf(w_{2n})| = \tbinom{2N}{N}(f^{\delta_{n}})^2.$
\end{theorem}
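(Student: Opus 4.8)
The plan is to reduce the theorem to the two symmetric-function identities $\hat F_{w_n}=s_{\delta_p}s_{\delta_q}$ and $\Ffpf_{w_{2n}}=(s_{\delta_n})^2$; the two counting formulas are then purely formal consequences. Indeed, for any generating function $\sum_a\sum_{I\in C(a)}x_I$ that is homogeneous of degree $\ell$, the coefficient of the squarefree monomial $x_1x_2\cdots x_\ell$ equals the number of words $a$, because $(1,2,\dots,\ell)$ is the unique weakly increasing sequence in $C(a)$ of content $\{1,\dots,\ell\}$. Thus $|\hat\cR(w_n)|$ and $|\cRfpf(w_{2n})|$ are exactly the coefficients of $x_1\cdots x_\ell$ in $\hat F_{w_n}$ and $\Ffpf_{w_{2n}}$, with $\ell=P+Q$ and $\ell=2N$ respectively. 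For homogeneous symmetric functions $G,H$ one has $[x_1\cdots x_\ell](GH)=\binom{\ell}{\deg G}\,[x_1\cdots x_{\deg G}]G\cdot[x_1\cdots x_{\deg H}]H$, obtained by choosing which of the $\ell$ variables feed each factor and using that $G,H$ are symmetric, together with $[x_1\cdots x_{|\mu|}]s_\mu=f^\mu$. Applying this to $s_{\delta_p}s_{\delta_q}$ and to $(s_{\delta_n})^2$ yields precisely $\binom{P+Q}{P}f^{\delta_p}f^{\delta_q}$ and $\binom{2N}{N}(f^{\delta_n})^2$. So the whole theorem rests on the two symmetric-function identities.

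To prove these identities I would stabilize the product formulas of \eqref{eq:wyser.yong.prod}. The permutations $w_n$ and $w_{2n}$ are $132$-avoiding, hence dominant; their shifts $1_N\times w_n$ are no longer dominant but remain weakly dominant, so the factorization theorem (Theorem~\ref{factor-thm}), extending Theorem~\ref{t:dominvSchub}, applies for every $N$ and gives an explicit product for $\hat\fkS_{1_N\times w_n}$ (resp.\ for the fixed-point-free $\hat\fkS_{1_N\times v_n,\,1_N\times w_{2n}}$) whose factors $(x_i+x_j)$ run over the involution Rothe diagram, which is the staircase $\D(w_n)$ (resp.\ $\Dfpf(w_{2n})$) translated $N$ steps along the main diagonal. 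Two classical facts make the limit computable: the staircase identity $s_{\delta_m}(x_1,\dots,x_m)=\prod_{1\le i<j\le m}(x_i+x_j)$, and the evaluation $\fkS_{w_m}(x;y)=\prod_{i+j\le m}(x_i-y_j)$ of the double Schubert polynomial of the longest element, so that the $y=-x$ specialization appearing in Theorem~\ref{factor-thm} is exactly what manufactures the $(x_i+x_j)$ products. Taking $N\to\infty$ then converts the diagonally anchored finite product into a genuine symmetric function, namely $\hat F_{w_n}$ or $\Ffpf_{w_{2n}}$.

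The heart of the argument, and the step I expect to be the main obstacle, is to show that this stable limit \emph{factors} as a product of two staircase Schur functions. The mechanism is that the cells of the diagram separate into those on the main diagonal and those strictly above it, and under stabilization these two bands decouple into independent staircase factors; tracking their sizes produces $\delta_p$ and $\delta_q$ with $p=\lceil\frac{n+1}{2}\rceil$ and $q=\lfloor\frac{n+1}{2}\rfloor$ in the orthogonal case, while in the fixed-point-free case the diagonal band is absent (consistent with the lack of a power-of-$2$ normalization for $\Sfpf$ in \eqref{eq:wyser.yong.prod}) and both staircases equal $\delta_n$, giving $(s_{\delta_n})^2$. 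I would make the decoupling precise either by inducting on $N$ through the divided-difference recurrence underlying Theorem~\ref{factor-thm}, or by verifying the identity after restricting to any fixed number $m\ge\ell$ of variables, where the stabilized polynomial is literally the shifted-staircase product and the classical staircase identity applies directly. As a consistency check one confirms that the degrees agree: $\deg\hat F_{w_n}=\tfrac12\bigl(\binom{n}{2}+\lfloor n/2\rfloor\bigr)=P+Q$ and $\deg\Ffpf_{w_{2n}}=\hat\ell(w_{2n})-\hat\ell(v_n)=n(n-1)=2N$, matching $|\delta_p|+|\delta_q|$ and $2|\delta_n|$. The genuine difficulty lies entirely in establishing this decoupling of the two diagonal bands in the stable limit; everything else is bookkeeping.
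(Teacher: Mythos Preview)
Your reduction of the counting statements to the two symmetric-function identities is fine, and you have correctly identified Theorem~\ref{factor-thm} as the starting point. However, there is a concrete error in your stabilization argument: the shifted involution $1_N\times w_n$ is \emph{not} weakly dominant for $N\ge 1$. By definition a weakly dominant involution has the form $(1,b_1)(2,b_2)\cdots(k,b_k)$ with each $b_i>k$; in particular $1$ cannot be a fixed point unless the involution is the identity. But $1_N\times w_n$ fixes $1,\dots,N$. So Theorem~\ref{factor-thm} does not apply to $1_N\times w_n$, and your plan of ``applying it for every $N$ and taking the limit'' collapses at the first step.

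The paper's route is therefore different in a way that matters. One applies Theorem~\ref{factor-thm} only to $w_n$ itself, obtaining $\hat\fkS_{w_n}=\hat\fkS_{\cG_k}\cdot\Phi_{k,n-k}(\fkS_{r(w_n)}(x;y))$ with $k=\lfloor n/2\rfloor$ and $r(w_n)=w_{\lceil n/2\rceil}$, and then stabilizes this single identity. The difficulty you sensed---that stabilization is not a ring homomorphism---is handled not by any ``diagonal-band'' decoupling but by a Hopf-algebraic mechanism. One writes $\stb_n=\pi_{w_n}$ via isobaric divided differences (Theorem~\ref{stabpi-thm}) and factors it as $\stb_n=\stb_{n/k,n-k}\circ\stb_{k,n-k}$, so that the $(S_k\times S_{n-k})$-invariant factor $\hat\fkS_{\cG_k}$ passes through the inner operator (Lemma~\ref{stab-lem}). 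The remaining step, Lemma~\ref{lastthm-lem2}, shows that $\stb_{k,n-k}\bigl(\Phi_{k,n-k}(\fkS_w(x;y))\bigr)=F_w(x_1,\dots,x_n)$ precisely when $(\id\otimes\omega)\circ\Delta(F_w)=\Delta(F_w)$; this is where the specific identity $r(w_n)=w_{\lceil n/2\rceil}$ enters, since $F_{w_m}=s_{\delta_m}\in\Lambda^{\mathrm{odd}}=\ZZ[p_1,p_3,\dots]$ and every element of $\Lambda^{\mathrm{odd}}$ satisfies that coproduct symmetry. Neither of your suggested routes (induction on $N$, or checking after restriction to $m$ variables) supplies this ingredient, and without it there is no bridge from the product formula for $\hat\fkS_{w_n}$ to one for $\hat F_{w_n}$.
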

Theorem~\ref{F-thm} is a special case of Theorem~\ref{last-thm}, which provides product formulas for a certain family of weakly dominant involutions.
Every involution Stanley symmetric function computed in Theorem~\ref{last-thm} is \emph{Schur-$P$ positive}.
In later work~\cite{HMP4,HMP5}, we present proofs that $\hat F_y = \hat F_{1,y}$ and $\Ffpf_z$ are Schur-$P$ positive for all $y \in \I(S_n)$ and $z \in \Ifpf(S_{2n})$.
We do not yet have a good understanding of when the symmetric function $\hat F_{y,z}$ is Schur-$P$ positive.
It can happen that an involution Stanley symmetric function is not expressible using Schur-$P$ functions.
For example, $\hat F_{[2,1,3,4],[3,4,1,2]} = s_{(1,1)}$ is not in the ring generated by Schur-$P$ functions. 
\begin{question}
For which $y,z \in \I(S_n)$ is $F_{y,z}$ Schur-$P$ positive?

\end{question}

Although our enumerative results are restricted to the geometric cases for the symmetric group, the objects we study have natural analogues in other Coxeter groups.
Several questions remain in this direction.
For example, Haiman showed in  \cite{Haiman} that $|\cR(w^B_n)| = f^{(n^n)}$ where $w^B_n$ is the longest element in the Weyl group $B_n$ and $(n^n)=(n,n,\dots,n)$.
Computations suggest  the following version of this theorem for involution words.

\begin{conjecture}
The set $\hat\cR(w_n^B)$ has size $f^{\delta_{n+1}}$.
\end{conjecture}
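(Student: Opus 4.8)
The plan is to carry the strategy of this paper into type $B$: replace the enumeration of $\hat\cR(w_n^B)$ by an identity for an involution Stanley symmetric function and then extract $|\hat\cR(w_n^B)|$ as the coefficient of the lowest squarefree monomial $x_1 x_2 \cdots x_d$. Writing $\hat F^B_{w_n^B} = \lim_{N\to\infty} \hat\fkS^B_{1_N \times w_n^B}$ for the type $B$ involution Stanley symmetric function (the stable limit of type $B$ involution Schubert polynomials $\hat\fkS^B$, built from a Wyser--Yong-style inductive construction together with the type $B$ analogue of the atom decomposition in Theorem-Definition~\ref{atoms-thmdef}), the count $|\hat\cR(w_n^B)|$ equals the coefficient of $x_1 x_2 \cdots x_d$ in $\hat F^B_{w_n^B}$, where $d = \hat\ell(w_n^B)$ is the common length of an involution word for $w_n^B$. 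The problem thus reduces to the single identity
\[
\hat F^B_{w_n^B} = s_{\delta_{n+1}},
\]
because the coefficient of $x_1 x_2 \cdots x_{|\delta_{n+1}|}$ in the Schur function $s_{\delta_{n+1}}$ is the Kostka number $K_{\delta_{n+1},(1^d)} = f^{\delta_{n+1}}$.

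As a first step and a consistency check, I would compute the degree $d = \hat\ell(w_n^B)$ and confirm it equals $|\delta_{n+1}|$. The longest element $w_n^B$ is the signed permutation $i \mapsto -i$; it is an involution with $\ell(w_n^B) = n^2$, and its reflection length is $n$ since its fixed space is trivial (equivalently, $w_n^B$ is the product of the $n$ commuting sign-change reflections). Using the formula $\hat\ell(z) = \tfrac12(\ell(z) + \ell_R(z))$ for the rank of an involution $z$ in the weak involution order, with $\ell_R$ the reflection length (which one checks directly here), this gives $d = \tfrac12(n^2+n) = \binom{n+1}{2} = |\delta_{n+1}|$, matching the degree of $s_{\delta_{n+1}}$. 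The cases $n=1,2$ can be verified by hand, giving $|\hat\cR(w_1^B)| = 1 = f^{\delta_2}$ and $|\hat\cR(w_2^B)| = 2 = f^{\delta_3}$.

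To establish the identity itself, the key steps, in order, are: (i) develop the type $B$ involution Rothe diagram and essential set, and prove the type $B$ analogue of the dominant product formula of Theorem~\ref{t:dominvSchub}, expressing $\hat\fkS^B_z$ for dominant $z$ as a product of factors $(x_i + x_j)$ read off from the diagram of $z$ together with the diagonal contributions coming from the sign-change generator $s_0$; (ii) check that $w_n^B$ is dominant in the relevant sense (its diagram is the full staircase, as for every longest element), so that the product formula applies; (iii) stabilize, computing $\hat F^B_{w_n^B}$ as the limit of these products and identifying it with $s_{\delta_{n+1}}$ via the classical identity $s_{\delta_{n+1}}(x_1,\dots,x_{n+1}) = \prod_{1\le i<j\le n+1}(x_i+x_j)$ for the staircase Schur polynomial. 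Extracting the coefficient of $x_1 x_2 \cdots x_d$ then completes the proof.

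The main obstacle is step (i): the type $A$ machinery of this paper (the atom decomposition, the inductive divided-difference construction, the essential set, and the dominant product formula) must be re-established in type $B$, where the sign-change generator $s_0$ has no type $A$ counterpart and the twisted action $g \rtimes s_0$ behaves differently from $g \rtimes s_i$. Controlling $s_0$ is precisely what produces the extra diagonal box, turning the two type $A$ half-staircases $\delta_p, \delta_q$ into the single staircase $\delta_{n+1}$, so the delicate points are the factors of $2$ attached to $s_0$ and the bookkeeping in the stable limit. As a cross-check and a guide for this bookkeeping, I would exploit the embedding $B_n \hookrightarrow S_{2n}$ sending $w_n^B$ to the longest element $w_{2n}$ and $s_0$ to the central transposition, which realizes type $B$ involution words as a symmetric class of words in $S_{2n}$ and connects the computation to the fixed-point-free identity $\Ffpf_{w_{2n}} = (s_{\delta_n})^2$ already proved here; making this correspondence bijective rather than merely numerical would be a worthwhile, if nontrivial, alternative route.
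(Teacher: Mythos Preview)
The statement you are attempting to prove is presented in the paper as a \emph{conjecture}, not a theorem: the authors write only that ``computations suggest'' it, and they give no proof. There is therefore no argument in the paper against which to compare your proposal.

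What you have written is a research plan rather than a proof, and you yourself identify the gap: step~(i), building the entire type~$B$ involution Schubert/Stanley apparatus (atoms, divided-difference recursion, diagrams, essential sets, dominant product formula, stabilization), is genuinely open and constitutes the whole content of the conjecture. None of these objects is defined in the paper for type~$B$, and several do not carry over mechanically. In particular: (a) the shift $1_N \times w$ that underlies stabilization in type~$A$ has no direct analogue in type~$B$, so the very definition of $\hat F^B_{w_n^B}$ as a stable limit of $\hat\fkS^B_{1_N\times w_n^B}$ is problematic; one must instead work with type~$B$ Stanley functions in the sense of Billey--Haiman or Fomin--Kirillov, and there is no established involution version of these. (b) Your target identity $\hat F^B_{w_n^B} = s_{\delta_{n+1}}$ is strictly stronger than the counting conjecture and is itself unproved; even the weaker statement that the relevant function is a single Schur function would be surprising, since in type~$A$ the analogue $\hat F_{w_n}$ is a \emph{product} $s_{\delta_p} s_{\delta_q}$, not a single $s_\lambda$. (c) The role of the generator $s_0$ in the divided-difference recursion and in any product formula is exactly the unknown; asserting that it ``produces the extra diagonal box'' is a heuristic, not an argument.

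Your degree check $\hat\ell(w_n^B) = \binom{n+1}{2}$ is correct and a good sanity test, and the embedding $B_n \hookrightarrow S_{2n}$ is a natural thing to try, but neither constitutes progress toward a proof. As it stands, the conjecture remains open.
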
 

\begin{remark}[Note added in proof]
A proof of this conjecture will appear in \cite{MP}. 
\end{remark}

Additionally, there is a notion of \emph{twisted involution words} for which Schubert polynomial and Stanley symmetric function analogues are readily defined.
We do not explore these objects in the present paper, but it remains a question of interest to find geometric interpretations for twisted involution Schubert polynomials.

\subsection*{Outline}

The rest of the paper is structured as follows:
\begin{itemize}
\item
Section~\ref{prelim-sect} reviews some general properties of involution words, Rothe diagrams, Schubert polynomials, and Stanley symmetric functions.

\item
Section~\ref{results-sect} contains our main results.
In Section~\ref{Symatoms-sect}, we describe some noteworthy facts about atoms for permutations. 
Section~\ref{invdiagram-sect}
introduces analogues of Rothe diagrams and
codes for involutions.
Sections~\ref{invschubert-sect}, \ref{revisit-sect}, and \ref{product-sect} primarily concern
involution Schubert polynomials.
Sections~\ref{invstan-sect} and \ref{stabilization-sect} are mostly about involution Stanley symmetric functions.

\item
Finally, Appendix~\ref{not-sect} provides an index of notation.
\end{itemize}

\subsection*{Acknowledgements}

We are especially grateful to
Dan Bump and Vic Reiner for many helpful conversations in the course of the development of this paper.
We also thank
 Sara Billey, Michael Joyce, Joel Lewis, J{\o}rn Olsson, Ben Wyser, Alex Yong, and the anonymous referees for  useful discussions and suggestions.

\section{Preliminaries}\label{prelim-sect}

Write $\PP = \{1,2,3,\dots\}$  for the positive integers and define $\NN = \{0\}\cup \PP$ and  $ [n] = \{ i \in \PP : i\leq n\}$.
If $(W,S)$  is a Coxeter system, then we write $\ell : W \to \NN$ for its length function, and 
denote by
\be\label{des-eq} \DesL(w) \omdef= \{ s \in S : \ell(sw) < \ell(w)\} \qquand \DesR(w) \omdef= \{ s \in S: \ell(ws) < \ell(w)\}\ee
 the  \emph{left} and \emph{right descent sets} of an element $w \in W$.

\subsection{General properties of involution words}\label{gen-sect}

Here we review the  basic properties of   involution words for an arbitrary Coxeter group. Most of this material   appears in some form  in \cite{R,RS,Springer} or  the more recent papers \cite{H1,H2,H3}.

\begin{remark}
Our definition of involution words has a straightforward generalization to \emph{twisted involutions} in Coxeter groups, by which we mean  elements $w \in W$ satisfying $w^{-1} = w^*$ for some fixed $S$-preserving automorphism $*$ of $W$ of order two. This more flexible setup is the point of view of our references, but our present applications   will not require this generality. 
\end{remark}

Let  $(W,S)$  be any Coxeter system
and write 
$\I = \I(W) \omdef= \{ w \in W : w^{-1} = w\}$.
Recall from the start of the introduction that we define
\be\label{rtimes-eq} y\rtimes s \omdef = \begin{cases} sys &\text{if $ys \neq sy$} \\ ys&\text{otherwise}\end{cases}
\qquad\text{for $y \in \I$ and $s \in S$}.
\ee

\begin{remark}
 Although $(y\rtimes s)\rtimes s = y$ for  $s \in S$, 
the operation $\rtimes : \I \times S \to \I$ does not  extend to a right $W$-action in general:
 if $s,t \in S$ and $sts=tst$ then  $((1\rtimes s) \rtimes t) \rtimes s = t $ but $ ((1\rtimes  t)\rtimes s) \rtimes t = s$.
We   omit all parentheses in  expressions like \eqref{first-eq} and interpret 
$1 \rtimes s_1 \rtimes s_2 \rtimes \cdots \rtimes s_k$ to mean $(\cdots ((1 \rtimes s_1) \rtimes s_2) \rtimes \cdots )\rtimes s_k$,
which is the only sensible way of parenthesizing the former expression.
\end{remark}

Define  $\cR(w)$, $\hat\cR(z)$, and $\hat\cR(y,z)$ for $w \in W$ and $y, z\in \I$ as in the introduction.
For $y,z \in \I$, the set $\hat\cR(y,z)$ consists of all words $(s_1,\dots,s_k)$ with $s_i \in S$ such that
for some (equivalently, every) word $(r_1,\dots,r_j) \in \hat\cR(y)$ it holds that
 $(r_1,\dots,r_j,s_1,\dots,s_k) \in \hat\cR(z).$
 We have $\hat\cR(y,y) = \{ \emptyset\}$ where $\emptyset$ denotes the empty sequence.
 The set $\hat\cR(y,z)$ may be empty, for example if $\ell(y) > \ell(z)$.
 
Fix $y \in \I$ and $s \in S$.
It is a  consequence of the exchange principle that   $\ell(sys) = \ell(y)$ if and only if $sys=y$ \cite[Lemma 3.4]{H2},
and so if  $s \in \DesR(y)$ then
\be\label{ides-eq}
\ell(y\rtimes s) = \begin{cases} \ell(y) -2 &\text{if $y\rtimes s = sys$}
\\
\ell(y)-1 &\text{if $y \rtimes s = ys$}.
\end{cases}
\ee
From this property, it follows  by induction on  length that  $\hat\cR(y) \neq \varnothing$ for all $y \in \I$, so we may
set 
\be\label{ellhat-def}
\ellhat(y)\omdef=\text{the common length of all involution words for $y \in \I$.}
\ee
We also define $\ellhat(y,z) \omdef = \ellhat(z)- \ellhat (y)$. If the set $\hat\cR(y,z)$ is nonempty,
then $\ellhat(y,z)$ is the common length of all of its elements.

\begin{remark}\label{kappa-rmk}
The map $\ellhat : W\to \NN$ is denoted $L$ in \cite[\S3]{R} and $\rho$ in \cite{H1,H2,H3}. 
Incitti \cite{Incitti1,Incitti2} has derived useful combinatorial formulas for  $\ellhat $ when $W$ is a classical Weyl group. In the case when $W=S_n$ is a symmetric group, one has
\[ \ellhat(y) = \tfrac{1}{2} \( \ell(y)+ \kappa(y)\)\qquad\text{for $y \in \I(S_n)$}\]
where $\ell(y)$ is the usual length and $\kappa(y)$ is the number of 2-cycles of the involution $y$.
\end{remark}

The  \emph{(strong) Bruhat order} of $(W,S)$ is the partial order $\leq$ on $W$ in which $u\leq v$ if and only if in each reduced expression for $v$ one can omit a certain number of factors to obtain a reduced expression for $u$.
Thus  $u< v$  implies $\ell(u) < \ell(v)$, and  it follows from \eqref{ides-eq} that if $y \in \I$ and $s \in \DesR(y)$ then   $y\rtimes s \leq y s < y$.
There is a close relationship between the  the Bruhat order on $\I$ and involution words.
For example,   $(\I,\leq)$ is a graded poset with rank function $\ellhat : \I \to \NN$
 \cite[Theorem 4.8]{H1},
and this poset
 inherits  the subword characterization of $(W,\leq)$ given above, but with the role of reduced words  replaced by involution words \cite[Theorem 2.8]{H3}.
From these results, it is clear that 
if $y \in \I$ and $s \in S$ then 
the following are equivalent:
\[  y \rtimes s < y\quad  \Leftrightarrow\quad \ell(ys) = \ell(y)-1 \quad \Leftrightarrow\quad \ell(y\rtimes s) < \ell(y)\quad \Leftrightarrow\quad \ellhat(y\rtimes s) = \ellhat(y)-1.\]
These properties imply the following useful alternative definition of the set  $\hat\cR(y,z)$:

\begin{lemma}\label{increasing-cor} If $y,z \in \I$, then a word $(s_1,s_2,\dots,s_k)$ with $s_i \in S$ belongs to $\hat\cR(y,z)$  if and only if 
\[y  < y_1 < y_2< \dots <y_k = z\qquad\text{where }y_i = y \rtimes s_1\rtimes s_2 \rtimes \cdots \rtimes s_i.\]
\end{lemma}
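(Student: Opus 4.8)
The plan is to prove the equivalence by combining the graded-poset structure of $(\I,\leq)$ with the length criterion \eqref{ides-eq}, showing that each one-step transition $w_{i-1} \rtimes s_i = w_i$ in an involution word corresponds exactly to a covering relation $w_{i-1} < w_i$ in the involution Bruhat order. First I would unwind the definition: by the definition of $\hat\cR(y,z)$, the word $(s_1,\dots,s_k)$ lies in $\hat\cR(y,z)$ precisely when, appending it to any involution word for $y$, one obtains an involution word for $z$. Since involution words are by definition the shortest-length $\rtimes$-expressions, and since $\ellhat$ is the common length of all involution words for a given involution, appending $(s_1,\dots,s_k)$ to an involution word for $y$ is reduced (i.e.\ remains an involution word) if and only if each successive application of $\rtimes s_i$ strictly increases $\ellhat$.

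The key observation to make precise is the chain of equivalences displayed just before the lemma:
\[
y \rtimes s < y \ \Leftrightarrow\ \ell(ys)=\ell(y)-1 \ \Leftrightarrow\ \ell(y\rtimes s)<\ell(y) \ \Leftrightarrow\ \ellhat(y\rtimes s)=\ellhat(y)-1.
\]
Applying the contrapositive/reverse direction, for $s \in S$ with $s \notin \DesR(w_{i-1})$ we have $\ellhat(w_{i-1}\rtimes s) = \ellhat(w_{i-1})+1$ and $w_{i-1}\rtimes s > w_{i-1}$. Thus I would argue by induction on $k$: setting $w_i = y \rtimes s_1 \rtimes \cdots \rtimes s_i$, the appended sequence is an involution word for $z = w_k$ if and only if $\ellhat$ increases by exactly $1$ at each step, which by the equivalences above is the same as having $w_{i-1} < w_i$ as a covering relation, hence $y < w_1 < w_2 < \cdots < w_k = z$. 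Conversely, if the strict chain of inequalities holds, then since $\ellhat$ is the rank function of the graded poset $(\I,\leq)$ (by \cite[Theorem 4.8]{H1}), the inequalities $w_{i-1} < w_i$ force $\ellhat(w_i) \geq \ellhat(w_{i-1})+1$; combined with the fact that one application of $\rtimes s_i$ changes $\ellhat$ by at most $1$ in absolute value, each must be a cover, so the concatenation is length-additive and hence an involution word.

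The main obstacle I anticipate is bookkeeping the two directions cleanly rather than any deep difficulty: specifically, establishing that a single $\rtimes s_i$ step cannot skip ranks in $(\I,\leq)$, i.e.\ that $w_{i-1} < w_i$ forces $\ellhat(w_i)=\ellhat(w_{i-1})+1$ rather than a larger gap. This follows because $w_{i-1} \rtimes s_i$ is either $w_{i-1}s_i$ or $s_i w_{i-1} s_i$, and in either case $\ellhat$ changes by at most one by \eqref{ides-eq} (read in the ascending direction). Once this ``each step moves by exactly one rank'' fact is in hand, the equivalence between length-additivity of the concatenated word and the strict saturated chain is immediate, and the lemma follows. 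I would take care to note that strictness of each inequality $w_{i-1} < w_i$ already rules out the degenerate case $w_{i-1}\rtimes s_i = w_{i-1}$, so no separate argument is needed there.
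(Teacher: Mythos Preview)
Your proposal is correct and follows essentially the same approach as the paper: the paper does not give a separate proof of this lemma at all, but simply states that it follows from the displayed chain of equivalences $y \rtimes s < y \Leftrightarrow \ell(ys)=\ell(y)-1 \Leftrightarrow \ell(y\rtimes s)<\ell(y) \Leftrightarrow \ellhat(y\rtimes s)=\ellhat(y)-1$. Your argument spells out exactly how that implication works, and the ``no skipping ranks'' concern you flag is indeed handled by those equivalences (together with the involutivity of $\rtimes s$, which forces $\ellhat$ to change by exactly $\pm 1$).
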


Recall that $\cRfpf(y) = \hat \cR(\wfpf_n,y)$ where $\wfpf_n = s_1s_3\cdots s_{2n-1}$.
\begin{corollary}\label{rfpf-cor}
If $y \in \I(S_{2n})$ then 
$ \cRfpf(y)
 $
is non-empty if and only if $y$ is fixed-point-free.
\end{corollary}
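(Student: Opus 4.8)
The statement to prove is Corollary~\ref{rfpf-cor}: for $y \in \I(S_{2n})$, the set $\cRfpf(y) = \hat\cR(\wfpf_n, y)$ is nonempty if and only if $y$ is fixed-point-free. I would approach this directly through the combinatorics of the involution weak order, using Lemma~\ref{increasing-cor} as the main structural tool. The key observation is that $\wfpf_n$ is itself a fixed-point-free involution (it is the product of the disjoint transpositions $(1,2)(3,4)\cdots(2n{-}1,2n)$), and that the operation $\rtimes s$ interacts with the fixed-point-free property in a controlled way. So the proof splits naturally into two implications.

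\textbf{The ``only if'' direction.} First I would show that if $\cRfpf(y) \neq \varnothing$, then $y$ is fixed-point-free. By Lemma~\ref{increasing-cor}, a nonempty $\hat\cR(\wfpf_n, y)$ means there is a saturated chain $\wfpf_n < w_1 < \cdots < w_k = y$ in the involution Bruhat order, where each step is of the form $w_{i} = w_{i-1} \rtimes s$ for some $s \in S$ with $\ellhat$ increasing by one. The crux is the claim: \emph{if $z \in \I(S_{2n})$ is fixed-point-free and $z \rtimes s > z$, then $z \rtimes s$ is also fixed-point-free.} To see this, recall $z \rtimes s$ is either $zs$ (when $s,z$ commute) or $szs$ (otherwise). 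When $z \rtimes s = szs$, conjugation preserves cycle type, so the fixed-point-free property persists. When $z \rtimes s = zs$ with $s = s_i = (i,i{+}1)$ commuting with $z$, I need to check that multiplying by the adjacent transposition cannot create a fixed point while increasing $\ellhat$; here one examines the one-line notation and uses that $zs > z$ (in ordinary Bruhat/length terms) together with commutation to argue the two positions $i, i{+}1$ form a new $2$-cycle rather than fixing a point. Starting from the fixed-point-free base case $\wfpf_n$ and inducting up the chain then forces $y$ to be fixed-point-free.

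\textbf{The ``if'' direction and the main obstacle.} For the converse, I would show that every fixed-point-free $y$ is reachable from $\wfpf_n$, i.e., $\wfpf_n \leq y$ in the involution Bruhat order whenever $y$ is fixed-point-free. Since $(\I,\leq)$ is graded with rank $\ellhat$ and inherits the subword property (as recalled in the excerpt), it suffices to show $\wfpf_n$ is the unique \emph{minimal} fixed-point-free involution and that it lies below every fixed-point-free element. The cleanest route is to verify that $\wfpf_n$ is minimal in length among fixed-point-free involutions — any fixed-point-free involution is a product of $n$ disjoint transpositions, and a short computation with the length formula $\ellhat(y) = \tfrac12(\ell(y)+\kappa(y))$ from Remark~\ref{kappa-rmk} (where $\kappa \equiv n$ is forced) shows $\wfpf_n$ uniquely minimizes $\ell$. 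Combined with the fact that the set of fixed-point-free involutions is exactly the downward-closed-complement picked out by the ``only if'' direction, one concludes $\wfpf_n \leq y$, so a saturated chain exists and $\hat\cR(\wfpf_n,y)$ is nonempty. I expect the \textbf{main obstacle} to be the inductive step in the ``only if'' direction: carefully controlling the commuting case $z \rtimes s = zs$ to rule out the creation of a fixed point, since that is precisely where the fixed-point-free structure could break and where the one-line-notation bookkeeping is most delicate.
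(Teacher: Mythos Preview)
Your ``only if'' direction has the right skeleton and matches the paper's approach, but your analysis of the commuting case is backwards. If $z$ is fixed-point-free and $s_i$ commutes with $z$, then $z$ conjugates $(i,i{+}1)$ to itself, so $\{z(i),z(i{+}1)\}=\{i,i{+}1\}$; since $z$ has no fixed points this forces $z(i)=i{+}1$ and $z(i{+}1)=i$, i.e.\ $(i,i{+}1)$ is already a $2$-cycle of $z$. Hence $s_i\in\DesR(z)$ and $zs_i<z$, so this case simply \emph{cannot occur} when $\ellhat$ increases. (Your claim that ``$i,i{+}1$ form a new $2$-cycle rather than fixing a point'' is false: in fact $zs_i$ fixes both $i$ and $i{+}1$.) This is exactly the paper's observation that ``$y\rtimes s=ys$ only if $s\in\DesR(y)$,'' and it is all you need here.

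Your ``if'' direction has a genuine gap: you conflate the Bruhat order $\leq$ with the weak order $\leq_T$. The set $\hat\cR(\wfpf_n,y)$ is nonempty if and only if $\wfpf_n\leq_T y$; a saturated chain in Bruhat order does \emph{not} yield an element of $\hat\cR(\wfpf_n,y)$, and the paper explicitly warns (end of Section~\ref{gen-sect}) that $y\leq_T z\Rightarrow y\leq z$ but not conversely. Showing that $\wfpf_n$ uniquely minimizes $\ell$ among fixed-point-free involutions is not enough: you must show it is the unique $\leq_T$-minimal such involution, which amounts to proving that every fixed-point-free $y\neq\wfpf_n$ has some $s\in\DesR(y)$ with $sy\neq ys$ (so that $y\rtimes s=sys$ is fixed-point-free and $<_T y$). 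This is straightforward---if every descent $i$ satisfied $y(i)=i{+}1$, one checks that $y(j)\le j{+}1$ for all $j$, forcing $y=\wfpf_n$---but it is the step your argument is missing, and it is precisely the piece of work you flagged as the ``main obstacle'' but placed in the wrong direction.
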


\begin{proof}
If $y \in S_{2n}$ is a fixed-point-free involution and $s$ is a simple transposition then $sys$ is also fixed-point-free while  $y \rtimes s = y s$ only if $s \in \DesR(w)$; then invoke the preceding lemma.
\end{proof}

The set of relative atoms $\cA(y,z)$ for $y,z \in \I(W)$ is defined in Theorem-Definition~\ref{atoms-thmdef}.
The properties of $\cA(y,z)$ are the focus of our paper \cite{HMP2}. 
While $\cA(y,z)$  may be empty, the set $\cA(y) = \cA(1,y)$ is always nonempty, and we have $\cA(y,y)=\cA(1)= \{1\}$.
It is clear that  
\[
\cA(y,z) = \{ w \in W : \ell(w) =\ellhat(y,z)\text{ and }vw \in \cA(z)\text{ for some }v \in \cA(y)\},
\] 
so $\cA(y,z)$ can be computed from $\cA(y)$ and $\cA(z)$.

\begin{example}\label{atom-ex}
For the involutions $\wfpf_2 = [2,1,4,3]$ and $w_4 = [4,3,2,1]$ in $S_4$ we have 
\[\cA(w_4) = \{ [2, 4,3,1], [3,4,1,2], [4,2,1,3] \}
\qquand
\cA(\wfpf_2,w_4) = \{ [1,3,4,2], [3,1,2,4]\}
.\] In general, the sets $\cA(w_n)$ and $\cA(\wfpf_n,w_{2n})$ have cardinality $(n-1)!!$ and $ n!$ and are given by a simple recursive construction due to Can, Joyce, and Wyser \cite{CJ,CJW}.
\end{example}

\begin{proposition}[{\cite[Proposition 2.8]{HMP2}}] \label{atomdes-prop}
 Let $y,z\in \I$ and $s \in S$.
 \ben
 \item[(a)] If $s \notin \DesR(z)$ then $\cA(y,z  ) = \{ ws : w \in \cA(y,z\rtimes s) \text{ and } s \in \DesR(w) \}.$
  \item[(b)] If $s \in \DesR(y)$ then $\cA(y ,z) = \{ sw : w \in \cA(y\rtimes s,z) \text{ and } s \in \DesL(w) \}.$
  \een
  Consequently, if $u \in \cA(y,z)$ then
 $\DesR(u) \subset \DesR(z) $ and $ \DesL(u) \subset S\setminus\DesR(y).$
\end{proposition}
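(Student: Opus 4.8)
The plan is to establish parts (a) and (b) first, and then read off the descent consequences as easy corollaries. Throughout I will use the characterization, immediate from Theorem-Definition~\ref{atoms-thmdef}, that $w \in \cA(y,z)$ if and only if some (equivalently every) reduced word of $w$ is an involution word from $y$ to $z$: this holds because $\hat\cR(y,z)$ is the disjoint union of the sets $\cR(w)$ over $w \in \cA(y,z)$ and is closed under braid relations, so any single reduced word landing in $\hat\cR(y,z)$ pins down the atom. The other two workhorses will be Lemma~\ref{increasing-cor}, which translates membership in $\hat\cR(y,z)$ into the existence of a strictly increasing chain $y < w_1 < \dots < w_k = z$ in $(\I,\leq)$, and the involution identity $(u \rtimes s)\rtimes s = u$, which makes $\rtimes s$ a bijection of $\I$.

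For part (a), assume $s \notin \DesR(z)$, so that $z \rtimes s$ covers $z$ with $\ellhat(z\rtimes s) = \ellhat(z)+1$. For the inclusion $\supseteq$, given $w \in \cA(y, z\rtimes s)$ with $s \in \DesR(w)$, I would take a reduced word of $w$ ending in $s$ and delete that final $s$; the resulting chain terminates one step early at some $w_{k-1}$ with $w_{k-1}\rtimes s = z\rtimes s$, and injectivity of $\rtimes s$ forces $w_{k-1}=z$, so the shortened word lies in $\hat\cR(y,z)$ and witnesses $ws \in \cA(y,z)$. For the reverse inclusion, given $u \in \cA(y,z)$ I would append $s$ to a reduced word of $u$; since $z \rtimes s > z$, Lemma~\ref{increasing-cor} shows the extended word lies in $\hat\cR(y, z\rtimes s)$, and by Theorem-Definition~\ref{atoms-thmdef} this word is automatically reduced, so $u = ws$ for the corresponding atom $w \in \cA(y, z\rtimes s)$, with $s \in \DesR(w)$ then automatic. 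Part (b) is the mirror image: when $s \in \DesR(y)$ one has $y \rtimes s < y$ and $(y\rtimes s)\rtimes s = y$, so prepending or removing an initial $s$ sets up a bijection between involution words from $y$ to $z$ and involution words from $y\rtimes s$ to $z$ that begin with $s$. The same two-sided argument, now reading descents on the left, gives the claimed description.

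Finally, the consequence follows by contraposition from (a) and (b). If $u \in \cA(y,z)$ but $s \notin \DesR(z)$, then part (a) forces $u = ws$ with $s \in \DesR(w)$, whence $\ell(us) = \ell(w) = \ell(u)+1$ and so $s \notin \DesR(u)$; this proves $\DesR(u) \subseteq \DesR(z)$. Symmetrically, if $s \in \DesR(y)$ then part (b) writes $u = sw$ with $s \in \DesL(w)$, giving $\ell(su) = \ell(u)+1$ and hence $s \notin \DesL(u)$, which proves $\DesL(u) \subseteq S \setminus \DesR(y)$.

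I expect the main obstacle to be the bookkeeping in (a) and (b) that guarantees the words obtained by appending or deleting $s$ are genuinely reduced and correspond to the intended atom. The crucial leverage is Theorem-Definition~\ref{atoms-thmdef}: because every involution word is automatically a reduced word, it suffices to append $s$ to a reduced word of an atom and verify the chain condition via Lemma~\ref{increasing-cor}, after which reducedness is guaranteed, length changes by exactly one, and the descent conditions $s \in \DesR(w)$ or $s \in \DesL(w)$ are \emph{forced} rather than needing to be imposed separately. Checking that $\rtimes s$ is injective on $\I$ — which is precisely the identity $(u\rtimes s)\rtimes s = u$ — is what lets me recover $z$ uniquely from $z \rtimes s$ in the first inclusion of~(a).
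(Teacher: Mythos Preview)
The paper does not actually prove this proposition: it is stated with the attribution ``See \cite{HMP2}'' and the surrounding text explicitly says it ``is proved as \cite[Proposition 2.8]{HMP2},'' so there is no in-paper argument to compare against.

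That said, your argument is correct and is essentially the natural proof one would expect. The key steps---that appending or deleting $s$ at the appropriate end of a reduced word for an atom preserves membership in the relevant $\hat\cR$ set via Lemma~\ref{increasing-cor}, that every involution word is automatically a reduced word by Theorem-Definition~\ref{atoms-thmdef} (so the extended word in the $\subseteq$ direction of (a) is reduced for free), and that the involutive identity $(u\rtimes s)\rtimes s = u$ recovers $z$ from $z\rtimes s$---are all sound and are exactly the ingredients available in Section~\ref{gen-sect}. The deduction of the descent inclusions by contraposition from (a) and (b) is clean. One small stylistic point: in your $\supseteq$ argument for (a) you could alternatively just note that the truncated word has length $\ellhat(y,z)$ and ends at $z$, so it lies in $\hat\cR(y,z)$ directly by length considerations rather than invoking injectivity of $\rtimes s$; but your route via the involutive identity is equally valid.
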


We mention another order on $\I$ which will be of relevance.
The \emph{left} and \emph{right weak orders} $\leq_L$ and $\leq_R$ on $W$ are the transitive closures of the relations $w<_L sw$ and $w <_R wt$ for  $w \in W$ and $s,t \in S$ such that  
$\ell(sw) > \ell(w)$ and $\ell(wt) > \ell(w)$.
Following \cite[Section 5]{H2}, we define the \emph{(two-sided) weak order} $\leq_T$ on $\I$  to be the transitive closure of the relations 
\be\label{t-eq} w <_T w \rtimes s\qquad\text{for $w \in \I$ and $s \in S$ such that $\ellhat(w)  < \ellhat(w\rtimes s) $.}\ee
Evidently $\cA(y,z)$ is nonempty if and only if $y\leq_T z$, and each element of $\cR(y,z)$ corresponds  to a maximal chain from $y$ to $z$ in the poset $(\I,\leq_T)$.
If $y,z \in \I$ then 
$ y \leq_T z $ implies $ y \leq z$,
but the  reverse implication does not hold in general.

\subsection{Diagrams and codes for permutations}
\label{diagram-sect}

We write $S_\infty$ for the group of permutations $w$ of $\PP$ whose \emph{support} $\supp(w) = \{ i \in \PP : w(i) \neq i\}$ is finite, and  identify $S_n$ for $n \in \PP$ with the subgroup of permutations $w \in S_\infty$ with $\supp(w)\subset [n]$.
The group $S_\infty$ is a Coxeter group with respect to the generating set $\{s _i = (i,i+1) : i \in \PP\}$.
The right descent set \eqref{des-eq} of $w \in S_\infty$ is
\be\label{Des-def}
\DesR(w) = \{ s_i  : i\in\PP\text{ and } w(i)>w(i+1)\}.
\ee
We say that $i$ is a \emph{descent} of $w \in S_\infty$ if $w(i)>w(i+1)$, so that $s_i \in \DesR(w)$.

The 
 \emph{Rothe diagram}  (see \cite[\S2.1.1]{Manivel})
 of  $w \in S_\infty$ is the set
\be\label{rothe-eq} D(w) \omdef=\left\{ (i,j)  \in \PP \times \PP : j < w(i)\text{ and }i < w^{-1}(j) \right\}.\ee
The set $D(w)$ is obtained by applying the map $(i,j) \mapsto (i,w(j))$ to the   \emph{inversion set} of $w$ given by
\[ \mathrm{Inv}(w) \omdef= \left\{ (i,j) \in \PP\times \PP : i <j \text{ and }w(i) > w(j)\right\}.\]
Consequently $D(w^{-1}) = D(w)^T$ where $T$ denotes the transpose map $(i,j) \mapsto (j,i)$.
If $w \in S_n$ has largest descent $k$, then $D(w) \subset [k]\times [n]$.

\begin{example}\label{Rothe-ex}
We have
$
D(\wfpf_n) = \{ (2i-1,2i-1) : i \in [n]\}$
and
$
D(w_n) = \{ (i,j) \in \PP^2 : i+j \leq n\}$.
\end{example}

The \emph{diagram} of an integer partition $\lambda = (\lambda_1\geq \lambda_2 \geq \dots)$ is the set $\{ (i,j) \in \PP \times \PP : j \leq \lambda_i \}.$
We often identify partitions with their diagrams, and write $(i,j) \in \lambda$ to indicate that $(i,j)$ belongs to the diagram of $\lambda$. If $\lambda$ and $\mu$ are   partitions with $\mu \subset \lambda$
 then the \emph{skew shape} $\lambda/\mu$ is  the complement of the diagram of $\mu$ in the diagram of $\lambda$. The \emph{shifted shape} of a strict partition $\lambda$ (i.e., a partition with distinct parts) is the set 
$\{ (i, j+i-1) : (i,j) \in \lambda\}.$
Two  finite subsets of $\PP\times\PP$ (in particular,   Rothe diagrams or diagrams of partitions or skew shapes or shifted shapes) are \emph{equivalent} if one can be transformed to the other by permuting its rows and then its columns.

\begin{example}
If $\lambda = (2,2,1)$ and $\mu = (1)$ then $\{(1,1),(1,3),(2,3),(3,1)\}$ is equivalent to $\lambda/\mu$.
\end{example}

The \emph{code} of  $w \in S_n$ is the sequence $c(w) \omdef= (c_1(w),c_2(w),\dots,c_n(w)) \in \NN^n$ where 
\be\label{code-def} c_i(w) = | \{ j \in [n]: i<j \text{ and }w(i)>w(j)\}|.\ee
Observe 
that $c_i(w)$ is the number of cells in the $i^{\mathrm{th}}$ row of $D(w)$.
The \emph{shape} $\lambda(w)$ of $w \in S_n$ is the    partition of $\ell(w)$ whose parts are the  nonzero entries of $c(w)$. 

\begin{example}
If $w =[3,7,4,1,6,5,2]$ then
$   c(w)= (2,5,2,0,2,1,0) $ and $ \lambda(w) = (5,2,2,2,1)$,
\end{example}

\subsection{Schubert polynomials}\label{schubert-sect}

We sketch here the  fundamental properties of the \emph{Schubert polynomials} $\fkS_w$ as defined in the introduction; our main references are \cite{Knutson,Manivel}.
We write
\be\label{cP-eq}
\cP_n \omdef= \ZZ[x_1,x_2,\dots,x_n]
\qquand
\cP_\infty \omdef=   \ZZ[x_1,x_2,\dots]
\ee
 for the rings of polynomials in finite and countable sets of commuting variables $\{x_1,x_2,\dots\}$.
The group $S_n$ (respectively, $S_\infty$) acts on $\cP_n$ (respectively $\cP_\infty$) by permuting variables. 
With respect to this action, the \emph{divided difference operator} $\partial_i $  for $i \in \PP$ is defined by
\be\label{partial-i-eq}\partial_i f \omdef= (f-s_i f)/(x_i-x_{i+1}) \qquad\text{for }f \in \cP_\infty.\ee
For example, $\partial_i (x_i^3) = x_i^2 +x_ix_{i+1} + x_{i+1}^2$.
It is a standard exercise to check that
this formula in fact gives a linear map $\partial_i :\cP_\infty \to \cP_\infty$,
and that 
\be
\label{+eq}
\partial_i(fg) = f \cdot \partial_i g\qquad\text{if $f,g \in \cP_\infty$ and $s_i f= f$.}
\ee
One may characterize the Schubert polynomials without explicitly constructing them.
\begin{theorem}[{\cite[Theorem 2.3]{Knutson}}] \label{schubunique-thmdef}
The Schubert polynomials $\{ \fkS_{w} \}_{w \in S_\infty}$  are the unique family of homogeneous polynomials indexed by the elements of $S_\infty$ such that 
\[\fkS_{1} = 1
\qquand
\partial_i \fkS_{w} = \begin{cases} \fkS_{w s_i} &\text{if $s_i\in \DesR(w)$}\\ 0 &
\text{otherwise}\end{cases}
\quad\text{for all } i \in \PP.
\]
\end{theorem}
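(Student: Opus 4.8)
The plan is to establish the two directions of the characterization separately: that the Schubert polynomials $\fkS_w$ do satisfy the stated conditions (existence), and that no other family of homogeneous polynomials can (uniqueness). The existence half is essentially the classical construction, so I would recall the relevant facts from \cite{Macdonald, Manivel, LS1982a}. The polynomials are built from $\fkS_{w_n} = x_1^{n-1}x_2^{n-2}\cdots x_{n-1}$ (for $w_n$ the longest element of $S_n$) by repeatedly applying divided differences, a construction that is consistent because the operators $\partial_i$ satisfy the braid relations and $\partial_i^2 = 0$. From $\partial_i^2 = 0$ the recurrence follows at once: if $s_i \in \DesR(w)$ then $\fkS_{ws_i} = \partial_i \fkS_w$ by definition, while if $s_i \notin \DesR(w)$ then $w = (ws_i)s_i$ with $s_i \in \DesR(ws_i)$, whence $\fkS_w = \partial_i \fkS_{ws_i}$ and therefore $\partial_i \fkS_w = \partial_i^2 \fkS_{ws_i} = 0$. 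Finally $\fkS_1 = 1$ is the standard staircase computation $\partial_{w_n}(x_1^{n-1}\cdots x_{n-1}) = 1$, and homogeneity of degree $\ell(w)$ is built into the construction.

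For uniqueness, suppose $\{P_w\}_{w \in S_\infty}$ is any family of homogeneous polynomials in $\cP_\infty$ with $P_1 = 1$ and the same divided-difference recurrence, and set $Q_w = P_w - \fkS_w$. I would prove $Q_w = 0$ for all $w$ by induction on $\ell(w)$. The base case $\ell(w) = 0$ forces $w = 1$, so $Q_1 = 1 - 1 = 0$. For the inductive step with $\ell(w) \geq 1$, I claim $\partial_i Q_w = 0$ for every $i \in \PP$: when $s_i \in \DesR(w)$ this reads $\partial_i Q_w = Q_{ws_i}$, which vanishes by the inductive hypothesis because $\ell(ws_i) < \ell(w)$; when $s_i \notin \DesR(w)$ both $\partial_i P_w$ and $\partial_i \fkS_w$ vanish by hypothesis.

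The crucial step is then to deduce that a homogeneous polynomial killed by every $\partial_i$ is zero. Since $\partial_i Q_w = 0$ is equivalent to $s_i Q_w = Q_w$, the polynomial $Q_w$ is invariant under every adjacent transposition, hence under all of $S_\infty$; that is, $Q_w$ is a symmetric function in the infinitely many variables $x_1, x_2, \dots$. A nonzero symmetric function in infinitely many variables must involve infinitely many monomials, because the $S_\infty$-orbit of any monomial of positive degree is infinite, so the only symmetric elements of $\cP_\infty$ are the constants. As $Q_w$ is homogeneous of degree $\ell(w) \geq 1$, it must vanish, which completes the induction and hence the proof.

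The main obstacle, and the reason the characterization is not automatic, is precisely this last point: the divided difference operators annihilate every symmetric function, so the recurrence alone cannot distinguish $\fkS_w$ from $\fkS_w$ plus an arbitrary symmetric polynomial of the same degree. What rescues uniqueness is that we work in $\cP_\infty$, where no nonconstant symmetric function is a finite polynomial. I would emphasize this explicitly, since the analogous statement fails in $\cP_n$ for fixed $n$, where nonzero symmetric polynomials of positive degree exist, and there the family is only determined up to addition of such symmetric functions.
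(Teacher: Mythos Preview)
Your proof is correct and follows essentially the same argument the paper has in mind: the paper does not prove this statement directly but cites \cite{Knutson}, and when it proves the analogous characterization for involution Schubert polynomials (Theorem~\ref{invschubdef-thm}) it explicitly follows Knutson's argument, which is exactly your induction on length together with the observation that $\bigcap_{i \in \PP} \ker \partial_i = \ZZ$ in $\cP_\infty$. Your added discussion of why uniqueness fails in $\cP_n$ is a helpful clarification.
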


  The divided difference operators satisfy $\partial_i^2=0$ as well as the Coxeter relations for $S_\infty$ given by
\be\label{coxrel}
\partial_i \partial_{i+1} \partial_i = \partial_{i+1} \partial_i \partial_{i+1} \qquand \partial _i \partial_j = \partial_j\partial_i
\qquad\text{for $i,j \in \PP$ with $|i-j| > 1$.}
\ee
 For $w \in S_\infty$, we may thus define $\partial_w \omdef = \partial_{i_1} \partial_{i_2}\cdots \partial_{i_k}$ for any reduced word $(s_{i_1},s_{i_2},\dots,s_{i_k}) \in \cR(w)$.

  \begin{theorem}[{See \cite[\S2.3.1]{Manivel}}]\label{fkSw0-thm}
  If
   $n \in \PP$ and $v \in S_n$  and $w_n=[n,n-1,\dots,3,2,1] \in S_n$, then 
\[\fkS_{w_n} = x_1^{n-1}  x_2^{n-2} x_3^{n-3}\cdots x_{n-1}
\qquand
  \fkS_v = \partial_{v^{-1} w_n} \fkS_{w_n}.
  \]
\end{theorem}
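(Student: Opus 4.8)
The plan is to prove both identities at once by exhibiting an explicit candidate family and invoking the uniqueness in Theorem~\ref{schubunique-thmdef}. Write $\mu_n = x_1^{n-1}x_2^{n-2}\cdots x_{n-1}$, and for $v \in S_\infty$ choose any $n$ with $v \in S_n$ and set $\tilde\fkS_v = \partial_{v^{-1}w_n}\mu_n$. The first thing I would record is the length bookkeeping for the longest element: for every $u \in S_n$ one has $\ell(uw_n) = \ell(w_nu) = \ell(w_n) - \ell(u)$, so in particular $\ell(v^{-1}w_n) = \ell(w_n) - \ell(v)$ and the operator $\partial_{v^{-1}w_n}$ is a well-defined composite of $\binom n2 - \ell(v)$ divided differences, independence of the chosen reduced word being exactly the Coxeter relations \eqref{coxrel} together with $\partial_i^2 = 0$. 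Granting for the moment that this family satisfies the hypotheses of Theorem~\ref{schubunique-thmdef}, uniqueness forces $\tilde\fkS_v = \fkS_v$ for all $v$; taking $v = w_n$ gives $v^{-1}w_n = \id$, whence $\fkS_{w_n} = \partial_{\id}\mu_n = \mu_n$ (the identity operator), which is the first formula, while the identity $\fkS_v = \partial_{v^{-1}w_n}\fkS_{w_n}$ is then exactly the definition of $\tilde\fkS_v$, which is the second formula.

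The central verification is the divided-difference recursion for the family $\tilde\fkS_v$. Fix $v$ and $i$. If $s_i \in \DesR(v)$, then $\ell(vs_i) = \ell(v) - 1$, so by the length identity $\ell(s_iv^{-1}w_n) = \ell(w_n) - \ell(vs_i) = \ell(v^{-1}w_n) + 1$; hence prepending $s_i$ to a reduced word for $v^{-1}w_n$ stays reduced, giving $\partial_i\partial_{v^{-1}w_n} = \partial_{s_iv^{-1}w_n} = \partial_{(vs_i)^{-1}w_n}$ and therefore $\partial_i\tilde\fkS_v = \tilde\fkS_{vs_i}$. If instead $s_i \notin \DesR(v)$, the same identity gives $\ell(s_iv^{-1}w_n) = \ell(v^{-1}w_n) - 1$, so $s_i$ is a left descent of $v^{-1}w_n$ and some reduced word for $v^{-1}w_n$ begins with $s_i$; then $\partial_{v^{-1}w_n} = \partial_i\partial_{w'}$ for an appropriate $w'$, and $\partial_i\tilde\fkS_v = \partial_i^2\partial_{w'}\mu_n = 0$. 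Homogeneity of $\tilde\fkS_v$ in degree $\ell(v)$ is immediate from $\deg\mu_n = \binom n2 = \ell(w_n)$ and the fact that each $\partial_i$ lowers degree by one. This is the same propagation argument that also proves the second formula directly from the axioms, by applying $\partial_u$ to $\fkS_{w_n}$ along a reduced word of $u = v^{-1}w_n$ and peeling right descents of $w_n$ one at a time until one reaches $\fkS_{w_nu^{-1}} = \fkS_v$, so the two formulas are really facets of a single computation.

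What remains, and what I expect to be the main obstacle, is the base case $\tilde\fkS_1 = \partial_{w_n}\mu_n = 1$ together with the claim that $\tilde\fkS_v$ is independent of the auxiliary choice of $n$. For the base case I would either use the factorization $w_n = (s_1)(s_2s_1)\cdots(s_{n-1}\cdots s_1)$ and induct on $n$, repeatedly applying the elementary identity $\partial_i(x_i^ax_{i+1}^{a-1}) = x_i^{a-1}x_{i+1}^{a-1}$ to peel off the first row of exponents of $\mu_n$ and descend to $\mu_{n-1}$, or invoke the antisymmetrization formula $\partial_{w_n}f = \bigl(\sum_{w \in S_n}(-1)^{\ell(w)}w(f)\bigr)\big/\prod_{i<j}(x_i - x_j)$, under which $\mu_n = x^{\delta_n}$ antisymmetrizes to the Vandermonde determinant and the quotient is $1$. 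Independence of $n$ I would deduce from a compatibility between $\partial_{v^{-1}w_{n+1}}$ and $\partial_{v^{-1}w_n}$ coming from the relation between $w_{n+1}$ and $w_n$; alternatively, since the theorem fixes $n$, one may run the uniqueness argument inside $S_n$ by checking that the finite family $\{\tilde\fkS_v : v \in S_n\}$ reproduces the Schubert polynomials by induction anchored at $\tilde\fkS_{w_n} = \mu_n$. The attendant bookkeeping of which variables appear, needed so that $\partial_i\tilde\fkS_v = 0$ holds for all $i \geq n$ and not only for $i \leq n-1$, is routine but should be stated carefully, as it amounts to the standard fact that $\fkS_v \in \ZZ[x_1,\dots,x_{n-1}]$ for $v \in S_n$.
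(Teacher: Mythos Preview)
The paper does not prove this theorem; it is quoted as a standard result from \cite{Manivel}, so there is no in-paper proof to compare against. Your argument is correct and is essentially the textbook one: the second identity follows by iterating the recursion of Theorem~\ref{schubunique-thmdef} along any reduced word for $v^{-1}w_n$, and the first follows from the uniqueness clause once you verify $\partial_{w_n}\mu_n = 1$, for which both methods you propose (the inductive peel via $\partial_i(x_i^{a}x_{i+1}^{a-1}) = x_i^{a-1}x_{i+1}^{a-1}$, or antisymmetrization to the Vandermonde) are valid. The technical points you flag about independence of $n$ and about $\tilde\fkS_v$ lying in $\cP_{n-1}$ are real but routine, and you are right to note them rather than sweep them under the rug.
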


Let $y = \{y_1,y_2,\dots\}$ be another countable set of commuting variables, which commute also with $x=\{x_1,x_2,\dots\}$.
 If $f \in \cP_\infty$ then we write $f(y)$ to denote the polynomial given by evaluating $f$ at $x_i=y_i$, and for emphasis we sometimes write $f=f(x)$.\
We let $\cP_\infty(x;y) = \ZZ[x_1,y_1,x_2,y_2,\dots]$ be the polynomial ring in $x$ and $y$ together.

\begin{definition}
[{\cite[Proposition 2.4.7]{Manivel}}]
\label{doubleschub-def} The \emph{double Schubert polynomial} of $w \in S_\infty$ is 
\[ \fkS_w(x;y) \omdef= \sum_{\substack{w=v^{-1}u \\  \ell(w) = \ell(u)+\ell(v)}}  \fkS_u(x) \fkS_{v}(-y) \in \cP_\infty(x;y).
  \]
  \end{definition}

  Let $S_\infty$ act on $\cP_\infty(x;y)$ by permuting only the $x_i$ variables, and extend the formula for 
   $\partial_i$   to an operator $\cP_\infty(x;y) \to \cP_\infty(x;y)$ with respect to this action. The following then holds:
  
    \begin{theorem}[{See \cite[\S2.3.1]{Manivel}}]
  If
   $n \in \PP$ and $v \in S_n$ then 
 \[\fkS_{w_n}(x;y) = \prod_{i+j \leq n} (x_i-y_j)
 \qquand
  \fkS_v(x;y) =  \partial_{v^{-1} w_n} \fkS_{w_n}(x;y)\]
  where the product on the left is over $i,j \in \PP$.
  In particular, $\fkS_v = \fkS_v(x;0)$.
\end{theorem}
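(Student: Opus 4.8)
The plan is to deduce everything from a single-step divided-difference recurrence for the polynomials $\fkS_w(x;y)$ of Definition~\ref{doubleschub-def}, treating the product formula and the divided-difference formula as two largely independent consequences of the factorization definition. The final assertion $\fkS_v = \fkS_v(x;0)$ is immediate: setting $y=0$ in Definition~\ref{doubleschub-def}, each factor $\fkS_v(-y)$ specializes to $\fkS_v(0)$, which vanishes unless $v=1$ since $\fkS_v$ is homogeneous of degree $\ell(v)$; the only surviving term is $v=1$, $u=w$, giving $\fkS_w(x)$.

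The central step is to prove that $\partial_i \fkS_w(x;y) = \fkS_{ws_i}(x;y)$ when $s_i \in \DesR(w)$ and $\partial_i \fkS_w(x;y) = 0$ otherwise, where $\partial_i$ acts only on the $x$-variables. Since $\partial_i$ is linear over the subring $\ZZ[y]$ and $\partial_i \fkS_u(x)$ equals $\fkS_{us_i}(x)$ or $0$ according to whether $s_i \in \DesR(u)$ (Theorem~\ref{schubunique-thmdef}), applying $\partial_i$ term-by-term to Definition~\ref{doubleschub-def} kills every factorization with $s_i \notin \DesR(u)$ and leaves $\sum \fkS_{us_i}(x)\,\fkS_v(-y)$ summed over length-additive factorizations $w = v^{-1}u$ with $s_i \in \DesR(u)$. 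The key point is then to check that $(u,v) \mapsto (us_i, v)$ is a bijection from these factorizations onto the length-additive factorizations of $ws_i$. That it lands in the target and is injective is a direct length count; surjectivity uses a short triangle-inequality argument to show that in any length-additive factorization $ws_i = \tilde v^{-1}\tilde u$ one has $s_i \notin \DesR(\tilde u)$, so that $\tilde u s_i$ recovers a valid $u$. When $s_i \notin \DesR(w)$ there are no length-additive factorizations with $s_i \in \DesR(u)$ at all, since concatenating reduced words would force a reduced word for $w$ ending in $s_i$; hence $\partial_i \fkS_w(x;y) = 0$. Once this recurrence is in hand, the formula $\fkS_v(x;y) = \partial_{v^{-1}w_n}\fkS_{w_n}(x;y)$ follows by iterating along a reduced word for $v^{-1}w_n$: because $w_n$ is longest, right-multiplication by the successive simple reflections strictly lowers length at each stage, so the recurrence applies with no vanishing and telescopes to $\fkS_{w_n \cdot (v^{-1}w_n)^{-1}}(x;y) = \fkS_v(x;y)$.

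For the product formula I would argue directly from Definition~\ref{doubleschub-def}. Since $w_n$ is the longest element, $\ell(vw_n) = \ell(w_n) - \ell(v)$ for every $v \in S_n$, so each $v$ yields exactly one factorization $w_n = v^{-1}(vw_n)$, automatically length-additive; reindexing by $u = vw_n$ gives $\fkS_{w_n}(x;y) = \sum_{u \in S_n} \fkS_u(x)\,\fkS_{uw_n}(-y)$. It remains to identify this Cauchy-type sum with $\prod_{i+j \le n}(x_i - y_j)$, which I would establish by induction on $n$: the permutation $w_n$ is dominant with staircase Rothe diagram $D(w_n) = \{(i,j) : i+j \le n\}$ (Example~\ref{Rothe-ex}), and peeling off the first-row factor $\prod_{j \le n-1}(x_1 - y_j)$ exhibits the remaining product as the analogous expression for $w_{n-1}$ in the shifted variables. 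Alternatively one checks that both sides have the same specialization at $y=0$, namely $\fkS_{w_n}(x) = x_1^{n-1}\cdots x_{n-1}$ from Theorem~\ref{fkSw0-thm}, and are carried to one another by the $y$-divided-difference operators obtained from the mirror image of the recurrence above.

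I expect the recurrence bijection in the second paragraph to be the crux: the mechanical part (pushing $\partial_i$ inside, the length bookkeeping, and the final telescoping) is routine, but verifying surjectivity of $(u,v)\mapsto(us_i,v)$ --- equivalently, that descents behave correctly under passing between factorizations of $w$ and of $ws_i$ --- is where the exchange/triangle-inequality reasoning is genuinely needed. The product formula is then the only remaining ingredient, and its proof is independent of the recurrence, relying instead on the dominance of $w_n$.
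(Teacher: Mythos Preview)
The paper does not prove this theorem; it is quoted from Manivel without proof, so there is no ``paper's own proof'' to compare against. Your task is therefore to give a self-contained derivation from Definition~\ref{doubleschub-def}, and most of your plan does this correctly: the specialization $\fkS_v = \fkS_v(x;0)$ is fine, and your derivation of the recurrence $\partial_i \fkS_w(x;y) = \fkS_{ws_i}(x;y)$ for $s_i \in \DesR(w)$ (with vanishing otherwise) via the bijection on length-additive factorizations is clean and correct. The iteration to $\fkS_v(x;y) = \partial_{v^{-1}w_n}\fkS_{w_n}(x;y)$ then follows exactly as you say.

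The gap is in the product formula for $\fkS_{w_n}(x;y)$. Your ``induction on $n$'' sketch does not work as written: peeling off $\prod_{j\le n-1}(x_1-y_j)$ is obvious on the product side, but you give no mechanism for extracting this factor from the Cauchy-type sum $\sum_{u\in S_n}\fkS_u(x)\fkS_{uw_n}(-y)$, and there is no evident one. Your alternative via $y$-divided differences is underspecified and would require setting up and proving a mirror recurrence. The cleanest route, using only what you have already established, is a uniqueness argument over $\ZZ[y]$ in the style of Theorem~\ref{schubunique-thmdef}. Set $P = \prod_{i+j\le n}(x_i-y_j)$ and $P_v = \partial_{v^{-1}w_n}P$. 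Since $\partial_i^2=0$ and the $\partial_i$ satisfy the Coxeter relations, one checks as for ordinary Schubert polynomials that $\partial_i P_v$ equals $P_{vs_i}$ or $0$ according to whether $s_i\in\DesR(v)$. Each $P_v$ is homogeneous of total degree $\ell(v)$, so $P_1 = \partial_{w_n}P$ is a constant; specializing $y=0$ and using Theorem~\ref{fkSw0-thm} gives $P_1 = 1$. On the other hand, $\fkS_1(x;y)=1$ directly from Definition~\ref{doubleschub-def}. Now both families $\{P_v\}_{v\in S_n}$ and $\{\fkS_v(x;y)\}_{v\in S_n}$ are homogeneous with value $1$ at $v=1$ and satisfy the identical $\partial_i$-recurrence (including the vanishing case), so the uniqueness argument of Theorem~\ref{schubunique-thmdef}, run with coefficients in $\ZZ[y]$, forces $P_v = \fkS_v(x;y)$ for all $v\in S_n$; in particular $\fkS_{w_n}(x;y) = P$.
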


If $w \in S_n$ then $\fkS_w$ is a polynomial in at most $n-1$ variables, though often fewer.

\begin{proposition}[{\cite[Proposition 2.5.4]{Manivel}}]\label{schubbasis-prop}
The set of Schubert polynomials $\fkS_w$ with $w \in S_\infty$ ranging over all permutations with largest descent at most $n$ forms a basis for $\cP_n$ over $\ZZ$. 

\end{proposition}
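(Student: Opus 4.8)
The plan is to show that the transition between the standard monomial basis of $\cP_n$ and the family $\{\fkS_w\}$ is unitriangular, and to organize this degree by degree. Two preliminary structural facts are needed. First, each relevant $\fkS_w$ genuinely lies in $\cP_n$: if $w$ has largest descent $d\le n$, then $s_i\notin\DesR(w)$ for every $i>d$, so $\partial_i\fkS_w=0$ by Theorem~\ref{schubunique-thmdef}; since $\partial_i f=0$ is equivalent to $f$ being symmetric in $x_i,x_{i+1}$, the polynomial $\fkS_w$ is symmetric in the infinitely many variables $x_{d+1},x_{d+2},\dots$. A polynomial symmetric in infinitely many variables cannot involve any of them (otherwise it would have infinitely many monomials), so $\fkS_w\in\ZZ[x_1,\dots,x_d]\subseteq\cP_n$.

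Second, I would pin down the indexing set via the code. One checks that the largest index $i$ with $c_i(w)>0$ is exactly the largest descent of $w$ (a descent at $i$ forces $c_i(w)>0$, and conversely $c_i(w)>0$ forces a descent at some position $\ge i$), so the permutations with largest descent $\le n$ are precisely those with $c_i(w)=0$ for $i>n$. Since $w\mapsto c(w)$ is a bijection from $S_\infty$ onto the finitely supported sequences in $\NN$, it restricts to a bijection from these permutations onto $\NN^n$. Because the code satisfies $\sum_i c_i(w)=\ell(w)$, this bijection identifies $\{w:\ \text{largest descent}\le n,\ \ell(w)=d\}$ with the exponent vectors $\{\alpha\in\NN^n:|\alpha|=d\}$, that is, with the monomial basis of the degree-$d$ component $\cP_n^{(d)}$.

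The heart of the argument is a leading-term lemma. Order the monomials of $\cP_n$ reverse-lexicographically, declaring $x^\alpha\prec x^\beta$ when $\alpha_k<\beta_k$ at the largest index $k$ with $\alpha_k\ne\beta_k$. I claim that $\fkS_w=x^{c(w)}+\sum_{\alpha\prec c(w)}d_\alpha\,x^\alpha$ with each $d_\alpha\in\ZZ$; for instance $\fkS_{[1,3,2]}=x_1+x_2$ has $c=(0,1,0)$ and reverse-lex-leading term $x_2$. I would establish this directly from the Billey--Jockusch--Stanley formula~\eqref{schub1-eq}: every monomial there arises from a pair $(a,I)$ with $a\in\cR(w)$ and $I$ a compatible sequence satisfying $I\le a$, and I would show that the reverse-lexicographically largest exponent vector so obtained is exactly $c(w)$, attained by a unique such pair (hence with coefficient $1$). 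This combinatorial identification of the leading monomial is the main obstacle; everything else is formal.

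Granting the leading-term lemma, the conclusion is immediate. Each $\fkS_w$ is homogeneous of degree $\ell(w)=|c(w)|$, so the expansion above takes place entirely inside the finite-rank free module $\cP_n^{(d)}$. By the bijection of the second paragraph, the monomials $\{x^\alpha:|\alpha|=d\}$ and the polynomials $\{\fkS_w:\ \text{largest descent}\le n,\ \ell(w)=d\}$ are indexed by the same finite set, and ordering both by $\prec$ on their leading exponents makes the change-of-basis matrix a finite, square, unitriangular, integral matrix, hence invertible over $\ZZ$. Therefore $\{\fkS_w\}$ is a $\ZZ$-basis of $\cP_n^{(d)}$ for each $d$, and taking the direct sum over $d$ shows it is a $\ZZ$-basis of $\cP_n$.
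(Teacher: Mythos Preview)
The paper does not give its own proof of this proposition; it is quoted from \cite[Proposition 2.5.4]{Manivel} without argument. Your approach---showing $\fkS_w\in\cP_n$ via Theorem~\ref{schubunique-thmdef}, bijecting the index set with $\NN^n$ through the code, and deducing a $\ZZ$-basis from unitriangularity in a monomial order with leading term $x^{c(w)}$---is exactly the standard proof found in that reference. The outline is correct; the one step you defer, the leading-term lemma, is indeed the crux and is a known fact (the present paper itself invokes it later, citing \cite[Corollary 3.9]{BB}, when identifying the lexicographically extremal monomial of an involution Schubert polynomial).
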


As mentioned in the introduction, a permutation is \emph{dominant} if it is 132-avoiding.
Alternatively, a permutation is dominant if and only if its Rothe diagram is the diagram of a partition \cite[Exercise 2.2.2]{Manivel}. 
A permutation $w \in S_n$ is \emph{Grassmannian} if it has at most one right descent, or equivalently if
 for some $r\geq 0$ it holds that $c_1(w) \leq \dots \leq c_r(w)$ and $c_i(w) = 0$ for all $i>r$.
For permutations of these types, the corresponding Schubert polynomials have the following   formulas.
Let $s_\lambda$ denote the Schur function indexed by a partition $\lambda$.

\begin{proposition}[{\cite[Propositions 2.6.7 and 2.6.8]{Manivel}}]\label{schubertfactor-prop}
Let $w \in S_\infty$.
\ben
\item[(a)] If $w$ is dominant then $\fkS_w(x;y) = \prod_{(i,j) \in D(w)} (x_i-y_j)$.

\item[(b)] If $w\neq 1$ is Grassmannian with unique descent $r$, then $\fkS_w = s_{\lambda(w)}(x_1,\dots,x_r)$.
\een
\end{proposition}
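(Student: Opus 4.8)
The plan is to treat both parts by transporting the already-known product formula for the longest element along divided differences. Throughout write $P_w = \prod_{(i,j)\in D(w)}(x_i-y_j)$, and recall that $w$ dominant means $D(w)$ is the diagram of its shape $\lambda=\lambda(w)$, so $P_w$ is organized by rows. For part (a) I would argue by induction on the codimension $|\delta_n|-\ell(w)$, starting from $w_n$. The base case is immediate: $D(w_n)=\delta_n$ and the stated formula gives $\fkS_{w_n}(x;y)=\prod_{i+j\le n}(x_i-y_j)=P_{w_n}$. For the inductive step I use the recurrence $\partial_i\fkS_w(x;y)=\fkS_{ws_i}(x;y)$ for $s_i\in\DesR(w)$, which follows from the formula $\fkS_v(x;y)=\partial_{v^{-1}w_n}\fkS_{w_n}(x;y)$. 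The key computation is that when $i$ is a descent of a dominant $w$ with $\lambda_i=\lambda_{i+1}+1$, one has $\partial_i P_w=P_{ws_i}$: writing $m=\lambda_i$, the rows $i,i+1$ contribution factors as $\bigl(\prod_{j\le m-1}(x_i-y_j)(x_{i+1}-y_j)\bigr)(x_i-y_m)$, whose first factor is $s_i$-invariant, so $\partial_i$ passes through it (using $\partial_i(fg)=f\,\partial_i g$ for $s_i$-invariant $f$) and $\partial_i(x_i-y_m)=1$; the result is exactly the product over $D(ws_i)$, and $ws_i$ is again dominant with the corner $(i,m)$ removed.

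To close part (a) I must know that every dominant permutation $u\ne w_n$ arises as $ws_i$ for a dominant $w$ of length one greater with $i$ a \emph{difference-one} descent. The combinatorial lemma I would prove is: for any partition $\lambda\subsetneq\delta_n$ with $\lambda_i\le n-i$, there is a row $r$ with $\lambda_r=\lambda_{r+1}$, with $\lambda_{r-1}>\lambda_r$ (or $r=1$), and with $\lambda_r<n-r$; adding a cell in row $r$ then produces a dominant $\mu\subseteq\delta_n$ from which $\lambda$ is recovered by a difference-one removal. Such a ``plateau'' row is found by a pigeonhole argument on the rows between the flush staircase part and the zero tail of $\lambda$. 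I expect this connectivity lemma to be the main obstacle: the naive strategy of peeling off an arbitrary removable corner fails, because $\partial_i P_w=P_{ws_i}$ holds \emph{only} for difference-one corners (for a larger step $ws_i$ is not dominant and $\partial_i P_w$ is not a product of linear forms), so the order in which corners are removed must be chosen carefully.

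For part (b) I would reduce to the single-variable case of (a), namely $\fkS_w=x^{\lambda(w)}$ for dominant $w$ (set $y=0$), together with the fact that the longest divided difference is the Jacobi symmetrizer. Given Grassmannian $w$ with unique descent $r$ and shape $\lambda$ (at most $r$ parts), set $v=w\,w_0^{(r)}$, where $w_0^{(r)}$ is the longest element of $\langle s_1,\dots,s_{r-1}\rangle$. Since $w$ is increasing on $[1,r]$, right multiplication by $w_0^{(r)}$ reverses these $r$ increasing values, creating exactly $\binom r2$ inversions, so $\ell(v)=\ell(w)+\binom r2$ and $v$ is the dominant permutation with code $\lambda+\delta_{r-1}$; hence $\fkS_v=x^{\lambda+\delta_{r-1}}$ by part (a). Because $\{s_1,\dots,s_{r-1}\}\subseteq\DesR(v)$ and the lengths add, iterating the recurrence of Theorem~\ref{schubunique-thmdef} along a reduced word of $w_0^{(r)}$ (a well-defined operator $\partial_{w_0^{(r)}}$ by the braid relations \eqref{coxrel}) gives $\fkS_w=\partial_{w_0^{(r)}}\fkS_v$.

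Finally I would invoke the classical identity
\[
\partial_{w_0^{(r)}}\bigl(x_1^{\mu_1}\cdots x_r^{\mu_r}\bigr)=\frac{\det\bigl(x_i^{\mu_j}\bigr)_{1\le i,j\le r}}{\prod_{1\le i<j\le r}(x_i-x_j)}
\]
for strictly decreasing $\mu$, which for $\mu=\lambda+\delta_{r-1}$ is precisely the bialternant definition of $s_\lambda(x_1,\dots,x_r)$, yielding $\fkS_w=s_{\lambda(w)}(x_1,\dots,x_r)$. The substantive input here is this symmetrizer identity, which I would prove by induction on $\ell(w_0^{(r)})$ by applying a single $\partial_i$ to an alternant-over-Vandermonde quotient, or cite from the standard references. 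Thus part (b) rests on three ingredients—the single-variable case of (a), the factorization $w=v\,w_0^{(r)}$, and the bialternant identity—none of which is hard individually once the descending induction of part (a) is in place.
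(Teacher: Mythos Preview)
Your argument is correct and follows the standard textbook route; the paper itself does not supply a proof of this proposition but simply cites \cite[Propositions 2.6.7 and 2.6.8]{Manivel}, where essentially this argument appears.

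Two minor remarks. For the connectivity lemma in part (a), the pigeonhole is most cleanly run by taking $r$ to be the \emph{smallest} index with $\mu_r=\mu_{r+1}$ (such $r$ exists since the parts eventually vanish). The strict decrease $\mu_1>\mu_2>\cdots>\mu_r$ together with $\mu_i\le n-i$ gives $\mu_r\le n-r$, and equality would force $\mu_{r+1}=\mu_r=n-r>n-(r+1)\ge\mu_{r+1}$, a contradiction; so $\mu_r<n-r$ and the box can be added. In part (b), with the paper's convention $\delta_k=(k-1,k-2,\dots,1)$, the code of your dominant $v$ is $c_i(v)=\lambda_i+(r-i)$ for $i\le r$, i.e.\ $\lambda+\delta_r$ (padded with a trailing zero), not $\lambda+\delta_{r-1}$; this is purely notational and does not affect the argument.
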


\subsection{Cohomology of  flag varieties}\label{cohomology-sect}

We review the geometric context that leads to the consideration of Schubert polynomials.
Let $\Fl(n)$ denote the set of \emph{complete flags}
$F_\bullet = ( 0 = F_0 \subsetneq F_1 \subsetneq \dots \subsetneq F_n = \CC^n),$ where each $F_i$ is a subspace of dimension $i$, given the structure of a projective algebraic variety via the Pl\"ucker embedding as in \cite[\S3.6.1]{Manivel}. We identify $\Fl(n)$ with the right coset space $B \setminus \GL_n(\CC)$, where $B$ is the Borel subgroup of lower triangular matrices in   $\GL_n(\CC)$.

The general linear group $\GL_n(\CC)$ acts on the right on $\Fl(n)$ by multiplication. 
Let $B^+ = w_n\cdot  B\cdot  w_n$ denote the Borel subgroup {opposite} to $B$, consisting  of 
the  upper triangular matrices in $\GL_n(\CC)$. 
It follows from the Bruhat decomposition of $\GL_n(\CC)$  that the distinct orbits of $B^+$ on $\Fl(n)$ are given by  $B\backslash BwB^+$ for $w \in S_n$, where $S_n$ is embedded as the subgroup of permutation matrices in $\GL_n(\CC)$.
Define 
\be\label{mathring-eq} \mathring X_w \omdef= B\backslash BwB^+ \qquand X_w \omdef= \overline{B\backslash BwB^+}\qquad\text{for }w \in S_n,\ee
where on the right the bar denotes the Zariski closure. We call $ \mathring X_w$ the \emph{Schubert cell} attached to $w \in S_n$ and $X_w$ the corresponding \emph{Schubert variety}. 

\begin{remark}\label{B+-rmk}
Because we identify $\Fl(n)$ with $B\backslash \GL_n(\CC)$ rather than $\GL_n(\CC) / B$ and define Schubert cells to be right $B^+$-orbits rather than left $B$-orbits, our definitions differ from those in \cite[\S3.6]{Manivel} by a transformation of indices. Explicitly, 
the sets $\Omega_w$ for $w \in S_n$ which Manivel refers to as  Schubert cells are given in our notation by
$ \Omega_w = \mathring X_{w_nw} \cdot w_n$.
What we call $X_w$ is related  to Manivel's definition of the Schubert variety of $w \in S_n$ by the same transformations. It thus follows
from \cite[\S3.6.2]{Manivel} that $X_w$ is an irreducible variety of codimension $\ell(w)$  in $\Fl(n)$.
The set $\mathrm{Fl}(n)$ is itself
  the Schubert variety indexed by the identity element of $S_n$ in our
  definitions.
\end{remark}

It will be useful to review the following concrete description of Schubert cells and varieties.
Choose a basis $e_1,e_2,\dots, e_n$ of $\CC^n$ for each $j \in [n]$
 define $V_j = \CC\spanning\{e_1,e_2,\dots,e_j\}$. Given a vector space $U\subset \CC^n$, we write $\proj : U \to V_j$  for the restriction to $U$ of the usual  linear projection $\CC^n \to V_j$ mapping $e_i \mapsto 0$ for $i>j$.
Also define  
\be\label{r-def}
\r_w(i,j) \omdef= | \{ t \in [i] : w(t) \in [j]\}|\qquad\text{for $w \in S_n$ and $i,j \in [n]$.}
\ee
By \cite[Proposition 3.6.4]{Manivel} (noting the remark above), we then have
\be\label{rank-cond}
\ba
\mathring X_w &= \left\{ F_\bullet \in \Fl(n) : \dim\( \Proj: F_i \to V_j\) = \r_w(i,j) \text{ for each }i,j \in [n]\right\},
\\
X_w &=  \left\{ F_\bullet \in \Fl(n) : \dim\( \Proj : F_i \to V_j\) \leq  \r_w(i,j) \text{ for each }i,j \in [n]\right\}.
\ea
\ee
These conditions say that $F_\bullet$ belongs to $\mathring X_w$ (respectively, $X_w$) if and only if for each $i,j \in [n]$, the upper left $i \times j$ submatrix of a matrix representing $F_\bullet$ has rank equal to (respectively, at most) the number of 1's in the upper left $i\times j$ submatrix of the permutation matrix of $w$.

If $X$ is a smooth complex algebraic variety and $V$ is a closed subvariety, then there is a corresponding cohomology class $[V] \in H^*(X,\ZZ)$,  with the important property that $[V\cap W] = [V][W]$ when $V$ and $W$ intersect transversely on an open subset of $V \cap W$.
When $X$ is compact one defines $[V]$ by first  triangulating $V$ to obtain a homology class and then taking its Poincar\'e dual. In general, one can view $[V ]$ as the image of the class of $V$ in the Chow ring of $X$ under an appropriate map to $H^*(X)$; see \cite{Fulton1997} or \cite[Appendix A]{Manivel} or \cite[Chapter 19]{Fulton1984}. 

For each Schubert variety $X_w\subset \Fl(n)$ one obtains in this way a corresponding \emph{Schubert class} $[X_w] \in H^*(\Fl(n),\ZZ)$ which is denoted $\sigma_w$ in \cite[\S3.6.3]{Manivel}.
 As in the introduction, we identify the Schubert classes with elements of the coinvariant algebra of the symmetric group via the 
  \emph{Borel isomorphism}
 (see \cite[\S3.6.4]{Manivel}) 
 \be
 \label{borel-eq}
H^*(\Fl(n),\ZZ) \xrightarrow{\sim} \cP_n / (\Lambda_n^+),
\ee
with $(\Lambda_n^+)$ denoting the ideal in $\cP_n$ generated by the symmetric polynomials of positive degree.
Via these identifications, 
the divided differences $\partial_w$ for $w \in S_n$
make sense as operators on $H^*(\Fl(n),\ZZ)$, since \eqref{+eq} implies that $\partial_i$
  maps  $(\Lambda_n^+)$ into itself.
Bernstein, Gelfand, and Gelfand  \cite{BGG} show that 
\be\label{BGG} \partial_s [X_w] = \begin{cases} [X_{ws}] &\text{if }s \in \DesR(w) \\ 0 &\text{otherwise}\end{cases}
\qquad\text{ for $w \in S_n$ and }s \in \{s_i : i \in \PP\}.
\ee
Consequently, once one fixes a polynomial representing $[X_{w_n}]$ (the unique closed orbit, in this case the class of a point),
representatives for all $[X_w]$ are determined by induction. Lascoux and Sch\"utzenberger's work \cite{LS1982b} shows that  the Schubert polynomials are representatives of the Schubert classes formed in precisely this way:

\begin{theorem}[Lascoux and Sch\"utzenberger \cite{LS1982b}] 
\label{schub-thm}
For all $w \in S_n$  it holds that
$\fkS_w \equiv [X_w]$.
\end{theorem}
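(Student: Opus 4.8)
The plan is to show that the two families $\{\fkS_w\}_{w\in S_n}$ (viewed modulo $(\Lambda_n^+)$) and $\{[X_w]\}_{w \in S_n}$ satisfy the same recursion under divided difference operators and agree on a single base element, so that they must coincide. The point is that Theorem~\ref{schubunique-thmdef} and equation \eqref{BGG} record literally the same rule: $\partial_i \fkS_w = \fkS_{ws_i}$ and $\partial_i[X_w] = [X_{ws_i}]$ when $s_i \in \DesR(w)$, and both vanish otherwise. Since the Borel isomorphism \eqref{borel-eq} intertwines the action of each $\partial_i$ on $H^*(\Fl(n),\ZZ)$ with its action on $\cP_n/(\Lambda_n^+)$, it suffices to verify the single congruence $\fkS_{w_n}\equiv [X_{w_n}]$; every other case then follows by applying $\partial_{v^{-1}w_n}$.

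More precisely, I would first record that $[X_v] = \partial_{v^{-1}w_n}[X_{w_n}]$ for all $v \in S_n$, mirroring the identity $\fkS_v = \partial_{v^{-1}w_n}\fkS_{w_n}$ of Theorem~\ref{fkSw0-thm}. This follows by iterating \eqref{BGG} along any reduced word for $v^{-1}w_n$: because $\ell(w_n u) = \binom{n}{2} - \ell(u)$ for every $u \in S_n$, right-multiplying $w_n$ by the successive generators of such a word strictly decreases length at each step, so the relevant simple reflection is always a right descent and \eqref{BGG} applies through its first branch. Granting the base case $\fkS_{w_n}\equiv[X_{w_n}]$, applying $\partial_{v^{-1}w_n}$ to both sides and using that the $\partial$ operators commute with the Borel isomorphism then yields $\fkS_v \equiv [X_v]$ for all $v$.

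The heart of the argument, and the main obstacle, is the base case: that the class of a point $[X_{w_n}]$ is represented by the staircase monomial $\fkS_{w_n} = x_1^{n-1}x_2^{n-2}\cdots x_{n-1}$. Here I would use two structural facts about the coinvariant algebra $\cP_n/(\Lambda_n^+)$: it is concentrated in degrees $0$ through $\binom{n}{2} = \dim \Fl(n)$, and its top degree $\binom{n}{2}$ is one-dimensional. The monomial $x_1^{n-1}\cdots x_{n-1}$ is a nonzero element of this top degree, while $X_{w_n}$ has codimension $\ell(w_n) = \binom{n}{2}$, so $[X_{w_n}]$ also lies in the top degree; hence $\fkS_{w_n} \equiv c\,[X_{w_n}]$ for some nonzero integer scalar $c$.

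It remains to fix the normalization $c = 1$, for which I would apply the top divided difference $\partial_{w_n}$ to both representatives. On the polynomial side the recursion of Theorem~\ref{schubunique-thmdef} gives $\partial_{w_n}\fkS_{w_n} = \fkS_{w_n \cdot w_n} = \fkS_1 = 1$; equivalently, a direct computation yields $\partial_{w_n}\big(x_1^{n-1}x_2^{n-2}\cdots x_{n-1}\big) = 1$. On the cohomology side, \eqref{BGG} gives $\partial_{w_n}[X_{w_n}] = [X_1] = 1$, the identity class, since $X_1 = \Fl(n)$. Applying $\partial_{w_n}$ to $\fkS_{w_n} \equiv c\,[X_{w_n}]$ therefore yields $1 \equiv c\cdot 1$, forcing $c = 1$ and completing the base case. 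I expect the genuinely nontrivial input to be the one-dimensionality of the top degree of the coinvariant algebra together with the geometric identification of $[X_{w_n}]$ as the point class of codimension $\binom{n}{2}$; the remainder is a formal propagation of this single normalized identity through the shared divided-difference recursion.
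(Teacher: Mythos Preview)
The paper does not actually prove this theorem; it is quoted from \cite{LS1982b} without proof. The paragraph immediately preceding the statement does, however, outline the standard argument: the divided difference recursion \eqref{BGG} determines all classes $[X_w]$ once a representative for $[X_{w_n}]$ is fixed, and Lascoux--Sch\"utzenberger identify $\fkS_{w_n}$ as that representative. Your proof follows exactly this outline and correctly supplies what the paper omits, namely the verification of the base case.

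Your normalization argument for the base case is valid. One small comment on presentation: you first assert that $\fkS_{w_n}$ is \emph{nonzero} in the coinvariant algebra and that $c$ is a \emph{nonzero} scalar, but you do not need to know this in advance. It is cleaner to argue as follows: since $[X_{w_n}]$ is the point class, it generates the one-dimensional top degree of $H^*(\Fl(n),\ZZ)$; hence $\fkS_{w_n} \equiv c\,[X_{w_n}]$ for some $c \in \ZZ$ (a priori possibly zero). Applying $\partial_{w_n}$ to both sides then gives $1 \equiv c$ directly, so $c=1$ and in particular $\fkS_{w_n}\not\equiv 0$. This avoids having to separately justify the nonvanishing of the staircase monomial in the quotient. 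With that reorganization, your argument is complete and matches the approach the paper sketches.
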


\subsection{Stanley symmetric functions}
\label{stanley-sect}

Let $\Lambda = \Lambda(x)$ 
be  the algebra of
symmetric functions over $\ZZ$ in the variables $x= \{x_i : i \in \PP\}$. We follow the standard conventions from \cite{EC2} for referring to the various well-known bases of this algebra. 

Our first definition of the {Stanley symmetric function} $F_w$  was given by \eqref{F1-eq}. Stanley \cite{Stan} was the first to consider this power series and prove that it  belongs to $\Lambda$. In this section we review an alternate definition due to Edelman and Greene \cite{EG} which makes this fact more transparent and explains the connection between $F_w$ and the problem of counting reduced words.

  \begin{remark}
  Following Lam \cite{Lam,Lam2}, our conventions for  $F_w$ differ from Stanley's original definition by the transformation $w \leftrightarrow w^{-1}$;  \cite[Corollary 2.2]{Lam} is helpful for understanding these transformations.
   \end{remark}

Let $T$ be a \emph{(Young) tableau}, i.e., an assignment of positive integers to the cells of the diagram of a partition (or, more generally, to the cells of some sequence of partitions or skew shapes or shifted shapes),
called the \emph{shape} of $T$. 
Say that $T$ is \emph{strict} if its entries are   strictly increasing both from left to right in each row and from top to bottom in each column. A strict tableau is \emph{standard} if its entries comprise the set $[n]$ for some $n \in \NN$. 
The \emph{reverse reading word} of $T$, denoted $\rww(T)$,   is the word obtained by reading the rows of $T$ from right to left, starting with the top row. For example, 
\[
T =  {\tableau[s]{1 &2 &3  \\ 2 & 3 }}
 \]
 has $\rww(T) = (3,2,1,3,2)$. 
 A tableau is \emph{reduced} for $w \in S_\infty$ if it is strict and its reverse reading word is a reduced word for $w$, where we identify a sequence of positive integers $(i_1,i_2,\dots,i_k)$ with the word $(s_{i_1}, s_{i_2},\dots, s_{i_k})  $.
 The tableau $T$ above
 is reduced  for  $w= s_3s_2s_1s_3s_2 = [4,3,1,2]$. 
Results of \cite{EG} show that we may alternatively define the Stanley symmetric function $F_w$ as follows:
  
\begin{theorem}[Edelman and Greene \cite{EG}]\label{stanley-def} If $w \in S_\infty$ then $F_w = \sum_{\lambda} \alpha_{w,\lambda} s_\lambda \in \Lambda$
where the sum is over  partitions $\lambda$  and
  $\alpha_{w,\lambda}$ is the number of reduced tableaux for $w$ of shape $\lambda$. 
  \end{theorem}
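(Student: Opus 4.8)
The plan is to prove this identity by means of the Edelman--Greene insertion algorithm, a variant of Robinson--Schensted--Knuth (RSK) row insertion adapted to reduced words. Recall from \eqref{F1-eq} that $F_w = \sum_{a \in \cR(w)} \sum_{I \in C(a)} x_I$, so it suffices to construct a weight-preserving bijection between the set of \emph{compatible pairs} $(a,I)$ with $a \in \cR(w)$ and $I \in C(a)$, and the set of pairs $(P,Q)$ in which $P$ is a reduced tableau for $w$ and $Q$ is a semistandard tableau of the same shape as $P$, such that $x_I = x^{\operatorname{content}(Q)}$. Granting such a bijection, the result follows by summation: grouping the pairs by their common insertion tableau $P$ and using that $\sum_Q x^{\operatorname{content}(Q)} = s_{\lambda}$, where $Q$ ranges over all semistandard tableaux of shape $\lambda = \operatorname{shape}(P)$, yields $F_w = \sum_P s_{\operatorname{shape}(P)} = \sum_\lambda \alpha_{w,\lambda}\, s_\lambda$. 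This simultaneously exhibits $F_w$ as an element of $\Lambda$.

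First I would define the insertion of a letter $x$ into a strict (row- and column-increasing) tableau: scanning the relevant row for the smallest entry $y \geq x$, one appends $x$ if no such $y$ exists, replaces $y$ by $x$ and bumps $y$ to the next row if $y > x$, and --- in the special case $y = x$ --- leaves the row unchanged while bumping $x+1$ to the next row. Inserting the letters of $a$ one at a time produces the insertion tableau $P = P(a)$; the recording tableau $Q$ is built by placing $i_j$ in the cell created at the $j$-th step. The central lemma to establish is that this procedure is well defined, i.e., that $P(a)$ is always a strict tableau, and that its reverse reading word $\rww(P(a))$ is again a reduced word for the \emph{same} permutation $w$. This I would prove by showing that each elementary insertion step alters the reading word only through a sequence of Coxeter--Knuth moves (the commutations $\cdots cab \cdots \leftrightarrow \cdots cba \cdots$ and braid moves $\cdots aba \cdots \leftrightarrow \cdots bab \cdots$ with $|a-b| = 1$), each of which preserves both reducedness and the underlying group element.

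Next I would verify that the recording data is semistandard exactly when $I$ is compatible. The defining inequalities of $C(a)$ --- namely $i_1 \leq \cdots \leq i_k$ with strict increase at each ascent $a_j < a_{j+1}$ --- translate precisely into the semistandardness of $Q$, i.e., weak increase along rows and strict increase down columns. The ascent condition $a_j < a_{j+1} \Rightarrow i_j < i_{j+1}$ is exactly what is needed to force the strict increase down columns of $Q$, while at a non-ascent the equality $i_j = i_{j+1}$ is permitted and corresponds to the weak increase allowed within a row. Reversibility of the algorithm (reverse-bumping, including undoing the special $y = x$ step) supplies the inverse map, completing the bijection and the weight-preservation identity $x_I = x^{\operatorname{content}(Q)}$.

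The main obstacle is the central lemma of the second paragraph: proving that the special absorption rule for $y = x$ keeps $P(a)$ strictly increasing while simultaneously keeping its reading word reduced for $w$. The difficulty is that, unlike ordinary RSK, the bumped value may \emph{increase} (from $x$ to $x+1$), so one must track carefully how braid relations are introduced and cancelled along a single insertion chain. The cleanest route is to set up Coxeter--Knuth equivalence as an equivalence relation on reduced words, prove that it is preserved by each single-letter insertion, and establish that every Coxeter--Knuth class contains a unique reading word of an increasing tableau; that tableau is then $P(a)$, and the recording tableau distinguishes the members of the class, which is precisely what makes the map a bijection.
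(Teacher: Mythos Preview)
Your proposal is correct and follows precisely the Edelman--Greene insertion approach. The paper does not prove this theorem at all: it is quoted as a result of Edelman and Greene \cite{EG}, with the remark immediately following that ``Edelman and Greene \cite{EG} provide bijective proofs of these identities using a variant of the RSK correspondence, now referred to as \emph{Edelman-Greene insertion}.'' Your sketch is exactly that argument, so there is nothing to compare.
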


Let $f^\lambda$ denote the number of standard tableaux of shape $\lambda$.

\begin{theorem}[Edelman and Greene \cite{EG}] \label{EG-cor} If $w \in S_\infty$ then $|\cR(w)| = \sum_{\lambda} \alpha_{w,\lambda} f^\lambda$.
\end{theorem}

Edelman and Greene \cite{EG} provide  bijective proofs of these identities using a variant of the RSK correspondence, now referred to as \emph{Edelman-Greene insertion}. This map gives an algorithm for calculating $F_w$ for any $w \in S_\infty$; other, more efficient methods of computation are described in \cite{Garsia,HY,Lascoux,Little}.
For our purposes, it will  suffice   to recall one exact formula.

\begin{proposition}[{Billey, Jockusch, Stanley \cite[Proposition 2.4]{BJS}}]\label{Fskew-prop} If the Rothe diagram of $w \in S_n$ is equivalent to a skew shape $\lambda/\mu$, then  $F_w = s_{\lambda/\mu}$ is the corresponding skew Schur function.
\end{proposition}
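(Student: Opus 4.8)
The plan is to work directly from the combinatorial definition \eqref{F1-eq}, namely $F_w = \sum_{a \in \cR(w)} \sum_{I \in C(a)} x_I$, and to exhibit a weight-preserving bijection between the \emph{compatible pairs} $(a,I)$ (with $a \in \cR(w)$ and $I \in C(a)$) and the column-strict (semistandard) fillings of the skew shape $\lambda/\mu$. Since such a filling $T$ contributes the monomial $\prod_{c} x_{T(c)}$ over its cells $c$, a weight-preserving bijection gives $F_w = \sum_T x^T = s_{\lambda/\mu}$ at once. I would resist the temptation to argue through $321$-avoidance: the hypothesis is strictly weaker than full commutativity (for instance $w_3 = [3,2,1]$ has $D(w_3)$ equal to the partition $(2,1)$ yet is not $321$-avoiding), so reduced words of $w$ may be linked by braid moves, and all of them must be allowed to contribute.

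To build the bijection I would first fix row and column permutations realizing the equivalence $D(w) \sim \lambda/\mu$, and use them to transport to the cells of $\lambda/\mu$ an assignment $c \mapsto \iota(c) \in \PP$ of simple-reflection indices determined by the position of $c$ (in particular its diagonal). Given a column-strict filling $T$ of $\lambda/\mu$, one reads its cells in a prescribed order (increasing entry, with a fixed rule on cells of equal entry) and records the pair $(a(T), I(T))$, where $a(T)$ is the sequence of indices $\iota(c)$ and $I(T)$ is the sequence of entries $T(c)$ read in that order. By construction $x_{I(T)} = x^T$, and the inverse map reconstructs $T$ from $(a,I)$ by placing the entry $i_j$ on the diagonal carrying the index $a_j$, in the position forced by the value $i_j$.

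The heart of the argument, and the step I expect to be hardest, is verifying that this is genuinely a bijection onto $\{(a,I)\}$. One must show (i) that for every column-strict $T$ the recorded word $a(T)$ is a \emph{reduced} word, and always for the \emph{same} permutation $w$, and (ii) that the column-strictness together with the weak increase along rows that define a semistandard filling correspond exactly to the conditions $i_1 \le \dots \le i_k$ with $i_j < i_{j+1}$ whenever $a_j < a_{j+1}$ that define $C(a)$. Both points lean on the skew-shape hypothesis essentially: indices coming from far-apart diagonals commute while those from adjacent diagonals satisfy the braid relation, and for a skew diagram these interactions are arranged so that the reading of any semistandard filling is reduced, independent of the tie-breaking choices, with all of $\cR(w)$ swept out as $T$ varies. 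The delicate part is choosing the reading rule so that no repeated generator ever occurs and so that the two sets of inequalities match on the nose; for a diagram that is not a skew shape this breaks down, which is exactly why the hypothesis is imposed.

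As an alternative that leans only on tools already recorded here, one could instead invoke the Edelman--Greene expansion $F_w = \sum_\nu \alpha_{w,\nu}\, s_\nu$ from Theorem~\ref{stanley-def} and reduce the claim to the identity $\alpha_{w,\nu} = c^{\lambda}_{\mu\nu}$, where $c^\lambda_{\mu\nu}$ is the Littlewood--Richardson coefficient occurring in $s_{\lambda/\mu} = \sum_\nu c^\lambda_{\mu\nu}\, s_\nu$; this would be established by analyzing Edelman--Greene insertion on the reduced words of $w$. Since this route reaches the same combinatorial core, I would present the direct bijection above as the main line of proof.
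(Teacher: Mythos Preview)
The paper does not supply a proof of this proposition; it is quoted from \cite[Proposition~2.4]{BJS} and used as a black box. So there is no in-paper argument to compare against, only the original proof in \cite{BJS}, which proceeds via \emph{balanced labellings} of the Rothe diagram: reduced words of $w$ are in bijection with injective balanced labellings of $D(w)$, compatible pairs $(a,I)$ with column-strict balanced labellings, and when $D(w)$ is equivalent to a skew shape the balanced condition collapses to the ordinary semistandard condition after transporting through the row/column permutations.

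Your own example $w_3=[3,2,1]$ exposes a genuine gap in the bijection you outline. You propose to label each cell $c$ of $\lambda/\mu$ by an index $\iota(c)$ determined by its diagonal, and then read off a word from a filling. But cells on distinct diagonals receive \emph{distinct} indices, so every word produced this way uses $|\lambda/\mu|$ pairwise different simple generators. For $w_3$ one has $D(w_3)=\{(1,1),(1,2),(2,1)\}$, already equal to the partition $(2,1)$ with three diagonals, yet $\cR(w_3)=\{(s_1,s_2,s_1),(s_2,s_1,s_2)\}$ consists of words that repeat a generator. Your reading therefore lands in $\cR(w')$ for some $321$-avoiding $w'$ (the one canonically attached to the skew shape), not in $\cR(w_3)$; the resulting identity is $F_{w'}=s_{\lambda/\mu}$, not $F_w=s_{\lambda/\mu}$. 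The missing ingredient is precisely what balanced labellings supply: a bijection from $\cR(w)$ to standard fillings of $D(w)$ that does \emph{not} come from a fixed cell-to-generator map, but rather encodes the generator of each letter dynamically in the relative position of its label within its hook. Once that bijection is in hand, the passage to $\lambda/\mu$ via row and column permutations is immediate, and your step~(ii) goes through. Your alternative via Theorem~\ref{stanley-def} and the identity $\alpha_{w,\nu}=c^{\lambda}_{\mu\nu}$ is viable but ultimately rests on the same combinatorics.
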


A permutation $w \in S_n$ is \emph{vexillary} if it is 2143-avoiding or, equivalently,  if  its Rothe diagram is equivalent to the  diagram of a partition \cite[Proposition 2.2.7]{Manivel}.

\begin{theorem}[{Macdonald \cite[Eq.\ (7.24)(iii)]{Macdonald}; Stanley \cite[Theorem 4.1]{Stan}}] \label{Mac-thm}
 The Stanley symmetric function $F_w$ is a Schur function if and only if $w$ is vexillary, in which case  $F_w = s_{\lambda(w)}$.
\end{theorem}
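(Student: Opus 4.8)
The plan is to prove the equivalence in both directions, treating the forward implication (vexillary $\Rightarrow$ single Schur function) as the easy case and the converse as the crux. For the \emph{if} direction I would invoke the equivalent characterization of vexillarity recalled just above the statement: $w$ is vexillary exactly when its Rothe diagram $D(w)$ is equivalent to the diagram of a partition $\lambda$. Since a partition diagram is the skew shape $\lambda/\varnothing$, Proposition~\ref{Fskew-prop} applies verbatim and yields $F_w = s_{\lambda/\varnothing} = s_\lambda$. It then remains only to check $\lambda = \lambda(w)$.

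To identify the partition, I would observe that the equivalence relation on diagrams preserves the multiset of row cardinalities: permuting rows merely permutes this multiset, while permuting columns is an injection fixing the number of cells in each individual row. The nonzero row cardinalities of $D(w)$ are by definition the nonzero entries of the code $c(w)$, whose decreasing rearrangement is $\lambda(w)$, while the nonzero row cardinalities of the diagram of $\lambda$ are the parts of $\lambda$. As these two multisets agree, their decreasing sorts coincide, giving $\lambda = \lambda(w)$ and hence $F_w = s_{\lambda(w)}$.

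For the \emph{only if} direction I would argue the contrapositive: if $w$ is not vexillary then $F_w$ is not a single Schur function. The natural tool is the Edelman--Greene expansion $F_w = \sum_\lambda \alpha_{w,\lambda}\, s_\lambda$ of Theorem~\ref{stanley-def}, in which every $\alpha_{w,\lambda} \ge 0$. Thus $F_w$ is a single Schur function precisely when exactly one partition $\lambda$ admits a reduced tableau for $w$ and that tableau is unique; so it would suffice to produce, for each non-vexillary $w$, either two distinct shapes carrying reduced tableaux or a single shape carrying two of them. The base obstruction is the minimal non-vexillary permutation $2143$, whose Rothe diagram $\{(1,1),(3,3)\}$ is equivalent to a pair of disconnected cells, giving $F_{2143} = s_{(2)} + s_{(1,1)}$ by Proposition~\ref{Fskew-prop}; this already fails to be a single Schur function (and indeed both shapes carry exactly one reduced tableau, as one checks from $\cR(2143)=\{(s_1,s_3),(s_3,s_1)\}$).

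The main obstacle is propagating this local failure: pattern containment does not directly factor or divide Stanley symmetric functions, so extracting a second Schur summand of $F_w$ from an embedded $2143$ requires genuine work and is where non-vexillarity is truly used. The cleanest rigorous route, and the one underlying the cited sources, bypasses the ad hoc pattern argument in favor of the structure theory of vexillary permutations: following Macdonald, one shows a vexillary $w$ has a well-defined shape $\lambda(w)$ and flag, that $\fkS_w$ is the associated flagged Schur polynomial, and that its stabilization is $s_{\lambda(w)}$; conversely, one locates a dominance-extremal pair of shapes in the support of $F_w$ that collapses to a single shape exactly when $D(w)$ is equivalent to a partition. I would carry out the converse through this diagram-and-flag analysis rather than through patterns, since it is precisely the step that genuinely exploits the hypothesis of non-vexillarity.
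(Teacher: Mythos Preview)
The paper does not prove this theorem; it is quoted as a known result attributed to Macdonald and Stanley, so there is no ``paper's own proof'' to compare against. I will therefore assess your argument on its own terms.

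Your \emph{if} direction is correct and complete: vexillarity gives $D(w)$ equivalent to a partition diagram, Proposition~\ref{Fskew-prop} applies with $\mu=\varnothing$, and your observation that diagram equivalence preserves the multiset of row cardinalities cleanly identifies the partition as $\lambda(w)$.

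Your \emph{only if} direction, however, is not a proof but a plan. You correctly diagnose the difficulty---an embedded $2143$ pattern does not by itself force a second Schur summand in $F_w$---and you correctly name the resolution, namely that the Schur support of $F_w$ has a unique dominance-maximal shape $\lambda(w)$ and a unique dominance-minimal shape (the conjugate of $\lambda(w^{-1})$), which coincide exactly when $D(w)$ is equivalent to a partition. But you do not actually establish either extremality claim or the characterization of when the two shapes coincide; ``I would carry out the converse through this diagram-and-flag analysis'' is a declaration of intent, not an argument. The substance of Stanley's Theorem~4.1 lies precisely in proving those extremality statements, and that is the step missing here.
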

 
 \begin{example} The longest permutation $w_n \in S_n$ is vexillary with  $\lambda(w_n) = \delta_n$, so  $F_w = s_{\delta_n}$ and  via Theorem \ref{EG-cor} we recover  the result of Stanley \cite{Stan} that $|\cR(w_n)|=f^{\delta_n}$.
 \end{example}

\section{Involution words for symmetric groups}\label{results-sect}

As in Section \ref{gen-sect},  we let 
$\I(S_\infty)$ and $ \I(S_n) $ be the sets of involutions in $S_\infty$ and $S_n$.
In addition, define  $\Ifpf(S_{2n})$ to be the set of fixed-point-free involutions in $S_{2n}$, and set $\Ifpf(S_\infty) = \bigcup_{n\in \PP} \Ifpf(S_{2n})$. 
Throughout this section, we write 
 $\cG_n $ for the Grassmannian involution
\be\label{g-def} \cG_n \omdef= (1,n+1)(2,n+2)\cdots(n,2n) = [n+1,n+2,\dots,2n,1,2,\dots,n]\in \Ifpf(S_{2n}).\ee
As in Section~\ref{gen-sect}, let  $\kappa(y)$
be the number of  2-cycles in an involution $y$.
We   have
\[
\ellhat(y) = \tfrac{1}{2} \( \ell(y)+ \kappa(y)\)
\qquand
\ellhat_\fpf(z) = \tfrac{1}{2} \( \ell(z)- \kappa(z)\)
\]
for  $y \in \I(S_n)$ and $z \in \Ifpf(S_{2n})$,
where $\ellhat(y)$ is as in \eqref{ellhat-def} and 
$ \ellhat_\fpf(z)\omdef =  \ellhat(z) - n.$

\subsection{Atoms for permutations}
\label{Symatoms-sect}

In this section we discuss some properties of the sets
 $\cA(y,z)$ from Theorem-Definition \ref{atoms-thmdef}, in the special case when $y,z \in \I(S_\infty)$. As in the introduction, we write  $\cAfpf(z) = \cA(\wfpf_n,z)$ for $z \in \Ifpf(S_{2n})$
 where $\wfpf_n = (1,2)(3,4)\cdots(2n-1,2n)$,
 so that $\cRfpf(z) = \bigcup_{u \in \cAfpf(z)} \cR(u)$.

As noted earlier, 
Can, Joyce, and Wyser \cite{CJ,CJW} have recently studied
the sets  $\cA(y)$ and $\cAfpf(y)$ for  $y \in \I(S_\infty)$; they 
 provide a useful set of conditions, involving only the one-line representations of permutations, that classify their elements. (Several left/right-handed conventions in \cite{CJ,CJW} are the mirror images of the ones we adopt here, so the elements in the sets described by the 
 main results  \cite[Theorem 2.5 and Corollary 2.16]{CJW} are actually the inverses of  
 what we call atoms.)
The sets $\cA(y)$ and $\cAfpf(y)$   may also be viewed as special cases of the sets $W(Y)$ that Brion defines geometrically in \cite[\S1.1]{Brion98}.
These sets of atoms  have several special 
  properties which do not generalize to other Coxeter groups, which we discuss in the complementary paper \cite{HMP2}:

\begin{theorem}[{\cite[Corollaries 6.11 and 6.23]{HMP2}}] \label{atomicSn-thm}
Let $y \in \I(S_\infty)$. Then  $|\cA(y)| = 1$  if and only if 
$y$ is 321-avoiding.
Likewise, if $y$ is fixed-point-free, then
 $|\cAfpf(y)| = 1$  if and only if 
$y$ is 321-avoiding.
\end{theorem}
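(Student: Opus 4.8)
The plan is to establish both statements by relating the cardinality of the atom sets to the combinatorics of $321$-avoidance, using the descent-set constraints from Proposition~\ref{atomdes-prop} together with the recursive structure of atoms. The key philosophy is that $|\cA(y)| = 1$ exactly when there is no ``choice'' to be made as one builds up $y$ from the identity via the moves $\rtimes s$, and that such freedom of choice is governed precisely by the presence of a $321$-pattern.

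\textbf{First direction (from $321$-avoidance to a unique atom).} I would proceed by induction on $\ellhat(y)$, using Proposition~\ref{atomdes-prop}(a). Suppose $y$ is $321$-avoiding and fix a right descent $s = s_i \in \DesR(y)$, so that $s \notin \DesR(z)$ with $z = y$ fails — instead I take $s \in \DesR(y)$ and set $y' = y \rtimes s$, which by \eqref{ides-eq} has $\ellhat(y') = \ellhat(y) - 1$. By part (a) of Proposition~\ref{atomdes-prop} applied with $z = y$, every atom of $y$ has the form $ws$ with $w \in \cA(y')$ and $s \in \DesR(w)$. The crux is to show that when $y$ is $321$-avoiding, (i) $y'$ is again $321$-avoiding, so that by induction $\cA(y') = \{w\}$ is a singleton, and (ii) the single atom $w$ necessarily has $s \in \DesR(w)$, forcing $|\cA(y)| = 1$. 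Step (i) is a direct pattern-containment argument on one-line notation, tracking how the move $y \mapsto sys$ or $y \mapsto ys$ affects $321$-patterns; I expect this to require a careful but elementary case analysis on whether $i, i+1$ are fixed points or lie in two-cycles of $y$.

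\textbf{Second direction (a $321$-pattern forces multiple atoms).} Conversely, if $y$ contains a $321$-pattern, I would exhibit two distinct atoms. The natural approach is to localize: find a small subconfiguration (positions realizing the decreasing pattern, minimally placed) where the Can--Joyce--Wyser recursive description of $\cA(y)$ offers a genuine branch — intuitively, a descending run can be ``sorted'' by distinct reduced decompositions that survive as distinct reduced words for distinct atoms. Since the full classification of $\cA(y)$ is available from \cite{CJW} (with the inverse/mirror convention noted), I would appeal to it to produce the two atoms explicitly, or alternatively reverse the inductive argument above to locate a step where both choices of $w \in \cA(y')$ with $s \in \DesR(w)$ are realizable. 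The \emph{main obstacle} is precisely this second direction: unlike the forward direction, it is not enough to track a single descent, and one must ensure the two locally-distinct atoms remain globally distinct reduced elements of $S_\infty$. Managing the bookkeeping of the recursive atom construction — especially handling the fixed-point-free case, where the base point is $\wfpf_n$ rather than $1$ and Corollary~\ref{rfpf-cor} guarantees all intermediate involutions stay fixed-point-free — is where the real work lies. For the fixed-point-free statement I would run the parallel induction using Proposition~\ref{atomdes-prop} relative to $y = \wfpf_n$, noting that $321$-avoidance is preserved under the fixed-point-free moves and that the singleton/branching dichotomy transfers verbatim once the base case $\cAfpf(\wfpf_n) = \{1\}$ is in hand.
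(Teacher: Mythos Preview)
The paper does not actually prove this theorem; it is quoted from the companion paper \cite{HMP2} (specifically Corollaries~6.11 and~6.23 there), so there is no ``paper's own proof'' to compare against beyond noting that the argument lives elsewhere and relies on a detailed structural analysis of atoms.

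That said, your proposal has a concrete error in the forward direction. You want to apply Proposition~\ref{atomdes-prop}(a) with $z=y$ and $s\in\DesR(y)$, but part~(a) requires $s\notin\DesR(z)$, which fails here---you even flag this (``fails'') and then proceed to use the conclusion anyway. The correct instantiation is with $z=y'=y\rtimes s$ (for which $s\notin\DesR(y')$), and that yields
\[
\cA(y')=\{\,us:u\in\cA(y),\ s\in\DesR(u)\,\},
\]
i.e.\ $u\mapsto us$ is a bijection from $\{u\in\cA(y):s\in\DesR(u)\}$ onto $\cA(y')$. This gives $|\cA(y')|\le|\cA(y)|$, which is the \emph{wrong} inequality for your induction: knowing $|\cA(y')|=1$ does not force $|\cA(y)|=1$, because atoms of $y$ need not have $s$ as a descent. (Concretely, for $y=w_4$ and $s=s_2$ one has $y'=[4,2,3,1]$ with $|\cA(y')|=1$, yet $|\cA(y)|=3$.) So the inductive step, as written, does not go through.

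Your reverse direction is only a plan, not an argument: you would still need to show that the two locally produced atoms are genuinely distinct elements of $S_\infty$, and the Can--Joyce--Wyser description you invoke is exactly what \cite{HMP2} develops and uses. In short, both directions ultimately lean on the structural results for $\cA(y)$ that the present paper deliberately defers to \cite{HMP2}; your sketch does not supply a substitute.
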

We can identify one atom of any involution $y$  by the following  construction. 
Given a list  $[c_1,c_2,\dots]$, we write $[[c_1,c_2,\dots]]$ for the sublist formed by omitting  repeated entries after their initial occurrence.
For example, $[[1,1,3,4,2,4,3,2,4]] = [1,3,4,2]$. 
If  the set of distinct elements
in $[c_1,c_2,\dots]$ is $\{1,2,\dots,n\}$, then interpret $[[c_1,c_2,\dots]]$ as the one-line representation of a permutation in $S_n$.
Now, for   $y  \in \cI(S_n)$, we   define $\alpha_{\min}(y)$ and $\beta_{\min}(y)$ to be the permutations in $S_n$ given by
\be\label{min-atom-eq}
\alpha_{\min}(y) \omdef= [[ b_1,a_1,b_2,a_2,\dots,b_k,a_k]]^{-1}
\qquand
\beta_{\min}(y) \omdef= [[a_1,b_1,a_2,b_2,\dots,a_k,b_k]]^{-1}
\ee
where $(a_1,b_1),(a_2,b_2),\dots,(a_k,b_k)$ are the elements of the set $\{ (a,b) \in [n]\times [n] : a \leq b = y(a)\},$ indexed such that $a_1<a_2<\dots<a_k$.

\begin{example}\label{alphabeta-ex}
If $y =  [4,7,3,1,6,5,2] =(1,4)(2,7)(5,6) $ 
then
\[ \alpha_{\min} (y)= [[4,1,7,2,3,3,6,5]]^{-1} = [2,4,5,1,7,6,3]
\qquand
 \beta_{\min} (y)= [1,3,5,2,6,7,4].
\]
In turn, one computes that
\ben
\item[(a)] $\alpha_{\min}(\cG_k) = [2,4,\dots,2k,1,3,\dots,2k-1]$ and $\beta_{\min}(\cG_k) = [1,3,\dots,2k-1,2,4,\dots,2k]$.

\item[(b)] 
$ \alpha_{\min}(w_{2k}) = [2,4,\dots,2k,2k-1,\dots,3,1]
$
and
$
\beta_{\min}(w_{2k}) = [1,3,\dots,2k-1,2k,\dots,4,2]
$.
\een
\end{example}

The following proposition from \cite{HMP2} is a corollary of  results in   \cite{CJW}.

\begin{proposition}[{\cite[Theorems 6.10 and 6.22]{HMP2}}] \label{lexatom-prop}
If $y \in \I(S_\infty)$ and  $z \in \Ifpf(S_\infty)$
then $\alpha_{\min}(y)$ and $\beta_{\min}(z)$ are the lexicographically minimal elements of $\cA(y)$ and $\cAfpf(z)$, respectively.
\end{proposition}

\begin{corollary}\label{lexatom-cor}
If $y \in \I(S_\infty)$ (respectively, $z \in \Ifpf(S_\infty)$) is 321-avoiding, then so is $\alpha_{\min}(y)$ (respectively, $\beta_{\min}(z)$).
\end{corollary}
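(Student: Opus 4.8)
The goal is to prove Corollary~\ref{lexatom-cor}: if $y \in \I(S_\infty)$ (respectively, a fixed-point-free $z$) is 321-avoiding, then the lexicographically minimal atom $\alpha_{\min}(y)$ (respectively, $\beta_{\min}(z)$) is also 321-avoiding.

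My plan is to combine Theorem~\ref{atomicSn-thm} with Proposition~\ref{lexatom-prop}. First I would observe that Theorem~\ref{atomicSn-thm} states that $y$ is 321-avoiding \emph{if and only if} $|\cA(y)| = 1$, and likewise $|\cAfpf(z)| = 1$ exactly when the fixed-point-free involution $z$ is 321-avoiding. Next, Proposition~\ref{lexatom-prop} identifies $\alpha_{\min}(y)$ as an element of $\cA(y)$ (indeed the unique lexicographically minimal one), and $\beta_{\min}(z)$ as an element of $\cAfpf(z)$. So if $y$ is 321-avoiding, then $\cA(y) = \{\alpha_{\min}(y)\}$ is a singleton, and the single atom it contains is a reduced word witness for $y$.

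The key remaining step is to connect the 321-avoidance of the atom back to that of $y$. The cleanest route is the following: since $y$ is an involution and 321-avoiding, and $\cA(y)$ consists of elements $w$ with $y = \prod$ obtained via the $\rtimes$ operation, I would argue that the unique atom must itself be 321-avoiding. The most direct argument uses Theorem~\ref{atomicSn-thm} applied \emph{to the atom}: one shows that if $w = \alpha_{\min}(y)$ were \emph{not} 321-avoiding, its own reduced words would force extra structure incompatible with $|\cA(y)| = 1$. Alternatively, and more robustly, I expect the intended proof is almost immediate once one recalls that a 321-avoiding permutation $w$ is characterized by having no reduced word containing a factor $s_i s_{i+1} s_i$ or $s_{i+1} s_i s_{i+1}$ (equivalently, $w$ is fully commutative). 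Since $\hat\cR(y) = \cR(\alpha_{\min}(y))$ when the atom is unique, and involution words for a 321-avoiding involution inherit this braid-free property, the atom must be fully commutative, hence 321-avoiding.

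The main obstacle will be pinning down precisely which direction of Theorem~\ref{atomicSn-thm} and which supporting fact about the word structure does the real work, since the corollary is so short that the danger is circular reasoning. To avoid this, I would phrase the argument as: by Proposition~\ref{lexatom-prop}, $\alpha_{\min}(y) \in \cA(y)$; by Theorem~\ref{atomicSn-thm}, the hypothesis that $y$ is 321-avoiding gives $\cA(y) = \{\alpha_{\min}(y)\}$; then I invoke the characterization that $\cA(w') = \{w'\}$ for a single permutation $w'$ precisely when $w'$ is fully commutative (321-avoiding), applied with $w'$ playing the role of the atom, to conclude $\alpha_{\min}(y)$ is 321-avoiding. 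The fixed-point-free case for $\beta_{\min}(z)$ runs verbatim using the second halves of Theorem~\ref{atomicSn-thm} and Proposition~\ref{lexatom-prop}. If the clean word-theoretic characterization is not available from the quoted results, the fallback is an explicit check using the formulas for $\alpha_{\min}$ and $\beta_{\min}$ in the definition preceding Example~\ref{alphabeta-ex}, verifying directly that the $[[\cdots]]$ construction cannot introduce a 321-pattern when $y$ avoids one.
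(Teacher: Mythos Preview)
Your proposal has a genuine gap at the key step. You correctly deduce from Theorem~\ref{atomicSn-thm} and Proposition~\ref{lexatom-prop} that $\cA(y) = \{\alpha_{\min}(y)\}$ when $y$ is 321-avoiding, but none of your suggested routes from there to ``$\alpha_{\min}(y)$ is 321-avoiding'' actually closes. Applying Theorem~\ref{atomicSn-thm} to $w' = \alpha_{\min}(y)$ makes no sense: that theorem concerns $\cA(\cdot)$ of an \emph{involution}, and $\alpha_{\min}(y)$ is typically not an involution. Your full-commutativity argument would need the assertion that involution words of a 321-avoiding involution contain no braid factor, but involution words of $y$ are not reduced words of $y$, so the usual characterization of 321-avoidance via braid-free reduced words does not transfer; you have given no reason why $\cR(\alpha_{\min}(y)) = \hat\cR(y)$ should be braid-free. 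The fallback ``explicit check from the $[[\cdots]]$ formula'' is plausible but is not carried out, so as written there is no proof.

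The paper's argument is different and much shorter. It does not use Theorem~\ref{atomicSn-thm} at all. Instead it invokes two facts: (i) the set of 321-avoiding permutations is a lower order ideal in the left weak order (Stembridge), and (ii) every atom $u \in \cA(y)$ satisfies $u \leq_L y$. Fact (ii) follows directly from the definition: if $(s_1,\dots,s_k)$ is a reduced word for $u$, then $y = 1 \rtimes s_1 \rtimes \cdots \rtimes s_k$ can be written as $v \cdot u$ with $\ell(y) = \ell(v) + \ell(u)$, since each $\rtimes s_i$ either right-multiplies or conjugates. From (i) and (ii), $y$ being 321-avoiding immediately forces $\alpha_{\min}(y) \leq_L y$ to be 321-avoiding as well, and the same argument handles $\beta_{\min}(z)$.
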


\begin{proof}
The set of 321-avoiding permutations is an order ideal under the left weak order (see \cite[Proposition 2.4]{Stem}), and  we have $u <_L \pi$ whenever $u \in \cA(\sigma,\pi)$ by inspection.
\end{proof}

\subsection{Diagrams and codes for involutions}\label{invdiagram-sect}

For involutions $y \in \I(S_\infty)$, we define
\be\label{invol-rothe-eq} \ba
\D(y) &\omdef= \{ (i,j) \in \PP\times \PP :  j<y(i)\text{ and } i<y(j)\text{ and }j\leq i\},
\\
\Dfpf(y) &\omdef=  \{ (i,j) \in \PP\times\PP :  j<y(i)\text{ and } i<y(j)\text{ and }j<i\}.
\ea
\ee
Call these sets \emph{involution Rothe diagrams}.
Observe that $\D(y)$ and $\Dfpf(y)$ 
are  the subsets of positions in the usual Rothe diagram $D(y)$ that are weakly and strictly below the diagonal.
Since 
$D(y)$ is invariant under transpose as $y^2=1$,   the diagram $\D(y)$  uniquely determines $ D(y)$.

\begin{example}\label{invRothe-ex}
If $y =  [4,7,3,1,6,5,2]$
then 
\[ D(y) =
 \left\{
\barr{cccccc}
\circ & \circ & \circ & . & . & . 
\\
\circ & \circ & \circ & . & \circ & \circ 
\\
\circ & \circ & . & . & . & . 
\\
. & . & . & . & . & . 
\\
. & \circ & . & . & \circ & . 
\\
. & \circ & . & . & . & . 
\earr
\right\}
\qquand
\hat D(y) = 
 \left\{
\barr{cccccc}
\circ & . & . & . & . & . 
\\
\circ & \circ & . & . & . & . 
\\
\circ & \circ & . & . & . & . 
\\
. & . & . & . & . & . 
\\
. & \circ & . & . & \circ & . 
\\
. & \circ & . & . & . & . 
\earr
\right\}.
\]
One similarly computes that
\[
\hat D(\cG_n) = \{ (i,j)  : 1\leq j\leq i \leq n\}
\qquand
\hat D(w_n) = \{ (i,j) \in \PP\times \PP: i+j \leq n \text{ and } j\leq i\},
\]
which are the  transposes of the
 shifted shapes of $ (n,n-1,\dots,2,1)$ and  $(n-1,n-3,n-5,\dots)$.
\end{example}

We discuss a few results that indicate why $\hat D(y)$ and $\Dfpf(z)$ are the appropriate notions of diagrams for involutions. 
The cardinality of $D(w)$ is the number of inversions of $w \in S_\infty$, and so $\ell(w) = |D(w)|$.
An analogous fact holds for involution Rothe diagrams:

\begin{proposition} If $y \in \I(S_n)$ and $z \in \Ifpf(S_{2n})$ then
$
\ellhat(y) = |\hat D(y)| $ and $ \ellhat_\fpf(z) = |\Dfpf(z)|
.
$
\end{proposition}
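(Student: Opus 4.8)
The plan is to reduce both identities to a counting argument on the Rothe diagram $D(y)$, exploiting its transpose-symmetry. The key observation, already noted in the excerpt, is that since $y = y^{-1}$ we have $D(y)^T = D(y^{-1}) = D(y)$, so $D(y)$ is symmetric about the main diagonal. I would therefore partition $D(y)$ into three pieces according to whether a cell $(i,j)$ lies strictly below, strictly above, or exactly on the diagonal, and count each piece.

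First I would count the diagonal cells. Using the defining conditions for the Rothe diagram, $(i,i) \in D(y)$ holds precisely when $i < y(i)$ and $i < y^{-1}(i)$; since $y^{-1} = y$ these coincide, so $(i,i) \in D(y)$ if and only if $y(i) > i$. The number of indices $i$ with $y(i) > i$ is exactly $\kappa(y)$, since each $2$-cycle $(a,b)$ with $a < b$ contributes its smaller element $a$ and fixed points contribute nothing. Thus $D(y)$ has exactly $\kappa(y)$ diagonal cells. Next, writing $b$ for the number of cells strictly below the diagonal, transpose-symmetry gives that the number of cells strictly above the diagonal is also $b$. Combining with $\ell(y) = |D(y)|$ (the inversion count) yields $\ell(y) = 2b + \kappa(y)$, hence $b = \tfrac{1}{2}(\ell(y) - \kappa(y))$.

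With this in hand both identities follow by bookkeeping. By definition $\Dfpf(y)$ consists of the cells of $D(y)$ strictly below the diagonal, so $|\Dfpf(y)| = b = \tfrac12(\ell(y) - \kappa(y))$, while $\hat D(y)$ adds the diagonal cells, giving $|\hat D(y)| = b + \kappa(y) = \tfrac12(\ell(y) + \kappa(y))$. Comparing with the formulas $\ellhat(y) = \tfrac12(\ell(y) + \kappa(y))$ and $\ellhat_\fpf(z) = \tfrac12(\ell(z) - \kappa(z))$ recorded at the start of Section~\ref{results-sect} immediately gives $|\hat D(y)| = \ellhat(y)$ and $|\Dfpf(z)| = \ellhat_\fpf(z)$, the latter applied to the fixed-point-free involution $z$.

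There is essentially no serious obstacle here; the argument is purely a symmetry-and-counting computation. The only point requiring minor care is matching conventions in the fixed-point-free case: one must use the diagram $\Dfpf$ (strictly below the diagonal) together with the fixed-point-free length formula $\ellhat_\fpf(z) = \tfrac12(\ell(z) - \kappa(z))$ rather than $\ellhat(z)$, but since a fixed-point-free $z \in \Ifpf(S_{2n})$ has $\kappa(z) = n$, the two diagrams $\hat D(z)$ and $\Dfpf(z)$ differ by exactly the $n$ diagonal cells, and the bookkeeping remains consistent.
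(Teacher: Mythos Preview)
Your proof is correct and follows essentially the same approach as the paper: both exploit the transpose-symmetry of $D(y)$ and the count of $\kappa(y)$ diagonal cells to express $|\hat D(y)|$ and $|\Dfpf(z)|$ in terms of $\ell$ and $\kappa$, then invoke the known formulas for $\ellhat$ and $\ellhat_\fpf$. The paper's proof is simply a terser version of yours, omitting the explicit verification that the diagonal cells number exactly $\kappa(y)$.
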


\begin{proof}
When $y$ is an involution, $D(y)$ is transpose-invariant and the number of diagonal positions $(i,i) \in D(y)$ is precisely $\kappa(w)$,   so it follows that
$|\hat D(y)| = \tfrac{1}{2}\(|D(y)| - \kappa(y)\) + \kappa(y) = \ellhat(y).$ If $z$ is a fixed-point-free involution, then $|\Dfpf(z)| =|\hat D(z)| - \kappa(z) = \ellhat(z) - \kappa(z) = \ellhat_\fpf(z)$.
\end{proof}

Given $y \in \I(S_n)$,
we define the \emph{involution codes}
\be\label{inv-code-eq}
\hat c(y) \omdef=\(\hat c_1(y),\hat c_2(y),\dots,\hat c_n(y)\)   \quand \hat c_\fpf(y)\omdef=\(\hat c_{\fpf,1}(y),\hat c_{\fpf,2}(y),\dots,\hat c_{\fpf,n}(y)\) 
\ee
to be the integer sequences with
\[
\ba
 \hat c_i(y) &= | \{ j \in [n] :  y(j)\leq i< j \text{ and }y(i) > y(j)\}|,
\\
 \hat c_{\fpf,i}(y) &= | \{ j \in [n] :  y(j)< i< j \text{ and }y(i) > y(j)\}|.
\ea
\]
The $i^{\mathrm{th}}$ entries in these sequences count
the number of cells in the $i^{\mathrm{th}}$ rows of  $\hat D(y)$ and $\Dfpf(y)$, respectively.
These sequences do not depend in any serious way on $n$: if $y $ is viewed as belonging to a larger symmetric group $S_N \supset S_n$, then the resulting codes are the same, extended by zeros.
\begin{example}\label{cg-ex} If $y=  [4,7,3,1,6,5,2]=(1,4)(2,7)(5,6) \in \I(S_7)$ then
\[\hat c(y) = (1,2,2,0,2,1,0)
\qquand
\hat c_\fpf(y) = (0,1,2,0,1,1,0).
\]
 For the involutions $\cG_n, w_{2n} \in \Ifpf(S_{2n})$
we have:
 \ben
\item[(a)] $\hat c(\cG_n) = (1,2,3,\dots,n,0,\dots,0)$ and $\hat c_\fpf(\cG_n) = (0,1,2,\dots,n-1,0,\dots,0)$.
\item[(b)] $\hat c(w_{2n}) = (1,2,\dots,n-1,n,n-1,\dots,2,1,0)$ and $\hat c_\fpf(w_{2n}) = (0,1,2,\dots,n-1,n-1,\dots,2,1,0)$.
\een
\end{example}
Recall the  minimal atoms $\alpha_{\min}(y) \in \cA(y)$ and $\beta_{\min}(y) \in \cAfpf(y)$ defined by \eqref{min-atom-eq}.

\begin{lemma}\label{invcode-lem}
If $y \in \I(S_n)$   then
$\hat c(y) = c( \alpha_{\min}(y))$ and $ \hat c_\fpf(y) = c(\beta_{\min}(y)).
$
\end{lemma}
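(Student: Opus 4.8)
\textbf{The plan is} to establish the two identities $\hat c(y) = c(\alpha_{\min}(y))$ and $\hat c_\fpf(y) = c(\beta_{\min}(y))$ by comparing the defining conditions on both sides directly, without appealing to the geometry or to divided differences. The cleanest route is to recall that $\hat c_i(y)$ and $\hat c_{\fpf,i}(y)$ count the cells in the $i^{\mathrm{th}}$ rows of $\hat D(y)$ and $\Dfpf(y)$, while $c_i(\alpha_{\min}(y))$ counts the cells in the $i^{\mathrm{th}}$ row of the ordinary Rothe diagram $D(\alpha_{\min}(y))$. So it suffices to understand the one-line form of $\alpha_{\min}(y)$ and $\beta_{\min}(y)$ well enough to read off their inversion-counts row by row.

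\textbf{First I would} unpack the construction of $\alpha_{\min}(y) = [[b_1,a_1,b_2,a_2,\dots,b_k,a_k]]^{-1}$ from Section~\ref{Symatoms-sect}. The pairs $(a_t,b_t)$ with $a_t\le b_t=y(a_t)$, listed with $a_1<a_2<\dots<a_k$, account for every non-fixed point of $y$ (each two-cycle contributing one pair with $a_t<b_t$, each fixed point contributing a pair with $a_t=b_t$). The key observation is that the list $[b_1,a_1,b_2,a_2,\dots]$, after deleting repeats, enumerates all of $[n]$, and that $\alpha_{\min}(y)^{-1}$ sends the position of a value to that value; equivalently, $\alpha_{\min}(y)$ records, for each $i$, where $i$ first appears in the interleaved list. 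I would then translate the condition ``$(i,j)\in \hat D(y)$,'' namely $j<y(i)$ and $i<y(j)$ and $j\le i$, into a statement about inversions of $\alpha_{\min}(y)$: I expect that for each value $i$, the pairs $j$ counted by $\hat c_i(y)$ are in bijection with the positions forming inversions with the slot occupied by $i$ in $\alpha_{\min}(y)$. Because the count $\hat c_i(y)$ is indexed by the \emph{value} $i$ appearing in position $y(i)$ or thereabouts, and $\alpha_{\min}(y)=\pi^{-1}$ for $\pi=[[b_1,a_1,\dots]]$, the transpose/inverse bookkeeping ($c_i(\pi^{-1})$ counts inversions of $\pi$ at value $i$) is what makes the two row-counts coincide.

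\textbf{For the fixed-point-free case} the argument is parallel but uses the interleaving $[[a_1,b_1,a_2,b_2,\dots,a_k,b_k]]$ in the opposite order, and the defining inequality sharpens from $j\le i$ to $j<i$, matching the switch from $y(j)\le i$ to $y(j)<i$ in $\hat c_{\fpf,i}$. I would verify the bijection on the off-diagonal cells only (the diagonal cells $(i,i)$ present in $\hat D(y)$ but absent from $\Dfpf(y)$ correspond exactly to the fixed points, which a fixed-point-free $y$ lacks, so the two constructions differ precisely in whether the same-value slot $a_t=b_t$ is swallowed). A sensible way to organize the whole proof is to treat the two-cycles one at a time: show that the pair $(a_t,b_t)$ contributes the same increment to the row-count on each side, so that summing over $t$ gives the identity. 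It is also legitimate to deduce the result as a corollary of a single more transparent claim — that $D(\alpha_{\min}(y))$ and $\hat D(y)$ have equal row lengths — and I would look for a direct combinatorial description of $D(\alpha_{\min}(y))$.

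\textbf{The hard part will be} managing the interleaving and the repeat-deletion $[[\cdots]]$ cleanly: because fixed points $a_t=b_t$ produce a single entry rather than two, the positions in $\alpha_{\min}(y)$ are not a simple function of $i$, and one must track carefully how deletion of repeats shifts the slots. The main obstacle is therefore the bookkeeping that converts the value-indexed count $\hat c_i(y)$ (which asks for $j$ with $y(j)\le i<j$ and $y(i)>y(j)$) into the position-indexed inversion count of $\alpha_{\min}(y)$, and in particular verifying that the transpose and inverse operations compose correctly so that counting cells below the diagonal of $D(y)$ matches counting all inversions of $\alpha_{\min}(y)$. I expect the cleanest verification runs by a case analysis on whether $j$ and $i$ index two-cycles or fixed points of $y$, checking in each case that membership in $\hat D(y)$ is equivalent to the corresponding inversion in $\alpha_{\min}(y)$; Example~\ref{alphabeta-ex} provides a concrete instance against which to sanity-check the bijection before writing it up in general.
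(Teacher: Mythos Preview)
Your proposal is correct in approach and essentially matches the paper's own proof: both argue by direct combinatorial comparison of the two codes, unpacking the definition of $\alpha_{\min}(y)$ and $\beta_{\min}(y)$ and checking equality entry by entry. The paper's execution is somewhat cleaner than your outline suggests it need be: rather than worrying about how repeat-deletion shifts slots, the paper simply observes that every position $t\in[n]$ equals $a_i$ or $b_i$ for a unique $i$, and then computes $c_{a_i}(\beta_{\min}(y))$ and $c_{b_i}(\beta_{\min}(y))$ directly from the one-line form, obtaining $|\{j<i : b_j>a_i\}|$ and $|\{j<i : b_j>b_i\}|$ respectively; the matching formulas for $\hat c_{\fpf,a_i}(y)$ and $\hat c_{\fpf,b_i}(y)$ then drop out of the defining conditions with no case analysis on fixed points versus two-cycles. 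So the ``hard part'' you anticipate largely dissolves once you index by the pair label $i$ rather than by the raw position.

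One small slip to correct: the diagonal cells $(i,i)\in\hat D(y)$ are those with $i<y(i)$, so they correspond to the \emph{smaller elements of two-cycles}, not to fixed points. This does not affect your strategy, but your parenthetical explanation of why $\hat D(y)$ and $\Dfpf(y)$ differ is inverted.
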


\begin{proof}
Define $(a_i,b_i)$ relative to $y$ as before Example \ref{alphabeta-ex} and 
fix $i \in [k]$, where $k = n-\kappa(y)$ is the size of $\{ (a,b) \in [n]\times [n] : a \leq b = y(a)\}$.
It is straightforward to check from the definitions that
\[ c_{a_i}(\beta_{\min}(y)) = | \{ j \in [i-1] :  b_j > a_i \}|
\qquand
c_{b_i}(\beta_{\min}(y)) = | \{ j \in [i-1] :  b_j > b_i \}|.
\]
On the other hand, since $y(t) \leq t$ implies $y(t) = a_j$ and $t=b_j$ for some $j$, and $a_j < a_i $ if and only if $j < i$, it follows that
\[ \ba \hat c_{\fpf,a_i}(y) &=|\{ j \in [n] : y(j)< a_i < j \text{ and }y(a_i) > y(j) \}|
\\&=|\{ j \in [k] : a_j< a_i < b_j \text{ and }b_i > a_j \}|
=|\{ j \in [i-1] : b_j>a_i \}|  =  c_{a_i}(\beta_{\min}(y))
\ea\]
and likewise
\[ \ba \hat c_{\fpf,b_i}(y) &=|\{ j \in [n] : y(j)< b_i < j \text{ and }y(b_i) > y(j) \}|
\\&=|\{ j \in [k] : a_j< b_i < b_j \text{ and }a_i > a_j \}|
=|\{ j \in [i-1] : b_j>b_i \}|
= c_{b_i}(\beta_{\min}(y))
.
\ea\]
As every $t \in [n]$ is given by $a_j$ or $b_j$ for some $j \in [k]$, we conclude that $\hat c_\fpf(y) = c( \beta_{\min}(y))$.
The proof that $\hat c(y) = c( \alpha_{\min}(y))$ is similar; we omit the details.
\end{proof}

One can read off an involution word for $y \in \I(S_n)$ from its involution code in the following way.
For any sequence $c=(c_1,c_2,\dots,c_n) \in \NN^n$, define 
\[\Theta(c) \omdef=  (\underbrace{c_1,\dots, 2,1}_{c_1\text{ terms}},\underbrace{c_2 + 1,\dots, 3,2}_{c_2\text{ terms}},\underbrace{c_3 + 2,\dots, 4,3}_{c_3\text{ terms}},\dots,\underbrace{c_n + n-1,\dots, n+1,n}_{c_n\text{ terms}}).\]
In what follows,  identify  integer sequences $(i_1,\dots,i_k)$  with words $(s_{i_1},\dots,s_{i_k})$.
\begin{proposition} If $y \in \I(S_n)$ and   $z \in \Ifpf(S_{2n})$ 
then
$\Theta\(\hat c(y)\) \in \hat\cR(y)$
and
 $\Theta\(\hat c_\fpf(z) \) \in \cRfpf(z).$
\end{proposition}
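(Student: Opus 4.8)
The plan is to reduce this statement to the analogous, well-known fact about ordinary reduced words via the minimal atoms. The key observation is Lemma~\ref{invcode-lem}, which identifies $\hat c(y)$ with the ordinary code $c(\alpha_{\min}(y))$ and $\hat c_\fpf(z)$ with $c(\beta_{\min}(z))$. Since $\cR(w) \subset \hat\cR(y)$ whenever $w \in \cA(y)$ (and similarly $\cR(w) \subset \cRfpf(z)$ for $w \in \cAfpf(z)$), by Theorem-Definition~\ref{atoms-thmdef}, it suffices to show that $\Theta(c(w))$ is a reduced word for $w$ whenever $w = \alpha_{\min}(y)$ or $w = \beta_{\min}(z)$. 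In fact I would prove the cleaner general lemma that $\Theta(c(w)) \in \cR(w)$ for \emph{every} $w \in S_n$, and then specialize: applying this to $w = \alpha_{\min}(y)$ gives $\Theta(\hat c(y)) = \Theta(c(\alpha_{\min}(y))) \in \cR(\alpha_{\min}(y)) \subset \hat\cR(\alpha_{\min}(y)) = \hat\cR(y)$ by Lemma~\ref{invcode-lem}, and likewise for the fixed-point-free case with $\beta_{\min}(z)$.

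So the heart of the matter is the classical statement: for $w \in S_n$ with code $c(w) = (c_1,\dots,c_n)$, the word $\Theta(c(w))$ is reduced for $w$. First I would recall that $\sum_i c_i = \ell(w)$, so the word $\Theta(c(w))$ has exactly $\ell(w)$ letters; hence it is enough to check that the product of the corresponding simple transpositions equals $w$. The structure of $\Theta$ suggests an inductive argument on the rows of the code. Each block $(\underbrace{c_i + i-1, \dots, i+1, i}_{c_i \text{ terms}})$ is a \emph{decreasing} run of consecutive integers, and such a run, read as a product of adjacent transpositions $s_{c_i+i-1}\cdots s_{i+1}s_i$, implements a cyclic shift that places the correct value $w(i)$ into position $i$ while shifting intervening entries. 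I would verify that applying the blocks left to right sorts the one-line notation of $w$ back to the identity, equivalently that reading the blocks right to left builds up $w$ from the identity one row at a time.

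The main technical step, and the expected obstacle, is tracking precisely how each decreasing block acts on the partially-constructed permutation so that the bookkeeping of indices comes out right; this is the standard ``Lehmer code to reduced word'' computation, and I would either cite it from a reference such as \cite{Manivel} or \cite{Macdonald}, or carry out the short induction on $n$ explicitly. Granting the general lemma, the only remaining point is purely formal: the minimal atoms $\alpha_{\min}(y)$ and $\beta_{\min}(z)$ lie in $\cA(y)$ and $\cAfpf(z)$ respectively (Proposition~\ref{lexatom-prop}), so their reduced words are genuinely involution words from $1$ to $y$ and from $\wfpf_n$ to $z$; combined with Lemma~\ref{invcode-lem} this yields $\Theta(\hat c(y)) \in \hat\cR(y)$ and $\Theta(\hat c_\fpf(z)) \in \cRfpf(z)$, as claimed.
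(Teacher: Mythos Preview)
Your approach is correct and is exactly what the paper does: it cites the classical fact that $\Theta(c(u)) \in \cR(u)$ for any permutation $u$ (from \cite[Remark 2.1.9]{Manivel}) and then invokes Lemma~\ref{invcode-lem}. One small slip: in your chain ``$\cR(\alpha_{\min}(y)) \subset \hat\cR(\alpha_{\min}(y)) = \hat\cR(y)$'' the middle term is ill-formed since $\alpha_{\min}(y)$ need not be an involution; you just mean $\cR(\alpha_{\min}(y)) \subset \hat\cR(y)$, which you had already stated correctly earlier.
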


\begin{proof}
By \cite[Remark 2.1.9]{Manivel}, if $u$ is any permutation then $\Theta(c(u))$ gives a reduced word for $u$, so this result follows from Lemma \ref{invcode-lem}.
\end{proof}

\subsection{Involution Schubert polynomials}
\label{invschubert-sect}

In this section we turn to the involution Schubert polynomials $\fkS_y$ and $\Sfpf_y$ defined in the introduction \eqref{intro-inv-schub-eq}.
For  $y,z \in \I(S_\infty)$,
we have $\hat\fkS_{y,z} = \sum_{u \in \cA(y,z)} \fkS_u \in \cP_\infty$
and
$
\hat \fkS_y = \hat \fkS_{1,y} = \sum_{w \in \cA(y)} \fkS_w$.
For $z \in \Ifpf(S_{2n})$ we have
$
 \Sfpf_z = \hat \fkS_{\wfpf_n,z} = \sum_{w \in \cAfpf(z)} \fkS_w$
 where $\wfpf_n = s_1s_3\cdots s_{2n-1}$.

Write $\Lambda_n$ for the  ring of symmetric polynomials in $\cP_n = \ZZ[x_1,\dots,x_n]$.
For the Grassmannian involution $\cG_k $ given by \eqref{g-def} we can compute $\hat \fkS_{\cG_k}$ and $\Sfpf_{\cG_k}$ directly:
\begin{proposition}\label{grassman-prop}
For each $k \in \NN$ it holds that 
\[ \hat\fkS_{\cG_k} =2^{-k}\prod_{1 \leq i \leq j \leq k} (x_i+x_j) \in \Lambda_k \qquand \Sfpf_{\cG_k}  = \prod_{1 \leq i < j \leq k} (x_i+x_j) \in \Lambda_k. \]
\end{proposition}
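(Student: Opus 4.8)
The strategy is to compute $\hat\fkS_{\cG_k}$ and $\Sfpf_{\cG_k}$ by identifying explicitly the atom sets $\cA(\cG_k)$ and $\cAfpf(\cG_k)$, summing Schubert polynomials over them, and recognizing the result as the stated products. The essential structural input is Theorem~\ref{atomicSn-thm}: since $\cG_k = [k{+}1,\dots,2k,1,\dots,k]$ is $321$-avoiding (it is a single ascending-to-ascending shuffle, with no three-term decreasing subsequence), both $\cA(\cG_k)$ and $\cAfpf(\cG_k)$ are \emph{singletons}. This reduces each involution Schubert polynomial to a \emph{single} ordinary Schubert polynomial, namely $\hat\fkS_{\cG_k} = \fkS_{\alpha_{\min}(\cG_k)}$ and $\Sfpf_{\cG_k} = \fkS_{\beta_{\min}(\cG_k)}$, where by Proposition~\ref{lexatom-prop} the unique atoms are the minimal ones computed in Example~\ref{alphabeta-ex}(a): $\alpha_{\min}(\cG_k) = [2,4,\dots,2k,1,3,\dots,2k{-}1]$ and $\beta_{\min}(\cG_k) = [1,3,\dots,2k{-}1,2,4,\dots,2k]$.

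\textbf{Key steps.}
First I would verify $\cG_k$ is $321$-avoiding and invoke Theorem~\ref{atomicSn-thm} to collapse each sum to one term. Next I would identify the two relevant permutations as Grassmannian: $\beta_{\min}(\cG_k)$ has its unique descent at position $k$ (it is increasing on $[1,k]$ and on $[k{+}1,2k]$), so by Proposition~\ref{schubertfactor-prop}(b) one gets $\fkS_{\beta_{\min}(\cG_k)} = s_{\lambda}(x_1,\dots,x_k)$ for the appropriate partition $\lambda$, which I would read off from the code via Lemma~\ref{invcode-lem}: indeed $c(\beta_{\min}(\cG_k)) = \hat c_\fpf(\cG_k) = (0,1,2,\dots,k{-}1)$, giving shape $\lambda = \delta_k = (k{-}1,k{-}2,\dots,1)$. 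Thus $\Sfpf_{\cG_k} = s_{\delta_k}(x_1,\dots,x_k)$, and I would then identify this staircase Schur polynomial with the product $\prod_{1\le i<j\le k}(x_i+x_j)$ — a classical factorization of the staircase Schur function (it equals the Vandermonde-like product, being the Schur function of the staircase in $k$ variables). For the orthogonal case, $\alpha_{\min}(\cG_k)$ is likewise recognized, with $c(\alpha_{\min}(\cG_k)) = \hat c(\cG_k) = (1,2,\dots,k)$; the corresponding $\fkS$ should factor as $\prod_{1\le i\le j\le k}(x_i+x_j)$, which is $2^k$ times the $i<j$ product after pulling out the diagonal factors $2x_i$. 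I would confirm the $2^{-k}$ normalization by comparing the diagonal terms $\prod_{i=1}^k(x_i+x_i) = 2^k x_1\cdots x_k$ against the difference between the two codes/diagrams.

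\textbf{Main obstacle.}
The cleanest route is through the diagram rather than the Grassmannian formula: $\alpha_{\min}(\cG_k)$ need not be Grassmannian (its code $(1,2,\dots,k)$ is strictly increasing, so in fact it \emph{is} Grassmannian with descent $k$), so both cases likely reduce to Proposition~\ref{schubertfactor-prop}(b), and the real content is the combinatorial identity $s_{\delta_k}(x_1,\dots,x_k) = \prod_{1\le i<j\le k}(x_i+x_j)$ together with its ``shifted-diagonal'' analogue. The step I expect to be most delicate is establishing these product factorizations of the staircase Schur functions and correctly bookkeeping the diagonal contributions that produce the factor $2^{-k}$ in the orthogonal case versus its absence in the symplectic case; this is exactly the discrepancy between $\hat D(\cG_k)$ (which includes the diagonal, giving the $i\le j$ product) and $\Dfpf(\cG_k)$ (which excludes it, giving the $i<j$ product), as recorded in Example~\ref{invRothe-ex}. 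Verifying that the dominant/Grassmannian Schubert polynomial matches the diagram product is where I would spend the most care, using Proposition~\ref{schubertfactor-prop}(a) applied to the (possibly reindexed) dominant representative if a direct staircase identity is preferred.
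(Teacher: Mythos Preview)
Your approach is correct and essentially identical to the paper's: both observe that $\cG_k$ is $321$-avoiding so that $\cA(\cG_k)$ and $\cAfpf(\cG_k)$ are singletons (Theorem~\ref{atomicSn-thm}), identify the unique atoms as the Grassmannian permutations $\alpha_{\min}(\cG_k)$ and $\beta_{\min}(\cG_k)$ with shapes $\delta_{k+1}$ and $\delta_k$, apply Proposition~\ref{schubertfactor-prop}(b) to get $\hat\fkS_{\cG_k} = s_{\delta_{k+1}}(x_1,\dots,x_k)$ and $\Sfpf_{\cG_k} = s_{\delta_k}(x_1,\dots,x_k)$, and finish with the classical staircase factorization $s_{\delta_k}(x_1,\dots,x_k) = \prod_{i<j}(x_i+x_j)$ coming from Jacobi's bialternant. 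Your hedging about whether to use the dominant formula is unnecessary---$\alpha_{\min}(\cG_k)$ is genuinely Grassmannian with descent $k$, and the $2^{-k}$ falls out exactly as you describe once you note $s_{\delta_{k+1}}(x_1,\dots,x_k) = x_1\cdots x_k \cdot s_{\delta_k}(x_1,\dots,x_k)$.
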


\begin{proof}
Combining Proposition \ref{schubertfactor-prop} with Theorem~\ref{atomicSn-thm}, Proposition~\ref{lexatom-prop}, and Example~\ref{cg-ex}(a)
shows that $\hat\fkS_{\cG_k} = s_{\delta_{k+1}}(x_1,\dots,x_k)
$ and $\Sfpf_{\cG_k} = s_{\delta_{k}}(x_1,\dots,x_k).$
The product formulas given for these Schur polynomials follow from Jacobi's determinantal definition of $s_\lambda$; see \cite[Exercise 1.2.4]{Manivel}.
\end{proof}

The involution Schubert polynomials \eqref{intro-inv-schub-eq} may also be characterized along the lines of Theorem \ref{schubunique-thmdef}, as follows.
We write $\delta_{v,w}$ to the denote the   Kronecker delta function, equal to 1 if $v=w$ and 0 otherwise.
The two-sided weak order $<_T$ in the following theorem is defined by \eqref{t-eq}.

\begin{theorem}\label{invschubdef-thm}
Fix $y \in \I(S_\infty)$. Then $\{ \hat\fkS_{y,z} \}_{z \in \I(S_\infty)}$ is the unique family of homogeneous polynomials indexed by $\I(S_\infty)$ such that 
\[\hat\fkS_{y,z} =\delta_{y,z}\text{ if $y \not <_T z$}
\qquand
\partial_i \hat\fkS_{y,z} = \begin{cases} \hat\fkS_{y,z\rtimes s_i}&\text{if $s_i \in \DesR(z)$} \\0&\text{otherwise}\end{cases}\quad\text{for all $i \in \PP$.}
\]
\end{theorem}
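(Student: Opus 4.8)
The plan is to prove existence and uniqueness separately, reducing both to the corresponding facts about ordinary Schubert polynomials via the decomposition $\hat\fkS_{y,z} = \sum_{u \in \cA(y,z)} \fkS_u$ from Theorem-Definition~\ref{atoms-thmdef}. For \textbf{existence}, I would verify that the polynomials $\hat\fkS_{y,z}$, as defined, satisfy both stated properties. The initial condition is immediate: when $y \not<_T z$ we have $\cA(y,z) = \varnothing$ unless $y = z$ (in which case $\cA(y,y) = \{1\}$ and $\hat\fkS_{y,y} = \fkS_1 = 1$), so $\hat\fkS_{y,z} = \delta_{y,z}$. Homogeneity follows since each $\fkS_u$ with $u \in \cA(y,z)$ has degree $\ell(u) = \ellhat(y,z)$, a common value.

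The crux is the divided-difference recurrence, and here Proposition~\ref{atomdes-prop}(a) is the key input. Fix $i$ and apply $\partial_i$ to $\hat\fkS_{y,z} = \sum_{u \in \cA(y,z)} \fkS_u$. By Theorem~\ref{schubunique-thmdef}, $\partial_i \fkS_u = \fkS_{us_i}$ if $s_i \in \DesR(u)$ and $0$ otherwise, so
\[
\partial_i \hat\fkS_{y,z} = \sum_{\substack{u \in \cA(y,z) \\ s_i \in \DesR(u)}} \fkS_{us_i}.
\]
I now split into the two cases. If $s_i \in \DesR(z)$: the consequence of Proposition~\ref{atomdes-prop} gives $\DesR(u) \subset \DesR(z)$, and one wants the map $u \mapsto us_i$ to be a bijection from $\{u \in \cA(y,z) : s_i \in \DesR(u)\}$ onto $\cA(y, z\rtimes s_i)$. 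Since $s_i \in \DesR(z)$ we have $z \rtimes s_i = s_i z s_i$, so $z = (z\rtimes s_i)\rtimes s_i$ and $s_i \notin \DesR(z\rtimes s_i)$; thus Proposition~\ref{atomdes-prop}(a) applied with $z\rtimes s_i$ in place of $z$ and $s_i$ in place of $s$ says exactly $\cA(y,z) = \{ws_i : w \in \cA(y,z\rtimes s_i),\ s_i \in \DesR(w)\}$, which gives the desired bijection and yields $\partial_i\hat\fkS_{y,z} = \hat\fkS_{y,z\rtimes s_i}$. If $s_i \notin \DesR(z)$: again $\DesR(u) \subset \DesR(z)$ forces $s_i \notin \DesR(u)$ for every $u \in \cA(y,z)$, so every term in the sum above vanishes and $\partial_i\hat\fkS_{y,z} = 0$. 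The main obstacle is keeping the $\rtimes$-bookkeeping straight so that Proposition~\ref{atomdes-prop}(a) is invoked with the correct element playing the role of ``$z\rtimes s$'', together with confirming $s_i \in \DesR(z) \iff s_i \in \DesR(z\rtimes s_i)$ fails in the way the recurrence needs.

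For \textbf{uniqueness}, I would argue by downward induction on $\ellhat(z)$ (equivalently, by induction along the two-sided weak order $<_T$ above $y$). Suppose $\{P_{y,z}\}$ is any family of homogeneous polynomials satisfying both conditions. The initial condition pins down $P_{y,z} = \delta_{y,z}$ whenever $y\not<_T z$, in particular for $z$ maximal in the relevant range. For the inductive step, given $z$ with $y <_T z$, choose any $s_i \in \DesR(z)$; then $z\rtimes s_i = s_i z s_i$ satisfies $\ellhat(z\rtimes s_i) = \ellhat(z)-1$, so by induction $P_{y, z\rtimes s_i} = \hat\fkS_{y,z\rtimes s_i}$ is already determined. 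The recurrence gives $\partial_i P_{y,z} = P_{y,z\rtimes s_i}$, and the standard fact that a homogeneous polynomial with no constant term is recovered from its images under the $\partial_i$'s (more precisely, that $\hat\fkS_{y,z}$ and $P_{y,z}$ are homogeneous of the same positive degree with $\partial_i(\hat\fkS_{y,z} - P_{y,z}) = 0$ for all $i$, forcing the difference to be a symmetric polynomial, hence constant, hence zero in positive degree) forces $P_{y,z} = \hat\fkS_{y,z}$. I expect this last ``recover a polynomial from its divided differences'' step to require a short argument citing that the only homogeneous symmetric polynomials annihilated by all $\partial_i$ are constants, exactly mirroring the uniqueness proof behind Theorem~\ref{schubunique-thmdef}.
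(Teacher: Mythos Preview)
Your proposal is correct and follows essentially the same route as the paper: verify existence via Proposition~\ref{atomdes-prop} and Theorem~\ref{schubunique-thmdef}, then prove uniqueness by induction on $\ellhat(z)$ using $\bigcap_{i}\ker\partial_i=\ZZ$ together with homogeneity. Two small slips to fix: it is not always true that $z\rtimes s_i=s_iz s_i$ when $s_i\in\DesR(z)$ (it could be $zs_i$ if they commute), though what you actually need---that $s_i\notin\DesR(z\rtimes s_i)$---holds in either case; and the identity you extract from Proposition~\ref{atomdes-prop}(a) should read $\cA(y,z\rtimes s_i)=\{us_i:u\in\cA(y,z),\ s_i\in\DesR(u)\}$, which is the direction your bijection requires.
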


\begin{proof}
We first claim that $\hat\fkS_{y,z}$ has  these properties. 
The polynomials $\hat\fkS_{y,z}$ are homogeneous since the Schubert polynomial $\fkS_u$ is homogeneous of degree $\ell(u)$.  
That $\hat\fkS_{y,z} =\delta_{y,z}$ if $y\not <_Tz$ follows from the definition of $\cA(y,z)$.
Finally, 
the given formula for $\partial_i \hat \fkS_{y,z}$  is straightforward to check from our original definition \eqref{intro-inv-schub-eq} using Proposition \ref{atomdes-prop} and Theorem \ref{schubunique-thmdef}.

For the uniqueness assertion, suppose $\{ f_z \}_{z \in \I(S_\infty)}$
is another family of homogeneous polynomials with the properties of $\hat \fkS_{y,z}$ described in the theorem.
We proceed as in the proof of \cite[Theorem 2.3]{Knutson}.
By hypothesis $f_z = \hat \fkS_{y,z} = \delta_{y,z}$  if $y \not <_Tz$, so assume that $y <_T z$ and that $f_u=\hat \fkS_{y,u}$  if $u \in \I(S_\infty)$ is such that $\ellhat(u) < \ellhat(z)$.
Then $\partial_i f_z = \partial_i \hat \fkS_{y,z}$ for all $i \in \PP$, so   we deduce that $f_z =\hat \fkS_{y,z}+a_y$ for some  $a_y \in \ZZ = \bigcap_{i \in \PP} \ker \partial_i $. 
Since $f_z$ and $\hat \fkS_{y,z}$ are both homogeneous and since $\hat \fkS_{y,z}$ has  degree $\ellhat(y,z)>0$, the  constant $a_z$ must be zero  so $f_z=\hat \fkS_{y,z}$ as desired.
\end{proof}

By induction,  we may express $\hat \fkS_{y,z}$ in terms of divided differences in the following way.

\begin{corollary}\label{invschubdef-cor} Let $y,z \in \I(S_n)$. Then $\hat\fkS_{y,z} = \partial_{u} \hat\fkS_{y,w_n}$ for any $u \in \cA(z,w_n)$.
\end{corollary}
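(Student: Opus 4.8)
The plan is to prove the identity $\hat\fkS_{y,z} = \partial_u \hat\fkS_{y,w_n}$ for $u \in \cA(z,w_n)$ by induction on $\ellhat(z,w_n) = \ellhat(w_n) - \ellhat(z)$, peeling off one simple reflection at a time and repeatedly invoking the characterization in Theorem~\ref{invschubdef-thm}. The base case is $z = w_n$, where $\cA(w_n,w_n) = \{1\}$ forces $u = 1$ and $\partial_1 = \id$ gives the trivial identity $\hat\fkS_{y,w_n} = \hat\fkS_{y,w_n}$.

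For the inductive step, suppose $z < w_n$ in two-sided weak order, so that there exists some $s_i \in S$ with $\ellhat(z) < \ellhat(z \rtimes s_i)$; set $z' = z \rtimes s_i$, which satisfies $\ellhat(z',w_n) < \ellhat(z,w_n)$. First I would apply the second defining property from Theorem~\ref{invschubdef-thm}: since $s_i \in \DesR(z')$ and $z' \rtimes s_i = z$, we have $\partial_i \hat\fkS_{y,z'} = \hat\fkS_{y,z}$. By the inductive hypothesis, $\hat\fkS_{y,z'} = \partial_{u'} \hat\fkS_{y,w_n}$ for any $u' \in \cA(z',w_n)$, and composing gives $\hat\fkS_{y,z} = \partial_i \partial_{u'} \hat\fkS_{y,w_n}$. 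The remaining task is to identify $\partial_i \partial_{u'}$ with $\partial_u$ for an appropriate $u \in \cA(z,w_n)$; this amounts to showing that for a suitable choice of atom $u' \in \cA(z',w_n)$, the element $u = s_i u'$ lies in $\cA(z,w_n)$ and has length $\ell(u') + 1$, so that $\partial_u = \partial_i \partial_{u'}$ via the reduced-word definition of $\partial_u$ from Section~\ref{schubert-sect}.

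The key combinatorial input is Proposition~\ref{atomdes-prop}(b), applied in the relative-atom setting: since $s_i \in \DesR(z')$ equivalently captures that $z = z' \rtimes s_i$ with $z < z'$, the relationship between $\cA(z,w_n)$ and $\cA(z',w_n)$ should be governed by left-multiplication by $s_i$. Concretely, I expect to show $\cA(z,w_n) \supseteq \{ s_i w' : w' \in \cA(z',w_n),\ s_i \in \DesL(w') \}$, or the appropriate variant, which would let me pass from $u'$ to $u = s_i u'$. The length condition $\ell(s_i u') = \ell(u') + 1$ follows precisely from $s_i \notin \DesL(u')$, which is what the atom description guarantees; this ensures that prepending $s_i$ to a reduced word for $u'$ yields a reduced word for $u$, hence $\partial_u = \partial_i \partial_{u'}$.

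The main obstacle will be verifying that the choice of atom $u'$ can be made compatibly so that $u = s_i u'$ ranges over (or at least lands in) $\cA(z,w_n)$, and in particular that the statement holds for \emph{any} $u \in \cA(z,w_n)$ rather than just one well-chosen atom. Since $\partial_u$ depends only on $u$ (and equals $\partial_{i_1}\cdots\partial_{i_k}$ for any reduced word of $u$), and since $\hat\fkS_{y,z} = \sum_{u \in \cA(z,w_n)} $ is a single fixed polynomial, it suffices to prove the identity for one atom $u$ and then argue that $\partial_u \hat\fkS_{y,w_n}$ is independent of the choice of $u \in \cA(z,w_n)$; this independence is itself a consequence of the identity once established, so the cleanest route is to prove the inductive step for a single compatible atom and invoke Theorem~\ref{invschubdef-thm} to conclude that the resulting polynomial equals $\hat\fkS_{y,z}$ regardless of the atom chosen. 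I would take care to state the induction so that the conclusion ``for any $u \in \cA(z,w_n)$'' is delivered by the characterization theorem rather than by tracking all atoms simultaneously.
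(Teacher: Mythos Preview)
Your overall strategy---induction on $\ellhat(z,w_n)$, peeling off one simple reflection and invoking Theorem~\ref{invschubdef-thm} at each step---is exactly what the paper intends by ``By induction.'' The gap is in how you handle the quantifier ``for any $u \in \cA(z,w_n)$.'' You fix an ascent $s_i$ of $z$ \emph{before} considering $u$, and then your argument (via Proposition~\ref{atomdes-prop}(b)) only reaches those atoms $u \in \cA(z,w_n)$ with $s_i \in \DesL(u)$. Your proposed workaround---prove it for one atom, then invoke Theorem~\ref{invschubdef-thm} to get independence of the choice of atom---is circular, as you yourself note: the independence \emph{is} the statement you are trying to prove, and the characterization theorem says nothing about the operators $\partial_u$ applied to $\hat\fkS_{y,w_n}$.

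The fix is to reverse the order of choices: let $u$ determine $s_i$. Given $u \in \cA(z,w_n)$ with $u \neq 1$, pick any $s_i \in \DesL(u)$ (equivalently, the first letter of a reduced word for $u$). By the last assertion of Proposition~\ref{atomdes-prop}, $\DesL(u) \subset S \setminus \DesR(z)$, so $s_i \notin \DesR(z)$ and $z' = z \rtimes s_i > z$. Now $u' = s_i u$ has $\ell(u') = \ell(u)-1$, and Proposition~\ref{atomdes-prop}(b) applied with first argument $z'$ gives $u' \in \cA(z',w_n)$. The inductive hypothesis yields $\hat\fkS_{y,z'} = \partial_{u'}\hat\fkS_{y,w_n}$, and Theorem~\ref{invschubdef-thm} gives $\hat\fkS_{y,z} = \partial_i \hat\fkS_{y,z'}$; composing, $\hat\fkS_{y,z} = \partial_i \partial_{u'}\hat\fkS_{y,w_n} = \partial_u \hat\fkS_{y,w_n}$. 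No independence argument is required. Equivalently, and perhaps more transparently: any reduced word $(s_{i_1},\dots,s_{i_k})$ for $u$ lies in $\hat\cR(z,w_n)$, so by Lemma~\ref{increasing-cor} there is a chain $z = z_0 < z_1 < \cdots < z_k = w_n$ with $z_{j-1} = z_j \rtimes s_{i_j}$, and applying the divided-difference formula in Theorem~\ref{invschubdef-thm} $k$ times gives $\hat\fkS_{y,z} = \partial_{i_1}\cdots\partial_{i_k}\hat\fkS_{y,w_n} = \partial_u \hat\fkS_{y,w_n}$ directly.
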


For technical reasons we need a slightly different version of Theorem \ref{invschubdef-thm} to characterize the fixed-point-free involution Schubert polynomials $\Sfpf_z$.
Define $\tilde \I_\fpf$ to be the set of fixed-point-free involutions $w: \PP \to \PP$ with the property that, for some sufficiently large positive integer $N$, it holds that $w(2i-1) = 2i$ and $w(2i) = 2i-1$ for all $i>N$. 
If $n$ is finite and $z \in \Ifpf(S_{2n})$ then let $ z_\infty$ be the infinite product
\be\label{infprod} z_\infty = z \cdot s_{2n+1}\cdot   s_{2n+3} \cdot s_{2n+5} \cdots \in \tilde\I_\fpf.\ee
We also let $1_\fpf = 1_\infty \in \tilde \I_\fpf$ be the map $\PP\to \PP$ with $2i-1 \mapsto 2i$ and $2i\mapsto 2i-1$ for all $i$. 
The set $\tilde \I_\fpf$ may be equivalently defined as the orbit of $1_\fpf$ under the conjugation action of $S_\infty$ on the group of all bijections $\PP \to \PP$. 
Although no elements of $\tilde\I_\fpf$ belong to $S_\infty$,
we define $w\rtimes s_i$ for $w \in \tilde\I_\fpf$ exactly as if $w$ were in $S_\infty$, and we define the (now infinite) set $\DesR(w)$ again by \eqref{Des-def}.

If $z \in \Ifpf(S_{2n})$, so that $zs_{2n+1} \in \Ifpf(S_{2n+2})$,
then
it follows either as a straightforward exercise or from the more general statement \cite[Lemma 3.2]{HMP2} that
$\cA(\wfpf_n, z) = \cA(\wfpf_{n+1}, z s_{2n+1})$ and therefore $
\Sfpf_z= \Sfpf_{z s_{2n+1}}$. By induction,  if $y \in \Ifpf(S_{2m})$ and $z \in \Ifpf(S_{2n})$, then $\Sfpf_y = \Sfpf_z$ whenever $ y_\infty =  z_\infty$.
It is therefore well-defined to set 
\[ \Sfpf_{ z_\infty} = \Sfpf_z = \hat\fkS_{\wfpf_n,z} \qquad\text{for any $z \in \Ifpf(S_{2n})$ and $n \in \PP$.}\]
Since $\tilde \I_\fpf = \bigcup_{n \in \PP} \{  z_\infty : z \in \Ifpf(S_{2n})\}$, this defines $\Sfpf_w$ for every $w \in \tilde \I_\fpf$.

\begin{corollary}\label{invschubdeffpf-cor}
It holds that  $\{ \Sfpf_{z} \}_{z \in \tilde\I_\fpf}$ is the unique family of homogeneous polynomials indexed by $\tilde\I_\fpf$ such that 
\[\Sfpf_{1_\fpf} =1
\qquand
\partial_i \Sfpf_{z} = \begin{cases} \Sfpf_{z\rtimes s_i}&\text{if $s_i \in \DesR(z)$ and $z\rtimes s_i \in \tilde \I_\fpf$} \\0&\text{otherwise}\end{cases}
\quad\text{for all $i \in \PP$.}
\]
\end{corollary}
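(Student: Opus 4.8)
The plan is to follow the two-part structure of the proof of Theorem~\ref{invschubdef-thm}: first verify that the family $\{\Sfpf_z\}_{z\in\tilde\I_\fpf}$ satisfies the stated recursion, and then show any homogeneous family obeying the same recursion must coincide with it. Throughout I would reduce statements about $w\in\tilde\I_\fpf$ to finite level using the already-established stabilization identity $\Sfpf_{z_\infty}=\Sfpf_z$, so that for any finite collection of indices it suffices to work with some $z\in\Ifpf(S_{2n})$ (with $n$ taken large enough) and the finite atom sets $\cAfpf(z)=\cA(\wfpf_n,z)$.

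For existence, homogeneity is immediate since $\Sfpf_z=\sum_{u\in\cAfpf(z)}\fkS_u$ is a sum of Schubert polynomials all of degree $\ell(u)=\ellhat_\fpf(z)$, and $\Sfpf_{1_\fpf}=\Sfpf_{\wfpf_n}=\fkS_1=1$ because $\cA(\wfpf_n,\wfpf_n)=\{1\}$. The heart of the matter is the divided-difference relation, and here I would first observe that the hypothesis ``$z\rtimes s_i\in\tilde\I_\fpf$'' is exactly the statement that $z\rtimes s_i$ is fixed-point-free: for fixed-point-free $z$ with $s_i\in\DesR(z)$, the element $z\rtimes s_i$ equals $s_izs_i$ (still fixed-point-free) unless $(i,i+1)$ is a $2$-cycle of $z$, in which case $z\rtimes s_i=zs_i$ acquires two fixed points. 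This gives three cases. If $s_i\notin\DesR(z)$, then $s_i\notin\DesR(u)$ for every $u\in\cAfpf(z)$ by the final clause of Proposition~\ref{atomdes-prop}, so $\partial_i\fkS_u=0$ by Theorem~\ref{schubunique-thmdef} and hence $\partial_i\Sfpf_z=0$. If $s_i\in\DesR(z)$ and $z\rtimes s_i\in\tilde\I_\fpf$, then $z\rtimes s_i\rtimes s_i=z$ and $s_i\notin\DesR(z\rtimes s_i)$, so Proposition~\ref{atomdes-prop}(a) applied to the pair $(\wfpf_n,z\rtimes s_i)$ yields $\cAfpf(z\rtimes s_i)=\{us_i:u\in\cAfpf(z),\ s_i\in\DesR(u)\}$; since $u\mapsto us_i$ is injective and $\partial_i\fkS_u=\fkS_{us_i}$ exactly when $s_i\in\DesR(u)$, this gives $\partial_i\Sfpf_z=\Sfpf_{z\rtimes s_i}$.

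The one case requiring care, and the main obstacle, is $s_i\in\DesR(z)$ with $z\rtimes s_i\notin\tilde\I_\fpf$ (so $z\rtimes s_i=zs_i$ is not fixed-point-free); here the claim $\partial_i\Sfpf_z=0$ amounts to showing no $u\in\cAfpf(z)$ has $s_i\in\DesR(u)$. I would prove this via Lemma~\ref{increasing-cor} and Corollary~\ref{rfpf-cor}: if such a $u$ existed, a reduced word for $u$ ending in $s_i$ would lie in $\hat\cR(\wfpf_n,z)$, and truncating its last letter would produce a word in $\hat\cR(\wfpf_n,z\rtimes s_i)=\cRfpf(z\rtimes s_i)$; but this set is empty because $z\rtimes s_i$ is not fixed-point-free, a contradiction. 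Equivalently, one may phrase the whole existence argument as an appeal to Theorem~\ref{invschubdef-thm}, noting that $\hat\fkS_{\wfpf_n,z\rtimes s_i}$ is the empty sum $0$ precisely when $\cA(\wfpf_n,z\rtimes s_i)=\varnothing$, which by Corollary~\ref{rfpf-cor} happens exactly when $z\rtimes s_i$ fails to be fixed-point-free. This also covers the tail descents of an infinite-support $z_\infty$, which are all $2$-cycle removals and hence fall into this vanishing case after passing to a large enough finite truncation.

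For uniqueness I would induct on $\ellhat_\fpf(z)$, exactly as in the proof of Theorem~\ref{invschubdef-thm}. The base case is $z=1_\fpf$, the unique element of $\tilde\I_\fpf$ of rank $0$, where $f_{1_\fpf}=1=\Sfpf_{1_\fpf}$. For the inductive step, the recursion forces $\partial_i f_z=\partial_i\Sfpf_z$ for every $i\in\PP$: in the good-descent cases both sides equal $f_{z\rtimes s_i}=\Sfpf_{z\rtimes s_i}$ by induction, and in all other cases both vanish by the existence computation above. Hence $f_z-\Sfpf_z\in\bigcap_{i\in\PP}\ker\partial_i=\ZZ$, and since both polynomials are homogeneous of positive degree $\ellhat_\fpf(z)$ this difference must be $0$. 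The only genuinely new ingredient beyond Theorem~\ref{invschubdef-thm} is the identification of the descent condition with fixed-point-freeness and the resulting vanishing in the $2$-cycle-removal case.
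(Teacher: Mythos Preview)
Your proof is correct and follows essentially the same approach as the paper's. The paper's proof is more terse---it reduces existence to Theorem~\ref{invschubdef-thm} by observing that $\hat\fkS_{\wfpf_n,z'}=0$ whenever $z'$ fails to be fixed-point-free (since then $\wfpf_n\not\leq_T z'$), which is exactly the ``equivalent phrasing'' you mention; and it handles uniqueness by the same induction on $\ellhat_\fpf$ that you spell out. Your more explicit case analysis via Proposition~\ref{atomdes-prop} and Corollary~\ref{rfpf-cor} unpacks the same mechanism.
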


\begin{proof}
If   $z \in \I(S_{2n})$ is not fixed-point-free then it cannot hold that $\wfpf_n \leq_T z$ (since $y\leq_T z$ implies that $y$ has at least as many fixed points as $z$) so $\hat\fkS_{\wfpf_n,z} = 0$. From this observation, the assertion that the polynomials $\{ \Sfpf_{z} \}_{z \in \tilde\I_\fpf}$ have the given properties is an easy exercise from Theorem \ref{invschubdef-thm}.
The   uniqueness assertion  follows by the same argument as   the one in the proof of Theorem \ref{invschubdef-thm}, \emph{mutatis mutandis}, after replacing the involution length $\ellhat$ which is not well-defined on $\tilde\I_\fpf$ by the function $\ellhat_\fpf : \tilde \I_\fpf \to \NN$ given by $\ellhat_\fpf(z_\infty) = \ellhat(\wfpf_n,z) = \ellhat(z) - n$ for $z \in \Ifpf(S_{2n})$.
\end{proof}

Given a sequence $i = (i_1,i_2,\dots,i_n) \in \NN^n$, we let $x^i = x_1^{i_1}x_2^{i_2} \cdots x_n^{i_n}$ and 
write $x^i <_{\mathrm{lex}} x^j$ when $i <_{\mathrm{lex}} j \in \NN^n$, where $<_{\mathrm{lex}}$ denotes the  lexicographic order on sequences.
A different convention is sometimes taken to define lexicographic order on monomials, as we explain in the following remark.

\begin{remark}
Viewing $x^i$ as a word in the alphabet $\{x_1,x_2,\dots\}$
 defines a sequence
\[ \Psi(x^i) \omdef= (\underbrace{1,\dots,1}_{\text{$i_1$ times}},\underbrace{2,\dots,2}_{\text{$i_2$ times}},\dots,\underbrace{n,\dots,n}_{\text{$i_n$ times}}).\]
For example, $\Psi\(x^{(1,1,1)}\) = \Psi(x_1x_2x_3) = (1,2,3)$ and $\Psi\(x^{(0,2,1)}\) = \Psi\(x_2^2x_3\) = (2,2,3)$. One checks that on the set of monomials of any fixed degree,  
the map $x^i \mapsto \Psi(x^i)$ reverses lexicographic order; e.g., we have $x_2^2x_3 <_{\mathrm{lex}} x_1x_2x_3$ but $(1,2,3) <_{\mathrm{lex}} (2,2,3)$. 
 For this reason, the order $<_{\mathrm{lex}}$ that we have defined on monomials is sometimes (e.g., in \cite{BB}) referred to as the \emph{reverse lexicographic order}, though for us this order is the usual \emph{lexicographic total order}.
  \end{remark}

We now show how to read the involution codes $\hat c(y)$ and $\hat c_\fpf(z)$ from the polynomials $\hat \fkS_y$ and $\Sfpf_z$. As these codes   determine  $y \in \I(S_\infty)$ and $z \in \Ifpf(S_\infty)$,    one  can reconstruct the index of an involution Schubert polynomial from the polynomial itself.

\begin{proposition} Let $y \in \I(S_\infty)$ and $z \in \Ifpf(S_{\infty})$.
The lexicographically least monomials in the involution Schubert polynomials $\hat \fkS_y$  and $\Sfpf_z$ are    $x^{\hat c(y)}$
and $x^{\hat c_\fpf(z)}$ respectively.
\end{proposition}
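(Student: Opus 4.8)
The plan is to reduce the claim to the corresponding statement about ordinary Schubert polynomials, using the fact that $\hat \fkS_y = \sum_{u \in \cA(y)} \fkS_u$ and $\Sfpf_z = \sum_{u \in \cAfpf(z)} \fkS_u$ together with Lemma~\ref{invcode-lem} and Proposition~\ref{lexatom-prop}. First I would establish the underlying single-Schubert fact: for any permutation $w \in S_\infty$, the lexicographically least monomial appearing in $\fkS_w$ is $x^{c(w)}$. This follows directly from the Billey--Jockusch--Stanley formula \eqref{schub1-eq}, since among all the monomials $x_I$ with $I \in C(a)$ and $I \leq a$ over reduced words $a \in \cR(w)$, the lexicographically smallest is obtained by making each index $i_j$ as small as possible; one checks that this minimal choice reproduces exactly the construction $\Theta(c(w))$, whose associated monomial is $x^{c(w)}$. (Here I use the convention, fixed in the remark preceding the statement, that $<_{\mathrm{lex}}$ on monomials is the order in which $x_1$-heavy monomials come last, so that the ``least'' monomial uses the largest available subscripts, matching the descending-run structure of $\Theta$.)

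The key combinatorial step is then to argue that the lexicographically least monomial of a sum $\sum_{u \in \cA} \fkS_u$ of distinct Schubert polynomials is the least among the individual least monomials $x^{c(u)}$, \emph{and} that this minimum is attained without cancellation. Since all Schubert polynomials have nonnegative integer coefficients, no cancellation can occur, so the least monomial of the sum is simply $\min_{<_{\mathrm{lex}}} \{ x^{c(u)} : u \in \cA \}$. Thus I must identify which atom $u$ minimizes $x^{c(u)}$ lexicographically. By Proposition~\ref{lexatom-prop}, $\alpha_{\min}(y)$ and $\beta_{\min}(z)$ are the lexicographically minimal elements (as one-line permutations) of $\cA(y)$ and $\cAfpf(z)$ respectively, and by Lemma~\ref{invcode-lem} their codes are precisely $\hat c(y)$ and $\hat c_\fpf(z)$. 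So the result follows once I show that the atom minimizing the \emph{one-line} lexicographic order also minimizes the \emph{monomial} lexicographic order of its code, i.e.\ that $u \mapsto x^{c(u)}$ is order-compatible with $u \mapsto$ (one-line word) in the relevant sense.

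The main obstacle I anticipate is exactly this last compatibility: the map from a permutation's one-line notation to the monomial $x^{c(u)}$ of its code is not in general monotone, so I cannot simply invoke ``minimal one-line $\Rightarrow$ minimal code.'' I would handle this by arguing directly with the defining formulas for $\hat c(y)$ and $\hat c_\fpf(y)$ rather than routing through the atoms' one-line forms. Concretely, I would show that for \emph{every} $u \in \cA(y)$ the monomial $x^{c(u)}$ satisfies $x^{\hat c(y)} \leq_{\mathrm{lex}} x^{c(u)}$, by comparing codes entry-by-entry and using Proposition~\ref{atomdes-prop} (which constrains $\DesL$ and $\DesR$ of atoms) to show that any atom other than $\alpha_{\min}(y)$ has a code that is lexicographically larger at the first position of disagreement. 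The fixed-point-free case is entirely parallel, using $\beta_{\min}(z)$, $\hat c_\fpf(z)$, and $\cAfpf(z)$ in place of their non-fixed-point-free counterparts. Combining the no-cancellation observation, the single-Schubert least-monomial fact, and this entrywise code comparison yields that $x^{\hat c(y)}$ and $x^{\hat c_\fpf(z)}$ are the lexicographically least monomials of $\hat \fkS_y$ and $\Sfpf_z$, as claimed.
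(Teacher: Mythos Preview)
Your overall strategy is exactly the paper's: write $\hat\fkS_y$ (respectively $\Sfpf_z$) as a sum of ordinary Schubert polynomials over atoms, use that Schubert polynomials have nonnegative coefficients so no cancellation occurs, use that the lex-least monomial of $\fkS_u$ is $x^{c(u)}$, and then identify the minimizing atom via Proposition~\ref{lexatom-prop} and Lemma~\ref{invcode-lem}.

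The gap is in your ``main obstacle'' paragraph. You assert that the map $u \mapsto c(u)$ from one-line notation to code is \emph{not} order-preserving under lex, and then propose a workaround via Proposition~\ref{atomdes-prop}. In fact the map \emph{is} order-preserving, and this is precisely the one-line observation the paper uses to finish. Here is the check: suppose $u <_{\mathrm{lex}} u'$ with first disagreement at position $i$, so $u(j)=u'(j)$ for $j<i$ and $u(i)<u'(i)$. For $j<i$, the complement $\{u(k):k>j\}=\PP\setminus\{u(1),\dots,u(j)\}$ depends only on the shared prefix, so $c_j(u)=c_j(u')$. At position $i$, with $A=\{u(1),\dots,u(i-1)\}$ we have $c_i(u)=|\{v\notin A:v<u(i)\}|$ and $c_i(u')=|\{v\notin A:v<u'(i)\}|$; since $u(i)\notin A$ lies in the second set but not the first, $c_i(u)<c_i(u')$. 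Hence $c(u)<_{\mathrm{lex}} c(u')$, i.e.\ $x^{c(u)}<_{\mathrm{lex}} x^{c(u')}$. With this in hand, the lex-minimal atom $\alpha_{\min}(y)$ (respectively $\beta_{\min}(z)$) automatically has the lex-minimal code among all atoms, and your workaround is unnecessary. Your sketch of that workaround via descent-set constraints is also too vague to evaluate as it stands.

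A smaller point: your argument for why $x^{c(w)}$ is the lex-least monomial of $\fkS_w$ is a bit garbled (you write ``making each index $i_j$ as small as possible'' and then say the least monomial ``uses the largest available subscripts,'' which are opposite moves). The paper simply cites Bergeron--Billey for this fact; if you want to argue it directly from the Billey--Jockusch--Stanley formula, be careful to match the convention that lex-least monomials have small $x_1$-exponent, hence come from compatible sequences that push indices as \emph{high} as possible.
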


\begin{proof}
One checks that if $u,u' \in S_\infty$ with  $u <_{\mathrm{lex}} u'$ (interpreting $u$ and $u'$  in one-line notation), then $c(u)  <_{\mathrm{lex}} c(u')$.
It therefore suffices by Proposition \ref{lexatom-prop} and Lemma \ref{invcode-lem} to show that $x^{c(u)}$ is the lexicographically least monomial in $\fkS_u$. 
This property is 
clear  
from the proof of \cite[Corollary 3.9]{BB}, on noting  that $x^{c(u)} = x_{D_{\mathrm{bot}}(u)}$ in the notation of \cite[\S3]{BB}, and that  $<_{\mathrm{lex}}$ is what  the authors of \cite{BB} call the reverse lexicographic order on monomials.
\end{proof}

\subsection{Cohomology of flag varieties revisited}\label{revisit-sect}

Throughout this section, we let 
 $\beta$ be a non-degenerate  bilinear form on $\CC^n$ which is symmetric or skew-symmetric,
 and define  $K\subset \GL_n(\CC)$ to be the 
  subgroup of matrices preserving $\beta$.
The group $K$ is given by the orthogonal group $\O(n)$ when $\beta$ is symmetric,
and by the symplectic group $\Sp(n)$ when $\beta$ is skew-symmetric (which can only occur if $n$ is even).

As explained in \cite[\S10]{R}, the orbits of the symmetric subgroups $\O(n)$ and $\Sp(n)$ on $\Fl(n)$ are naturally indexed by $\I(S_n)$ and $\Ifpf(S_n)$, respectively.
 To refer to these indexing sets, we define
 \[ \I_{\O(n)} \omdef= \I(S_n)\qquand \I_{\Sp(n)} \omdef= \Ifpf(S_n).\]
 The corresponding $K$-orbits may then be described by rank conditions analogous to the ones \eqref{rank-cond} for Schubert varieties. Explicitly, we may define a $K$-orbit associated to an involution $y \in \I_K$ by
\[
\mathring Y_y^K \omdef=\left \{F_{\bullet} \in \Fl(n) : \rank \(\beta|_{F_i \times F_j}\) = \r_y(i,j) \text{ for each $i,j \in [n]$}\right\},
\]
where $\beta|_{F_i \times F_j}$ denotes the  linear map $F_i \to F_j^*$ given by 
$v \mapsto \beta(v, \cdot)$, and $\r_y(i,j)$ is as in \eqref{r-def}.
It is not hard to see that $\mathring Y^K_y$ is $K$-stable, and that $\Fl(n)$ is the disjoint union $\bigcup_{y \in \I_K} \mathring Y_y$. Wyser discusses why $\mathring Y_y^K$ is actually a single $K$-orbit in \cite[\S2.1.2]{Wyser}. 

Let $Y^K_y$  denote the Zariski closure of $\mathring  Y^K_y$ as in Section \ref{cohomology-sect}. Then $Y^K_y$ is again defined by rank conditions; namely (see \cite[Proposition 2.4]{Wyser})
\be
\label{Y-eq}
 Y^K_y =\left \{F_{\bullet} \in \Fl(n) : \rank \(\beta|_{F_i \times F_j}\) \leq \r_y(i,j) \text{ for each $i,j \in [n]$}\right\}.
\ee
Equation \eqref{rank-cond} asserts that the Schubert variety $X_w$ for $w \in S_n$ is  the set of complete flags $F_\bullet$ such that $ \dim \(\Proj F_i \to V_j \)\leq \r_w(i,j)$ for each $(i,j) \in [n]\times [n]$.
Fulton shows in \cite{fulton-double-schubert} that a proper subset of these rank conditions  actually imply all of the rest. Specifically, to determine $X_w$ one only needs  the conditions corresponding to pairs $(i,j)$ in the \emph{essential set} of the Rothe diagram $D(w)$. In general, the essential set of a diagram $D \subset \PP \times \PP$ is
\be\label{ess-eq}
\Ess(D) \omdef= \{(i,j) \in D : (i+1,j), (i,j+1), (i+1,j+1)  \notin D \}.
\ee
Observe that $\Ess(D)$ is the set of southeast corners of the connected  components of $D$.
\begin{example}
If $y=[4,7,3,1,6,5,2]$ as in Example \ref{invRothe-ex}
then 
\[\Ess( D(y)) = \{  (2,3),(2,6),(3,2),(5,5),(6,2)\}
\quand
\Ess(\hat D(y))  = \{ (3,2),(5,5),(6,2)\}
.
\]
\end{example}

The rank conditions \eqref{Y-eq} giving $Y^K_y$ admit the following analogous simplification.

\begin{proposition} \label{prop:essential-set} 
The $K$-orbit closure associated to an involution $y \in \I_K$ satisfies
\[
Y^K_y =\left \{F_{\bullet}  \in \Fl(n) : \rank \(\beta|_{F_i \times F_j}\) \leq \r_y(i,j) \text{ for each $(i,j) \in \Ess\(D\)$}\right\}
\]
where $D=\hat D(y)$ when $K=\O(n)$ and $D=\Dfpf(y)$ when $K=\Sp(n)$.
\end{proposition}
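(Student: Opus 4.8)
The plan is to reduce the statement to the analogous fact for ordinary Schubert varieties, namely that the rank conditions $\dim(\Proj: F_i \to V_j) \leq \r_w(i,j)$ over $(i,j) \in \Ess(D(w))$ already cut out $X_w$, which is Fulton's theorem. First I would recall that $\mathring Y^K_y$ and its closure $Y^K_y$ are defined by the conditions $\rank(\beta|_{F_i\times F_j}) \leq \r_y(i,j)$ for \emph{all} $(i,j) \in [n]\times[n]$, as in \eqref{Y-eq}. The goal is to show that the subset of these conditions indexed by $\Ess(\hat D(y))$ (respectively $\Ess(\Dfpf(y))$) already implies the full set. Since $\hat D(y)$ and $\Dfpf(y)$ are precisely the on-or-below-diagonal portions of $D(y)$, and $D(y)$ is transpose-invariant because $y=y^{-1}$, the essential set of $\hat D(y)$ determines the essential set of the whole diagram $D(y)$ up to reflection across the diagonal.

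The key steps, in order, are as follows. (1) Observe that for an involution $y$, the bilinear-form rank function enjoys a symmetry: because $\beta$ is non-degenerate and $(\veps)$-symmetric, the rank of $\beta|_{F_i\times F_j}$ is controlled symmetrically in $i$ and $j$, so a rank condition at $(i,j)$ and its transpose $(j,i)$ carry the same information. This is what lets us restrict attention to the below-diagonal cells. (2) Invoke Fulton's essential-set theorem for the flag variety: the inequalities $\dim(\Proj: F_i\to V_j)\le \r_y(i,j)$ at the essential cells of $D(y)$ imply the same inequalities at all $(i,j)$. (3) Translate between the linear-projection rank $\dim(\Proj: F_i\to V_j)$ and the bilinear-form rank $\rank(\beta|_{F_i\times F_j})$: identifying $V_j$ with an appropriate quotient determined by $\beta$ (using that $\beta$ is non-degenerate), these two rank functions agree for flags in $Y^K_y$, so the essential conditions transfer from the Schubert setting to the orbit-closure setting. (4) Combine the diagonal symmetry from (1) with Fulton's reduction from (2): the essential cells of $D(y)$ come in transpose pairs, so one representative from each pair — exactly the cells of $\Ess(\hat D(y))$ — suffices, and in the fixed-point-free case the diagonal cells drop out, leaving $\Ess(\Dfpf(y))$.

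The main obstacle I expect is step (3) together with the fixed-point-free subtlety flagged in the text after Proposition~\ref{prop:essential-set} and in the introduction. For $K=\O(n)$ the symmetric-form case is a clean transpose symmetry and the reduction to $\Ess(\hat D(y))$ should follow directly from Fulton. For $K=\Sp(n)$, however, the skew-symmetry forces $\beta|_{F_i\times F_i}$ to be degenerate in a controlled way, so the diagonal rank conditions behave differently, and one must verify carefully that excising the diagonal cells (passing from $\hat D(y)$ to $\Dfpf(y)$) does not discard any essential condition — i.e. that the diagonal conditions are automatically implied by the strictly-below-diagonal ones for a fixed-point-free involution. I would handle this by checking that for fixed-point-free $y$ the diagonal cells $(i,i)$ never lie in $\hat D(y)$ in the first place (since $y(i)\neq i$ forces the relevant inequality $j<y(i)$, $i<y(j)$ with $j=i$ to fail or be redundant), so that $\Ess(\hat D(y))$ and $\Ess(\Dfpf(y))$ differ only by cells whose conditions are vacuous for the symplectic form. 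The bulk of the argument is then bookkeeping, but this compatibility between the skew form and the diagonal is where the real care is needed.
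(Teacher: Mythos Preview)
Your proposal has two genuine gaps.

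\textbf{Step (3) does not work.} The rank condition $\rank(\beta|_{F_i\times F_j})\le \r_y(i,j)$ compares two subspaces of the \emph{same} varying flag $F_\bullet$; there is no fixed reference flag $V_\bullet$ with respect to which this becomes $\dim(\Proj:F_i\to V_j)\le \r_y(i,j)$. Using $\beta$ to identify $\CC^n$ with its dual sends $F_j$ to the annihilator of $F_j^\perp$, but this still depends on $F_\bullet$, not on a fixed $V_\bullet$. So you cannot literally invoke Fulton's theorem for $X_y$ and then translate. What the paper does instead is rerun Fulton's implication-graph argument in this setting: letting $C_{ij}$ be the locus where the $(i,j)$ rank inequality holds, one checks directly that if $(i,j)\in D$ then $C_{ij}\subset C_{i-1,j}\cap C_{i,j-1}$, while if $(i,j)\notin D$ then $C_{ij}$ contains $C_{i-1,j}$ or $C_{i,j-1}$. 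This uses only how $\r_y(i,j)$ changes by $0$ or $1$ in each direction, so the combinatorics is the same as Fulton's, but it must be argued for the bilinear-form rank, not imported from Schubert varieties.

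\textbf{Step (4) is incorrect as stated.} For fixed-point-free $y$ the diagonal cell $(i,i)$ lies in $\hat D(y)$ precisely when $i<y(i)$, which certainly happens (e.g.\ every odd $i$ for $y=\wfpf_n$). So $\Ess(\hat D(y))$ and $\Ess(\Dfpf(y))$ can genuinely differ, and the diagonal conditions are not vacuous. The paper disposes of them with a parity argument specific to the skew form: both $\rank(\beta|_{F_i\times F_i})$ and $\r_y(i,i)$ are even, and from this one deduces $C_{i,i-1}\subset C_{ii}$, so the diagonal condition is implied by the adjacent off-diagonal one. This is exactly the ``subtlety in the fixed-point-free case'' alluded to, and it is not mere bookkeeping.
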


\begin{proof}
If $K = \O(n)$ and $y = 1 \in \I(S_n)$, or if $n$ is even and $K = \Sp(n)$ and $y = \wfpf_n \in \Ifpf(S_n)$,
then $\Ess(D) = D = \varnothing$ and the proposition asserts that $Y_y^K = \Fl(n)$.
In the symmetric case when $y=1$, we have $\r_y(i,j) = \min\{i,j\}$ for all $i,j \in [n]$, so the claim that $Y_y^K = \Fl(n)$ is evident from \eqref{Y-eq}.
In the skew-symmetric case when $y=\wfpf_n$,
we have $\r_y(i,j) = \min\{i,j\} - \delta$ where $\delta \in \{0,1\}$ is nonzero if and only if $i=j$ is odd. 
In this case,
since the rank of $\beta|_{F_i\times F_j}$ must be even when $i=j$ and $\beta$ is skew-symmetric, 
it again follows from \eqref{Y-eq} that $Y_y^K = \Fl(n)$ as desired.

Assume $y \in \I_K$ is not one of the trivial involutions considered in the previous paragraph, so that $\Ess(D)$ is nonempty.
Let $C_{ij}$ denote the  set of  complete flags $F_\bullet \in \Fl(n)$ satisfying the condition $\rank \(\beta|_{F_i \times F_j}\) \leq \r_y(i,j)$.  
Since $y$ is an involution and since $\beta$ is symmetric or skew-symmetric,
we have $C_{ij}=C_{ji}$, so \eqref{Y-eq} implies that $Y^K_y$ is the intersection of the sets $C_{ij}$ for   $1\leq j \leq i \leq n$.
Define the \emph{implication graph} of $y$ to be the directed graph on  $\{(i,j) : 1 \leq j \leq i \leq n\}$  with an edge from $(i,j)$ to $(k,l)$ if  the two cells are adjacent in the same row or column and  $C_{ij}\subset C_{kl}$. 

First assume  $K = \O(n)$ so that $D = \hat D(y)$. If $(i,j) \in D$, then $\r_y(i-1,j) = \r_y(i,j-1) = \r_y(i,j)$ so $C_{ij}$ is contained in $C_{i-1,j}$ and $C_{i,j-1}$. If $(i,j) \notin D$, then either $j \geq y(i)$ or $i \geq y(j)$. In the first case, $\r_y(i-1,j) = \r_y(i,j)-1$, so $C_{i-1,j} \subset C_{ij}$; in the second case, $\r_y(i,j-1) = \r_y(i,j)-1$, so $C_{i,j-1}\subset C_{ij}$. Thus, each cell in $D$ is the source of   edges in the implication graph going both north and west,  while each cell not in $D$ is the target of an edge going either south or east.
It follows that every $(i,j) \in D$ can be reached by a directed path in the implication graph  starting at some cell in $\Ess(D)$,
and every $(i,j) \notin D$
can be reached by a directed path  
 starting at  either $(1,1)$ or a cell in $D$. Since $(1,1) \notin D$ only if $w(1) = 1$, in which case  $C_{1,1} = \Fl(n)$, we  conclude that each $C_{ij}$  contains  $C_{pq}$ for some $(p,q) \in \Ess(D)$, so $Y^K_y$ is the intersection of the sets $C_{pq}$ for $(p,q) \in \Ess(D)$, as desired.

Now  suppose $K = \Sp(n)$ so that $D = \Dfpf(y)$.
Since in this case $\beta$ is skew-symmetric and $y$ is fixed-point-free, the numbers  $\rank \(\beta|_{F_i \times F_i}\)$ and $\r_y(i,i)$ are always even and bounded above by $i$. Hence $C_{1,1} = \Fl(n)$. We claim, moreover, that $C_{i,i-1} \subset C_{ii}$ for all $1 <i \leq n$. To show this, suppose $F_\bullet \in C_{i,i-1}$. If $\rank \(\beta|_{F_i \times F_{i-1}}\) = \rank \(\beta|_{F_i \times F_i}\)$ or $\r_y(i,i-1) < \r_y(i,i)$ then clearly $F_\bullet \in C_{ii}$; otherwise, it must happen that $\rank \(\beta|_{F_i \times F_{i-1}}\) = \rank \(\beta|_{F_i \times F_i}\)-1$ is odd and $\r_y(i,i-1) = \r_y(i,i)$ is even,   so  the strict inequality $\rank \(\beta|_{F_i \times F_{i-1}}\)  < \r_y(i,i-1)$ holds, which again implies   $F_\bullet \in C_{ii}$.
From the claim just shown and \eqref{Y-eq}, we  deduce that $Y^K_y$ is the intersection of  $C_{ij}$ for   $1\leq j < i \leq n$. The proposition  thus follows as in the orthogonal case, by considering the implication graph  on the set of cells $\{(i,j) : 1 \leq j < i \leq n\}$ strictly below the diagonal.
\end{proof}
 
As for Schubert varieties, to each orbit closure $Y^K_y$ there is an associated cohomology class
 \be\label{coclass-eq} [Y^K_y] \in H^*(\Fl(n),\ZZ) \qquad\text{for }y \in \I_K.\ee
 The Borel isomorphism \eqref{borel-eq} 
identifies 
 these cohomology classes with elements of the quotient  $\cP_n / (\Lambda_n^+)$.
One has the following analogue of \eqref{BGG} describing the action of the divided difference operators on $ [Y^K_y]$.

\begin{theorem}
\label{WY1-thm}
Let $i \in [n-1]$ and $y \in \I_K$ and set $s=s_i$. Then
\[
\partial_i [Y^K_y] = \begin{cases}
[Y^K_{y \rtimes s}] & \text{if $y \rtimes s = s y s < y$}\\
2[Y^K_{y \rtimes s}] & \text{if $y \rtimes s  = y s < y$ and $y s \in \I_K$}\\
0 & \text{if $s \notin \DesR(y)$ or $y\rtimes s \notin \I_K$.}
\end{cases}
\]
\end{theorem}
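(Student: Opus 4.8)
The plan is to prove this geometrically, via the push-pull description of $\partial_i$ along a $\PP^1$-bundle, which is the geometric origin of the formula \eqref{BGG}. Let $\pi = \pi_i : \Fl(n) \to \Fl_{\hat\imath}(n)$ be the $K$-equivariant projection that forgets the $i$-dimensional subspace $F_i$ of a flag; this is a Zariski-locally trivial $\PP^1$-bundle whose fibre over the image of a flag $F_\bullet$ is $\PP(F_{i+1}/F_{i-1})$. On cohomology one has $\partial_i = \pi^*\pi_*$, and I would exploit its two standard consequences: if a subvariety $Z \subseteq \Fl(n)$ is a union of fibres of $\pi$, then $\pi_*[Z] = 0$ and hence $\partial_i[Z] = 0$; whereas if $\pi|_Z : Z \to \pi(Z)$ is generically finite of degree $d$, then $\pi_*[Z] = d\,[\pi(Z)]$, so $\partial_i[Z] = d\,[\pi^{-1}(\pi(Z))]$.

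Next I would reduce the identification of the target class to $K$-orbit combinatorics. Since $K$ acts with finitely many orbits on both $\Fl(n)$ and $\Fl_{\hat\imath}(n)$, the image $\pi(\mathring Y^K_y)$ is a single $K$-orbit, and its preimage $\pi^{-1}(\pi(\mathring Y^K_y))$ --- being a $\PP^1$-bundle over that orbit --- is irreducible and $K$-stable, hence the closure of a unique dense $K$-orbit $\mathring Y^K_{y'}$. Thus $\pi^{-1}(\pi(Y^K_y)) = Y^K_{y'}$, and the theorem reduces to two claims: that $\mathring Y^K_y$ is $\pi$-saturated (so $\partial_i[Y^K_y]=0$) exactly when $s \notin \DesR(y)$ or $y\rtimes s \notin \I_K$, and that otherwise $y' = y\rtimes s$ with generic degree $d = 1$ when $y\rtimes s = sys$ and $d = 2$ when $y\rtimes s = ys \in \I_K$. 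The codimension bookkeeping is automatic and also explains the symplectic vanishing: by \eqref{ides-eq} and Remark \ref{kappa-rmk}, $\ellhat(y\rtimes s) = \ellhat(y)-1$ in both nonzero cases, matching $\dim Y^K_{y'} = \dim Y^K_y + 1$; but for $K=\Sp(n)$ a commuting descent leaves $\ellhat_\fpf$ unchanged, so there is simply no orbit closure of codimension one less for $\partial_i[Y^K_y]$ to land on, forcing $0$.

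The heart of the argument, and the step I expect to be the main obstacle, is computing the intersection of $\mathring Y^K_y$ with a generic fibre of $\pi$ and reading off $d$. Fixing generic $F_{i-1}\subset F_{i+1}$ and letting $F_i$ range over $\PP^1 = \PP(F_{i+1}/F_{i-1})$, only the rank conditions $\rank(\beta|_{F_i\times F_j}) \le \r_y(i,j)$ involving $F_i$ vary, and their variation is governed entirely by the $2$-dimensional space $U = F_{i+1}/F_{i-1}$ and the form induced on it by $\beta$. When $s \notin \DesR(y)$ the open conditions defining $\mathring Y^K_y$ hold for every $F_i$ in the fibre, so $\mathring Y^K_y$ is $\pi$-saturated and $\partial_i = 0$. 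When $s \in \DesR(y)$ one analyses the induced form on $U$: in the non-commuting case exactly one $F_i$ meets $\mathring Y^K_y$, giving $d=1$ and $y' = sys$; in the commuting case $y$ interchanges $i$ and $i+1$, and here the symmetric versus skew-symmetric nature of $\beta$ is decisive. For $K=\O(n)$ the governing condition is isotropy of $F_i$ for a nondegenerate symmetric form on $U$, satisfied at exactly two points of the fibre, giving $d=2$ and $y'=ys$; for $K=\Sp(n)$ the analogous condition would force $ys$ to acquire fixed points, so $ys \notin \Ifpf = \I_K$ and the fibre lies generically in a single orbit. Carefully verifying these local counts --- in particular handling the degeneracy loci of $\beta$ on the moving plane $U$, and the parity constraints forcing $\rank(\beta|_{F_i\times F_i})$ and $\r_y(i,i)$ to be even in the symplectic case --- is where the real work lies, and is precisely the content assembled from \cite[Propositions 2.1 and 2.7]{Wyser} and \cite[\S1.3]{WY}.

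As an alternative I would note a purely combinatorial route, available once the Schubert-class expansion $[Y^K_y] = c_y\sum_{w\in\cA^K(y)}[X_w]$ is known: applying $\partial_i$ termwise through \eqref{BGG} and reorganising the resulting classes via the atom recursion of Proposition \ref{atomdes-prop} reproduces the same trichotomy, with the factor of $2$ and the vanishing emerging from how $\cA(y,\cdot)$ transforms under $\rtimes s$. I would keep the geometric push-pull argument as the primary proof, however, since this combinatorial version presupposes the very expansion that Theorem \ref{t:WyserYong} is meant to establish.
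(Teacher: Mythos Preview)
Your approach is essentially the paper's: both interpret $\partial_i$ as push-pull along the $\PP^1$-bundle $\pi$ forgetting $F_i$, and reduce to determining when $Y^K_y$ is $\pi$-saturated versus generically finite of degree $1$ or $2$ over its image. The one methodological difference is in the symplectic vanishing case $y\rtimes s \notin \I_K$: rather than your codimension/fibre argument, the paper invokes Proposition~\ref{prop:essential-set} to show directly that when $y(i)=i+1$ no cell of $\Ess(\Dfpf(y))$ lies in row or column $i$, so none of the defining rank conditions for $Y^K_y$ involve $F_i$ and $\pi$-saturation is immediate.
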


\begin{proof}
Fix $i \in [n]$. Suppose $X$ is a subvariety of $\Fl(n)$, and consider the set (variety, in fact) $X'$ obtained by replacing each flag $F_{\bullet} \in X$ by all flags $E_{\bullet}$ with $E_j = F_j$ for $j \neq i$. Informally, we obtain $X'$ by removing any conditions defining $X$ that restrict $F_i$. If $\dim X' = \dim X + 1$, then there will be an integer $d$ such that each flag $E_{\bullet}$ in a dense open subset of $X'$ arises from $d$ distinct flags in $X$, in which case $\partial_i [X] = d[X']$. If, alternatively, $\dim X' \neq \dim X +1$, then  $\partial_i [X] = 0$. For a more detailed justification of these assertions, see \cite[Chapter 10]{Fulton1997}.

One can use the rank conditions on $Y_y^K$ to  understand  $(Y_y^{K})'$ and the integers $d$. For the cases when $y \rtimes s_i \in \I_K$, see \cite[Propositions 2.1 and 2.7]{Wyser} and \cite[\S1.3]{WY}. Suppose $y \rtimes s_i \notin \I_K$, which occurs only if $K = \Sp(n)$ and $y\rtimes s_i = ys_i$ so that $ys_i$ is not fixed-point-free. Then $y(i) = i+1$.  We claim that row $i$ and column $i$ of $\Ess(\Dfpf(y))$ are both empty. Column $i$ of $\Dfpf(y)$ is  empty by definition. Row $i$ of $\Ess(\Dfpf(y))$ is empty since if  $(i,k) \in \Dfpf(y)$ for some $k<i$, then we also have $(i+1,k) \in \Dfpf(y)$ (for otherwise $y(k) = i+1$), in which case $(i,k) \notin \Ess(\Dfpf(y))$. Thus, any of the rank conditions defining $Y^K_y$ in Proposition \ref{prop:essential-set} that involve $F_i$ are implied by others that do not, so $(Y^{K}_y)' = Y_y^{K}$ and therefore $\partial_i [Y_y^{K}] = 0$.
\end{proof}

The theorem shows that one may compute polynomial representatives for the cohomology classes \eqref{coclass-eq} just as for Schubert classes, i.e., by applying divided difference operators to suitable representatives 
 for the longest element $w_n \in S_n$.
 Wyser and Yong identify such representatives in \cite{WY}. 
To state their result,  recall the definitions of $\Upsilon^{\O(n)}_{w_n}$ and $\Upsilon^{\Sp(n)}_{w_{2n}}$ from \eqref{eq:wyser.yong.prod}.

\begin{theorem}[{Wyser and Yong \cite[Theorem 1.1]{WY}}]
\label{WY2-thm}
Let   $y \in \I_K$ and $u,v \in \cA(y,w_n)$. Then
\[ \Upsilon^K_{w_n} \equiv [Y^K_{w_n}]\qquand \partial_u \Upsilon^K_{w_n} =  \partial_v \Upsilon^K_{w_n}.\]
\end{theorem}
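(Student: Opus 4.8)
The two claims play different roles, and I would prove them in that order. The first, $\Upsilon^K_{w_n}\equiv[Y^K_{w_n}]$, fixes the base case of an induction; the second, $\partial_u\Upsilon^K_{w_n}=\partial_v\Upsilon^K_{w_n}$, is exactly the consistency needed to make the recursive definition $\Upsilon^K_y:=\partial_u\Upsilon^K_{w_n}$ well posed. The base case is where I expect the real work to lie.

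For the base case, first observe that $\ellhat(w_n)$ (resp. $\ellhat_\fpf$) is the maximum of the rank function on $\I_K$, so $Y^K_{w_n}$ is the unique closed $K$-orbit; a short count shows that $\deg\Upsilon^K_{w_n}$ equals this codimension and equals $|\D(w_n)|$ (resp. $|\Dfpf(w_n)|$), in agreement with Example~\ref{invRothe-ex}. The closed orbit is the full flag variety of $K$, realized inside $\Fl(n)$ as the locus where the initial subspaces $F_i$ are $\beta$-isotropic in the pattern recorded by $\D(w_n)$ (resp. $\Dfpf(w_n)$) through Proposition~\ref{prop:essential-set}. I would then compute $[Y^K_{w_n}]$ as the top Chern (Euler) class of the bundle whose vanishing cuts out this isotropy locus: the tautological line-bundle quotients contribute Chern roots $x_i$, and each isotropy condition on a pair $(i,j)$ contributes the weight $x_i+x_j$, so the class factors as the product in \eqref{eq:wyser.yong.prod}. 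This normal-bundle computation is the one genuinely geometric ingredient and, in my estimation, the main obstacle; the orthogonal and symplectic cases diverge precisely at the diagonal conditions $i=j$, present only for $\O_n$, which account for the eventual powers of $2$.

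Granting the base case, I would prove the consistency statement by lifting an identity in cohomology to one of polynomials. Each $\partial_i$ preserves $(\Lambda_n^+)$ and hence descends under the Borel isomorphism \eqref{borel-eq}, so $\partial_u\Upsilon^K_{w_n}$ represents $\partial_u[Y^K_{w_n}]$. Reading a reduced word of $u\in\cA(y,w_n)$ as an involution word from $y$ to $w_n$ and applying Theorem~\ref{WY1-thm} one cover at a time from the top, no factor vanishes and $\partial_u[Y^K_{w_n}]=2^{\kappa(w_n)-\kappa(y)}[Y^K_y]$: by \eqref{ides-eq} the number of factor-two (that is, $y\mapsto ys$) covers along any maximal chain from $y$ to $w_n$ equals $\kappa(w_n)-\kappa(y)$ in the orthogonal case and is $0$ in the symplectic case, so the exponent depends only on $y$. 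Thus $\partial_u\Upsilon^K_{w_n}\equiv\partial_v\Upsilon^K_{w_n}$ for all $u,v\in\cA(y,w_n)$. To promote $\equiv$ to $=$, I would work in the subspace $V$ spanned by the Schubert polynomials $\fkS_v$ with $v\in S_n$: the bound $\deg_{x_i}\Upsilon^K_{w_n}\le n-i$ (a direct check on the product, and exactly the staircase bound characterizing $V$) places $\Upsilon^K_{w_n}$ in $V$; the operators $\partial_i$ with $i<n$ map $V$ into itself since $\partial_i\fkS_v\in\{\fkS_{vs_i},0\}$ with $vs_i\in S_n$; and by Theorem~\ref{schub-thm} the images of $\{\fkS_v\}_{v\in S_n}$ form a basis of $\cP_n/(\Lambda_n^+)$, so the Borel quotient is injective on $V$. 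Two elements of $V$ agreeing in cohomology therefore agree as polynomials, which is the claim.

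As a cross-check I would record a purely algebraic shortcut for the second claim: comparing \eqref{eq:wyser.yong.prod} with the dominant-involution product formula of Theorem~\ref{t:dominvSchub} (applicable since $w_n$ is $132$-avoiding) identifies $\Upsilon^{\O_n}_{w_n}=2^{\lfloor n/2\rfloor}\hat\fkS_{w_n}$ and $\Upsilon^{\Sp}_{w_n}=\Sfpf_{w_n}$, after which Corollary~\ref{invschubdef-cor} (with its fixed-point-free analogue) says directly that $\partial_u$ applied to these polynomials is independent of the atom $u\in\cA(y,w_n)$. I would keep the cohomological argument as the primary route, however, since it does not presuppose the product formula this theorem helps to establish; and I would concentrate the bulk of the effort on the base-case Euler-class computation, where the symmetric/skew dichotomy and the parity subtleties already visible in the proof of Proposition~\ref{prop:essential-set} make the symplectic case the delicate one.
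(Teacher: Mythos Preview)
The paper does not itself prove this theorem: it is quoted verbatim from \cite[Theorem~1.1]{WY} and no proof is given in the main text, so there is no ``paper's own proof'' to compare your proposal against. Your outline is nonetheless a sound reconstruction of how such a proof goes.

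For the first claim, your Euler-class\,/\,normal-bundle heuristic is the right idea, and the paper's Appendix~\ref{geom-sect} carries out essentially this computation (for the closely related variety $\Zb$) via the Harris--Tu degeneracy-locus formulas of Theorem~\ref{thm:harris-tu}; that is where the symmetric/skew dichotomy and the powers of~$2$ become precise. Your argument for the second claim is also correct: applying Theorem~\ref{WY1-thm} along any maximal chain from $w_n$ down to $y$ never hits the zero branch (in the symplectic case because every intermediate involution remains fixed-point-free, cf.\ Corollary~\ref{rfpf-cor}); the factor-two steps are exactly the covers $z\mapsto zs$, each decreasing $\kappa$ by one, so the accumulated power of~$2$ is $\kappa(w_n)-\kappa(y)$ regardless of the chain; and the staircase degree bound $\deg_{x_i}\Upsilon^K_{w_n}\le n-i$ together with the injectivity of $\ZZ\spanning\{\fkS_v:v\in S_n\}\hookrightarrow \cP_n/(\Lambda_n^+)$ lifts the cohomological equality to one of polynomials. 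This last lifting device is precisely what the paper invokes at the close of Appendix~\ref{geom-sect}.

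You are right to flag your alternative shortcut via Theorem~\ref{t:dominvSchub} as circular in the paper's logical order: that result (restated as Theorem~\ref{invSchuprod-thm}) is proved using Theorem~\ref{WYatom-thm}, which in turn rests on the present theorem.
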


Let $y \in \I_K$ and   $u \in \cA(y,w_n)$. If $K=\O(n)$ then define 
$\Upsilon^{\O(n)}_y \omdef= 2^{\kappa(y)- \kappa(w_n)} \partial_u \Upsilon^{\O(n)}_{w_n}
$
and if $K=\Sp(n)$ then define 
$\Upsilon^{\Sp(n)}_y \omdef= \partial_u \Upsilon^{\Sp(n)}_{w_{n}}.$
 Theorems \ref{WY1-thm} and \ref{WY2-thm} show
that 
$\Upsilon^{K}_{y}
$
 is then a representative of $ [Y^{K}_y]$.
 These polynomials do not depend on the choice of $u$, and so are unambiguously indexed by $\I_K\subset S_n$.
 Wyser and Yong note in \cite[Theorem 1.1]{WY} that these representatives are nonnegative integer linear combinations of ordinary Schubert polynomials. We can identify this decomposition explicitly by showing that Wyser and Yong's polynomials are actually scalar multiples of involution Schubert polynomials.
 
Suppose $y \in \I(S_n)$ and $z \in \Ifpf(S_{2n})$.
Let $z_\infty \in \tilde \I_\fpf$ be the permutation of $\PP$ defined from $z$ by \eqref{infprod},
and choose any  $u\in\cA(y,w_n)$ and $v\in\cA(z,w_{2n})$.
In the next proof, we let 
\[\Upsilon_y \omdef =2^{-\kappa(y)} \Upsilon^{\O(n)}_y=2^{-\lfloor n/2\rfloor}\partial_u \Upsilon^{\O(n)}_{w_n}
\qquand
\Upsilon^\fpf_{z_\infty} \omdef= \Upsilon^{\Sp(2n)}_z= \partial_v \Upsilon^{\Sp(2n)}_{w_{2n}}.
\]
These polynomials are well-defined, independent of our choice of $n$,
by \cite[Theorem 1.4]{WY}.

\begin{theorem} \label{WYatom-thm}
Let
$ y \in \I(S_n)$ and $z \in \Ifpf(S_{2n})$. Then
$
2^{\kappa(y)} \hat \fkS_y = \Upsilon^{\O(n)}_{y}
$
and
$
\Sfpf_z =  \Upsilon^{\Sp(2n)}_z.
$
\end{theorem}

\begin{proof} 
It suffices to
 argue that  $\{ \Upsilon_y\}_{y \in \I(S_\infty)} $ and $ \{ \Upsilon^\fpf_z \}_{z \in \tilde \I_{\fpf}}$
have the properties in Theorem \ref{invschubdef-thm} and Corollary \ref{invschubdeffpf-cor}
that uniquely characterize  $\{ \hat \fkS_y\}_{y \in \I(S_\infty)}$ and $\{ \Sfpf_z \}_{z \in \tilde\I_\fpf}$. 
For this, we first claim  for all $y \in \I(S_\infty)$ and $z \in \tilde \I_\fpf$ and  $i,j \in \PP$  that
\[ 
\partial_i \Upsilon_y = \begin{cases} \Upsilon_{y\rtimes s_i} & \text{if $s_i \in \DesR(y)$} \\ 0&\text{otherwise}\end{cases}
\]
and
\[
\partial_j \Upsilon^\fpf_z = \begin{cases} \Upsilon^\fpf_{z\rtimes s_j} & \text{if $s_j \in \DesR(z)$ and $z\rtimes s_j \in \tilde \I_\fpf$} \\ 0&\text{otherwise.}\end{cases}
\]
Choose $n \in \PP$ such that   $y \in \I(S_n)$ and   $z = w_\infty$ for some $w \in \Ifpf(S_{2n})$.
Then $\Upsilon_y \in \cP_{n-1}$ and $\Upsilon^\fpf_z \in \cP_{2n-1}$, so the claim holds automatically when $i \geq n$ and $j \geq 2n$ since both sides of the two equations are zero.
When $s_i \in \DesR(y)$ and $s_j \in \DesR(z)$ and $z \rtimes s_j \in \tilde \I_\fpf$,
 the desired identities follow directly from the definitions and Proposition~\ref{atomdes-prop}.
Finally, suppose $i \in [n-1]$ is not a descent of $y$ and $j \in [2n-1]$ is not a descent of $z$.
Then  
by the preceding case 
$ \Upsilon_y = \partial_i \Upsilon_{y\rtimes s_i} $ and $ \Upsilon^\fpf_z = \partial_j \Upsilon^\fpf_{z\rtimes s_j}$
since $s_i \in \DesR(y\rtimes s_i)$ and $s_j \in \DesR(z\rtimes s_j)$.
As the divided difference operators square to zero, it follows that $\partial_i \Upsilon_y = \partial_j \Upsilon^\fpf_z = 0$.

It remains  to show that $\Upsilon_{1} = \Upsilon^\fpf_{1_\fpf} = 1$ and that if $z\rtimes s_i \notin \tilde \I_\fpf$ then $\partial_i \Upsilon_z^\fpf = 0$.
We know that $2^{\kappa(y)} \Upsilon_y \equiv [Y^{\O(n)}_y]$ if $y \in \I(S_n)$ and $\Upsilon^\fpf_z \equiv [Y^{\Sp(2n)}_{w}]$ if $z=w_\infty$ where $w \in \Ifpf(S_{2n})$. The rank conditions in Proposition \ref{prop:essential-set} show that $Y^{\O(n)}_{1} = \Fl(n)$ and $Y^{\Sp(2n)}_{v_n} = \Fl(2n)$,   so these orbit closures correspond to the identity elements in their corresponding cohomology rings.  As the Borel isomorphism \eqref{borel-eq} is an isomorphism of rings and since $\Upsilon_y$ and $\Upsilon^\fpf_z$ are evidently homogeneous polynomials, it is immediate 
that $\Upsilon_{1} =\Upsilon_1^{\O(n)} = 1 $ and $ \Upsilon^\fpf_{1_\fpf} = \Upsilon_{\wfpf_n}^{\Sp(2n)}= 1$.

Finally 
 suppose $z \in \tilde \I_\fpf$ and $j \in [2n-1]$ are such that $z \rtimes s_j \notin \tilde \I_\fpf$. As before, we then have $z=w_\infty$ for some  $w \in \Ifpf(S_{2n})$, and
 necessarily $j \in \DesR(w)$ and $w\rtimes s_j = ws_j \notin \Ifpf(S_{2n})$.  Theorem \ref{WY1-thm}  implies $\partial_j [ Y^{\Sp(2n)}_{w}] = 0$, so as $\Upsilon^\fpf_z \equiv [Y^{\Sp(2n)}_{w}]$ it follows that $\partial_j\Upsilon^\fpf_{z}  = 0$.
We conclude by Theorem \ref{invschubdef-thm} and Corollary \ref{invschubdeffpf-cor}
that $\Upsilon_y = \hat \fkS_y$ and $\Upsilon^\fpf_z = \Sfpf_z$ for all $y \in \I(S_\infty)$ and $z \in \tilde\I_\fpf$, which is what we needed to show.
\end{proof}

\subsection{Product formulas}
\label{product-sect}

Theorem \ref{WYatom-thm} establishes an explicit product formula for  $\hat\fkS_{w_n}$ and $\Sfpf_{w_{2n}}$, and in this section we generalize that result to the following class of  involutions:

\begin{definition} An involution $y \in \I(S_\infty)$  is \emph{weakly dominant} if it has the form
\[y = (1,b_1)(2,b_2)\cdots(k,b_k)\]
for 
some 
$k \in \NN$ and some distinct integers $b_1,b_2,\dots,b_k $ with $b_i > k$ for all $i \in [k]$.
\end{definition}

If $y=(1,b_1)(2,b_2)\cdots(k,b_k) \in \cI(S_n)$ is weakly dominant then we define $r(y) \in S_{n-k}$ to be the permutation given in one-line notation by
\be\label{r-eq} r(y) \omdef= [b_1-k,b_2-k,\dots,b_k-k, c_1,c_2,\dots,c_{n-2k}]\ee
where $c_1 < c_2 <\dots <c_{n-2k} $ are the  elements of $\{1,2,\dots,n-k\} \setminus \{b_1-k , b_2-k,\dots,b_k-k\}$.

\begin{example}\label{r-ex}
We have $r\( (1,6)(2,5)(3,8)\) = [3,2,5,1,4]$ and 
$
r(\cG_k) = 1
$
and
$r(w_{k}) = w_{\lceil k/2\rceil}.$
 More generally,  if $y=u^{-1} \cG_k u$ for any $u \in S_k$, then $y$ is weakly dominant with $r(y) = u$.
\end{example}

It follows from Section \ref{diagram-sect} that the permutation $r(y)$ has these basic properties:

\begin{observation}\label{invdiagram-obs}
If $y \in \cI(S_n)$ is weakly dominant with $k=\kappa(y)$ distinct 2-cycles, then $r(y)$ belongs to $S_{n-k}$ with largest descent at most $k$, so $D(r(y)) \subset [k]\times [n-k]$.
\end{observation}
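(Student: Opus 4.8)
The plan is to verify the three assertions of the observation—that $r(y)$ is a genuine element of $S_{n-k}$, that its largest descent is at most $k$, and hence that $D(r(y)) \subset [k]\times[n-k]$—directly from the one-line description of $r(y)$, invoking at the end the fact recorded in Section~\ref{diagram-sect} that a permutation with largest descent $k$ has Rothe diagram supported in its first $k$ rows.

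First I would check that $r(y)$ is well-defined as a permutation of $[n-k]$. Since $y=(1,b_1)(2,b_2)\cdots(k,b_k)$ is an involution with $\kappa(y)=k$ two-cycles, the values $b_1,\dots,b_k$ are distinct, and because $b_i>k$ they lie in $\{k+1,\dots,n\}$; hence the shifted values $b_1-k,\dots,b_k-k$ are $k$ distinct elements of $[n-k]$. The involution $y$ has exactly $n-2k$ fixed points, so the complementary set $\{1,\dots,n-k\}\setminus\{b_1-k,\dots,b_k-k\}$ has cardinality $n-2k$, and its elements $c_1<\cdots<c_{n-2k}$ fill the remaining entries. Thus the word $[b_1-k,\dots,b_k-k,c_1,\dots,c_{n-2k}]$ has length $k+(n-2k)=n-k$ and lists each element of $[n-k]$ exactly once, so $r(y)\in S_{n-k}$.

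Next I would bound the largest descent. By construction the final $n-2k$ entries of $r(y)$ are $c_1<c_2<\cdots<c_{n-2k}$, which are strictly increasing; therefore $r(y)(i)<r(y)(i+1)$ for every $i$ with $k<i<n-k$, so no position beyond $k$ is a descent and the largest descent of $r(y)$ is at most $k$. Applying the fact from Section~\ref{diagram-sect} that a permutation with largest descent $k$ has its Rothe diagram contained in rows $[k]$, I conclude $D(r(y))\subset[k]\times[n-k]$.

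There is no serious obstacle here, as the content is entirely bookkeeping. The only point requiring care is confirming that the shifts $b_i\mapsto b_i-k$ remain inside $[n-k]$ and stay distinct, together with the cardinality count guaranteeing that the $c_j$ exactly complete a bijection of $[n-k]$; once $r(y)$ is known to be a permutation whose tail $c_1<\cdots<c_{n-2k}$ is increasing, both the descent bound and the diagram containment follow immediately.
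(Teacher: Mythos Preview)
Your proof is correct and matches the paper's approach: the paper states this as an observation following directly from the definitions and the fact in Section~\ref{diagram-sect} that $D(w)\subset[k]\times[n]$ when $w\in S_n$ has largest descent $k$, without giving further details. Your careful verification that the $b_i-k$ are distinct elements of $[n-k]$ and that the increasing tail forces all descents into the first $k$ positions is exactly the bookkeeping the paper leaves implicit.
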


The diagram $\D(\cG_k)$ is the transpose of the shifted shape of $\delta_{k+1}$.
For any permutation $u$, define 
\be
\label{e-eq}
E_k(u) = \{ (j+k,i) : (i,j) \in D(u)\}
\ee
 to be the transpose of $D(u)$, shifted down by $k$ rows.

\begin{lemma}\label{invdiagram-lem} Suppose $y \in \I(S_\infty)$ is weakly dominant and $k = \kappa(y)$. Then 
\[ \D(y) = \D(\cG_k) \cup E_k(r(y))
\qquand
\Dfpf(y) = \Dfpf(\cG_k) \cup E_k(r(y)).
\]
\end{lemma}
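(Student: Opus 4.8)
The statement is purely combinatorial: I need to verify that the involution Rothe diagrams $\D(y)$ and $\Dfpf(y)$ of a weakly dominant $y$ decompose as the union of the diagram of the Grassmannian involution $\cG_k$ (a shifted staircase below the diagonal) and a transposed-shifted copy $E_k(r(y))$ of the ordinary Rothe diagram of $r(y)$. Since both sides are explicit subsets of $\PP\times\PP$, the cleanest approach is a direct set-theoretic comparison: fix $y=(1,b_1)\cdots(k,b_k)$ with all $b_i>k$, and characterize membership in $\D(y)$ by cases according to whether the column index $j$ satisfies $j\le k$ or $j>k$.

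First I would recall the defining conditions. A cell $(i,j)$ lies in $\D(y)$ iff $j<y(i)$, $i<y(j)$, and $j\le i$; the set $\Dfpf(y)$ is the same with the strict inequality $j<i$. The crucial structural observation is that for $1\le i\le k$ we have $y(i)=b_i>k$, while for $i>k$ either $y(i)=i$ (if $i$ is a fixed point) or $y(i)=a<k$ with $y(a)=i$ coming from a two-cycle $(a,i)$ with $a\le k$. I would split the verification into two regions. \textbf{Region $j\le k$:} here I expect the conditions $j<y(i)$ and $i<y(j)$ to reduce, using $y(j)=b_j>k\ge i$ whenever $j\le k\le i$, to exactly the staircase condition defining $\D(\cG_k)=\{(i,j):1\le j\le i\le k\}$, matching $\D(\cG_k)$ from Example~\ref{invRothe-ex}. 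One must check that no cell with $j\le k$ but $i>k$ survives, which follows because for such a cell $i<y(j)=b_j$ can fail or the diagonal constraint interacts with fixed points — I would track this carefully. \textbf{Region $j>k$:} since $\D(y)$ requires $j\le i$, here $i>k$ as well, so both coordinates lie strictly below and to the right of the $k\times k$ corner; I would show via the defining inequalities that $(i,j)\in\D(y)$ iff $(j-k,\,i-k)\in D(r(y))$, i.e.\ $(i,j)=(i,j)$ with $(i,j)=((j')+k,\,(i')+k)$... more precisely iff $(i,j)\in E_k(r(y))=\{(p+k,q):(q,p)\in D(r(y))\}$. This is where the definition of $r(y)$ as $[b_1-k,\dots,b_k-k,c_1,\dots]$ and the inversion-set description $D(u)=\{(i,j):j<u(i),\,i<u^{-1}(j)\}$ must be matched index-by-index, using Observation~\ref{invdiagram-obs} to confine $E_k(r(y))$ to rows $>k$.

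\textbf{The main obstacle} will be the bookkeeping at the boundary and the transpose-and-shift in $E_k$: the map $(i,j)\mapsto(j+k,i)$ swaps roles of rows and columns while the involution conditions $j<y(i)$ and $i<y(j)$ are themselves transpose-symmetric, so I must be scrupulous about which coordinate plays which role after shifting by $k$. Concretely, for a cell with $i,j>k$ I would write $i=p+k$, $j=q+k$ and translate $j<y(i)=b_p$ (when $p\le k$) or the analogous fixed-point/two-cycle condition into $q<r(y)(\text{something})$, verifying it coincides with the inversion condition for $r(y)$; the subtlety is that $y$ acts on $\{k{+}1,\dots\}$ through the second coordinates $b_i$ and through fixed points, and one must confirm these assemble into precisely the one-line word defining $r(y)$. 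Once the two regions are handled, the $\Dfpf$ statement is immediate: both $\D(y)$ and $\D(\cG_k)\cup E_k(r(y))$ have the same diagonal cells (coming entirely from the $\D(\cG_k)$ part, by Observation~\ref{invdiagram-obs} which keeps $E_k(r(y))$ strictly below the diagonal), so deleting them yields the strict-inequality version on both sides simultaneously. I would finish by remarking that the $E_k(r(y))$ piece contributes nothing on the diagonal, so the decomposition for $\Dfpf$ follows from that for $\D$ by restricting to $j<i$.
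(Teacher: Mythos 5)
Your high-level strategy (a direct case analysis of the defining inequalities of $\D(y)$) is the same as the paper's, but your partition of $\PP\times\PP$ into regions misplaces $E_k(r(y))$, and the resulting claims are false, so the proof as planned would collapse. By definition $E_k(r(y))=\{(j+k,i):(i,j)\in D(r(y))\}$, and $D(r(y))\subset[k]\times[n-k]$ by Observation~\ref{invdiagram-obs}; hence every cell of $E_k(r(y))$ has row index $>k$ and column index \emph{at most} $k$. These cells therefore lie exactly in the part of your ``Region $j\le k$'' with $i>k$ --- the part you assert must be shown to contain no cells of $\D(y)$ (``no cell with $j\le k$ but $i>k$ survives''). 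Concretely, for $y=(1,6)(2,5)(3,8)$ with $k=3$, the cells $(4,1),(4,2),(4,3),(5,1),(7,3)$ all lie in $\D(y)$, have $j\le 3<i$, and make up precisely $E_3(r(y))$ (this is the paper's worked example directly after the lemma). Dually, your ``Region $j>k$'' (where $j\le i$ forces $i,j>k$) contains no cells of $\D(y)$ whatsoever: for $i>k$ one has $y(i)=i$ or $y(i)=m\le k$ where $b_m=i$ --- not $y(i)=b_{i-k}$, as you write --- and in every combination one of the conditions $j<y(i)$, $i<y(j)$ fails. In particular, your two descriptions of that region, ``$(i,j)\in\D(y)$ iff $(j-k,i-k)\in D(r(y))$'' and ``iff $(i,j)\in E_k(r(y))$,'' are inconsistent with each other, since no cell of $E_k(r(y))$ has column index $>k$; even if the first biconditional held, it would identify this region with a copy of $D(r(y))^T$ shifted by $(k,k)$, which is not $E_k(r(y))$.

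The repair is to decompose by \emph{row} index, as the paper does. For rows $i\le k$, any cell of $\D(y)$ has $j\le i\le k$, and conversely every cell with $j\le i\le k$ lies in $D(y)$ because $y(i)=b_i>k\ge j$ and $y(j)=b_j>k\ge i$; so rows $\le k$ of $\D(y)$ are exactly the staircase $\D(\cG_k)$. For rows $>k$, one proves $(j+k,i)\in D(y)\Leftrightarrow(i,j)\in D(r(y))$: when $i\in[k]$ the condition $j+k<y(i)=b_i$ translates to $j<r(y)(i)$ and the condition $i<y(j+k)$ to $i<r(y)^{-1}(j)$, while for $i>k$ both sides fail. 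Thus the cells of $D(y)$ in rows $>k$ are exactly $E_k(r(y))$, which sits strictly below the diagonal. Your closing observation --- that the $\Dfpf$ identity follows from the $\D$ identity because all diagonal cells come from the staircase part --- is correct, but only once this regional analysis is fixed.
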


\begin{example}
Let $y = (1,6)(2,5)(3,8) = [6,5,8,4,2,1,7,3]$  as in Example \ref{r-ex}. Then 
\[
\D(y) = \left\{ \barr{ccc} \circ &. & . \\ \circ  & \circ & .\\ \circ  & \circ & \circ \\ \circ  & \circ & \circ \\ \circ & . & . \\ . & . & . \\ . & . & \circ
\earr
\right\},
\qquad
\D(\cG_3) = \left\{ \barr{ccc} \circ &. & . \\ \circ  & \circ & .\\ \circ  & \circ & \circ 
\earr
\right\},
\qquad
D(r(y)) = \left\{ \barr{cccc} \circ &  \circ & . &. \\ \circ & . & . & . \\ \circ & .& . & \circ  
\earr
\right\},
\]
and the lemma's claim that $\D(y) = \D(\cG_3) \cup E_3(r(y))$ is evident.
\end{example}

\begin{proof}[Proof of Lemma \ref{invdiagram-lem}]
By definition,   $y=(1,b_1)(2,b_2)\cdots(k,b_k)$  for distinct integers $b_i >k$.
It is clear that $\D(\cG_k)  \subset \D(y)$.
Fix $(i,j) \in \PP\times \PP$. We claim that 
$(j+k,i) \in D(y)$ if and only if $(i,j) \in D(r(y))$.
We have  $(j+k,i) \in D(y)$ if and only if 
\be\label{event}  j+k < y(i) \qquand i < y(j+k).\ee
The claim holds when $i \in [k]$ since then
 the first of these conditions  is  equivalent to $j < r(y)(i)$, while the second  may be rewritten as
\[ i < y(j+k) \quad \Leftrightarrow\quad j+k \notin \{ b_1,b_2,\dots,b_i\}
\quad \Leftrightarrow\quad j \notin r(y)([i])
\quad \Leftrightarrow\quad
i < r(y)^{-1}(j).\]
Suppose instead that  $i>k$. Then $(i,j) \notin D(r(y))$ by Observation \ref{invdiagram-obs}.
We can only have $i<y(j+k)$ if $j+k \notin \{b_1,b_2,\dots,b_k\}$,
but if this holds then $i < y(j+k) = j+k$, in which case we cannot have $j+k < y(i)$.
Thus,
 the conditions \eqref{event} 
never simultaneously hold
so  $(j+k,i) \notin D(y)$.  We conclude that the set of positions in $D(y)$ below the $k^{\mathrm{th}}$ row is precisely 
 $E_k(r(y))$. The latter set  is contained entirely below the diagonal since $D(r(y)) \subset [k]\times[n-k]$, so  the lemma follows.
\end{proof}

The following proposition shows that every dominant involution is weakly dominant.

\begin{proposition}\label{dom-prop}
Let $y \in \I(S_\infty)$. The following are equivalent:
\ben
\item[(a)] $y$ is dominant (i.e., 132-avoiding).
\item[(b)] $\hat D(y)$ is the transpose of a shifted shape.
\item[(c)] $y$ is weakly dominant and $r(y)$ is dominant.
\een
\end{proposition}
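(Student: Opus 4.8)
The plan is to prove the cycle of equivalences by first settling $(a)\Leftrightarrow(b)$ as a statement purely about self-conjugate partitions, and then settling $(b)\Leftrightarrow(c)$ using Lemma~\ref{invdiagram-lem}. Throughout I would use two facts already in the excerpt: that $y$ is dominant if and only if $D(y)$ is the diagram of a partition (Section~\ref{diagram-sect}), and that $D(y)$ is transpose-invariant with $\hat D(y)$ uniquely determining $D(y)$, since $y=y^{-1}$ and $D(y^{-1})=D(y)^T$.

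For $(a)\Leftrightarrow(b)$, I would isolate the combinatorial lemma: a transpose-invariant finite set $D\subset\PP\times\PP$ is the diagram of a partition $\lambda$ if and only if its subset $\{(i,j)\in D:j\le i\}$ of cells weakly below the diagonal is the transpose of the shifted shape of some strict partition $\mu$. The forward direction is the classical description of a self-conjugate $\lambda$ by its principal hooks: setting $\mu_c=\lambda_c-c+1$ for each $c$ with $\lambda_c\ge c$ gives a strict partition (strict because $\lambda_c-c$ is strictly decreasing), and one checks directly that $(r,c)$ with $c\le r$ lies in $\lambda$ exactly when $c\le r\le c+\mu_c-1$, which is the transpose of the shifted shape of $\mu$. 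For the converse I would reconstruct $\lambda$ by reflecting: column $c\le d:=\ell(\mu)$ occupies the contiguous rows $c,\dots,c+\mu_c-1$, and the key point is that strictness forces $\mu_c\ge d-c+1\ge i-c+1$ for all $c\le i\le d$, so each row of the reflected set is a left-justified prefix and the row lengths are weakly decreasing; hence $D$ is a genuine Young diagram. Applying this to $D=D(y)$ gives $(a)\Leftrightarrow(b)$.

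For $(b)\Leftrightarrow(c)$, I would first extract weak dominance from $(b)$: the diagonal cells of $\hat D(y)$ are exactly the $(c,c)$ with $y(c)>c$, and in the transpose of a shifted shape of $\mu$ these are precisely $(1,1),\dots,(d,d)$, so the smaller elements of the two-cycles of $y$ are $1,2,\dots,k$ with $k=\kappa(y)=d$. Writing $y=(1,b_1)\cdots(k,b_k)$, if some $b_i\le k$ then $b_i$ would itself be a smaller element paired downward, a contradiction, so $b_i>k$ and $y$ is weakly dominant. Now Lemma~\ref{invdiagram-lem} gives $\hat D(y)=\hat D(\cG_k)\cup E_k(r(y))$, where $\hat D(\cG_k)=\{(i,j):j\le i\le k\}$ is the transpose of the shifted staircase $\delta_{k+1}$ and $E_k(r(y))$ is the transpose of $D(r(y))$ pushed down $k$ rows, with $D(r(y))$ occupying only columns $\le k$ by Observation~\ref{invdiagram-obs}. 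Reading column $i\le k$ of $\hat D(y)$, the staircase contributes rows $i,\dots,k$ and $E_k(r(y))$ contributes $\{j+k:(i,j)\in D(r(y))\}$. I would then observe that this column is a contiguous run starting at row $i$, with lengths strictly decreasing in $i$, if and only if each set $\{j:(i,j)\in D(r(y))\}$ is an initial segment $\{1,\dots,\nu_i\}$ with $\nu_1\ge\nu_2\ge\cdots$, i.e.\ if and only if $D(r(y))$ is the diagram of a partition; that is, $\hat D(y)$ is the transpose of a shifted shape if and only if $r(y)$ is dominant. Combined with the weak dominance already shown, this gives $(b)\Leftrightarrow(c)$.

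The main obstacle I anticipate is the third paragraph's bookkeeping: verifying cleanly, in both directions, that the ``contiguous columns from the diagonal with strictly decreasing lengths'' characterization of the transpose of a shifted shape matches ``left-justified rows, weakly decreasing'' for $D(r(y))$, including the boundary behavior at row $k$ where the staircase block and the shifted-down block $E_k(r(y))$ must glue together without a gap. The self-conjugate lemma of the second paragraph is standard, and deducing weak dominance from the diagonal cells is short; it is precisely this translation between the two shape conditions, mediated by Lemma~\ref{invdiagram-lem}, where the care is needed.
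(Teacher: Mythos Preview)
Your proposal is correct and follows essentially the same strategy as the paper: treat $(a)\Leftrightarrow(b)$ as a direct fact about self-conjugate Young diagrams versus shifted shapes, and use Lemma~\ref{invdiagram-lem} to translate between condition~$(b)$ and the dominance of $r(y)$. The paper's proof is terser, declaring $(a)\Leftrightarrow(b)$ ``immediate from the definitions'' and the implication from dominant to weakly dominant a ``straightforward exercise,'' but the substance matches yours.

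The one small structural difference is that the paper extracts weak dominance from condition~$(a)$ (a 132-avoiding involution is weakly dominant), whereas you extract it from condition~$(b)$ by reading off the diagonal cells of $\hat D(y)$. Both are short and valid; your route has the mild advantage that it keeps the argument for $(b)\Leftrightarrow(c)$ self-contained, without passing through~$(a)$.
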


\begin{proof}
The equivalence of (a) and (b) is immediate from the definitions of  dominant permutations and involution Rothe diagrams.
By Lemma \ref{invdiagram-lem}, it is clear that if $y$ is weakly dominant then the transpose of $\hat D(y)$ is a shifted shape if and only if $D(r(y))$ is the diagram of a partition, i.e., $r(y)$ is dominant. It is a straightforward exercise to check that a 132-avoiding (i.e., dominant) involution is  weakly dominant, so it follows that (a) and (c) are equivalent.
\end{proof}

We now prove Theorem~\ref{t:dominvSchub}, which is the involution analogue of Proposition \ref{schubertfactor-prop}(a). 

\begin{theorem}\label{invSchuprod-thm}
Suppose $y \in \I(S_\infty)$ and $z \in \Ifpf(S_{\infty})$ are dominant. Then
\[
\hat\fkS_y = 2^{-\kappa(y)} \prod_{(i,j) \in \D(y)} (x_i+x_j)
\qquand
\Sfpf_z = \prod_{(i,j) \in \Dfpf(z)} (x_i+x_j).
\]
\end{theorem}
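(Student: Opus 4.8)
The plan is to reduce both identities to a single \emph{factorization} of $\hat\fkS_y$ and then prove that factorization by propagating the known formula for the longest element downward through divided-difference operators.

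Set $k=\kappa(y)$ and $v=r(y)$. By Proposition~\ref{dom-prop}, $y$ is weakly dominant and $v$ is dominant, so Lemma~\ref{invdiagram-lem} gives the disjoint decomposition $\D(y)=\D(\cG_k)\cup E_k(v)$, where $\D(\cG_k)=\{(i,j):1\le j\le i\le k\}$ occupies rows $\le k$ and $E_k(v)=\{(j+k,i):(i,j)\in D(v)\}$ occupies rows $>k$. Hence
\[
2^{-k}\prod_{(i,j)\in\D(y)}(x_i+x_j)=\Big(2^{-k}\prod_{1\le i\le j\le k}(x_i+x_j)\Big)\cdot\prod_{(i,j)\in D(v)}(x_i+x_{j+k}).
\]
By Proposition~\ref{grassman-prop} the first factor is $\hat\fkS_{\cG_k}$, and since $v$ is dominant Proposition~\ref{schubertfactor-prop}(a) identifies the second with the double Schubert polynomial $\fkS_v(x;y)$ specialized at $y_j\mapsto -x_{j+k}$. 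So it suffices to prove
\[
\hat\fkS_y=\hat\fkS_{\cG_k}\cdot\fkS_v(x;y)\big|_{y_j=-x_{j+k}},
\]
together with the parallel statement for $\Sfpf_z$ (with $k=\kappa(z)=n$, diagram $\Dfpf(\cG_n)$, and $\Sfpf_{\cG_n}=s_{\delta_n}(x_1,\dots,x_n)$ replacing $\hat\fkS_{\cG_k}$).

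I would prove this factorization by downward induction through the dominant involutions, using $\hat\fkS_{z}=\partial_u\hat\fkS_{w_n}$ for $u\in\cA(z,w_n)$ (Corollary~\ref{invschubdef-cor}). The base case is $y=w_n$, where the claimed factorization is exactly the Wyser--Yong product formula \eqref{eq:wyser.yong.prod}, which follows from Theorems~\ref{WY2-thm} and~\ref{WYatom-thm}. For the inductive step I would realize a dominant $y\ne w_n$ as $y=y^+\rtimes s_i$ for a dominant involution $y^+$ covering $y$ in the two-sided weak order — concretely, $\hat D(y^+)$ is obtained from $\hat D(y)$ by adjoining a single cell, which is always possible strictly below $w_n$ — and then pass the inductively known factorization of $\hat\fkS_{y^+}$ down to $\hat\fkS_y=\partial_i\hat\fkS_{y^+}$ via Theorem~\ref{invschubdef-thm}. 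Away from the diagonal this is transparent: when the step merely interchanges two-cycle endpoints, swapping $(p,b_p)$ and $(p+1,b_{p+1})$ with $p<k$, the factor $\hat\fkS_{\cG_k}=s_{\delta_{k+1}}(x_1,\dots,x_k)$ is symmetric in $x_1,\dots,x_k$ and so fixed by $s_p$, whence $\partial_p$ acts only on the second factor; since $p+1\le k$, it also commutes with the specialization $y_j\mapsto -x_{j+k}$, and the divided-difference rule for double Schubert polynomials turns $\fkS_{v^+}$ into $\fkS_{v}$ exactly as needed.

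The main obstacle is the interaction at the diagonal: the covers in which a fixed point and a two-cycle endpoint interchange, equivalently the boundary descent $p=k$ of $v$ (the case $v(i)>k$ for some $i\le k$). There $s_k$ swaps $x_k$ with $x_{k+1}$, so $\hat\fkS_{\cG_k}$ is not $s_k$-invariant and $\partial_k$ no longer commutes with the specialization $y_1\mapsto -x_{k+1}$; moreover such a cover is of the length-one $y^+s_i$ type rather than a conjugation, and this is precisely where the factor of $2$ separating $\hat\fkS$ from the orthogonal $\Upsilon$-polynomial enters. I expect to resolve this either by a direct Leibniz-rule computation of $\partial_k(\hat\fkS_{\cG_k}\cdot g)$, checking that the correction terms from the non-symmetry of $\hat\fkS_{\cG_k}$ cancel against the change in $g$, or — more robustly — by running the entire induction on the geometric side through Theorems~\ref{WY1-thm} and~\ref{WYatom-thm}, where moves leaving $\I_K$ contribute $0$ and the relevant powers of $2$ are dictated by $\kappa$. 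The fixed-point-free identity runs in exact parallel, using Corollary~\ref{invschubdeffpf-cor} in place of Theorem~\ref{invschubdef-thm} and $\Sfpf_{\cG_n}$ in place of $\hat\fkS_{\cG_k}$.
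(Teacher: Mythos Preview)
Your strategy inverts the paper's logical order: you reduce Theorem~\ref{invSchuprod-thm} to the factorization $\hat\fkS_y=\hat\fkS_{\cG_k}\cdot\Phi_{k,q}(\fkS_{r(y)}(x;y))$ and then try to establish that by induction, whereas the paper proves the product formula of Theorem~\ref{invSchuprod-thm} \emph{directly} and only afterward deduces the factorization (Theorem~\ref{factor-thm}) from it.  The reason this ordering matters is exactly the diagonal obstacle you flag and leave unresolved.

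The paper's direct argument sidesteps that obstacle entirely.  Given a dominant $y<w_n$, it chooses a specific cell $(i,j)$ to adjoin to $\hat D(y)$: take $j$ minimal with $(n-j,j)\notin D(y)$, then $i$ minimal with $(i,j)\notin D(y)$.  One checks $j=y(i)$, $j\le i$, $s_i\notin\DesR(y)$, and $\hat D(y\rtimes s_i)=\hat D(y)\cup\{(i,j)\}$, so $y\rtimes s_i$ is again dominant.  The crucial point is that for this particular choice, rows $i$ and $i+1$ of $\hat D(y)$ have the same length and columns $i,i+1$ are empty, so the map $(k,l)\mapsto(s_i(k),s_i(l))$ preserves $\hat D(y)$ and the product $\prod_{(k,l)\in\hat D(y)}(x_k+x_l)$ is $s_i$-invariant.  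Hence $\partial_i$ acts only on the new factor $(x_i+x_j)$, yielding $1$ if $i\ne j$ and $2$ if $i=j$; meanwhile $\kappa(y\rtimes s_i)-\kappa(y)$ is $0$ or $1$ in exactly the same cases.  Both the conjugation and the multiplication cover are thus handled by one uniform computation, with no need to split off $\hat\fkS_{\cG_k}$ or confront its lack of $s_k$-symmetry.

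Your factored approach could in principle be pushed through --- the Leibniz computation you allude to is doable --- but the boundary case is genuinely messier because $k$ itself changes across the cover, so you are comparing $\hat\fkS_{\cG_k}$ against $\hat\fkS_{\cG_{k+1}}$ while simultaneously the specialization $y_j\mapsto -x_{j+k}$ shifts.  As written, the proposal has a real gap there; the fix is either to carry out that computation explicitly, or to adopt the paper's unfactored induction, which is both shorter and cleaner.
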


\begin{proof}

We prove  the first identity, since the formula for $\Sfpf_z$ follows by essentially the same argument.
To begin, it is helpful to note (see \cite[\S2.1]{Manivel}) that the Rothe diagram of a permutation $w$ is the complement in $\PP\times \PP$ of the {hooks}  through the points $(i,w(i))$ for $i \in \PP$, where the \emph{hook} through a cell $(i,j)$ is the set of positions of the form $(i+t,j)$ or $(i,j+t)$ for $t \in \NN$.
Assume $w$ is dominant, so that $D(w)$ is a partition.
Then the northwest corners of the complement of $D(w)$ are all of the form $(i,w(i))$.
Moreover, if $(i,w(i))$ is such a corner then $i$ is a descent of $w$ if and only if the $(i+1)^{\mathrm{th}}$ row of $D(w)$ is shorter than the $i^{\mathrm{th}}$ row.

Let $y \in \I(S_n)$ be a dominant involution.
The desired formula holds when $y = w_n$ by Theorem \ref{WYatom-thm}, so assume $y<w_n$ and that the product formula is valid for all dominant involutions $z \in \I(S_n)$ with $\ell(y)<\ell(z)$. The Rothe diagram of $D(y)$ is strictly contained in $D(w_{n}) = (n-1,n-2,\dots,2,1)$,
so we may define $j \in [n]$ to be minimal such that $(n-j,j) \notin D(y)$, and then define $i \in [n-j]$ to be minimal such that $(i,j) \notin D(y)$. The cell $(i,j)$ is then a northwest corner of the complement of $D(y)$, so  $j = y(i)$.
Moreover,
rows $i,i+1,\dots,n-j+1$ of $D(y)$ all have length $j-1$, so we must have $s_i \notin\DesR(y)$. Finally, we must also have $j \leq i$ since $D(y)$ is symmetric under transpose.

Using these facts and the interpretation of the  Rothe diagram as the complement of the hooks through the points of a permutation, it is a straightforward exercise to check that 
\be\label{ijadd}
 \D(y \rtimes s_i) = \D(y) \cup \{(i,j)\}.
\ee We omit the details, since the argument is easier to visualize  than to transcribe and is similar to the proof of \cite[Proposition 2.6.7]{Manivel}. 
By  Proposition \ref{dom-prop},
the identity \eqref{ijadd} 
 implies that $y\rtimes s_i$ 
is itself a dominant involution of greater length than $y$,
so by induction and Theorem \ref{invschubdef-thm} we obtain
\be\label{prodform-eq}
\hat{\fkS}_y = \partial_i \hat{\fkS}_{y \rtimes s_i} = \partial_i \left[2^{-\kappa(y\rtimes s_i)}   (x_i+x_j)   \prod_{(k,l) \in \D(y)} (x_k + x_l) \right].
\ee
The transposed shifted shape $\D(y)$ has the same number of cells in rows $i$ and $i+1$ and no cells in columns $i$ and $i+1$.
The product $\prod_{(k,l) \in \D(y)} (x_k + x_l)$ is therefore $s_i$-invariant since the map  $(k,l) \mapsto (s_i(k),s_i(l))$ preserves $\D(y)$.
Since $\partial_i (fg) = f \partial_i(g)$ when $s_i f=f$, 
equation \eqref{prodform-eq}
becomes
\be\label{becomes-eq} \hat \fkS_y = 2^{-\kappa(y\rtimes s_i)}  \prod_{(k,l) \in \D(y)}  (x_k + x_l) \cdot \partial_i  (x_i+x_j)  .\ee
It is easy to check that $\partial_i( x_i + x_j ) $ is 1 if $i\neq j$ and 2 otherwise.
In turn, since $\kappa(\sigma)$ is the number of diagonal cells in $\D(\sigma)$ for any $\sigma \in \I(S_\infty)$, it follows that
 $\kappa(y\rtimes s_i) - \kappa(y)$ is 0 if $i\neq j$ and 1 otherwise.
Applying these observations transforms \eqref{becomes-eq} to the the desired formula for $\hat \fkS_y$.
\end{proof}

 For $p,q \in \NN$ define $\Phi_{p,q}$ to be the map $\cP_{\infty}(x; y) \to \cP_{p+q}$ with
 \be\label{Phi-eq}
 \Phi_{p,q} : f(x;y) \mapsto 
f(x_1,x_2, \dots, x_p, 0, 0, \dots; -x_{p+1}, -x_{p+2}, \dots, -x_{p+q}, 0, 0, \dots).
\ee
In other words, $\Phi_{p,q}$ is the ring homomorphism which maps $x_i \mapsto x_i$  for $i \in [p]$ and $y_j \mapsto -x_{p+j}$ for $j \in [q]$, while mapping all other variables to zero.
Suppose $z \in \cI(S_n)$ is weakly dominant and $k=\kappa(z)$. If $E_k(u)$ is defined as before Lemma \ref{invdiagram-lem}, then it follows from Observation \ref{invdiagram-obs} 
that
\be\label{phifact-eq} \prod_{(i,j) \in E_k(r(z))} (x_i+x_j)  = \prod_{(i,j) \in D(r(z))} (x_i + x_{j+k}) = \Phi_{k,n-k}\( \prod_{(i,j) \in D(r(z))} (x_i-y_j)\).\ee
This fact leads to the following result,  generalizing the previous theorem.

\begin{theorem}\label{factor-thm} 
Suppose $z \in \I(S_\infty)$ is a weakly dominant involution. Let $p = \kappa(z)$, define $n$ to be the smallest integer such that $z \in S_n$, and set $q= n-p$.
Then
\[
\hat\fkS_z = \hat\fkS_{\cG_{p}} \cdot \Phi_{p,q}\(\fkS_{r(z)}(x;y)\)
\qquand
\Sfpf_z = \Sfpf_{\cG_{p}} \cdot \Phi_{p,q}\(\fkS_{r(z)}(x;y)\)
\]
where the second identity applies only in the case when $z$ is fixed-point-free.
\end{theorem}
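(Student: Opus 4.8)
The plan is to prove the identity by downward induction on $\ell(r(z))$, transferring the involution divided-difference recursion of Theorem~\ref{invschubdef-thm} (and, in the fixed-point-free case, Corollary~\ref{invschubdeffpf-cor}) through the claimed factorization. Write $G_z = \hat\fkS_{\cG_p}\cdot\Phi_{p,q}\(\fkS_{r(z)}(x;y)\)$ for the target right-hand side; I want $G_z=\hat\fkS_z$. First observe that both sides are unchanged when $n$ is enlarged: $\hat\fkS_z$ is intrinsic to $z\in S_\infty$, while padding $r(z)$ by a trailing fixed point leaves $\fkS_{r(z)}(x;y)$ literally unchanged and only introduces a $y$-variable that $\Phi_{p,q}$ kills. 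Hence I may fix $p=\kappa(z)$ and allow $n$ to be as large as convenient, so that via $z\mapsto r(z)$ the weakly dominant involutions with $\kappa=p$ are identified with the graded poset $\mathcal V_p=\{v : \text{largest descent of }v\le p\}$, with $\ellhat(z)=\binom{p+1}{2}+\ell(r(z))$ by Lemma~\ref{invdiagram-lem}. The base of the induction is the dominant case: when $r(z)$ is dominant (equivalently $z$ is dominant, by Proposition~\ref{dom-prop}), the identity is exactly Theorem~\ref{invSchuprod-thm} together with \eqref{phifact-eq} and Proposition~\ref{schubertfactor-prop}(a); in particular it holds at the unique maximal-length element $(1,n)(2,n-1)\cdots(p,n-p+1)$, which is dominant.

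For the inductive step I need the structural dictionary between length-decreasing \emph{conjugation} covers of a weakly dominant $z'$ and moves on $v'=r(z')$. Since $\D(z')=\D(\cG_p)\cup E_p(r(z'))$ and $E_p$ is the transpose of $D(r(z'))$ shifted down by $p$ rows, rows $1,\dots,p$ of $\D(z')$ form the fixed block $\D(\cG_p)$ while rows $>p$ encode $D(r(z'))$ by transposition. Using \eqref{ides-eq}, a cover $z=z'\rtimes s_i=s_iz's_i$ staying weakly dominant with the same $\kappa=p$ then corresponds to right multiplication $r(z)=r(z')s_i$ when $i<p$ (it permutes two of the first $p$ positions), and to left multiplication $r(z)=s_{i-p}\,r(z')$ when $i>p$ (it swaps the values $i-p,i-p+1$). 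The boundary index $i=p$ would turn $p+1$ into a left endpoint and hence leave the weakly dominant class, so it produces no in-class cover; correspondingly no move on $v'$ uses it. I will also check that every non-dominant $z$ admits such an in-class conjugation up-cover, so that the downward induction is exhaustive.

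Granting the dictionary, the transfer is a two-case computation. Because $\hat\fkS_{\cG_p}=s_{\delta_{p+1}}(x_1,\dots,x_p)$ is symmetric in $x_1,\dots,x_p$, it is fixed by $s_i$ both when $i<p$ and when $i>p$ (in the latter case it simply does not involve $x_i,x_{i+1}$), so in either case the product rule $\partial_i(fg)=f\,\partial_i g$ for $s_i$-invariant $f$ gives $\partial_i G_{z'}=\hat\fkS_{\cG_p}\cdot\partial_i\Phi_{p,q}\(\fkS_{v'}(x;y)\)$. When $i<p$ the homomorphism $\Phi_{p,q}$ fixes $x_i,x_{i+1}$ and therefore commutes with $\partial_i$, so this equals $\hat\fkS_{\cG_p}\cdot\Phi_{p,q}\(\fkS_{v's_i}(x;y)\)=G_z$ by the $x$-recursion $\partial_i\fkS_{v'}(x;y)=\fkS_{v's_i}(x;y)$ that follows from $\fkS_{v'}(x;y)=\partial_{v'^{-1}w_m}\fkS_{w_m}(x;y)$. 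When $i=p+j>p$, writing $x_{p+j},x_{p+j+1}$ as the $\Phi_{p,q}$-images of $-y_j,-y_{j+1}$ shows $\partial_i\circ\Phi_{p,q}=-\Phi_{p,q}\circ\partial^y_j$, where $\partial^y_j$ is the $y$-divided difference; combined with the standard recursion $\partial^y_j\fkS_{v'}(x;y)=-\fkS_{s_jv'}(x;y)$ (and $0$ if $\ell(s_jv')>\ell(v')$), the two signs cancel and again $\partial_i G_{z'}=\hat\fkS_{\cG_p}\cdot\Phi_{p,q}\(\fkS_{s_jv'}(x;y)\)=G_z$. Since $\hat\fkS_z=\partial_i\hat\fkS_{z'}$ by Theorem~\ref{invschubdef-thm} and $\hat\fkS_{z'}=G_{z'}$ by the inductive hypothesis, this closes the induction. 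The fixed-point-free statement follows by the identical argument, replacing $\hat\fkS_{\cG_p}=s_{\delta_{p+1}}$ by $\Sfpf_{\cG_p}=s_{\delta_p}$ (still symmetric in $x_1,\dots,x_p$, by Proposition~\ref{grassman-prop}), $\D$ by $\Dfpf$, and Theorem~\ref{invschubdef-thm} by Corollary~\ref{invschubdeffpf-cor}, noting that conjugation preserves fixed-point-freeness.

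The step I expect to be the main obstacle is the structural dictionary of the second paragraph: proving cleanly that the length-decreasing conjugation covers within the weakly dominant class match \emph{exactly} the right- and left-multiplication covers of $v=r(z)$, that the boundary index $i=p$ genuinely exits the class, and that enough in-class conjugation up-covers exist (so that every non-dominant weakly dominant involution reduces to a larger one). Once this poset-theoretic dictionary is established, the divided-difference computations are routine; the only delicate point there is the sign bookkeeping in the $i>p$ case, where the sign from $\Phi_{p,q}$ sending $y_j\mapsto -x_{p+j}$ and the sign in the $y$-side recursion for double Schubert polynomials reassuringly cancel.
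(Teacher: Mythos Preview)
Your approach is correct and is essentially the paper's: induct by finding a conjugation up-cover $z' = s_i z s_i$ inside the weakly dominant class, apply Theorem~\ref{invschubdef-thm}, and pull $\partial_i$ past the symmetric factor $\hat\fkS_{\cG_p}$ and through $\Phi_{p,q}$ to act on $\fkS_{r(z)}(x;y)$. The paper, however, uses only the case $i\in[p-1]$, and this makes your ``main obstacle'' evaporate. Rather than taking all dominant involutions as the base case, the paper takes the (a priori smaller) set $\{z : b_1>b_2>\cdots>b_p\}$, which is easily seen to be dominant; the complementary condition ``there exists $i\in[p-1]$ with $b_i<b_{i+1}$'' then \emph{immediately} furnishes the required up-cover, with $r(z\rtimes s_i)=r(z)\,s_i$ and $\ell(r(z\rtimes s_i))=\ell(r(z))+1$. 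No further poset dictionary is needed. Your $i>p$ case (left multiplication on $r(z)$, with the sign cancellation between $\Phi_{p,q}$ and $\partial^y_j$) is correct but superfluous, as is the analysis of the boundary index $i=p$; the entire induction runs on right multiplication alone.
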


\begin{proof}
Since $z$ is weakly dominant we may write $z = (1,b_1)(2,b_2)\cdots(p,b_p)$ for distinct  integers $b_i$ all greater than $p$.
First suppose $b_1>b_2>\dots>b_p$. One checks that $z$ is then 
 dominant,  so it follows from Proposition \ref{schubertfactor-prop}(a) that the right-most expression in \eqref{phifact-eq} is  precisely $\Phi_{p,q}\(\fkS_{r(w)}(x;y)\)$. 
By Proposition \ref{grassman-prop}, noting Example \ref{invRothe-ex}, it always holds that $\hat \fkS_{\cG_p} = \prod_{(i,j) \in \hat D(\cG_p)} (x_i+x_j)$,
 so the desired formula for $ \hat\fkS_w $ follows by  Lemma \ref{invdiagram-lem} and Theorem \ref{invSchuprod-thm}.

Suppose alternatively that there exists an index $i \in [p-1]$ such that $b_i < b_{i+1}$. Then $i$ is a (right) ascent of both $z$ and $r(z)$, and evidently $z\rtimes s_i = s_i zs_i $ is also weakly dominant, so we may  assume by induction that 
$ \hat\fkS_{z\rtimes s_i} = \hat\fkS_{\cG_{p}} \cdot \Phi_{p,q}\(\fkS_{r(z\rtimes s_i)}(x;y)\) $.
As $\hat \fkS_{\cG_p} \in \Lambda_p$ by Proposition \ref{grassman-prop}, it follows
by our inductive hypothesis and Theorem \ref{invschubdef-thm}  that
\[ \hat\fkS_z = \partial_i \hat \fkS_{z\rtimes s_i}  = \partial_i \left[ \hat\fkS_{\cG_{p}} \cdot \Phi_{p,q}\(\fkS_{r(z\rtimes s_i})(x;y)\) \right] = \hat \fkS_{\cG_p} \cdot \partial_i \Phi_{p,q} \(\fkS_{r(z\rtimes s_i)}(x;y)\).  \]
Since 
   $r(z\rtimes s_i) = r(z)s_i > r(w)$, and since $\partial_i$ acts  only on the $x_i$ variables when applied to an element of $\cP_\infty(x;y)$,
we have  
$\partial_i \Phi_{p,q} \(\fkS_{r(z\rtimes s_i)}(x;y) \)
=  \Phi_{p,q}\(\fkS_{r(z)}(x;y)\).$
 Substituting this into the preceding equation gives the desired formula for $\hat \fkS_z$.
When $z$ is fixed-point-free, the  analogous identity for $\Sfpf_z$ 
 follows by a   similar argument.
\end{proof}


\subsection{Involution Stanley symmetric functions}
\label{invstan-sect}

The 
 \emph{involution Stanley symmetric function} indexed by $y,z \in \I(S_\infty)$ is
$ 
\hat F_{y,z} = \sum_{u \in \cA(y,z)} F_u \in \Lambda.
$
We abbreviate as usual by setting
$
\hat F_y = \hat F_{1,y}$ and 
$
 \Ffpf_z = \hat F_{\wfpf_n,z}$ for $z \in \Ifpf(S_{2n})$. 
Observe that $\hat F_{y,z} = 0$ if $y \not \leq_T z$, with $<_T$ as in Section \ref{gen-sect}.
The following slight modification to Theorem \ref{stanley-def} holds by Theorem-Definition \ref{atoms-thmdef}: 

\begin{observation} If $y,z \in \I(S_\infty)$ then $\hat F_{y,z} = \sum_{\lambda} \beta_{y,z,\lambda} s_\lambda$ where 
the sum is over  partitions $\lambda$  and $\beta_{y,z,\lambda}$ is the number of strict tableaux $T$ of shape $\lambda$ with $\rww(T)\in \hat\cR(y,z)$. 
\end{observation}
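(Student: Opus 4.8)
The plan is to reduce the statement directly to the Edelman--Greene expansion of ordinary Stanley symmetric functions (Theorem~\ref{stanley-def}) by way of the atom decomposition. First I would start from the defining identity $\hat F_{y,z} = \sum_{u \in \cA(y,z)} F_u$, which is how $\hat F_{y,z}$ is introduced in this section and is justified by Theorem-Definition~\ref{atoms-thmdef}. Applying Theorem~\ref{stanley-def} to each summand gives $F_u = \sum_\lambda \alpha_{u,\lambda} s_\lambda$, where $\alpha_{u,\lambda}$ is the number of strict tableaux of shape $\lambda$ whose reverse reading word lies in $\cR(u)$. Interchanging the two sums---legitimate since $\cA(y,z)$ is finite---yields $\hat F_{y,z} = \sum_\lambda \bigl( \sum_{u \in \cA(y,z)} \alpha_{u,\lambda} \bigr) s_\lambda$, so it remains only to identify the inner coefficient with $\beta_{y,z,\lambda}$.

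The one point requiring care is that $\hat\cR(y,z) = \bigcup_{u \in \cA(y,z)} \cR(u)$ is in fact a \emph{disjoint} union: a reduced word $(s_{a_1},\dots,s_{a_k})$ determines its product $s_{a_1}\cdots s_{a_k}$ uniquely, so the sets $\cR(u)$ are pairwise disjoint as $u$ ranges over distinct elements of $W$. Consequently a strict tableau $T$ of shape $\lambda$ satisfies $\rww(T) \in \hat\cR(y,z)$ if and only if $\rww(T) \in \cR(u)$ for exactly one $u \in \cA(y,z)$, i.e.\ $T$ is a reduced tableau for a unique atom. Summing over strict $\lambda$-tableaux then gives $\sum_{u \in \cA(y,z)} \alpha_{u,\lambda} = \beta_{y,z,\lambda}$, which completes the identification and hence the proof.

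I do not anticipate a genuine obstacle: the entire argument is a formal manipulation of the finite Schur expansions supplied by Edelman and Greene. The only subtlety worth flagging is the disjointness noted above, which is what guarantees that no tableau is overcounted when the atom-indexed coefficients $\alpha_{u,\lambda}$ are amalgamated into a single count over $\hat\cR(y,z)$.
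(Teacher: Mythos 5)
Your proof is correct and matches the paper's approach exactly: the paper states the Observation as an immediate consequence of Theorem-Definition~\ref{atoms-thmdef} (the decomposition $\hat\cR(y,z) = \bigcup_{u \in \cA(y,z)} \cR(u)$) combined with Theorem~\ref{stanley-def}, which is precisely your argument with the routine details written out. The disjointness point you flag---that distinct $u \in \cA(y,z)$ have disjoint sets $\cR(u)$ since a reduced word determines its product---is the right (and only) subtlety, and the paper leaves it implicit.
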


We have $\beta_{y,z,\lambda} = 0$ if $\lambda $ is not a partition of $\ellhat(y,z)$, so the sum   appearing in the observation's formula for $\hat F_{y,z}$ is finite. 
From
Theorem \ref{EG-cor}, we obtain the following corollary:

\begin{corollary}\label{tildeF-cor} If $y,z \in \I(S_\infty)$  then
 $|\hat\cR(y,z)| = \sum_{\lambda} \beta_{y,z,\lambda} f^\lambda$.
\end{corollary}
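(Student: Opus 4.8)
The plan is to reduce the statement to the Edelman--Greene count (Theorem~\ref{EG-cor}) applied to each atom, exploiting the fact that $\hat\cR(y,z)$ decomposes as a \emph{disjoint} union of reduced-word sets. First I would invoke Theorem-Definition~\ref{atoms-thmdef} to write $\hat\cR(y,z) = \bigcup_{w \in \cA(y,z)} \cR(w)$, and observe that this union is disjoint: every involution word in $\hat\cR(y,z)$ is an ordinary reduced word, hence a reduced word for a \emph{unique} permutation of $S_\infty$ (namely its product), so it can lie in $\cR(w)$ for at most one atom $w$. Therefore $|\hat\cR(y,z)| = \sum_{w \in \cA(y,z)} |\cR(w)|$.

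Next I would apply Theorem~\ref{EG-cor} to each atom, giving $|\cR(w)| = \sum_{\lambda} \alpha_{w,\lambda} f^\lambda$, and substitute. Since $\cA(y,z)$ is finite and each $\alpha_{w,\lambda}$ vanishes unless $\lambda \vdash \ellhat(y,z)$, the resulting double sum is finite and the order of summation may be interchanged:
\[
|\hat\cR(y,z)| = \sum_{w \in \cA(y,z)} \sum_{\lambda} \alpha_{w,\lambda} f^\lambda = \sum_{\lambda} \left( \sum_{w \in \cA(y,z)} \alpha_{w,\lambda} \right) f^\lambda.
\]
It then remains to identify the inner sum with $\beta_{y,z,\lambda}$. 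Recall that $\alpha_{w,\lambda}$ counts strict tableaux $T$ of shape $\lambda$ with $\rww(T) \in \cR(w)$, whereas $\beta_{y,z,\lambda}$ counts strict tableaux $T$ of shape $\lambda$ with $\rww(T) \in \hat\cR(y,z)$. Because the union above is disjoint, each strict tableau counted by $\beta_{y,z,\lambda}$ has its reading word in exactly one $\cR(w)$ with $w \in \cA(y,z)$, so the tableaux counted by $\beta_{y,z,\lambda}$ partition according to that atom, yielding $\beta_{y,z,\lambda} = \sum_{w \in \cA(y,z)} \alpha_{w,\lambda}$. Combining with the display gives the claim.

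There is no substantial obstacle here; the only point requiring care is the disjointness of the union, which is precisely what lets the Edelman--Greene statistics for the atoms aggregate cleanly into the involution statistics $\beta_{y,z,\lambda}$ of the Observation. As an equivalent alternative, one could instead extract the coefficient of the squarefree monomial $x_1 x_2 \cdots x_k$, with $k = \ellhat(y,z)$, from the two expressions $\hat F_{y,z} = \sum_{u \in \cA(y,z)} F_u$ and $\hat F_{y,z} = \sum_{\lambda} \beta_{y,z,\lambda} s_\lambda$: on the left this coefficient equals $\sum_{u} |\cR(u)| = |\hat\cR(y,z)|$ by the power-series definition of $F_u$, while on the right the coefficient of $x_1 \cdots x_k$ in $s_\lambda$ (with $|\lambda| = k$) is $f^\lambda$, the number of standard tableaux of shape $\lambda$.
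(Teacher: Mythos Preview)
Your argument is correct and is precisely the reasoning the paper leaves implicit: the corollary is stated immediately after Theorem~\ref{EG-cor} with no separate proof, relying on exactly the atom decomposition and additivity you spell out. Your optional alternative via the coefficient of $x_1\cdots x_k$ in $\hat F_{y,z}$ is also a valid one-line route to the same identity.
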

Thus, to count the number of elements in the sets $\hat \cR(y,z)$, we need only determine the Schur decomposition of the symmetric functions $\hat F_{y,z}$. 
The rest of this section is spent proving a few facts about such decompositions that follow directly from properties of Stanley symmetric functions and involution words.

Just as for ordinary Stanley symmetric functions, $\hat F_y$ and $\Ffpf_y$ are skew Schur functions when indexed by  321-avoiding permutations.
In detail, given a sequence of nonnegative integers $c=(c_1,c_2,\dots,c_n)$ whose nonzero entries occur in positions $k_1<\dots<k_l$, let $\skew(c)$ be the set of cells $(i,j) \in \PP\times \PP$, with $1\leq i \leq l$,
such that 
\[ i-k_i - c_{k_i} < j +(l -k_l-c_{k_l}) \leq i-k_i.\]
By  \cite[\S2.2.2]{Manivel}, it follows that if $c=c(w)$ is the code of a 321-avoiding permutation $w$, then $\skew(c)$ is a skew shape.
For example, 
if $y =\wfpf_n\in S_{2n}$ then $c(y) = (1,0,1,0,\dots,1,0)$ and $\skew(c(y)) = \delta_{n+1}/\delta_n$
where
$\delta_n = (n-1,n-2,\dots,2,1)$.

If $y \in S_\infty$ is 321-avoiding then Proposition \ref{Fskew-prop}  
asserts that $F_y = s_{\skew(c)}$. The following parallel statement holds for $\hat F_y$ and $\Ffpf_y$.
The codes $\hat c(y)$ and $\hat c_\fpf(y)$ used here
are defined by \eqref{inv-code-eq}.

\begin{proposition}\label{hatFskew-prop}
Suppose $y \in \I(S_\infty)$ is 321-avoiding.
Then 
 $ \skew(\hat c(y)) = \lambda/\mu $ is a skew shape and $ \hat F_y = s_{\lambda/\mu}$.
 If $y$ is fixed-point-free, then $\skew(\hat c_\fpf(y)) = \gamma/\nu$ is a skew shape and $\Ffpf_y = s_{\gamma/\nu}$.
\end{proposition}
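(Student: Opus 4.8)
The plan is to reduce both claims to the analogous statement for ordinary Stanley symmetric functions by exploiting the single-atom structure of 321-avoiding involutions. First I would invoke Theorem~\ref{atomicSn-thm}, which guarantees $|\cA(y)| = 1$ when $y$ is 321-avoiding; combined with Proposition~\ref{lexatom-prop}, this forces $\cA(y) = \{\alpha_{\min}(y)\}$, so the defining sum $\hat F_y = \sum_{u \in \cA(y)} F_u$ collapses to $\hat F_y = F_{\alpha_{\min}(y)}$. In the fixed-point-free case the same two results give $\cAfpf(y) = \{\beta_{\min}(y)\}$ and hence $\Ffpf_y = F_{\beta_{\min}(y)}$.

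Next I would transfer the 321-avoiding hypothesis to the unique atom. By Corollary~\ref{lexatom-cor}, the permutation $\alpha_{\min}(y)$ (respectively $\beta_{\min}(y)$) is again 321-avoiding. The remark preceding the proposition, citing \cite[\S2.2.2]{Manivel}, then tells us that $\skew(c(u))$ is a genuine skew shape whenever $u$ is 321-avoiding, and moreover that this skew shape is equivalent to the Rothe diagram $D(u)$. Applying Lemma~\ref{invcode-lem} to rewrite $\hat c(y) = c(\alpha_{\min}(y))$ (respectively $\hat c_\fpf(y) = c(\beta_{\min}(y))$), I conclude that $\skew(\hat c(y))$ (respectively $\skew(\hat c_\fpf(y))$) is a skew shape $\lambda/\mu$ (respectively $\gamma/\nu$) equivalent to $D(\alpha_{\min}(y))$ (respectively $D(\beta_{\min}(y))$).

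With the relevant diagram identified as a skew shape, Proposition~\ref{Fskew-prop} of Billey, Jockusch, and Stanley immediately yields $F_{\alpha_{\min}(y)} = s_{\lambda/\mu}$ and $F_{\beta_{\min}(y)} = s_{\gamma/\nu}$. Substituting the collapsed formulas from the first step then completes both identities, giving $\hat F_y = F_{\alpha_{\min}(y)} = s_{\lambda/\mu}$ and $\Ffpf_y = F_{\beta_{\min}(y)} = s_{\gamma/\nu}$.

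The argument is essentially an assembly of prior results, so the one point demanding genuine care is the identification in the second step: that the combinatorially defined set $\skew(c(u))$ is not merely some skew shape but specifically one equivalent to the Rothe diagram $D(u)$, which is the precise form in which Proposition~\ref{Fskew-prop} requires its hypothesis. I expect this compatibility to be exactly the content of \cite[\S2.2.2]{Manivel}, so the main obstacle will be confirming that the bookkeeping of the $\skew$ construction given just before the proposition matches the Rothe-diagram description, rather than proving anything substantially new.
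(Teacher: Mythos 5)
Your proposal is correct and follows essentially the same route as the paper's proof: single atom via Theorem~\ref{atomicSn-thm} and Proposition~\ref{lexatom-prop}, transfer of 321-avoidance to the atom via Corollary~\ref{lexatom-cor}, identification of codes via Lemma~\ref{invcode-lem}, and reduction to Proposition~\ref{Fskew-prop}. The compatibility point you flag at the end (that $\skew(c(u))$ is a skew shape equivalent to $D(u)$ for 321-avoiding $u$) is indeed exactly what the paper delegates to \cite[\S2.2.2]{Manivel} in the discussion preceding the proposition.
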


\begin{proof}
By Theorem \ref{atomicSn-thm} and Proposition \ref{lexatom-prop},  $\cA(y) = \{ \alpha_{\min}(y) \}$ and (in the fixed-point-free case) $\cAfpf(y) = \{\beta_{\min}(y)\}$. By Corollary \ref{lexatom-cor}, $\alpha_{\min}(y) $ and $\beta_{\min}(y)$ are themselves  321-avoiding, so the result  follows from its analogue for ordinary Stanley symmetric functions by Lemma \ref{invcode-lem}.
\end{proof}

We next classify  the involutions $y \in S_\infty$ for which  $\hat F_y$ and $\Ffpf_y$ are   Schur functions.
To begin, we note the following lemma 
which derives from the discussion after \cite[Proposition 5.4]{EL}.

\begin{lemma}[Eriksson and Linusson \cite{EL}] \label{EL-lem} A permutation $w \in S_\infty$ is both 321-avoiding and 2143-avoiding if and only if either $w$ or $w^{-1}$ is Grassmannian.
\end{lemma}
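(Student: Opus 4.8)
The plan is to prove the two directions separately, treating the forward implication as routine and concentrating effort on the reverse one.

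\textbf{Forward direction.} First I would record that both pattern classes are closed under inversion: as permutations $321$ and $2143 = (1\,2)(3\,4)$ are involutions, so $w$ avoids $321$ (resp.\ $2143$) if and only if $w^{-1}$ does. Hence it suffices to check that a Grassmannian $w$ avoids both patterns, the case of $w^{-1}$ Grassmannian following by applying this to $w^{-1}$. If $w$ has at most one descent then $w$ is a union of at most two increasing runs, so it has no decreasing subsequence of length three, giving $321$-avoidance. Moreover every inversion of $w$ must straddle the unique descent position $d$ (consist of indices $i\le d<j$), and two inversions $(i,j),(k,l)$ with $i<j<k<l$ cannot both straddle a single $d$; since a $2143$ pattern requires exactly such a disjoint pair of inversions, $w$ is $2143$-avoiding.

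\textbf{Reverse direction, structure from the two avoidances.} Now suppose $w$ is both $321$- and $2143$-avoiding; I want to show $w$ or $w^{-1}$ has at most one descent, equivalently that $w$ and $w^{-1}$ cannot both have two or more descents. The first step extracts structure from $321$-avoidance: if $i$ is a descent then $w(i)$ is a left-to-right maximum (otherwise an earlier larger value with $w(i)>w(i+1)$ is a $321$) and dually $w(i+1)$ is a right-to-left minimum; consequently no two descents are adjacent, and if $a_1<\dots<a_m$ are the descent positions then the tops $t_r=w(a_r)$ and bottoms $b_r=w(a_r+1)$ are each strictly increasing in $r$. The second step uses $2143$-avoidance: for descents $a_r<a_s$ the positions $a_r<a_r+1<a_s<a_s+1$ carry values $t_r,b_r,t_s,b_s$ with $b_r<b_s$ and $t_r<t_s$; were $t_r<b_s$, this subword would read $b_r<t_r<b_s<t_s$, i.e.\ be order-isomorphic to $2143$, which is forbidden. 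Hence $t_r>b_s$ for all $r,s$, so every descent top of $w$ exceeds every descent bottom of $w$. Call this the \emph{threshold property}.

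\textbf{The crux.} Assume for contradiction that $w^{-1}$ also has $\ge 2$ descents. Applying the first two steps to $w^{-1}$ (valid, as $w^{-1}$ is again $321$- and $2143$-avoiding) gives the threshold property for $w^{-1}$; translating back to $w$, there are consecutive values $v<v+1$ and $v'<v'+1$ with $v+1<v'$ whose positions satisfy $w^{-1}(v+1)<w^{-1}(v'+1)<w^{-1}(v)<w^{-1}(v')$, so that $w$ contains the pattern $2413$ on the values $v,v+1,v',v'+1$. A direct four-element check shows that $321$- and $2143$-avoidance force \emph{precisely} this arrangement of these four values; in particular $2413$ is itself consistent with both avoidances, which is exactly why two descents of $w^{-1}$ alone are not yet contradictory. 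The contradiction must therefore come from combining this $2413$ with a descent of $w$: using the threshold property of $w$ to select a descent whose bottom exceeds $v+1$ and whose position lies to the right of $w^{-1}(v)$, the top--bottom pair of that descent together with the inversion of $(v+1,v)$ should produce a $2143$ pattern, the desired contradiction. I expect this last combination to be the main obstacle, since the local four-element argument is insufficient (the surviving $2413$ is genuinely consistent), and one must argue globally, choosing the pair of $w^{-1}$-descents and the $w$-descent so that their tops and bottoms interleave into a forbidden $2143$ (or $321$).

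\textbf{Alternative route.} As a cross-check I would keep in mind a diagram-theoretic reformulation close to the original setting of Eriksson and Linusson: $321$-avoidance is equivalent to $D(w)$ being equivalent to a skew shape (cf.\ Proposition~\ref{Fskew-prop}), while $2143$-avoidance (vexillarity, Theorem~\ref{Mac-thm}) is equivalent to $D(w)$ being equivalent to a straight partition shape. One can then instead prove that a diagram equivalent both to a skew shape and to a straight shape must be right-justified or bottom-justified, i.e.\ that $w$ or $w^{-1}$ is Grassmannian; phrased via the essential set, this gives a second, more combinatorial path to the same conclusion.
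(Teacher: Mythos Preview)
The paper does not supply its own proof of this lemma: it is attributed to Eriksson and Linusson and simply cited as deriving ``from the discussion after \cite[Proposition~5.4]{EL}.'' So there is no in-paper argument to compare your proposal against; I can only assess your argument on its own terms.

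Your forward direction is correct and efficiently done. Your reverse direction, however, has a genuine gap, and you flag it yourself: at the ``crux'' you assert that combining the $2413$ pattern (forced by two descents of $w^{-1}$) with a descent of $w$ ``should produce a $2143$ pattern,'' but you do not carry this out. Since $2413$ is itself $321$- and $2143$-avoiding, nothing local to those four values yields a contradiction, and an unguided choice of a single $w$-descent will not suffice either; this step needs an actual argument.

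Your approach can be completed, but it takes a short case analysis you have not written. With the $2413$ pattern at positions $q_1<q_2<q_3<q_4$ carrying values $c{+}1,\,c'{+}1,\,c,\,c'$, first show that every descent of $w$ must lie in $[q_2,q_3-1]$: a descent $a<q_2$ forces the four positions $a,a{+}1,q_2,q_4$ (and a descent $a\geq q_3$ forces $q_1,q_3,a,a{+}1$) into a $2143$ pattern, using that descent tops are left-to-right maxima and descent bottoms are right-to-left minima in a $321$-avoiding permutation. Then, with two descents $a_1<a_2$ of $w$ both in $[q_2,q_3-1]$ and your threshold inequality $b_1<b_2<t_1<t_2$, one gets $b_1<c{+}1$ and $t_2>c'$, so the positions $q_1<a_1{+}1<a_2<q_4$ carry values in relative order $2,1,4,3$, the desired contradiction. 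Your alternative essential-set route is closer in spirit to what Eriksson--Linusson actually do and would likely be the cleaner way to finish, but as written it is only a sketch.
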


If $w \in S_k$ and  $m \in \NN$ then we define the shifted permutation
\be\label{1cross-def} 1_m \times w \omdef= [1,2,\dots,m,w(1)+m,w(2)+m,\dots w(k)+m ] \in S_{k+m}.\ee
For any  $v \in S_m$  we similarly define $v \times w = v \cdot (1_m \times w) \in S_{k+m}$. 
With this convention, if $ z \in S_{2k}$ is a fixed-point-free involution, then $\shiftfpf{m}{n}{z}$ is as well.
The operation on permutations just defined, which we denote with $\times$,
is sometimes called the ``direct sum'' and denoted with $\oplus$.

\begin{example}
We have 
$\shift{4}{w_4} = (5,8)(6,7)
$
and
$ \shiftfpf{2}{2}{w_4} = (1,2)(3,4)(5,8)(6,7)(9,10).
$
\end{example}

Recall from \eqref{g-def} that $\cG_k = (1,k+1)(2,k+2),\dots(k,2k) \in \I(S_{2k})$.

\begin{proposition}\label{grass-prop} If $y \in S_\infty$ is a Grassmannian involution then $y= \shift{m}{\cG_{k}}$ for some $m,k \in \NN$.
\end{proposition}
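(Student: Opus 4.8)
The plan is to read off the structure of $y$ directly from its one-line notation. If $y$ has no right descent, then $y(1)<y(2)<\cdots$ together with $y$ fixing all large integers forces $y=1=\shift{0}{\cG_0}$ (taking $k=0$). So I would assume $y$ has a unique right descent at position $r$, which means $y(1)<\cdots<y(r)$, $y(r)>y(r+1)$, and $y(r+1)<y(r+2)<\cdots$; in other words $y$ is increasing on $[1,r]$ and on $[r+1,\infty)$. The key tool throughout is that, since $y$ is an involution, its graph is symmetric under $(i,y(i))\mapsto(y(i),i)$, and I will repeatedly feed reflected points back into the increasing runs.

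First I would show $y$ has no deficiency in its first run, i.e.\ $y(i)\ge i$ for all $i\in[r]$. If instead $y(i_0)<i_0$ for some $i_0\le r$, then $y(i_0)<i_0\le r$, so $y(i_0)$ is itself a first-run position carrying the value $y(y(i_0))=i_0$; increasingness on $[1,r]$ applied to the positions $y(i_0)<i_0$ gives $i_0<y(i_0)$, contradicting $y(i_0)<i_0$. Hence each $i\in[r]$ is either fixed or an excedance. The same reflection argument rules out an excedance preceding a fixed point in $[1,r]$: if $i<i'\le r$ with $y(i)>i$ and $y(i')=i'$, then $i<y(i)<y(i')=i'\le r$, so $y(i)\in[r]$ would be a deficiency, which is impossible. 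Therefore there is an $m\ge0$ with $y(i)=i$ for $i\le m$ and $y(i)>i$ for $m<i\le r$; setting $k=r-m$, the excedance positions are exactly $m+1,\dots,m+k$, and their images $b_i:=y(m+i)$ satisfy $b_1<\cdots<b_k$ with each $b_i$ a deficiency position, hence $b_i>r$.

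Next I would pin down the positions beyond $r$. Increasingness gives $y(r+1)<y(r+2)<\cdots$, and every $j>r$ is either one of the deficiency positions $b_i$ (with value $y(b_i)=m+i\le r$) or a fixed point (with value $j>r$). Since all deficiency values are $\le r$ while all fixed values are $>r$, increasingness forces the deficiency positions to precede the fixed points among the consecutive integers $r+1,r+2,\dots$, so $\{b_1,\dots,b_k\}=\{r+1,\dots,r+k\}$ with $b_i=r+i=m+k+i$. Thus $y$ swaps $m+i$ with $m+k+i$ for each $i\in[k]$ and fixes everything else, which is exactly $\shift{m}{\cG_k}$, with $m,k\in\NN$ as required.

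I do not expect a deep obstacle here, since the whole argument is elementary; the main care is the bookkeeping in the two reflection steps, where one must check each time that the reflected position $y(i)$ still lands in the range ($[1,r]$ or $[r+1,\infty)$) on which increasingness is available. I would also double-check the degenerate conventions, namely that $\cG_0$ is the identity and that the values $m=r-k$ and $k$ produced by the construction are genuinely nonnegative, both of which are immediate from the definitions of $m$ and $r$.
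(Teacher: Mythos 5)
Your argument is correct in substance and takes a genuinely different route from the paper's proof. The paper works with the Rothe diagram: after reducing to the case $y(1)>1$ (stripping leading fixed points, which is what produces the shift by $m$), it sets $k=y(1)-1$, notes that $D(y)$ contains the cells $(1,1),\dots,(1,k)$ and hence, by the transpose-symmetry $D(y)=D(y)^T$ of an involution's diagram, also $(1,1),\dots,(k,1)$; the unique descent then confines $D(y)$ to $[k]\times[k]$, and the Grassmannian condition (weakly increasing row lengths) forces $D(y)=[k]\times[k]=D(\cG_k)$, whence $y=\cG_k$. You instead analyze the one-line notation directly, using the symmetry $(i,y(i))\mapsto(y(i),i)$ of the graph of an involution in place of diagram transpose-symmetry. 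Your version is self-contained and avoids quoting the diagram/code characterization of Grassmannian permutations; the price is more case bookkeeping, which is exactly where the one issue below arises.

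The step that needs to be filled in is your opening claim of the third paragraph, that every $j>r$ is either one of the deficiency positions $b_i$ or a fixed point. This presupposes that $y$ has no excedance beyond $r$ (equivalently, that no $2$-cycle of $y$ lies entirely inside $(r,\infty)$), but your first two reflection steps only control positions in $[1,r]$: they rule out deficiencies there and excedances preceding fixed points there, which leaves open the possibility of a pair $e<d$ with $r<e$, $y(e)=d$, $y(d)=e$. The fix is a third instance of your reflection trick, now applied to the second increasing run: if $j>r$ and $y(j)>j$, then $j$ and $y(j)$ both lie in $[r+1,\infty)$, so increasingness there gives $y(j)<y(y(j))=j$, a contradiction. (Your closing paragraph speaks of ``the two reflection steps,'' but three are needed.) Once excedances beyond $r$ are excluded, every deficiency position beyond $r$ is the partner of an excedance in $(m,r]$ and hence is some $b_i$, your classification of the positions beyond $r$ is justified, and the rest of the argument goes through as written.
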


\begin{proof}
Let $y \in \I(S_\infty)$ be Grassmannian. 
It suffices to show that if $k=y(1)-1$ is positive then $y = \cG_k$.
For this, observe that the Rothe diagram $D(y)$ contains the cells $(1,i)$ for all $i \in [k]$, and therefore also the cells $(i,1)$ for $i \in [k]$ since $D(y) = D(y^{-1}) = D(y)^T$.
As $k$ is evidently the unique descent of $y$, $D(y)$ has no cells below the $k^{\mathrm{th}}$ row or (by symmetry) to the right of the $k^{\mathrm{th}}$ column; hence $D(y) \subset [k]\times [k]$. 
On the other hand, by the definition of a Grassmannian permutation preceding Proposition \ref{schubertfactor-prop},
it holds that each nonempty row in $D(y)$ contains at least as many cells as the row above it. Since the first row of $D(y)$ already has $k$ cells, it follows  that in fact $D(y) = [k]\times [k] = D(\cG_k)$, so $y= \cG_k$ as desired.
\end{proof}

The permutations $y \in \I(S_\infty)$ whose involution Stanley symmetric functions are single Schur functions (of straight shape) turn out to have a very restricted form, which we now describe.

\begin{proposition}\label{Fgrass-prop}
Suppose $y = \shift{m}{\cG_{k-1}}$ and $z = \shiftfpf{m}{n}{\cG_{k}}$ for some $m,n,k$. Then 
\[\hat F_y = \Ffpf_z = s_{\delta_k}
\qquand
|\hat\cR(y)| = |\cRfpf(z)| = f^{\delta_k}.\]
\end{proposition}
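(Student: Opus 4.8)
The plan is to reduce each of $\hat F_y$ and $\Ffpf_z$ to a single ordinary Stanley symmetric function, compute the relevant codes from the block structure of $y$ and $z$, and recognize the resulting permutations as Grassmannian; the enumerative statement will then follow at once from Corollary~\ref{tildeF-cor}.

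First I would observe that $y = \shift{m}{\cG_{k-1}}$ and $z = \shiftfpf{m}{n}{\cG_k}$ are $321$-avoiding (and that $z$ is fixed-point-free), since each is a sum of increasing blocks and copies of $\cG_\bullet$ and $\wfpf_\bullet$, none of which admits a decreasing subsequence of length three. By Theorem~\ref{atomicSn-thm} and Proposition~\ref{lexatom-prop} this forces $\cA(y) = \{\alpha_{\min}(y)\}$ and $\cAfpf(z) = \{\beta_{\min}(z)\}$, so that
\[
\hat F_y = F_{\alpha_{\min}(y)} \qquand \Ffpf_z = F_{\beta_{\min}(z)}.
\]

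Next I would compute the involution codes. Because the blocks of $y$ (resp.\ $z$) occupy disjoint increasing intervals of both positions and values, the Rothe diagram is block-diagonal, and the computations of Example~\ref{invRothe-ex} apply blockwise: each $\cG_\bullet$-block contributes its lower-triangular involution diagram, while each $\wfpf_\bullet$-block and the leading identity part lie on or above the diagonal and contribute nothing below (resp.\ strictly below) it apart from a shift. This gives $\D(y) = (m,m) + \D(\cG_{k-1})$ and $\Dfpf(z) = (2m,2m) + \Dfpf(\cG_k)$, hence
\[
\hat c(y) = (0^m, 1, 2, \dots, k-1) \qquand \hat c_\fpf(z) = (0^{2m+1}, 1, 2, \dots, k-1, 0^{2n}).
\]
By Lemma~\ref{invcode-lem} these are precisely the ordinary codes $c(\alpha_{\min}(y))$ and $c(\beta_{\min}(z))$. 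Both are weakly increasing up to their last nonzero entry and zero thereafter, so $\alpha_{\min}(y)$ and $\beta_{\min}(z)$ are Grassmannian, hence vexillary, each with shape $\delta_k = (k-1, k-2, \dots, 1)$ (the nonzero code entries sorted into decreasing order). Theorem~\ref{Mac-thm} then gives $F_{\alpha_{\min}(y)} = F_{\beta_{\min}(z)} = s_{\delta_k}$, so $\hat F_y = \Ffpf_z = s_{\delta_k}$. Since $s_{\delta_k}$ is a single Schur function, Corollary~\ref{tildeF-cor} immediately yields $|\hat\cR(y)| = |\cRfpf(z)| = f^{\delta_k}$.

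The substance of the argument is the diagram bookkeeping of the third step; the point that needs care is the fixed-point-free case, where one must check that the leading $\wfpf_m$-block and trailing $\wfpf_n$-block genuinely drop out of $\Dfpf(z)$, so that $\Ffpf_z$ is independent of $m$ and $n$. Everything else reduces to the facts that a $321$-avoiding involution has a unique atom and that a permutation with weakly increasing code is Grassmannian with Schur function indexed by its shape. (Alternatively, one could finish via Proposition~\ref{hatFskew-prop}, which directly expresses $\hat F_y$ and $\Ffpf_z$ as the skew Schur functions of $\skew(\hat c(y))$ and $\skew(\hat c_\fpf(z))$; the skew shape of a code of the form $(0^a, 1, 2, \dots, k-1, 0^b)$ is the $180^\circ$ rotation of $\delta_k$, whose skew Schur function is again $s_{\delta_k}$.)
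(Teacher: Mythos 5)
Your proposal is correct and is essentially the paper's argument: the paper's proof is just the two-citation remark that the identities ``follow directly from Proposition~\ref{hatFskew-prop} and Corollary~\ref{tildeF-cor},'' which is exactly your parenthetical alternative, with the code computation $\hat c(y) = (0^m,1,\dots,k-1)$, $\hat c_\fpf(z) = (0^{2m+1},1,\dots,k-1,0,\dots)$ left implicit. Your main line merely unpacks the unique-atom reduction underlying Proposition~\ref{hatFskew-prop} (via Theorem~\ref{atomicSn-thm}, Proposition~\ref{lexatom-prop}, and Lemma~\ref{invcode-lem}) and finishes with the Grassmannian/vexillary identification of Theorem~\ref{Mac-thm} instead of the skew-shape formula, which is a cosmetic rather than substantive difference.
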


\begin{proof}
In view of Example~\ref{cg-ex}(a),
this follows from Corollary \ref{tildeF-cor} and Proposition \ref{hatFskew-prop}.
\end{proof}

\begin{theorem}\label{grass-thm}
Let $y \in \I(S_\infty)$ and $z \in \Ifpf(S_{\infty})$.
\ben
\item[(a)] $\hat F_y$ is a Schur function if and only if $y = \shift{m}{\cG_k}$ for some $m,k \in \NN$.
\item[(b)] $\Ffpf_z$ is a Schur function if and only if $z = \shiftfpf{m}{n}{ \cG_k}$ for some $m,n \in \NN$ and $k \in \PP$.
\een
\end{theorem}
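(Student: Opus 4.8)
The plan is to treat the two directions separately, with essentially all the work lying in the ``only if'' direction. The ``if'' direction is immediate from Proposition~\ref{Fgrass-prop}: if $y = \shift{m}{\cG_k}$ then applying that proposition with its index $k$ replaced by $k+1$ gives $\hat F_y = s_{\delta_{k+1}}$, while if $z = \shiftfpf{m}{n}{\cG_k}$ the proposition gives $\Ffpf_z = s_{\delta_k}$ directly; both are Schur functions.

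For the ``only if'' direction of (a), the first and cleanest step is to reduce to the case $|\cA(y)| = 1$. Write $\hat F_y = \sum_{u \in \cA(y)} F_u$. By the Edelman--Greene expansion (Theorem~\ref{stanley-def}) each $F_u = \sum_\lambda \alpha_{u,\lambda} s_\lambda$ has all $\alpha_{u,\lambda} \in \NN$, and each $F_u \neq 0$ since the coefficient of $x_1 \cdots x_{\ell(u)}$ in $F_u$ is $|\cR(u)| \geq 1$. If $\hat F_y = s_\nu$ is a single Schur function, then comparing Schur coefficients gives $\sum_{u} \alpha_{u,\lambda} = 0$ for every $\lambda \neq \nu$; by nonnegativity this forces $\alpha_{u,\lambda} = 0$ for all $u$ and all $\lambda \neq \nu$, so each (nonzero) $F_u$ is a positive multiple of $s_\nu$. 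Summing, $1 = \sum_u \alpha_{u,\nu} \geq |\cA(y)|$, and since $\cA(y)$ is always nonempty we conclude $|\cA(y)| = 1$. By Theorem~\ref{atomicSn-thm}, $y$ is therefore $321$-avoiding, and $\cA(y) = \{\alpha_{\min}(y)\}$ with $\alpha_{\min}(y)$ itself $321$-avoiding by Corollary~\ref{lexatom-cor}. Now $\hat F_y = F_{\alpha_{\min}(y)} = s_\nu$, so Theorem~\ref{Mac-thm} shows $\alpha_{\min}(y)$ is vexillary, and being also $321$-avoiding it follows from Lemma~\ref{EL-lem} that $\alpha_{\min}(y)$ or its inverse is Grassmannian.

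The remaining step, which I expect to be the main obstacle, is to convert this information about the atom back into the structure of $y$ itself: I would prove that $y$ must be a Grassmannian involution, after which Proposition~\ref{grass-prop} identifies $y = \shift{m}{\cG_k}$ and finishes. The difficulty is that Proposition~\ref{atomdes-prop} gives only $\DesR(\alpha_{\min}(y)) \subseteq \DesR(y)$, the wrong inclusion for bounding the descents of $y$, so I would instead argue directly at the level of diagrams. Using $\hat c(y) = c(\alpha_{\min}(y))$ (Lemma~\ref{invcode-lem}), the hypothesis that $\alpha_{\min}(y)$ is Grassmannian forces the involution code $\hat c(y)$ to be weakly increasing up to its support and then zero, and I would verify that for a $321$-avoiding involution---whose diagram $\hat D(y)$ is the transpose of a shifted skew shape---this can occur only when $\hat D(y)$ is the transposed shifted staircase, i.e.\ exactly when $y$ is Grassmannian. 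Equivalently, one can phrase this through Proposition~\ref{hatFskew-prop}: since $\hat F_y = s_{\skew(\hat c(y))}$, one checks that this skew shape is a single straight Schur function precisely when $\hat c(y)$ has the staircase form of $\hat c(\cG_k)$. (The inverse-Grassmannian case of Lemma~\ref{EL-lem} is handled by the same diagram bookkeeping applied to the transpose.)

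Part~(b) runs in exact parallel. The same Schur-coefficient argument applied to $\Ffpf_z = \sum_{u \in \cAfpf(z)} F_u$ forces $|\cAfpf(z)| = 1$, so $z$ is $321$-avoiding (Theorem~\ref{atomicSn-thm}) and $\cAfpf(z) = \{\beta_{\min}(z)\}$ with $\beta_{\min}(z)$ $321$-avoiding (Corollary~\ref{lexatom-cor}); then $\Ffpf_z = F_{\beta_{\min}(z)} = s_\nu$ makes $\beta_{\min}(z)$ vexillary, hence Grassmannian or inverse-Grassmannian. The analogous diagram analysis, now applied to $\Dfpf(z)$ together with $\hat c_\fpf(z) = c(\beta_{\min}(z))$, should force $\Dfpf(z)$ to be the transposed shifted staircase, identifying $z = \shiftfpf{m}{n}{\cG_k}$. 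The extra subtlety to watch in (b) is that the fixed-point-free padding is by the $\wfpf$-blocks rather than by genuine fixed points, so the correct normal form is $\shiftfpf{m}{n}{\cG_k}$ with $k \in \PP$ rather than $\shift{m}{\cG_k}$, and one needs the fixed-point-free counterpart of Proposition~\ref{grass-prop} to read this off.
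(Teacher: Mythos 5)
Your reduction to a single atom (the Schur-positivity argument forcing $|\cA(y)|=1$) and the identification of $\alpha_{\min}(y)$ as $321$- and $2143$-avoiding is exactly the paper's first step, spelled out more carefully than the paper does; and your code-based finishing of the Grassmannian branch is a viable alternative to what the paper actually does there (the paper argues on the arcs of $y$ directly, excluding nested pairs via $321$ in $y$ and separated pairs via $2143$ in $\alpha_{\min}(y)$, then reads off the form $\shift{m}{\cG_k}$). The genuine gap is the inverse-Grassmannian branch of Lemma~\ref{EL-lem}, which you dismiss with ``the same diagram bookkeeping applied to the transpose.'' That remark has no valid interpretation with the tools available: Lemma~\ref{invcode-lem} converts Grassmannian-ness of $\alpha_{\min}(y)$ into a statement about $\hat c(y)$ precisely because it identifies $c(\alpha_{\min}(y))$ --- the \emph{row} lengths of $D(\alpha_{\min}(y))$ --- with the row lengths of $\hat D(y)$. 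Grassmannian-ness of $\alpha_{\min}(y)^{-1}$ instead constrains $c(\alpha_{\min}(y)^{-1})$, i.e.\ the \emph{column} lengths of $D(\alpha_{\min}(y))$, and these are not the column lengths of $\hat D(y)$ or of anything attached to $y$, since $D(\alpha_{\min}(y))$ is not transpose-symmetric: already for $y=\cG_2$ one has $\alpha_{\min}(y)=[2,4,1,3]$, whose diagram has column lengths $(2,0,1)$, while $\hat D(\cG_2)$ has column lengths $(2,1,0)$. The paper closes this branch with a separate direct observation: $\alpha_{\min}(y)^{-1}=[[b_1,a_1,b_2,a_2,\dots]]$ has a descent at every $2$-cycle of $y$, so if it is Grassmannian then $\kappa(y)\leq 1$, and a $321$-avoiding involution with at most one $2$-cycle is the identity or a simple transposition, trivially of the desired form. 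You need this (or some substitute) explicitly; note that in part (b) the paper avoids the dichotomy altogether by excluding configurations (the patterns $132546$ and $13245768$) that would create two descents in \emph{both} $\beta_{\min}(z)$ and its inverse, which is why its argument never has to decide which of the two is Grassmannian.

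A second, smaller soft spot is the endgame of (b). Knowing that $\Dfpf(z)$ is a shifted transposed staircase does not by itself identify $z$: the map $z\mapsto\Dfpf(z)$ is far from injective (every $\wfpf_N$ has $\Dfpf(\wfpf_N)=\emptyset$), so the $\wfpf$-blocks of $z$ are invisible in its diagram. Relatedly, the ``fixed-point-free counterpart of Proposition~\ref{grass-prop}'' you invoke cannot be the naive one, because $\shiftfpf{m}{n}{\cG_k}$ is generally \emph{not} Grassmannian --- already $\wfpf_2=[2,1,4,3]$ has two descents. The correct statement, which you would have to prove, is that a $321$-avoiding fixed-point-free involution whose code $\hat c_\fpf(z)$ has the weakly-increasing-then-zero shape must equal $\shiftfpf{m}{n}{\cG_k}$; proving it requires arguing about the arc structure of $z$ itself (pairwise non-nested arcs, crossing components occupying disjoint intervals, isolated arcs forced to be $(i,i+1)$), not merely about the cells of $\Dfpf(z)$. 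This is true and provable along the lines you sketch, but as written the step ``staircase diagram, hence $z=\shiftfpf{m}{n}{\cG_k}$'' skips the part of the argument where all the content lies.
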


\begin{proof}
Suppose $\hat F_y$ is a Schur function.
Combining Theorems  \ref{Mac-thm} and \ref{atomicSn-thm} with Proposition \ref{lexatom-prop} and its corollary shows that  $y$ must be 321-avoiding
and that $\alpha_{\min}(y)$ must be both 321-avoiding and 2143-avoiding. 
By Lemma \ref{EL-lem}, either $\alpha_{\min}(y)$
or its inverse is therefore Grassmannian. It is apparent from the definition that $\alpha_{\min}(y)^{-1}$ is Grassmannian only if $y$ is the identity or a simple transposition
(as the only 321-avoiding transpositions are adjacent transpositions),
  in which case $\alpha_{\min}(y) $ is equal to its inverse. We conclude that $\alpha_{\min}(y)$ must be Grassmannian. 
  Since $\cA(y) = \{\alpha_{\min}(y)\}$, both $y$ and $\alpha_{\min}(y)$ have the same right descent set.
  Therefore $y$ is also Grassmannian, so  $y = \shift{m}{\cG_k}$ for some $m,k \in \NN$ by Proposition~\ref{grass-prop}.

Suppose next that $\Ffpf_z$ is a Schur function. By the same set of results as  cited in the previous paragraph, it   follows that $z$ must be 321-avoiding and either $\beta_{\min}(z)$ or its inverse must be Grassmannian.
For $i=1,2,3,4$, let $(a_i,b_i)$  be elements of the set $\{ (a,b)  \in \PP\times \PP : a < b = z(a)\}$.  One checks that it cannot occur that $a_1<a_2 <b_2<b_1$ (as then $z$ would contain the pattern 321) or that $a_1<a_2<b_1<a_3<b_2<b_3$ (as then $\beta_{\min}(z)$ and its inverse would both  contain the pattern 132546) or that $a_1<a_2<b_1<b_2 < a_3<a_4<b_3<b_4$ (as then $\beta_{\min}(z)$ and its inverse would both contain the pattern 13245768).
Using these properties, it is an  elementary exercise to deduce that $z$ must have the form $\shiftfpf{m}{n}{\cG_k}$ for some $m,n,k \in \NN$. 

This proves one half of the theorem, and the converse holds by Proposition \ref{Fgrass-prop}.
\end{proof}

A permutation is \emph{antivexillary} if it is both 321-avoiding and 351624-avoiding. For an explanation of this terminology, see \cite[Proposition 5.1]{EL}. 

\begin{corollary} Let $y \in \I(S_\infty)$ and $z \in \Ifpf(S_{2n})$. 
\ben
\item[(a)] $\hat F_y$ is a Schur function if and only if $y$ is  vexillary and 321-avoiding.
\item[(b)] $\Ffpf_z$ is a Schur function if and only if $z$ is antivexillary and 231564-avoiding.
\een
\end{corollary}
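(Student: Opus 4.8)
The plan is to read the corollary off from Theorem~\ref{grass-thm} together with a pattern-avoidance description of the two explicit families appearing there. Since Theorem~\ref{grass-thm} says that $\hat F_y$ is a Schur function exactly when $y=\shift{m}{\cG_k}$ and $\Ffpf_z$ is a Schur function exactly when $z=\shiftfpf{m}{n}{\cG_k}$, and since \emph{vexillary} means 2143-avoiding while \emph{antivexillary} means 321- and 351624-avoiding, it is enough to prove two combinatorial equivalences: an involution $y$ equals some $\shift{m}{\cG_k}$ if and only if $y$ is 321- and 2143-avoiding; and a fixed-point-free involution $z$ equals some $\shiftfpf{m}{n}{\cG_k}$ if and only if $z$ is 321-, 351624-, and 231564-avoiding. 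I would prove each equivalence by analyzing the arcs $(a,b)$ with $a<b=y(a)$, as in the proof of Theorem~\ref{grass-thm}.

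For the implication from pattern-avoidance to the explicit form, I would build a dictionary translating each forbidden pattern into a forbidden arc configuration, verified by reading off the induced one-line pattern: two nested arcs $a<a'<b'<b$, or a fixed point strictly between the endpoints of an arc, yield a 321; two disjoint arcs $a<b<a'<b'$ yield a 2143; three arcs with $a_1<a_2<b_1<a_3<b_2<b_3$ (the middle one crossing two disjoint arcs) yield a 351624; and two disjoint crossing pairs $a_1<a_2<b_1<b_2<a_3<a_4<b_3<b_4$ yield a 231564 at the six positions $(a_1,a_2,b_1,a_3,a_4,b_3)$. Hence a 321- and 2143-avoiding involution has arcs that pairwise cross with no interior fixed points, forcing the moved points into a single consecutive crossing block preceded by fixed points, i.e.\ $y=\shift{m}{\cG_k}$; and a 321-, 351624-, 231564-avoiding fixed-point-free involution has no nesting, a crossing relation on arcs with no induced path (hence a disjoint union of cliques), and at most one nontrivial clique, which---using fixed-point-freeness to force the remaining arcs to be unit arcs flanking the clique---gives $z=\shiftfpf{m}{n}{\cG_k}$.

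For the reverse implication I would use that both families are block-diagonal: $\shift{m}{\cG_k}$ is the direct sum of an identity block with $\cG_k$, and $\shiftfpf{m}{n}{\cG_k}$ is the direct sum $\wfpf_m \oplus \cG_k \oplus \wfpf_n$, each summand lying entirely below the next. An occurrence of a pattern in a direct sum decomposes as a direct sum of occurrences in the blocks, so a sum-indecomposable pattern can occur only inside a single block. The patterns 321 and 351624 are sum-indecomposable, and each block avoids them: $\cG_k$ has a single descent and so avoids every pattern with two or more descents, the identity block is increasing, and $\wfpf_m,\wfpf_n$ are direct sums of copies of 21 and so contain no sum-indecomposable pattern of length exceeding $2$. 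The decomposable patterns $2143=21\oplus 21$ and $231564=231\oplus 231$ require a further step: the identity block contains no 21 and the blocks $\wfpf_m,\wfpf_n$ contain no 231, so both factors must land inside $\cG_k$, which again has a single descent and hence contains neither 2143 nor 231564.

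The step I expect to need the most care is this last one: because 231564 is sum-decomposable, its exclusion from $\shiftfpf{m}{n}{\cG_k}$ does not follow from block-locality alone, and instead rests on the two observations that the flanking blocks avoid 231 and that the central block avoids 231564 by a descent count. Pinning down that the relevant six-element patterns are exactly 351624 and 231564---rather than the patterns 132546 and 13245768 used for $\beta_{\min}(z)$ in the proof of Theorem~\ref{grass-thm}---is the other point requiring verification, but it is a finite computation of the induced patterns displayed above.
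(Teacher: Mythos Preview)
Your proposal is correct and follows the same overall strategy as the paper: reduce to Theorem~\ref{grass-thm} and then verify the two pattern-avoidance characterizations of the families $\shift{m}{\cG_k}$ and $\shiftfpf{m}{n}{\cG_k}$. The arc dictionary you set up is accurate, and both directions of each equivalence go through as you outline; in particular, your descent-counting argument (that $\cG_k$ has a single descent while each of $321$, $2143$, $351624$, $231564$ has two) cleanly handles the reverse direction once the sum-decomposability of $2143=21\oplus 21$ and $231564=231\oplus 231$ is noted.

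The main difference from the paper is one of packaging rather than substance. For part (a), the paper simply cites Lemma~\ref{EL-lem} (Eriksson--Linusson) together with Proposition~\ref{grass-prop}: since $y=y^{-1}$, being $321$- and $2143$-avoiding is equivalent to $y$ being Grassmannian, and Grassmannian involutions are exactly the $\shift{m}{\cG_k}$. Your direct arc analysis reproves these two ingredients from scratch. For part (b), the paper explicitly leaves the equivalence ``fixed-point-free, antivexillary, and $231564$-avoiding $\Leftrightarrow$ $z=\shiftfpf{m}{n}{\cG_k}$'' as an exercise to the reader, and your crossing-graph argument (no induced $P_3$ gives a union of cliques; no two nontrivial cliques; singleton arcs must be unit arcs flanking the clique by fixed-point-freeness) is exactly a correct way to do that exercise. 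So your write-up is more self-contained than the paper's, at the cost of not reusing Lemma~\ref{EL-lem} and Proposition~\ref{grass-prop} for part (a).
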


\begin{proof}
Lemma \ref{EL-lem} 
implies that $y$ is  vexillary and 321-avoiding
if and only if $y$ is Grassmannian,
which occurs if and only if $y=1_m\times \cG_k$ for some $m,k \in \NN$ by Proposition \ref{grass-prop}.
Part (a) follows by combining this observation with Theorem~\ref{grass-thm}(a).

To prove part (b), one must show that a fixed-point-free involution $z$ is antivexillary and 231564-avoiding
if and only if $z = \shiftfpf{m}{n}{\cG_k}$ for some $m,k \in \NN$. 
From the second paragraph in the proof of Theorem~\ref{grass-thm}, we rule out three cases that correspond to containing the patterns $321$, $351624$ and $34127856$ respectively.
The result follows by noting $34127856$ contains $231564$ and observing that any $z \in \Ifpf(S_{2n})$ containing $231564$ must also contain one of these three patterns.
\end{proof}

\subsection{Stabilization}
\label{stabilization-sect}

To prove stronger statements about involution Stanley symmetric functions, we must leverage the results in Section \ref{product-sect}; we discuss methods for this here.
If $f$ is a power series in the variables $x=\{x_1,x_2,\dots\}$, then we write $r_n(f)$ or $f(x_1,\dots,x_n)$ for the power series formed by setting 
 the variables $x_{n+1}= x_{n+2}= \dots = 0$.
If $f\in \cP_\infty$ then $r_n(f) \in \cP_n$
and if $f \in \Lambda$ then $r_n(f) \in \Lambda_n$, where  $\Lambda_n$ denotes the ring of polynomials in $\cP_n$ fixed by the  action of $S_n$.

Let $w \in S_\infty$.
Following \cite{Macdonald}, we define
the  \emph{stabilization (in degree $n \in \NN$)} of $\fkS_w$ to be
\be\label{stab-obs} \stb_n(\fkS_w) \omdef= F_w(x_1,\dots,x_n) \in \Lambda_n.\ee
By Proposition \ref{schubbasis-prop},   this formula extends by linearity to a map $\stb_n:\cP_\infty \to \Lambda_n$. We then have
\[ \stb_n( \hat \fkS_{y,z} ) = \hat F_{y,z}(x_1,\dots,x_n)\qquad\text{for $y,z \in \I(S_\infty)$}
\]
and  $\lim_{n\to \infty} \stb_n(\hat \fkS_{y,z}) = \hat F_{y,z}$, where  the limit is interpreted in the sense of formal power series, with a sequence of  power series defined to be convergent if
the sequence of coefficients of any fixed monomial is eventually constant.

By applying   $\stb_n$  to both sides of the identities in Theorem \ref{factor-thm},
one might hope to show that  similar
factorizations hold for   $\hat F_y$ and $\Ffpf_z$. This strategy cannot work in general, since stabilization  is not a ring homomorphism and  may fail to preserve products.
However, we will find that in certain cases of interest the maps $\stb_n$ do behave as we would wish. To prove this we will require several preliminaries about these operations.
The following statement is immediate from \eqref{schub1-eq}.

\begin{proposition}
[{\cite[Proposition 2.8.1]{Manivel}}]
 \label{stab-cor}
If $w \in S_\infty$ then
$\stb_n(\fkS_w) = \fkS_{1_N \times w}(x_1,x_2,\dots,x_n)$ for all $N \geq n$.
\end{proposition}

More usefully, 
we can express $\stb_n$ in terms of certain modified divided differences.
Following \cite{Macdonald}, we define the \emph{isobaric divided difference operator} $\pi_i : \cP_\infty \to \cP_\infty$ for $i \in \PP$
by 
\be\label{pi-i-eq}
\pi_i f   \omdef= \partial_i (x_if).
\ee 
For example, $\pi_i(x_ix_{i+1}) = \partial_i(x_i^2x_{i+1})=x_ix_{i+1}$.
One checks that $\pi_i^2 = \pi_i$ and 
\be\label{pi-property}
\pi_i ( fg) = f \cdot \pi_i g\qquad\text{whenever $f,g \in \cP_\infty$ and $s_i f=f$.}
\ee
In particular if $s_if=f$ then $\pi_i (f) = f \cdot \pi_i(1) = f$.
These operators,
like the ordinary divided differences $\partial_i$, satisfy the Coxeter relations \eqref{coxrel}.
Therefore,
for $w \in S_\infty $  we may   define 
\be
\label{isobar-eq}
\pi_w \omdef= \pi_{i_1}\cdots \pi_{i_k}\qquad\text{for any reduced word $(s_{i_1},\dots,s_{i_k}) \in \cR(w)$.}
\ee
We will require the following property, which is less well-known.

\begin{lemma} \label{lem:restriction-and-isobaric}
If $i,n \in \PP$ and $f \in \cP_\infty$ then
$
 r_n( \pi_i f ) = \begin{cases}
 \pi_i r_n(f) & \text{if $i < n$}\\
 r_n(f) & \text{if $i \geq n$}.
 \end{cases}
$
\end{lemma}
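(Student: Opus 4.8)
The plan is to treat the two cases of the statement separately, in both relying on the closed formula $\pi_i f = \partial_i(x_i f) = (x_i f - x_{i+1}\, s_i f)/(x_i - x_{i+1})$ together with the fact, recorded in \eqref{pi-property}, that $\pi_i$ is linear over the subring of polynomials fixed by $s_i$ (in particular, over $\ZZ[x_j : j \neq i, i+1]$).

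For the case $i < n$, I would show that the restriction $r_n$ and the operator $\pi_i$ simply commute. The point is that $r_n$ sets to zero only the variables $x_{n+1}, x_{n+2}, \dots$, all of which are distinct from $x_i$ and $x_{i+1}$ because $i+1 \leq n$; hence these are ``spectator'' variables fixed by $s_i$. Expanding $f = \sum_\alpha p_\alpha m_\alpha$ with $p_\alpha \in \ZZ[x_i, x_{i+1}]$ and $m_\alpha$ a monomial in the remaining variables, property \eqref{pi-property} gives $\pi_i f = \sum_\alpha m_\alpha\, \pi_i p_\alpha$; applying $r_n$ then kills exactly those terms whose monomial $m_\alpha$ involves a zeroed variable and fixes each $\pi_i p_\alpha \in \ZZ[x_i, x_{i+1}]$, which is visibly the same as first applying $r_n$ and then $\pi_i$. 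This yields $r_n(\pi_i f) = \pi_i\, r_n(f)$.

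For the case $i \geq n$, I would instead show that $\pi_i$ acts as the identity after restriction. First I would peel off the genuine spectators: the variables $x_j$ with $j > n$ and $j \notin \{i, i+1\}$ are fixed by $s_i$, so by the same expansion, zeroing them commutes with $\pi_i$, reducing the claim to a polynomial $\tilde f$ involving only $x_1, \dots, x_n$ and $x_i, x_{i+1}$. Then I would use the polynomial identity $(x_i - x_{i+1})\,\pi_i \tilde f = x_i \tilde f - x_{i+1}\, s_i \tilde f$ and apply the ring homomorphism $x_{i+1} \mapsto 0$: the right-hand side becomes $x_i\,(\tilde f|_{x_{i+1}=0})$, so cancelling the nonzero factor $x_i$ in the integral domain $\cP_\infty$ gives $(\pi_i \tilde f)|_{x_{i+1}=0} = \tilde f|_{x_{i+1}=0}$. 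If $i = n$ this is already the desired $r_n(\pi_n f) = r_n(f)$, since $x_n$ survives $r_n$; if $i > n$, I further evaluate at $x_i \mapsto 0$ on both sides to reach the same conclusion.

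I expect the $i \geq n$ case, and specifically the boundary value $i = n$, to be the main obstacle: here $x_i = x_n$ survives $r_n$ while $x_{i+1} = x_{n+1}$ does not, so the naive ``set both $x_i$ and $x_{i+1}$ to zero'' shortcut is unavailable (it would collapse the identity $(x_i - x_{i+1})\pi_i \tilde f = \cdots$ to $0 = 0$). The fix is precisely to substitute only $x_{i+1} = 0$ first and cancel the surviving factor $x_i$, an argument that also handles $i > n$ uniformly after one extra harmless substitution. The remaining work is bookkeeping: justifying the spectator reduction through \eqref{pi-property} and confirming that each substitution is a ring homomorphism, so that the cancellation in the domain $\cP_\infty$ is legitimate.
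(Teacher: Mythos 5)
Your proof is correct. The paper itself gives no argument here — its ``proof'' is literally ``Checking the lemma is a simple exercise in algebra which we leave to the reader'' — so there is nothing to compare against; your proposal is a complete and valid way of doing that exercise. Both halves are sound: the spectator-variable expansion via \eqref{pi-property} legitimately commutes $r_n$ past $\pi_i$ when $i<n$, and in the case $i\geq n$ your identity $(x_i-x_{i+1})\,\pi_i \tilde f = x_i\tilde f - x_{i+1}\,s_i\tilde f$ followed by the substitution $x_{i+1}\mapsto 0$ and cancellation of $x_i$ in the integral domain $\cP_\infty$ is exactly the right maneuver; you also correctly isolate the only delicate point, namely the boundary case $i=n$, where $x_i$ survives $r_n$ but $x_{i+1}$ does not, and your order of substitutions handles it properly.
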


\begin{proof}
Checking the lemma is a simple exercise in algebra which we leave to the reader.
\end{proof}

The next theorem  appears in \cite{Macdonald}, but since these notes are out of proof and difficult to obtain, we include a self-contained proof.

\begin{theorem}[{Macdonald \cite[Eq.\ (4.25)]{Macdonald}}] \label{stabpi-thm}
For all $f \in \cP_n$ it holds that $\stb_n (f) = \pi_{w_n}f$.
\end{theorem}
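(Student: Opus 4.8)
The plan is to compare the two $\ZZ$-linear maps $\stb_n$ and $f\mapsto\pi_{w_n}f$ on $\cP_n$. Since Proposition~\ref{schubbasis-prop} gives a basis of $\cP_n$ consisting of the Schubert polynomials $\fkS_w$ whose indexing permutations have largest descent at most $n$, it suffices to prove $\pi_{w_n}\fkS_w=\stb_n\fkS_w=F_w(x_1,\dots,x_n)$ for each such $w$. First I would check that $\pi_{w_n}$ maps $\cP_n$ into $\Lambda_n$: because $w_n$ is the longest element of $S_n$, each $s_i$ with $i<n$ is a descent, so $w_n$ has a reduced word beginning with $s_i$ and hence $\pi_i\pi_{w_n}=\pi_{w_n}$; as $\pi_i g=g$ precisely when $g$ is symmetric in $x_i$ and $x_{i+1}$, the polynomial $\pi_{w_n}f$ is invariant under every such $s_i$ and therefore lies in $\Lambda_n$. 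This reduces the theorem to the single identity $\pi_{w_n}\fkS_w=F_w(x_1,\dots,x_n)$.

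Next I would exploit the two lemmas placed immediately before the theorem. By Corollary~\ref{stab-cor} the right-hand side equals $r_n(\fkS_{1_N\times w})$ for every $N\ge n$. On the operator side I would prove the commutation identity
\[
r_n(\pi_{w_N}f)=\pi_{w_n}f\qquad\text{for all }f\in\cP_n\text{ and all }N\ge n.
\]
This is where Lemma~\ref{lem:restriction-and-isobaric} does its work: evaluating $\pi_{w_N}$ along the staircase reduced word $(s_1)(s_2s_1)\cdots(s_{N-1}\cdots s_1)$ and pushing $r_n$ leftward through the product deletes every factor $\pi_i$ with $i\ge n$ and commutes the others past $r_n$, so (using $r_nf=f$) one is left with the $\pi$-product of the subword of indices below $n$. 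For this word that subword is a reduced word for $w_n$ followed by copies of $s_{n-1}\cdots s_1$, and since each $s_j$ with $j<n$ is a right descent of $w_n$ we have $\pi_{w_n}\pi_j=\pi_{w_n}$; hence the product collapses to $\pi_{w_n}$. The commutation identity shows the polynomials $\pi_{w_N}\fkS_w$ restrict consistently and so assemble into a symmetric function $G_w\in\Lambda$ with $r_nG_w=\pi_{w_n}\fkS_w$, and the theorem becomes the assertion $G_w=F_w$.

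The hard part will be precisely this identification $G_w=F_w$, that is, that the isobaric symmetrization of $\fkS_w$ stabilizes to the Stanley symmetric function. The two reductions above are formal and cannot establish it by themselves: restricting either $\pi_{w_N}\fkS_w$ or $\fkS_{1_N\times w}$ to fewer than $N$ variables merely reproduces the statement being proved, so some genuinely new computation is unavoidable. I would anchor it in the dominant case $w=w_n$, where $\fkS_{w_n}=x_1^{n-1}\cdots x_{n-1}=x^{\delta_n}$ and the classical Demazure computation $\pi_{w_n}(x^{\delta_n})=\partial_{w_n}(x^{\delta_n}\cdot x^{\delta_n})=s_{\delta_n}(x_1,\dots,x_n)$ agrees with $F_{w_n}=s_{\delta_n}$. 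To cover a general $w$ I would expand $\fkS_w$ by the Billey--Jockusch--Stanley formula~\eqref{schub1-eq} and show that applying $\pi_{w_n}$ erases the constraint $I\le a$ while enforcing $\max I\le n$, converting~\eqref{schub1-eq} into the truncation of~\eqref{F1-eq} that defines $F_w(x_1,\dots,x_n)$; equivalently, one interprets $\pi_{w_n}$ through the Demazure character formula on each monomial. This monomial-level computation is the crux, and everything else is bookkeeping organized by the commutation identity.
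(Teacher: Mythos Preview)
Your framework is sound and tracks the paper closely: reduce to the Schubert basis via Proposition~\ref{schubbasis-prop}, use Lemma~\ref{lem:restriction-and-isobaric} to establish a commutation identity between $r_n$ and products of isobaric operators, and thereby reduce to a single explicit computation. But the ``hard part'' you identify is genuinely a gap, and the route you propose for it does not work as stated. Applying $\pi_{w_n}$ to a single monomial $x_I$ in the Billey--Jockusch--Stanley sum does not produce another compatible-sequence monomial with the bound $I\le a$ removed; by the Weyl/Demazure character formula it produces $\pm s_\lambda(x_1,\dots,x_n)$ or zero, with signs and cancellations you have no mechanism to control. So the sentence ``applying $\pi_{w_n}$ erases the constraint $I\le a$ while enforcing $\max I\le n$'' is not a description of an argument but a restatement of what you want. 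The anchor case $w=w_n$ alone does not propagate, since $\pi_{w_n}$ does not intertwine with the divided differences $\partial_{w^{-1}w_n}$ that produce $\fkS_w$ from $\fkS_{w_n}$.

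The paper closes exactly this gap with one clean identity you are missing: setting $\tau_m=\pi_1\pi_2\cdots\pi_m$, it shows directly that $\tau_m\fkS_u=\fkS_{1_1\times u}$ whenever $u$ has largest descent at most $m$. The proof is a two-line Schubert computation: $x_1\cdots x_m\fkS_u$ is itself a Schubert polynomial $\fkS_v$ (multiply $\fkS_{w_m}$ by $x_1\cdots x_m$ to get $\fkS_{w_{m+1}}$, then apply $\partial_{u^{-1}w_m}$), and applying $\partial_1\cdots\partial_m$ to $\fkS_v$ walks down to $\fkS_{1_1\times u}$. Iterating gives $\tau_{2n-1}\cdots\tau_n\fkS_w=\fkS_{1_n\times w}$, so the polynomial whose restriction you wanted to identify is literally $\fkS_{1_n\times w}$, and Corollary~\ref{stab-cor} finishes. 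Your commutation identity then collapses $r_n(\tau_{2n-1}\cdots\tau_n f)$ to $\tau_{n-1}^n f=\pi_{w_n}f$, exactly as you outlined. In short, replace the BJS/Demazure monomial argument by the shift identity $\tau_m\fkS_u=\fkS_{1_1\times u}$ and your proof goes through.
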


\begin{proof}
Define the operator $\tau_n = \pi_1 \cdots \pi_n$.
 Since $x_i( \pi_j f) = \pi_j (x_if)$ for $i < j$, it follows that we may also write $\tau_n f  = \partial_1 \cdots \partial_n (x_1 \cdots x_n f)$ for $f \in \cP_\infty$.
Suppose $u \in S_\infty$ has largest descent at most $n$. We  claim that  $\tau_n \fkS_u = \fkS_{1_1\times u}$
where 
$1_1\times u = [1,u(1)+1,u(2)+1,\dots] \in S_\infty$ as in \eqref{1cross-def}.
To show this, first assume $u \in S_n$ 
and let 
$v = w_{n+1} w_n u =  [u(1) +1, u(2)+1,\dots,u(n)+1,1] \in S_{n+1}$.
 Since the product $x_1\cdots x_n$ is invariant under $S_n$, it  holds by Theorem \ref{fkSw0-thm} that
\[ x_1\cdots x_n  \fkS_{ u} = x_1\cdots x_n \partial_{u^{-1}w_n} \fkS_{w_n} =  \partial_{u^{-1}w_n} (x_1\cdots x_n \fkS_{w_n}) = \partial_{u^{-1}w_n} \fkS_{w_{n+1}} = \fkS_{v}.\]
Therefore
$
\tau_{n} \fkS_u = \partial_1 \cdots \partial_n (x_1 \cdots x_n \fkS_{u})
= \partial_1 \cdots \partial_n  \fkS_{v}
$.  It is clear that we have a descending chain
\[v > v s_n > vs_n s_{n-1}> \dots > v s_ns_{n-1} \cdots s_1 = 1_1\times u.\] Thus, we conclude by Theorem \ref{schubunique-thmdef} that $\tau_n \fkS_u = \fkS_{1_1\times u}$ when $u \in S_n$.
To prove the claim in general, observe that $\fkS_u \in \cP_n$ by Proposition \ref{schubbasis-prop}, so $\pi_i \fkS_u = \fkS_u$ for all $i>n$.
Therefore if $u \in S_N$ for some $N\geq n$, then $\tau_N\fkS_u = \tau_n \fkS_u =  \fkS_{1_1\times u}$  by the part of the claim already shown.

Fix $f \in \cP_n$.
Given our claim, it follows by Proposition~\ref{schubbasis-prop}
and \ref{stab-cor} that  
\[\stb_n(f) = r_n \(\tau_{2n-1} \cdots \tau_{n+1} \tau_n f\).\]
One checks  using Lemma \ref{lem:restriction-and-isobaric} that if $N\geq n$ then   $r_n( \tau_N g) = \tau_{n-1} r_n(g)$ for all $ g \in \cP_\infty$. Using this property, we deduce that
\[ \stb_n(f) =r_n \(\tau_{2n-1} \cdots \tau_{n+1} \tau_n f\)= \tau_{n-1}^n r_n(f) = \tau_{n-1}^n f .\]
Since $\pi_i^2 = \pi_i$ for all $i \in \PP$, if $w \in W$ and $s \in S$, then  $\pi_w \pi_s$ is equal to $\pi_w$ when $s \in \DesR(w)$ and to $ \pi_{ws}$ when $s\notin \DesR(w)$. Using this property, it is a simple exercise to check that $\tau_{n-1}^n = \pi_{w_n}$, and this suffices to complete the proof.
\end{proof}

 We may now begin to say something about the ``stability'' of the formulas in Theorem \ref{factor-thm}.
 In view of the preceding theorem and \eqref{pi-property}, it follows that $\stb_n(fg) = f  \stb_n(g)$ if $f \in \cP_\infty$ is invariant under the action of $S_n$. We would like to apply something like this identity to Theorem \ref{factor-thm}, but, problematically, the involution Schubert polynomials $\hat \fkS_{\cG_k}$ and $\Sfpf_{\cG_k}$ appearing in that result are  symmetric only under the action of the subgroup $S_k \times S_{n-k}$, not all of $ S_n$. To get around this difficulty, we factor $\stb_n$ into two operators, one of which respects the partial symmetry which we encounter, in the following way.

Fix nonnegative integers $p,q$ with $n=p+q$, and write $\Lambda_{p\times q}$ for the subring of polynomials in $\cP_n$ which are fixed by the action of $S_p \times S_q \subset S_n$. Thus $\Lambda_n = \Lambda_{0\times n} = \Lambda_{n\times 0}\subset \Lambda_{p\times q}$.
Let 
\be\label{g-p-q-eq} 
\cG_{p,q} \omdef= w_n \cdot (w_p \times w_q) = [ q+1,q+2,\dots,n,1,2,\dots,q] \in S_n
\ee
and define $\stb_{p,q} : \cP_n \to \Lambda_{p\times q}$ and $\stb_{n/p,q} : \Lambda_{p\times q} \to \Lambda_n$ by 
\be\label{stb-p-q-eq}
 \stb_{p,q} \omdef= \pi_{w_p\times w_q} \qquand \stb_{n/p,q} \omdef= \pi_{\cG_{p,q}}.
 \ee
It is clear from Theorem \ref{stabpi-thm} that 
$ \stb_n(f) = \stb_{n/p,q}\stb_{p,q}(f)$ for all $f \in \cP_n.$

\begin{lemma}\label{stab-lem} Let $p,q \in \NN$ and $n=p+q$.
Suppose
$f \in \Lambda_{p\times q}$ and $g \in \cP_\infty$ is such that 
$\stb_{p,q}(g) \in \Lambda_{n}$. Then
$
\stb_n(fg) = \stb_{n}(f)  \stb_{p,q}(g).
$
\end{lemma}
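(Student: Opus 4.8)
The plan is to reduce everything to the single operator identity $\stb_n\circ\stb_{p,q}=\stb_n$ on all of $\cP_\infty$, after which the lemma is a three-line computation. The naive temptation is to invoke the factorization $\stb_n=\stb_{n/p,q}\circ\stb_{p,q}$ together with $\stb_n=\pi_{w_n}$ and then pull $f$ and $\stb_{p,q}(g)$ out of the relevant isobaric operators via \eqref{pi-property}. The difficulty — and the main obstacle — is that these identities hold only on $\cP_n$: the equality $\stb_n=\pi_{w_n}$ fails on $\cP_\infty$ (for instance $\stb_n(x_{n+1})=0$ while $\pi_{w_n}(x_{n+1})=x_{n+1}$), and since $g\in\cP_\infty$ may involve the variables $x_{n+1},x_{n+2},\dots$, the product $fg$ need not lie in $\cP_n$. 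So the entire proof hinges on showing that $\stb_n$ is insensitive to the part of $\stb_{p,q}$ coming from these higher variables.

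To that end I would first prove that $\stb_n\circ\pi_i=\stb_n$ on $\cP_\infty$ for every $i$ with $1\le i\le n-1$. The key is to realize $\stb_n$ as a composition: for any $M\ge n$, Corollary~\ref{stab-cor} gives $\stb_n(\fkS_w)=\fkS_{\shift{M}{w}}(x_1,\dots,x_n)=r_n\big(\fkS_{\shift{M}{w}}\big)$, so that $\stb_n=r_n\circ\iota_M$, where $\iota_M\colon\cP_\infty\to\cP_\infty$ is the linear operator determined on the Schubert basis by $\iota_M(\fkS_w)=\fkS_{\shift{M}{w}}$. The operator $\iota_M$ intertwines isobaric divided differences with a shift of index, namely $\iota_M\circ\pi_i=\pi_{i+M}\circ\iota_M$; this holds because the embedding $w\mapsto\shift{M}{w}$ carries $s_i$ to $s_{i+M}$ and the action of $\pi_i$ on a Schubert polynomial depends only on the corresponding local data, so it can be checked directly on the basis (or by a short induction on $M$ using the $\tau$-operators from the proof of Theorem~\ref{stabpi-thm}). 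Granting this, for $i<n\le M$ we have $i+M\ge n$, so Lemma~\ref{lem:restriction-and-isobaric} yields $r_n\circ\pi_{i+M}=r_n$, and therefore
\[
\stb_n(\pi_i f)=r_n\big(\iota_M(\pi_i f)\big)=r_n\big(\pi_{i+M}(\iota_M f)\big)=r_n(\iota_M f)=\stb_n(f)
\]
for all $f\in\cP_\infty$. Since $w_p\times w_q$ admits a reduced word using only generators $s_i$ with $i<n$, composing these identities gives $\stb_n\circ\stb_{p,q}=\stb_n\circ\pi_{w_p\times w_q}=\stb_n$ on $\cP_\infty$, as desired.

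With this in hand the proof finishes quickly. Because $f\in\Lambda_{p\times q}$ is fixed by each $s_i$ appearing in $w_p\times w_q$, property~\eqref{pi-property} lets me factor $f$ out of $\stb_{p,q}=\pi_{w_p\times w_q}$, giving $\stb_{p,q}(fg)=f\cdot\stb_{p,q}(g)$. Hence
\[
\stb_n(fg)=\stb_n\big(\stb_{p,q}(fg)\big)=\stb_n\big(f\cdot\stb_{p,q}(g)\big)=\stb_{p,q}(g)\cdot\stb_n(f),
\]
where the first equality is the operator identity just established, and the last uses the hypothesis $\stb_{p,q}(g)\in\Lambda_n$: since $\stb_{p,q}(g)$ is then $S_n$-invariant, the multiplicativity property recorded after Theorem~\ref{stabpi-thm} applies and lets me pull it out of $\stb_n$. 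This is exactly $\stb_n(fg)=\stb_n(f)\,\stb_{p,q}(g)$. The only nonroutine point is the intertwining identity $\iota_M\circ\pi_i=\pi_{i+M}\circ\iota_M$, which is precisely where the behaviour of stabilization on the higher variables gets pinned down; everything else is bookkeeping with \eqref{pi-property} and Lemma~\ref{lem:restriction-and-isobaric}.
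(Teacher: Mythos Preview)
Your argument is correct, but it does considerably more work than the paper's. The paper simply applies the factorization $\stb_n=\stb_{n/p,q}\circ\stb_{p,q}$ to $fg$, pulls $f$ out of $\stb_{p,q}$ via \eqref{pi-property} (since $f\in\Lambda_{p\times q}$), then pulls $\stb_{p,q}(g)$ out of $\stb_{n/p,q}$ via \eqref{pi-property} (since $\stb_{p,q}(g)\in\Lambda_n$), and finishes with $\stb_{n/p,q}(f)=\stb_n(f)$ because $\stb_{p,q}(f)=f$. That is the whole proof.

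You are right that this factorization, and the identification $\stb_n=\pi_{w_n}$, are only established on $\cP_n$, so the paper's first step $\stb_n(fg)=\stb_{n/p,q}\stb_{p,q}(fg)$ is, strictly speaking, unjustified when $fg\notin\cP_n$. Your detour through the operator identity $\stb_n\circ\pi_i=\stb_n$ on $\cP_\infty$ repairs this. The intertwining claim $\iota_M\circ\pi_i=\pi_{i+M}\circ\iota_M$ that you leave as ``nonroutine'' is indeed true: on any $\cP_N$ with $N>i$ one has $\iota_1=\tau_N=\pi_1\cdots\pi_N$ by the proof of Theorem~\ref{stabpi-thm}, and a short braid-relation manipulation gives $\tau_N\pi_i=\pi_{i+1}\tau_N$ for $i<N$; iterating yields your identity. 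So your proof is complete once that sketch is filled in. In practice the extra generality is never used---in the paper's only application (Theorem~\ref{last-thm}) the factor $g=\Phi_{p,q}(\fkS_{r(z)}(x;y))$ already lies in $\cP_n$---so the paper's three-line argument suffices there.
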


\begin{proof}
Our hypotheses together with \eqref{pi-property} imply that $\stb_{p,q}(f)= f$ and 
\[
\stb_{p,q}(fg) = f\cdot \stb_{p,q}(g)
\qquand
\stb_{n/p,q}(f\cdot \stb_{p,q}(g)) = \stb_{n/p,q}(f) \stb_{p,q}(g)
.\]
Hence $ \stb_n(fg) = \stb_{n/p,q}  \stb_{p,q}(fg)= 
 \stb_{n/p,q} ( f)    \stb_{p,q}(g)
 =
  \stb_{n} ( f)    \stb_{p,q}(g)
.$
\end{proof}

The algebra of symmetric functions $\Lambda$ may be identified with its graded dual and so given the structure of a graded, self-dual Hopf algebra; see \cite[Chapter 2]{ReinerNotes} for the details of this standard construction.
The coproduct $\Delta : \Lambda \to \Lambda \otimes \Lambda$ of this Hopf algebra
is defined to be the linear map satisfying 
$
\Delta(f) = \sum_i g_i\otimes h_i
$ where $g_i, h_i \in \Lambda$ are symmetric functions such that
\[f(x_1,x_2,\dots,y_1,y_2,\dots) = \sum_i g_i(x_1,x_2,\dots)  h_i(y_1,y_2,\dots).
\]
For any partition $\nu$, it holds that $\Delta(s_\nu) = \sum_{\lambda,\mu} c^\nu_{\lambda,\mu} s_\lambda\otimes s_\mu$ where $c^\nu_{\lambda,\mu}$ are the Littlewood-Richardson coefficients.
Write $\omega : \Lambda \to \Lambda$ for the linear map defined by $\omega(s_\nu) = s_{\nu^T}$ for all partitions $\nu$, where $\nu^T$ denotes the transpose of $\nu$.

\begin{lemma}\label{lastthm-lem2}
Let $p,q \in \NN$ and $n=p+q$. Suppose $w \in S_q$ has largest descent at most $p$. If 
\[
(\id\otimes \omega)\circ \Delta(F_w) = \Delta(F_w) 
\]
then
$ \stb_{p,q}\(\Phi_{p,q}\(\fkS_w(x;y)\)\) = F_w(x_1,\dots,x_n)$ where $\Phi_{p,q}$ is the map defined by \eqref{Phi-eq}.

\end{lemma}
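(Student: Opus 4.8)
The plan is to compute the left-hand side explicitly by pushing the operators $\Phi_{p,q}$ and $\stb_{p,q}$ through the defining expansion of the double Schubert polynomial, and then to recognize the result as the two-alphabet evaluation of $(\id\otimes\omega)\Delta(F_w)$, at which point the hypothesis closes the argument.

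First I would expand $\fkS_w(x;y)$ using Definition~\ref{doubleschub-def} and apply the ring homomorphism $\Phi_{p,q}$ termwise. Since $\Phi_{p,q}$ sends $x_i\mapsto x_i$ for $i\le p$ and $-y_j\mapsto x_{p+j}$ for $j\le q$ (all remaining variables going to $0$), each summand $\fkS_u(x)\fkS_v(-y)$ maps to $\fkS_u(x_1,\dots,x_p)\cdot\fkS_v(x_{p+1},\dots,x_{p+q})$, with the signs cancelling. The key structural point is that because $w\in S_q$ and each factorization $w=v^{-1}u$ with $\ell(w)=\ell(u)+\ell(v)$ is reduced, both $v^{-1}$ and $u$ use only the generators $s_1,\dots,s_{q-1}$ and hence lie in the parabolic subgroup $S_q$; moreover, a short exchange argument shows $\DesR(u)\subseteq\DesR(w)$ for the right factor $u$. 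As $w$ has largest descent at most $p$, this forces $\DesR(u)\subseteq\{s_1,\dots,s_p\}$, so $\fkS_u\in\cP_p$ and the restriction $\fkS_u(x_1,\dots,x_p)=\fkS_u$ is lossless; similarly $\fkS_v\in\cP_{q-1}$ fits inside the second block of $q$ variables.

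Next I would apply $\stb_{p,q}=\pi_{w_p\times w_q}$, which factors as the product of the isobaric operators for the two disjoint variable blocks $\{x_1,\dots,x_p\}$ and $\{x_{p+1},\dots,x_{p+q}\}$. Using \eqref{pi-property} these operators distribute across the product, and Theorem~\ref{stabpi-thm} (applied to each block, the second after relabeling variables) sends $\fkS_u(x_1,\dots,x_p)$ to $F_u(x_1,\dots,x_p)$ and $\fkS_v(x_{p+1},\dots,x_{p+q})$ to $F_v(x_{p+1},\dots,x_{p+q})$. This gives
\[
\stb_{p,q}\big(\Phi_{p,q}(\fkS_w(x;y))\big)=\sum_{\substack{w=v^{-1}u\\ \ell(w)=\ell(u)+\ell(v)}} F_u(x_1,\dots,x_p)\,F_v(x_{p+1},\dots,x_{p+q}).
\]
Reindexing by the reduced factorization $w=PQ$ with $P=v^{-1}$ and $Q=u$, and invoking the standard facts for Stanley symmetric functions that $\Delta(F_w)=\sum_{PQ=w\ \text{reduced}}F_P\otimes F_Q$ and $\omega(F_P)=F_{P^{-1}}$ (see \cite{Lam,Macdonald}), the displayed sum is the evaluation in the alphabets $(x_1,\dots,x_p)$ and $(x_{p+1},\dots,x_{p+q})$ of $(\id\otimes\omega)\,\tau\,\Delta(F_w)$, where $\tau$ swaps tensor factors. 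Since $\Lambda$ is cocommutative, $\tau\Delta=\Delta$, so this is exactly the two-alphabet evaluation of $(\id\otimes\omega)\Delta(F_w)$. By the definition of the coproduct, $F_w(x_1,\dots,x_n)$ is the two-alphabet evaluation of $\Delta(F_w)$; hence the hypothesis $(\id\otimes\omega)\Delta(F_w)=\Delta(F_w)$ equates the two, proving the claim.

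I expect the main obstacle to lie in the stabilization computation rather than the Hopf-algebraic endgame. One must verify carefully that the descent bound $\DesR(u)\subseteq\DesR(w)$ holds for right factors of a reduced factorization and that $v\in S_q$, since these are precisely what make the restrictions imposed by $\Phi_{p,q}$ lossless and allow Theorem~\ref{stabpi-thm} to apply verbatim on each block. The other delicate point is the bookkeeping of which permutation is paired with which alphabet: the double Schubert expansion pairs the first alphabet with the \emph{right} factor, whereas the coproduct pairs it with the \emph{left} factor, so cocommutativity is needed to reconcile them, and this is exactly what routes the discrepancy into the $\omega$-twist — making the hypothesis enter where it must.
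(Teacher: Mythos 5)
Your proposal is correct and follows essentially the same route as the paper's proof: expand $\fkS_w(x;y)$ via Definition~\ref{doubleschub-def}, use the descent bounds to see that $\Phi_{p,q}$ loses nothing, apply Theorem~\ref{stabpi-thm} blockwise to obtain $\sum_{w=v^{-1}u} F_u(x_1,\dots,x_p)F_v(x_{p+1},\dots,x_n)$, and then identify this with the two-alphabet evaluation of $(\id\otimes\omega)\Delta(F_w)=\Delta(F_w)$ using Lam's coproduct formula and Macdonald's $\omega(F_w)=F_{w^{-1}}$. The only cosmetic difference is that you state the coproduct with the left factor in the first tensor slot and invoke cocommutativity to reconcile conventions, whereas the paper cites Lam's formula already in the matching form.
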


\begin{proof}
It is clear from Definition \ref{doubleschub-def} and \eqref{Phi-eq}
that
\[
 \Phi_{p,q}\(\fkS_{w}(x;y)\) 
 =
 \sum_{\substack{ w = v^{-1}u \\ \ell(w) = \ell(u)+\ell(v)}} \fkS_u(x_1,x_2,\dots,x_p,0,0,\dots) \fkS_v(x_{p+1},x_{p+2},\dots,x_n,0,0,\dots).
 \]
 In the right hand sum, all indices $u$   have largest descent at most $p$ (as this is the largest possible descent of $w$), while all indices $v$ have largest descent at most $q$ (as $ v\in S_{q}$).
By Proposition \ref{schubbasis-prop}, we can therefore drop the trailing zeros  and simply write
 \[
 \Phi_{p,q}\(\fkS_{w}(x;y)\)
=
 \sum_{\substack{ w = v^{-1}u \\ \ell(w) = \ell(u)+\ell(v)}} \fkS_u(x_1,x_2,\dots,x_p) \fkS_v(x_{p+1},x_{p+2},\dots,x_n) \in \cP_n.
\]
Since $\stb_{p,q}$    acts on $ \Phi_{p,q}\(\fkS_{w}(x;y)\)$ as the operator $\pi_{w_p \times w_{q}}$,
it follows via Theorem \ref{stabpi-thm} that
\be\label{stbphi-eq}
 \stb_{p,q}\( \Phi_{p,q}\(\fkS_{w}(x;y)\)\)
 =
  \sum_{\substack{ w = v^{-1}u \\ \ell(w) = \ell(u)+\ell(v)}} F_u(x_1,x_2.\dots,x_p) F_v(x_{p+1},x_{p+2},\dots,x_n).
\ee
Now, we have from \cite[Proposition 5 and Theorem 12]{Lam2} 
that $\Delta(F_w) = \sum_{} F_u \otimes F_v$ where the sum is over all $u,v \in S_q$ with $w=vu$ and $\ell(w) = \ell(v) + \ell(u)$.
On the other hand,  Macdonald \cite[Corollary 7.22]{Macdonald} proves that $\omega(F_w) = F_{w^{-1}}$. Hence, if $(\id\otimes \omega)\circ \Delta(F_w) = \Delta(F_w)$, then 
\[F_w(x_1,x_2,\dots,y_1,y_2,\dots) = \sum_{\substack{w=v^{-1}u \\ \ell(w) = \ell(v) + \ell(u)}} F_u(x_1,x_2,\dots)  F_v(y_1,y_2,\dots).
\]
On transposing the variables $y_i$ and $x_{p+i}$ for $i \in [q]$ (which by symmetry does not affect either expression) and then  setting $x_{n+i}= y_{i} = 0$ for $i \in \PP$, the left side of this identity becomes $F_w(x_1,\dots,x_n)$ while the right side becomes the formula \eqref{stbphi-eq} for 
$\stb_{p,q}\( \Phi_{p,q}\(\fkS_{w}(x;y)\)\)$;   these expressions are therefore equal when $\id \otimes \omega$ fixes $\Delta(F_w)$.
\end{proof}

For $n \in \NN$ let $p_n \omdef= x_1^n + x_2^n + \dots \in \Lambda$ denote the usual  \emph{power sum symmetric function}.
Since $\Delta(p_n) = p_n \otimes 1 + 1\otimes p_n$ \cite[Proposition 2.3.6(a)]{ReinerNotes} and $\omega(p_n) = (-1)^{n-1} p_n$ \cite[Proposition 2.4.1(a) and Eq.\ (2.4.7)]{ReinerNotes}, and since $\Delta$ and $\omega$ are algebra homomorphisms, 
it follows that $(1\otimes \omega)\circ \Delta(f) = \Delta(f)$ whenever $f $ belongs to the Hopf subalgebra 
\be\label{lambda-odd-eq}
\LambdaOdd \omdef=\QQ[p_1,p_3,p_5,\dots] \cap \Lambda
\ee
generated by the odd-indexed power sum symmetric functions.
This subalgebra   is studied in a few places (see, e.g., \cite{AS,HH,Stem_proj}), but does not seem to have an established name.
The following theorem is the main result of this section, and will imply the results described in the introduction.

\begin{theorem}\label{last-thm}
Let $y \in \I(S_\infty)$ be weakly dominant  with $k = \kappa(y)$. If  $F_{r(y)} \in\LambdaOdd$
then 
\[ \hat F_y = \hat F_{\cG_{k}} F_{r(y)} 
\qquand
\Ffpf_y = \Ffpf_{\cG_{k}} F_{r(y)}
\]
where the second identity applies only in the case when $y$ is fixed-point-free.
\end{theorem}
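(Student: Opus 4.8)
The plan is to reduce the symmetric-function identity to the polynomial factorization of Theorem~\ref{factor-thm} and then pass from involution Schubert polynomials to involution Stanley symmetric functions using the stabilization operator $\stb_n$, with the machinery of Lemmas~\ref{stab-lem} and \ref{lastthm-lem2} doing the heavy lifting. Let $n_0$ be the smallest integer with $y \in S_{n_0}$ and write $k=\kappa(y)$. Theorem~\ref{factor-thm} gives $\hat\fkS_y = \hat\fkS_{\cG_k}\cdot\Phi_{k,n_0-k}(\fkS_{r(y)}(x;y))$, and I want to apply $\stb_n$ to this. The essential difficulty is that $\stb_n$ is not a ring homomorphism, so the product cannot be stabilized factor by factor; the hypothesis $F_{r(y)} \in \Lambda^{\mathrm{odd}}$ is precisely what tames this, by forcing the relevant partial stabilization to land in the fully symmetric ring $\Lambda_n$.

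First I would note that applying $\stb_{n_0}$ alone does not suffice, since the degree $\ellhat(y)$ of $\hat F_y$ can exceed $n_0$ (for instance $y=\cG_4$ has $n_0=8$ but $\ellhat(y)=\binom{5}{2}=10$), and an identity of symmetric functions in too few variables need not hold in general. I therefore run the argument for every $n\geq n_0$. Fix such an $n$ and regard $r(y)$ as an element of $S_{n-k}$ padded by trailing fixed points; this changes neither the double Schubert polynomial $\fkS_{r(y)}(x;y)$ nor the Stanley symmetric function $F_{r(y)}$, and since $\fkS_{r(y)}(x;y)$ involves only finitely many $y$-variables one has $\Phi_{k,n_0-k}(\fkS_{r(y)}(x;y)) = \Phi_{k,n-k}(\fkS_{r(y)}(x;y))$. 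By Observation~\ref{invdiagram-obs}, $r(y)\in S_{n-k}$ has largest descent at most $k$, so with $p=k$ and $q=n-k$ the hypotheses of Lemma~\ref{lastthm-lem2} hold. Moreover, since $F_{r(y)}\in\Lambda^{\mathrm{odd}}$, the discussion preceding Lemma~\ref{lastthm-lem2} (using $\Delta(p_m)=p_m\otimes 1+1\otimes p_m$ and $\omega(p_m)=(-1)^{m-1}p_m$ for odd $m$) gives $(\id\otimes\omega)\circ\Delta(F_{r(y)})=\Delta(F_{r(y)})$. Hence Lemma~\ref{lastthm-lem2} yields $\stb_{k,n-k}\(\Phi_{k,n-k}(\fkS_{r(y)}(x;y))\) = F_{r(y)}(x_1,\dots,x_n)\in\Lambda_n$.

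With this established I would apply Lemma~\ref{stab-lem} with $f=\hat\fkS_{\cG_k}$, which lies in $\Lambda_k\subset\Lambda_{k\times(n-k)}$ by Proposition~\ref{grassman-prop}, and $g=\Phi_{k,n-k}(\fkS_{r(y)}(x;y))$, whose partial stabilization $\stb_{k,n-k}(g)=F_{r(y)}(x_1,\dots,x_n)$ lies in $\Lambda_n$ by the previous step. The lemma gives $\stb_n(\hat\fkS_y)=\stb_n(\hat\fkS_{\cG_k})\cdot\stb_{k,n-k}(g)=\hat F_{\cG_k}(x_1,\dots,x_n)\cdot F_{r(y)}(x_1,\dots,x_n)$, using $\stb_n(\hat\fkS_{\cG_k})=\hat F_{\cG_k}(x_1,\dots,x_n)$. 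Since $\stb_n(\hat\fkS_y)=\hat F_y(x_1,\dots,x_n)$ by definition of stabilization, this reads $\hat F_y(x_1,\dots,x_n)=\hat F_{\cG_k}(x_1,\dots,x_n)\,F_{r(y)}(x_1,\dots,x_n)$ for all $n\geq n_0$. Because $r_n$ is a ring homomorphism, the right side equals $r_n(\hat F_{\cG_k}F_{r(y)})$; letting $n\to\infty$ and invoking $\hat F_y=\lim_{n\to\infty}\stb_n(\hat\fkS_y)$ gives $\hat F_y=\hat F_{\cG_k}F_{r(y)}$.

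The fixed-point-free identity follows by the identical argument, replacing $\hat\fkS$ by $\Sfpf$, $\hat F$ by $\Ffpf$, and invoking the second identity of Theorem~\ref{factor-thm} together with $\stb_n(\Sfpf_y)=\Ffpf_y(x_1,\dots,x_n)$. I expect the main obstacle to be the bookkeeping of the second paragraph: recognizing that one must stabilize in arbitrarily many variables rather than only $n_0$, and checking that the partially stabilized factor $\stb_{k,n-k}(g)$ is genuinely $S_n$-symmetric so that Lemma~\ref{stab-lem} is applicable. This last point is where the hypothesis $F_{r(y)}\in\Lambda^{\mathrm{odd}}$ is indispensable: it converts the mere $S_k$-symmetry of $\hat\fkS_{\cG_k}$ into a clean factorization of the full stabilization, which would otherwise fail.
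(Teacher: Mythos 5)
Your proposal is correct, and its engine is the same as the paper's: both proofs apply Lemmas~\ref{stab-lem} and \ref{lastthm-lem2} to the factorization of Theorem~\ref{factor-thm}, using $F_{r(y)}\in\Lambda^{\mathrm{odd}}$ to guarantee $(\id\otimes\omega)\circ\Delta(F_{r(y)})=\Delta(F_{r(y)})$, and thereby obtain $\hat F_y(x_1,\dots,x_n) = \hat F_{\cG_k}(x_1,\dots,x_n)\,F_{r(y)}(x_1,\dots,x_n)$ in $\Lambda_n$. Where you genuinely diverge is in lifting this to an identity in $\Lambda$. The paper stabilizes only once, at the smallest $n_0$ with $y\in S_{n_0}$, and then invokes the row-bounded subspaces $\Lambda_{\infty,k}$ spanned by Schur functions with at most $k$ rows: by \cite[Theorem 4.1]{Stan}, $F_u\in\Lambda_{\infty,k}$ when $u$ has largest descent at most $k$; products add row bounds; and $r_n$ is injective on $\Lambda_{\infty,m}$ for $m\leq n$, so the two symmetric functions, agreeing after restriction to $n_0$ variables, agree outright. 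You instead re-run the stabilization argument for every $n\geq n_0$ and let $n\to\infty$. Your route buys independence from the row-bound machinery entirely, at the cost of checking that the two lemmas apply uniformly in $n$ -- which you do: padding $r(y)$ by fixed points preserves the hypotheses of Lemma~\ref{lastthm-lem2} (the descent bound comes from Observation~\ref{invdiagram-obs}), and the factor from Theorem~\ref{factor-thm} can be rewritten as $\Phi_{k,n-k}(\fkS_{r(y)}(x;y))$. On that last point your justification (``involves only finitely many $y$-variables'') should be sharpened: $\Phi_{k,n_0-k}$ and $\Phi_{k,n-k}$ genuinely differ on $y_j$ for $n_0-k<j\leq n-k$, so what is needed is that $\fkS_{r(y)}(x;y)$ involves only $y_j$ with $j<n_0-k$; this holds because every $v$ appearing in Definition~\ref{doubleschub-def} for $w=r(y)\in S_{n_0-k}$ lies below $r(y)$ in Bruhat order and hence in $S_{n_0-k}$, so $\fkS_v(-y)$ uses only $y_1,\dots,y_{n_0-k-1}$. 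With that detail supplied, your argument is complete, and it is arguably more self-contained than the paper's, since it sidesteps the question of which $\Lambda_{\infty,m}$ contains $\hat F_y$ itself.
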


\begin{proof}
Let $n$ be the smallest integer such that $y \in S_n$, so that $n=2k$ when $y$ is fixed-point-free.
As noted in the preceding discussion,  the operator $\id \otimes \omega$ preserves $\Delta (F_{r(y)})$
and so 
applying Lemmas \ref{stab-lem}
 and \ref{lastthm-lem2} to Theorem \ref{factor-thm}
shows that
$
\hat F_y(x_1,\dots,x_n) = \hat F_{\cG_k}(x_1,\dots,x_n) F_{r(y)} (x_1,\dots,x_n)$
and, when $y$ is fixed-point-free, that  $\Ffpf_y(x_1,\dots,x_{n}) = \Ffpf_{\cG_k}(x_1,\dots,x_{n}) F_{r(y)} (x_1,\dots,x_{n})$.
It remains only to argue that these identities in $\Lambda_n$ lift to identities in $\Lambda$.

Write $\ell(\lambda)$ for the number of parts in a partition $\lambda$.
Let $\Lambda_{n,k}$ be the subspace of $\Lambda_n$ spanned by the Schur polynomials $s_\lambda(x_1,\dots,x_n)$ for partitions $\lambda$ with $\ell(\lambda)\leq k$, and likewise define $\Lambda_{\infty,k} = \ZZ\spanning \{s_\lambda : \ell(\lambda) \leq k\}$. It is well-known that the restriction map $r_n$  defines a bijection $\Lambda_{\infty,k} \to \Lambda_{n,k}$ whenever $k\leq n$ (see, e.g., \cite[Proposition 1.2.1]{Manivel}) and that $fg \in \Lambda_{\infty,j+k}$ whenever $f \in \Lambda_{\infty,j}$ and $g \in \Lambda_{\infty,k}$ (see \cite[\S1.5.4]{Manivel}).
Hence, if we have $(f,g,h) \in \Lambda_{\infty,j}\times  \Lambda_{\infty,k}\times \Lambda_{\infty,j+k}$ and $j+k\leq n$,
then 
\be
\label{equiv}
 h(x_1,\dots,x_n) = f(x_1,\dots,x_n)g(x_1,\dots,x_n) \in \Lambda_n
 \qquad
 \Rightarrow
 \qquad
 h=fg \in \Lambda
\ee
since we may obtain the right identity by applying the inverse of the bijection $r_n : \Lambda_{\infty,j+k} \to \Lambda_{n,j+k}$ to both sides of the equation on the left.

It follows from \cite[Theorem 4.1]{Stan} that $F_u \in \Lambda_{\infty,k}$ if $u \in S_\infty$ has largest descent at most $k$.
In view of Proposition \ref{atomdes-prop}, we thus have $\hat F_u \in \Lambda_{\infty,k}$ (respectively, $\Ffpf_u \in \Lambda_{\infty,k}$) whenever $u$ is an involution (respectively, fixed-point-free involution) with largest descent at most $k$. 
Since $2k \leq n$ and 
$y \in S_n$, and since $\cG_k$ and $r(y)$ both have largest descent at most $k$, we may apply \eqref{equiv} to deduce the desired identities from the formulas 
 in the first paragraph.
\end{proof}

The subalgebra $\LambdaOdd \subset\Lambda$ has a distinguished basis   $\{P_\lambda\}$ indexed by strict partitions, called the \emph{Schur $P$-functions}; see \cite{AS,Stem_proj}. An element $f \in \LambdaOdd$ is \emph{Schur $P$-positive} if it is a nonnegative   linear combination of Schur $P$-functions. 

\begin{theorem}\label{last-cor}
Let $y \in \I(S_\infty)$ be a weakly dominant involution with $k = \kappa(y)$.
Suppose  $D(r(y))$ is
equivalent to a skew shape of the form $\delta_m/\lambda$ for some  $m \in \PP$ and   partition $\lambda\subset \delta_m$, where $\delta_m = (m-1,m-2,\dots,2,1)$.
Then
\[
\hat F_y =  
 s_{\delta_{k+1}} s_{\delta_m/\lambda}
\qquand
\Ffpf_y 
=s_{\delta_k} s_{\delta_m/\lambda},
\]
where the second identity applies only in the case when $y$ is fixed-point-free. Moreover, in this case the symmetric functions $\hat F_y$ and (when defined) $\Ffpf_y$  are Schur $P$-positive.
\end{theorem}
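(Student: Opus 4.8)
The plan is to deduce everything from Theorem~\ref{last-thm} once its three ingredients have been identified and its single hypothesis has been verified. Recall that Theorem~\ref{last-thm} asserts $\hat F_y = \hat F_{\cG_k} F_{r(y)}$, and (when $y$ is fixed-point-free) $\Ffpf_y = \Ffpf_{\cG_k} F_{r(y)}$, \emph{provided} $F_{r(y)} \in \Lambda^{\mathrm{odd}}$. So it suffices to compute each factor on the right and to check this membership.

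First I would identify $F_{r(y)}$. Since $D(r(y))$ is equivalent to the skew shape $\delta_m/\lambda$ by hypothesis, Proposition~\ref{Fskew-prop} gives $F_{r(y)} = s_{\delta_m/\lambda}$. Next I would record the two Grassmannian factors: Proposition~\ref{Fgrass-prop} (with the shift parameters set to $0$) yields $\hat F_{\cG_{j-1}} = s_{\delta_j}$ and $\Ffpf_{\cG_j} = s_{\delta_j}$ for every $j$, so taking $j=k+1$ and $j=k$ respectively gives $\hat F_{\cG_k} = s_{\delta_{k+1}}$ and $\Ffpf_{\cG_k} = s_{\delta_k}$. The remaining hypothesis of Theorem~\ref{last-thm} is that $F_{r(y)} = s_{\delta_m/\lambda}$ lies in $\Lambda^{\mathrm{odd}}$, and this is where I would invoke the theorem of Ardila and Serrano \cite{AS} that every staircase skew Schur function $s_{\delta_m/\lambda}$ is Schur $P$-positive; since the $\{P_\nu\}$ form a basis of $\Lambda^{\mathrm{odd}}$, positivity in particular forces $s_{\delta_m/\lambda} \in \Lambda^{\mathrm{odd}}$. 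With this verified, Theorem~\ref{last-thm} immediately delivers the two product formulas $\hat F_y = s_{\delta_{k+1}} s_{\delta_m/\lambda}$ and $\Ffpf_y = s_{\delta_k} s_{\delta_m/\lambda}$.

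For the final Schur $P$-positivity claim, I would note that each factor is itself Schur $P$-positive: the staircases $s_{\delta_{k+1}}$ and $s_{\delta_k}$ are the $\lambda = \emptyset$ instances of the Ardila--Serrano result, while $s_{\delta_m/\lambda}$ is positive by the same theorem. Positivity of the product then follows from the fact that the Schur $P$-functions multiply with nonnegative structure constants, a theorem of Stembridge \cite{Stem_proj}; equivalently, the Schur $P$-positive cone is closed under multiplication. This handles both $\hat F_y$ and (when defined) $\Ffpf_y$ simultaneously.

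The main obstacle is not the combinatorial bookkeeping, which is a routine assembly once Theorem~\ref{last-thm} and Proposition~\ref{Fgrass-prop} are in hand, but rather the importation of the two external positivity results. The staircase skew Schur $P$-positivity of \cite{AS} is used twice over---first to certify the $\Lambda^{\mathrm{odd}}$ hypothesis needed to apply Theorem~\ref{last-thm} at all, and then to establish positivity of the individual factors---while Stembridge's nonnegativity of the Schur $P$ structure constants \cite{Stem_proj} is what allows positivity to survive the product. Granting these, the remaining argument is a direct substitution.
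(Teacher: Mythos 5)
Your proposal is correct and follows essentially the same route as the paper: Proposition~\ref{Fskew-prop} identifies $F_{r(y)} = s_{\delta_m/\lambda}$, Proposition~\ref{Fgrass-prop} supplies the factors $s_{\delta_{k+1}}$ and $s_{\delta_k}$, Theorem~\ref{last-thm} yields the product formulas, and the Ardila--Serrano result \cite{AS} together with closure of Schur $P$-positivity under products \cite{Stem_proj} gives the final positivity claim. The only (harmless) difference is that the paper certifies $s_{\delta_m/\lambda} \in \Lambda^{\mathrm{odd}}$ by citing \cite{RSW} or \cite{EC2} directly, whereas you deduce it from the positivity statement of \cite{AS} plus the fact that the Schur $P$-functions form a basis of $\Lambda^{\mathrm{odd}}$ --- an equally valid and slightly more economical certification.
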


\begin{remark}
We show in \cite{HMP3,HMP4,HMP5} that  $\hat F_y$ and $\Ffpf_z$ are     Schur $P$-positive for all $y \in \I(S_\infty)$ and $z \in \Ifpf(S_\infty)$. Only in the special cases just described does this follow from our present methods.
\end{remark}

\begin{proof}
By Proposition \ref{Fskew-prop} we have $F_{r(y)} = s_{\delta_m/\mu}$, and it is well-known that this skew Schur function belongs to $ \LambdaOdd$: see Proof 2 of \cite[Corollary 7.32]{RSW},  or just adapt the argument in \cite[Proposition 7.17.7]{EC2}.
From this, the formulas for $\hat F_y$ and $\Ffpf_y$ are  immediate by Proposition \ref{Fgrass-prop} and Theorem \ref{last-thm}. For the last assertion, we note that skew Schur functions of the form $s_{\delta_m/\mu}$ are Schur $P$-positive (see \cite{AS}), and that Schur $P$-positivity is closed under products (see \cite[\S8]{Stem_proj}).
\end{proof}

The most important special case of the preceding result is Theorem \ref{F-thm} from the introduction, whose proof we  now give.

\begin{proof}[Proof of Theorem \ref{F-thm}]
One checks that $\kappa(w_n) = \lfloor n/2\rfloor$ and $r(w_n) = w_{\lceil n/2\rceil}$, and that $D(w_n)$ is equivalent to the shape of  $\delta_n$.
Hence, by Theorem \ref{last-cor},  $\hat F_{w_n} = s_{\delta_{k+1}} s_{\delta_m}$ for $k=\lfloor \frac{n}{2}\rfloor$ and $m= \lceil \frac{n}{2}\rceil$
and $\Ffpf_{w_{2n}} = (s_{\delta_n})^2$.
The theorem follows as $\{k+1,m\} = \{p,q\}$ for $p = \lceil\frac{n+1}{2}\rceil$ and $q = \lfloor \frac{n+1}{2}\rfloor$.
\end{proof}

\appendix
\newpage
\section{Index of notation}
\label{not-sect}

The tables below list our common notations, with references to definitions where relevant.

\def\skip{\\[-4pt]}

\begin{center}
\begin{tabular}{l | l r}
\text{Symbol} & \text{Meaning} &\text{Reference}
 \\
 \hline
 $(W,S)$ & An arbitrary Coxeter system \\
  $\I=\I(W)$ & The set of involutions $\{w \in W : w=w^{-1}\}$ & \\ 
  $y\rtimes s$ & For $(y,s) \in \I\times S$, either $ys$ (if $ys=sy$) or $sys$ (if $ys\neq sy$) & \eqref{rtimes-eq} \\
 $\ellhat$ & The involution length function $\I \to \NN$ & \eqref{ellhat-def} \\
  $<_T$ & The two-sided weak order on $\I$ & \eqref{t-eq} \\
\skip
 $S_\infty$ & The group of bijections $\PP \to \PP$ with finite support & \\ 
  $S_n$ & The group of bijections $[n]\to [n]$ viewed as a subgroup of $S_\infty$ & \\ 
$\Ifpf(S_n)$ & The set of fixed-point-free involutions in $S_n$ & \\
$\Ifpf(S_\infty)$  & The union $ \bigcup_{n\in \PP} \Ifpf(S_{2n})$ & \\
$\tilde\I_\fpf$ & The $S_\infty$-conjugacy class of $(1,2)(3,4)(5,6)\cdots$  & \S\ref{invschubert-sect}\\
$\kappa(w)$ & The number of 2-cycles in $w \in \I(S_n)$ & \\
\skip
$\wfpf_n$ & The permutation $(1,2)(3,4)\cdots (2n-1,n) \in \Ifpf(S_{2n})$ & \\
$w_n$ & The longest permutation $[n,n-1,\dots,3,2,1] \in \I(S_n)$  & \\
$\cG_n$ & The Grassmannian involution $(1,n+1)(2,n+2)\cdots(n,2n)$ & \\
$z_\infty$ & An element of $\tilde\I_\fpf$ constructed from $z \in \Ifpf(S_{2n})$ & \eqref{infprod} \\
$r(y)$ & A certain permutation constructed from $y \in \I(S_n)$ & \eqref{r-eq} \\
$1_m \times w$ & The image of $w \in S_k$ in $S_m \times S_k \subset S_{m+k}$ & \eqref{1cross-def} \\
$\cG_{p,q}$ & The  permutation $[q+1,q+2,\dots,n,1,2,\dots,q] \in S_n$ & \\
\skip
$\cR(w)$ & The set of reduced words for $w \in W$ & \S\ref{intro1-sect} \\
$\hat\cR(y,z)$ & The set of involution words from  $y$ to $z$ & \S\ref{intro1-sect} \\
$\hat\cR(y)$ & The set of involution words $\hat\cR(1,y)$  for $y \in \I(W)$ \\
$\cRfpf(z)$ & The set of involution words $\hat\cR(\wfpf_n,z)$ for $z \in \Ifpf(S_{2n})$ & \\
\skip
$\cA(y,z)$ & The set of relative atoms for $y,z \in \I(W)$ & Thm.-Def.~\ref{atoms-thmdef} \\
$\cA(y)$ & The set of atoms $\cA(1,y)$ for $y \in \I(W)$ & \\
$\cAfpf(z)$ & The set of atoms $\cA(\wfpf_n,z)$  for $z \in \Ifpf(S_{2n})$ & \\
$\alpha_{\min}(y)$ & The minimal atom in $\cA(y)$ & \eqref{min-atom-eq} \\
$\beta_{\min}(z)$ & The minimal atom in $\cAfpf(z)$ & \eqref{min-atom-eq}\\
\skip
$D(w)$ & The Rothe diagram of $w \in S_n$ & \eqref{rothe-eq} \\
$\D(y)$ & The involution Rothe diagram of $y \in \I(S_n)$  & \eqref{invol-rothe-eq} \\
$\Dfpf(z)$ & The involution Rothe diagram of $z \in \Ifpf(S_n)$ & \eqref{invol-rothe-eq} \\
$\Ess(D)$ & The essential set of $D\subset \PP\times \PP$ & \eqref{ess-eq} \\
$E_k(u)$ & A certain modified Rothe diagram & \eqref{e-eq} \\
\skip
$c(w)$ & The code of $w \in S_n$  & \eqref{code-def} \\
$\hat c(y)$ & The involution code of $y \in \I(S_n)$ & \eqref{inv-code-eq}  \\
$\hat c_\fpf(z)$ & The involution code of $z \in \Ifpf(S_n)$ & \eqref{inv-code-eq} \\
$\lambda(w)$ & The partition given by sorting $c(w)$ \\
\end{tabular}
\end{center}

\begin{center}
\begin{tabular}{l | l r}
\text{Symbol} & \text{Meaning} &\text{Reference}
 \\
 \hline
$\fkS_w$ & The Schubert polynomial of $w \in S_n$ & \eqref{schub1-eq} \\
$\hat\fkS_{y,z}$ & The involution Schubert polynomial of $y,z \in \I(S_n)$ & \eqref{intro-inv-schub-eq} \\
$\hat\fkS_y$ & The involution Schubert polynomial $\hat\fkS_{1,y}$ of $y \in \I(S_n)$ \\
$\Sfpf_z$ & The involution Schubert polynomial $\hat\fkS_{\wfpf_n,z}$ of $z \in \Ifpf(S_{2n})$ \\
$\fkS_w(x;y)$ & The double Schubert polynomial of $w \in S_n$ & Def.~\ref{doubleschub-def}\\
\skip
$F_w$ & The Stanley symmetric function of $w \in S_n$ & \eqref{F1-eq} \\
$\hat F_{y,z}$ & The involution Stanley symmetric function of $y,z \in \I(S_n)$ & \eqref{intro-inv-schub-eq} \\
$\hat F_y$ & The involution Stanley symmetric function $\hat F_{1,y}$ of $y \in \I(S_n)$ \\
$\Ffpf_z$ & The involution Stanley symmetric function $\hat F_{\wfpf_n,z}$ of $z \in \Ifpf(S_{2n})$ \\
\skip
$\GL_n(\CC)$ & The group of $n\times n$ invertible matrices over $\CC$ \\
$\O_n(\CC)$ & The subgroup of orthogonal matrices in $\GL_n(\CC)$ \\
$\Sp_n(\CC)$ & The subgroup of symplectic matrices in $\GL_n(\CC)$ \\
$B$ & The subgroup of lower triangular matrices in $\GL_n(\CC)$ \\
$B^+$ & The subgroup of upper triangular matrices in $\GL_n(\CC)$ \\
$K$ & Usually $\O_n(\CC)$ or $\Sp_n(\CC)$ \\
\skip
$\Fl(n)$ & The type $A$ flag variety, identified with $B\backslash \GL_n(\CC)$ & \S\ref{cohomology-sect}\\
$X_w$ & The Schubert variety indexed by $w$ & \eqref{mathring-eq} \\
$Y_y^K$ & The closed $K$-orbit in $\Fl(n)$ indexed by $y$ & \eqref{Y-eq} \\
$\Upsilon_y^K$ & Wyser and Yong's cohomology representative for $Y_y^K$ & \S\ref{revisit-sect} \\
$\r_w(i,j)$ & The number of positive integers $t \leq i $ with $w(t) \leq j$ for $w \in S_n$  & \eqref{r-def} \\
\skip
 $\cP_n$ & The polynomial ring $\ZZ[x_1,x_2,\dots,x_n]$ & \\
$\cP_\infty$ & The polynomial ring with infinitely many variables $\ZZ[x_1,x_2,\dots]$  & \\
$\cP_\infty(x;y)$ & The polynomial ring $\ZZ[x_1,y_1,x_2,y_2,x_3,y_3,\dots]$ & \\
$\Lambda$ & The Hopf algebra of symmetric functions over $\ZZ$ & \\
$\Lambda_n$ & The subring of symmetric polynomials in $\cP_n$ & \\
$(\Lambda_n^+)$ &The ideal in $\cP_n$ generated by the non-constant elements of $\Lambda_n$ & \eqref{borel-eq} \\
\skip
$\delta_n$ & The partition $(n-1,n-2,\dots,3,2,1)$ \\
$s_\lambda$ & The Schur function in $\Lambda$ indexed by a partition $\lambda$ \\
$s_{\lambda/\mu}$ & The skew Schur function in $\Lambda$ for partitions $\mu \subset \lambda$  & \\
$f^\lambda$ &The number of standard tableaux of shape $\lambda$ & \\
$p_n$ & The power sum symmetric function $x_1^n + x_2^n + x_3^n+ \dots \in \Lambda$ & \\
$\Gamma$ & The Hopf subalgebra $\QQ\langle p_1,p_3,p_5,\dots\rangle \cap \Lambda $ of $ \Lambda$ & \\
\skip
$\partial_i$ & The $i$th divided difference operator & \eqref{partial-i-eq} \\
$\pi_i$ & The $i$th isobaric divided difference operator &\eqref{pi-i-eq} \\
$\tau_n$ & The operator $\pi_1\pi_2\cdots\pi_n$ & \\
$r_n$ & The operator on power series which sets $x_{n+1}=x_{n+2}=\dots=0$  & \\
$\stb_n$ & The stabilization operator in degree $n$ & \eqref{stab-obs} \\
$\stb_{p,q}$ & A modified stabilization operator & \eqref{stb-p-q-eq} \\
$\stb_{n/p,q}$ & A modified stabilization operator & \eqref{stb-p-q-eq} \\
$\Phi_{p,q}$ & A certain ring homomorphism $\cP_\infty(x;y) \to \cP_{p+q}$ & \eqref{Phi-eq}
\end{tabular}
\end{center}

\end{document}